\newcommand{\beq}{\begin{equation}}
\newcommand{\eeq}{\end{equation}}
\renewcommand{\(}{\left(}
 \renewcommand{\)}{\right)}
\renewcommand{\[}{\left[}
 \renewcommand{\]}{\right]}
\def\eps{\varepsilon}
\def\sig{\sigma}
\newcommand{\n}{{\bf n}}
\newcommand{\D}{\partial^{\varphi}}
\newcommand{\V}{\mathcal{V}_{t}}
\newcommand{\X}{{\mathbb H}}
\newcommand{\Y}{{\mathbb W}}
\newcommand{\es}{{\eps,\sig}}
\newcommand{\N}{{\bf N}}
\newcommand{\pa}{\partial}
\providecommand{\abs}[1]{\left\vert#1\right\vert}
\providecommand{\norm}[1]{\left\Vert#1\right\Vert}
\providecommand{\Rn}[1]{\mathbb{R}^{#1}}
\def\dt{\partial_t}
\def\dtt{ \frac{d}{dt}}
\def\hal{\frac{1}{2}}
\def\thal{\frac{3}{2}}
\def\fhal{\frac{5}{2}}
\def\ls{\lesssim}
\def\linf{\Lambda_\infty}
\newtheorem{prop}{Proposition}
\newtheorem{theoreme}[prop]{Theorem}
\newtheorem{lem}[prop]{Lemma}
\newtheorem{rem}[prop]{Remark}
\numberwithin{equation}{section}
\numberwithin{prop}{section}
\title[Incompressible viscous surface waves]{Vanishing viscosity and surface tension limits of incompressible viscous surface waves}
\author{Yanjin Wang}
\address{School of Mathematical Sciences\\
Xiamen University\\
Xiamen, Fujian 361005, China
\newline \indent and
\newline \indent The Institute of Mathematical Sciences\\
The Chinese University of Hong Kong\\
Shatin, NT, Hong Kong}
\email[Y. J. Wang]{yanjin$\_$wang@xmu.edu.cn}
\thanks{Y. J. Wang was supported by the National Natural Science Foundation of China (11771360, 11531010) and the Natural Science Foundation of Fujian Province of China (2019J02003).}
\author{Zhouping Xin}
\address{The Institute of Mathematical Sciences\\
The Chinese University of Hong Kong\\
Shatin, NT, Hong Kong}
\email[Z. P. Xin]{zpxin@ims.cuhk.edu.hk}
\thanks{Z. P. Xin was partially supported by the Zheng Ge Ru Foundation, and Hong Kong RGC Earmarked Research Grants
CUHK-4048/13P, CUHK-14305315, CUHK-14302819, CUHK-14300917,  CUHK-14302917,
a Focus Area Grant from CUHK, a grant from Croucher Foundation, and a NSFC/RGC Joint Research Scheme.}
\keywords{Free boundary; Navier-Stokes equations; Euler equations; Incompressible fluids; Vanishing viscosity limit; Surface tension.}
\subjclass[2000]{Primary 35Q30, 35R35, 76D03; Secondary 35B40, 76E17}
\begin{document}

\begin{abstract}
Consider the dynamics of a layer of viscous incompressible fluid under the influence of gravity. The upper boundary is a free boundary with the effect of surface tension taken into account, and the lower boundary is a fixed boundary on which the Navier-slip condition is imposed. It is proved that there is a uniform time interval on which the estimates independent of both viscosity and surface tension coefficients of the solution can be established.  This then allows one to justify the vanishing viscosity and surface tension limits by the strong compactness argument. In the presence of surface tension, the main difficulty lies in the less regularity of the highest temporal derivative of the mean curvature of the free surface and the pressure. It seems hard to overcome this difficulty by using the vorticity in viscous boundary layers.  One of the key observations here is to find that there is a crucial cancelation between the mean curvature and the pressure by using the dynamic boundary condition.
\end{abstract}

\maketitle

%\tableofcontents

%%%%%%%%%%%%%%%%%%%%%%%%%%%%%%%%%%%%%%%%%%%%%%%%%%%%%%
\section{Introduction}
%%%%%%%%%%%%%%%%%%%%%%%%%%%%%%%%%%%%%%%%%%%%%%%%%%%%%%

%%%%%%%%%%%%%%%%%%%%%%%%%%%%%%%%%%%%%%%%%%%%%%%%%%%%%%
\subsection{Formulation}
%%%%%%%%%%%%%%%%%%%%%%%%%%%%%%%%%%%%%%%%%%%%%%%%%%%%%%

We consider the motion of an incompressible viscous fluid under the influence of a uniform gravitational force in a moving domain
\beq\label{omegat}
\Omega(t)= \left\{ x \in \mathbb{R}^3 \mid  -b< x_{3} < h(t, x_{1}, x_{2}) \right\}.
\eeq
The lower boundary of $\Omega(t)$ is assumed to be rigid and given with the constant $b>0$, but the upper boundary is a free surface that is the
graph of the unknown function $ h:\mathbb{R}_+\times\Rn{2}\to \Rn{}$. The fluid is described by its velocity and pressure, which are given for
each
$t\ge0$ by $ u(t,\cdot):\Omega(t) \to \Rn{3} $ and $ p(t,\cdot):\Omega(t) \to \Rn{}$, respectively.  For each $t>0$, $(u, p, h)$ satisfy the free-surface incompressible Navier-Stokes equations
\beq\label{NS}
\begin{cases}
\partial_t u + u \cdot \nabla u + \nabla p-\eps  \Delta u  = 0& \text{in }
\Omega(t) \\
\nabla\cdot{u}=0 & \text{in }\Omega(t) \\
p {\bf n}-  2  \eps  S u {\bf n} =  {g} h {\bf n}-\sigma H {\bf n} & \text{on } \{x_3=h(t,x_1,x_2)\} \\
\partial_t h = u\cdot\N &
\text{on }\{x_3=h(t,x_1,x_2)\} \\
u_3 = 0,\quad {\varepsilon}(Su (- e_3))_i=-\kappa {\varepsilon} u_i,\ i=1,2 & \text{on } \{x_3=-b\}%
\end{cases}%
\eeq
for $Su = {1 \over 2} \( \nabla u + \nabla u^t\)$ the symmetric part of the gradient of $u$ and ${\bf n}=\N/|\N|$ the outward unit normal of the
free surface with $\N=(-\pa_1h,-\pa_2h,1)^t$. $\eps>0$ is the viscosity, $g>0$ is the strength of gravity, $\sigma>0$ is the surface tension
coefficient and $\kappa$ is the friction coefficient. Finally, $H$ is twice the mean curvature of the free surface given by the formula
\begin{equation}
H =\nabla\cdot\left(\frac{\nabla h }
{\sqrt{1+|\nabla h|^2}}\right).
\end{equation}
The kinematic boundary condition, the fourth equation in \eqref{NS}, implies that the free surface is adverted with the
fluid, and the dynamic boundary condition, the third equation, states the balance of normal stress  on the free surface. Note that in \eqref{NS} we have shifted the gravitational forcing to the boundary and
eliminated the constant atmospheric pressure, $p_{atm}$, in the usual way by adjusting the actual pressure $\bar{p}$ according to $p = \bar{p} +
g x_3 - p_{atm}$. We have imposed the
Navier
slip boundary condition on the fixed lower boundary.  {Note that setting formally $\varepsilon=0$ in \eqref{NS} it corresponds to  the free-surface incompressible Euler equations.}

The initial surface is given by the graph of the function $h(0)=h_0: \Rn{2}\rightarrow \mathbb{R}$, which yields the initial domain $\Omega(0)$
on which the initial velocity $u(0)=u_0: \Omega(0) \rightarrow \mathbb{R}^3$ is specified. It  will  be assumed that $h_0 > -b$ and that $(u_0,
h_0)$ satisfy certain compatibility conditions.

The movement of the free boundary and the subsequent change of the domain create numerous mathematical difficulties. To circumvent these, as
usual, we will transform the free boundary problem under consideration to a problem with a fixed domain and fixed boundary. Consider a  family of
diffeomorphism $\Phi(t, \cdot)$ of the form
\beq\label{diff}
\begin{split}
\Phi(t, \cdot) : \,   \Omega= \mathbb{R}^2 \times (-b, 0) & \rightarrow   \Omega({t}) \\
(y,z) &\rightarrow    (y, \varphi(t, y, z)).
\end{split}
\eeq
$\varphi$ is chosen so that $\partial_{z} \varphi>0$ which ensures that $\Phi(t, \cdot)$ is a diffeomorphism.
For the fluid domain under consideration, $\varphi$ can be chosen as
\beq\label{eqphi}
\varphi(t,y,z)=    z + \eta(t, y, z),
\eeq
where $\eta$ is a chosen extension of $h$ onto $\{z\le 0\}$ defined by
\beq\label{eqeta}
\hat{\eta}(t, \xi, z)= \( 1+\frac{z}{b}\)\exp{(A|\xi|z)} \hat{h}(t, \xi).
\eeq
Here $\hat{\cdot}$ stands for the horizontal Fourier transform with respect  to the $y$ variable. It is verified in Proposition \ref{diffeoprop}
that for given $h_0$ if the  number $A>0$ is chosen sufficiently small, then
\beq\label{Adeb}
\partial_{z} \varphi(0,y,z)\ge c_0 >0\text{ in }\Omega.
\eeq

Then one can reduce the problem into the fixed domain $ \Omega  $ by setting
\beq\label{vdef}
v(t,y,z)= u(t, \Phi(t,y,z)), \ q(t,y,z)= p(t, \Phi(t,y,z) )\text{ in }\Omega.
\eeq
Set
$$ \partial^\varphi_{i} =  \partial_{i}  - { \partial_{i} \varphi \over \partial_{z} \varphi}
\partial_{z}, \quad i=t, \, 1,\, 2, \quad \D_{3}= \D_{z}=  {1 \over \partial_{z} \varphi} \partial_{z} $$
such that
$$   \partial_{i}u \circ \Phi(t, \cdot)= \D_{i}v, \quad i= t, \, 1, \, 2, \, 3.
     $$
Then by the change of coordinates \eqref{diff}, the problem \eqref{NS} becomes
\beq\label{NSv}
\begin{cases}   \D_{t}v +  v \cdot \nabla^\varphi  v + \nabla^\varphi q - \eps   \Delta^\varphi v =0 &\text{in }\Omega
\\ \nabla^\varphi  \cdot  v = 0  &\text{in }\Omega
\\q \n - 2 \eps S^\varphi v   \n = gh  \n-\sigma H\n &\text{on }\{z=0\}
\\\partial_{t}h = v \cdot \N &\text{on }\{z=0\}
\\v_3 = 0,\quad (S^\varphi v   e_3 )_i=  \kappa v_i,\ i=1,2 &\text{on }\{z=-b\}.
\end{cases}
\eeq
Here we have naturally written $ \(\nabla^\varphi  \)_i =  \D_{i}  , \ \Delta^\varphi=  \D_{i}\D_{i} , \  \nabla^\varphi\cdot  v=  \D_{i} v_{i} $
and $S^\varphi v= {1 \over 2}\big( \nabla^\varphi v + (\nabla^\varphi v)^t)$.
Note that $\nabla^\varphi\cdot  S^\varphi v= {1 \over 2} \Delta^\varphi v$ for vector fields satisfying $\nabla^\varphi\cdot  v=0$.

%%%%%%%%%%%%%%%%%%%%%%%%%%%%%%%%%%%%%%%%%%%%%%%%%%%%%%
\subsection{Previous works}
%%%%%%%%%%%%%%%%%%%%%%%%%%%%%%%%%%%%%%%%%%%%%%%%%%%%%%

Free boundary problems in fluid mechanics have been studied intensively in the mathematical community. There are a huge amount of mathematical
works, and we only mention briefly some of them below. We may refer to the references cited in these works for more proper survey of the
literature. For the incompressible Navier-Stokes equations, we refer to, for instance, Beale \cite{beale_1}, Hataya \cite{H}, Guo and Tice
\cite{GT13,GT_per,GT_inf} for the well-posedness without surface tension, and Beale \cite{beale_2}, Tani \cite{Ta}, Tanaka and Tani \cite{TT} for
the well-posedness with surface tension. Those well-posedness results are strongly based on the regularizing effect of the viscosity, and the
solutions are shown to be global for the small initial data \cite{beale_2,TT,H,GT_per,GT_inf}. Note that the surface tension has a
regularizing effect on the free surface, and it enhances the decay rate, see \cite{GT_per,GT_inf} for more discussions. For the
incompressible Euler equations, the problem becomes much more difficult. The early works were focused on the irrotational fluids, which began
with
the pioneering work of Nalimov \cite{N} of the local well-posedness without surface tension for the small initial data and was generalized to the
general initial data by the breakthrough of Wu \cite{Wu1,Wu2} for the case without surface tension and by Beyer and G\"unther \cite{BG} for the
case with surface tension. For the irrotational inviscid fluids, certain dispersive effects can be used to establish the global well-posedness
for the small initial data; we refer to  Wu \cite{Wu3,Wu4}, Germain, Masmoudi and Shatah \cite{GMS1}, Ionescu and Pusateri \cite{IP} and Alazard
and Delort \cite{AD} for the case with gravity but without surface tension, Germain, Masmoudi and Shatah \cite{GMS2} and Ionescu and Pusateri \cite{IP2} for
the case with surface tension but without gravity, and Deng,  Ionescu,    Pausader and  Pusateri \cite{DIPP} for
the case with both gravity and surface tension. For the general incompressible Euler equations without the irrotational assumption, only local well-posedness
results could be found. The first local well-posedness in 3D was obtained by Lindblad \cite{Lindblad05} for the case without surface tension and
by Coutand and Shkoller
\cite{CS07} for the case with (and without) surface tension, and we also refer to Shatah and Zeng \cite{SZ} and Zhang and Zhang
\cite{ZZ}.

Various approaches are used to prove those well-posedness results mentioned above, depending on whether viscosity or surface tension is presented
or not. It is then very natural and interesting to study the asymptotic behavior of vanishing these two parameters in the equations. The
vanishing viscosity limit for the Navier-Stokes equations is a classical issue. When there is no boundary, the problem has been well studied; we
refer to Swann \cite{Swann71}, Kato \cite{Kato72}, DiPerna and Majda \cite{DM1,DM2}, Constantin \cite{Con86} and Masmoudi \cite{M} for example.
However, in the presence of boundaries, the situation is more complicated and the problem becomes challenging due to the possible formation of
boundary layers. In a fixed domain with the no-slip boundary condition, there is formation of boundary layers in the vicinity of the boundary and
the solution $u^\eps$ of the Navier-Stokes equations is expected to behavior like $u^\eps\sim u^0+U(t,y,z/\sqrt{\eps})$ (we assume the boundary
is locally given by $z=0$), where $u^0$ is the solution of the Euler equations satisfying only the impermeable boundary condition and $U$ is some
profile. In view of this small scale behavior, it is impossible in general to get uniform strong estimates in any Sobolev spaces containing
normal derivatives. Consequently, the vanishing viscosity problem with the no-slip boundary condition is widely open except Asano
\cite{Asano} and Sammartino and Caflisch \cite{SC} in the framework of analytic initial data, Maekawa \cite{Mae14} for the
initial vorticity located away from the boundary and Guo and Nguyen \cite{GN} for a steady flow over a moving plane. However,
when the no-slip boundary condition is replaced by the Navier slip boundary condition,
the situation becomes  better. Indeed, now the solution is expected to behavior like $u^\eps\sim u^0+\sqrt{\eps}U(t,y,z/\sqrt{\eps})$; the
amplitude of the boundary layer is  weaker, and one can hope to get an uniform estimates involving one normal derivative. In this case, the
vanishing viscosity limit has been justified rigorously in Iftimie and Planas \cite{Iftimie-Planas}, Iftimie and Sueur
\cite{Iftimie-Sueur}, Masmoudi and Rousset \cite{MasRou12} and Xiao and Xin \cite{XiaoXin13}. Furthermore,  for some special types of Navier
boundary conditions or boundaries,  uniform estimates in higher order Sobolev spaces can be obtained, see  Xiao and Xin \cite{Xin} and Beir\~ao
da Veiga and Crispo \cite{BC11}.

Going back to the free-surface incompressible Navier-Stokes equations, since the dynamic boundary condition can be viewed as the same type of
slip boundary conditions, one has the hope to establish the vanishing viscosity limit. For the case without surface tension and there is no
boundary below the fluid, Masmoudi and Rousset \cite{MasRou} justified the inviscid limit by using the framework of their earlier work
\cite{MasRou12} and some additional techniques. Elgindi and Lee \cite{EL14} discussed the same problem for the case with fixed surface tension,
however, some key points in their arguments are not clear to us (especially  before we posted the earlier version of our paper onto arXiv on 21 April, 2015). On the other hand, for the free boundary problems, it is also interesting to
show the vanishing surface tension limit. This is supposed to be somewhat simpler than the inviscid limit problem since the equation on the free
boundary is defined without boundary. Yet, one needs to develop the well-posedness which is uniform with respect to surface tension, and
generally this is nontrivial. These have been done for the irrotational Euler equations, see Ambrose and Masmoudi \cite{AM1,AM2} and references therein; for the
general Euler equations, a priori uniform estimates have been derived in \cite{SZ}. Note that these results are local in time. {For the Navier-Stokes equations, Tan and Wang \cite{TW14} proved the global-in-time vanishing surface tension limit for the small initial data. }The purpose of this paper is to derive the uniform estimates of solutions of the system \eqref{NS}
(equivalently, \eqref{NSv}) on a time interval independent of both viscosity and surface tension coefficients. These allow one to justify the
vanishing viscosity and surface tension limits by the strong compactness argument. As a byproduct, one can get  a unified local well-posedness of the free-surface incompressible Euler equations with or without surface tension by the inviscid limit.

%%%%%%%%%%%%%%%%%%%%%%%%%%%%%%%%%%%%%%%%%%%%%%%%%%%%%%
\section{Main results}\label{sec main}
%%%%%%%%%%%%%%%%%%%%%%%%%%%%%%%%%%%%%%%%%%%%%%%%%%%%%%

%%%%%%%%%%%%%%%%%%%%%%%%%%%%%%%%%%%%%%%%%%%%%%%%%%%%%%
\subsection{Statement of the results}
%%%%%%%%%%%%%%%%%%%%%%%%%%%%%%%%%%%%%%%%%%%%%%%%%%%%%%

We shall use Sobolev conormal spaces on $\Omega$ as \cite{MasRou12,MasRou}. Set
$$  Z_{i}= \partial_{i}, \, i=1, \, 2, \ Z_{3}= z(z+b) \partial_{z},$$
which are tangent to $\pa\Omega$. The Sobolev conormal space $H^k_{co}$  is defined as
$$ H^k_{co}(\Omega)= \left\{ f \in L^2(\Omega), \   Z^\alpha  f  \in L^2 (\Omega), \  \alpha\in\mathbb{N}^3,\ | \alpha| \leq k \right\},$$
where $ Z^\alpha = Z_{1}^{\alpha_{1}} Z_{2}^{\alpha_{2}} Z_{3}^{\alpha_{3}}  $ for $\alpha\in\mathbb{N}^3$, with norm defined as
$$ \norm{  f  }_{k} =\sum_{\alpha\in\mathbb{N}^3\atop|\alpha| \leq k}\norm{ Z^\alpha f }_{L^2}, \ \norm{  f  } = \norm{ f  }_0 = \norm{  f
}_{L^2}.
  $$
Similarly, $W^{k, \infty}_{co}$  is defined as
$$ W^{k, \infty}_{co}(\Omega)= \left\{ f \in L^\infty(\Omega), \   Z^\alpha  f  \in L^\infty (\Omega), \  \alpha\in\mathbb{N}^3, \  | \alpha|
\leq k \right\}$$
with norm
$$ \norm{  f  }_{k, \infty} =\sum_{\alpha\in\mathbb{N}^3\atop|\alpha| \leq k} \norm{ Z^\alpha f }_{L^\infty}.$$
$H^k$ and $W^{k,\infty}$ will denote for the usual Sobolev spaces on $\Omega$, and $|\cdot|_{s}$ and $|\cdot|_{s,\infty}$ stand for the standard
Sobolev norms on $\mathbb{R}^2$.
We introduce also the {\it spatial-time} Sobolev conormal norms on $\Omega$ as:
\beq\label{xym}
\norm{  f  }_{\X^{m,k}}^2 =\sum_{\ell=0}^m \norm{\dt^\ell f }_{m+k-\ell}^2, \  \norm{  f   }_{\X^{m}}= \norm{  f   }_{\X^{m,0}}, \text{ and
}\norm{  f  }_{\Y^{m}} =\sum_{\ell=0}^m \norm{\dt^\ell f }_{m-\ell,\infty},
\eeq
 and the {\it spatial-time} Sobolev norms on $\mathbb{R}^2$ :
\beq\label{xymb}
\abs{  f  }_{\X^{m,s}} =\sum_{\ell=0}^m \abs{\dt^\ell f }_{m+s-\ell}^2, \   \abs{  f  }_{\X^{m}}= \abs{  f  }_{\X^{m,0}},\text{ and }\abs{  f
}_{\Y^{m}} =\sum_{\ell=0}^m \abs{\dt^\ell f }_{m-\ell,\infty}.
\eeq
Note that in these definitions $k$ and $m$ are assumed to be non-negative integers, but $s$ is allowed  to be any real number, typically, halfs. {For $s\in\mathbb{R}$,   $\[s\]$ denotes   the largest integer no more than $s$. For the convenience, we shall use $\mathbb{N}^{1+d} = \{ \alpha = (\alpha_0,\alpha_1,\dotsc,\alpha_d) \}$ to emphasize that the $0-$index term is related to temporal
derivatives; for $\alpha \in \mathbb{N}^{1+d}$,  we denote $Z^\alpha = \dt^{\alpha_0} Z_1^{\alpha_1}\cdots
Z_d^{\alpha_d}.$}

{Since we will work in a high-regularity context with regularity up to $m$ temporal derivatives, then one needs to use the initial data $(v_0,h_0)$ and the equations \eqref{NSv} to construct the initial data $\dt^j v(0)$ and $\dt^j h(0)$  for $j=1,\dotsc,m$, and $\dt^j q(0)$ for $j = 0,\dotsc, m-1$. Such a construction is standard and classical, so will be omitted here, see \cite{GT13,TW14} for more details. For our analysis, these initial data must then satisfy various conditions which will be specified when necessary.}

The aim of this paper is to get a local well-posedness result for strong  solutions of  \eqref{NSv} in Sobolev conormal spaces
which is valid on an interval of time independent of $\es \in (0,1]$.  Note that such a result will also  imply  the  local
well-posedness for the Euler equation with or without surface tension. As it is well-known that
when there is no surface tension a Taylor sign condition on the free boundary is needed
to get local well-posedness for the Euler equation; in the presence of surface tension, no such condition is needed.
By the change of coordinates \eqref{diff}, the Taylor sign condition reads as
\beq\label{Taylor2}
 -    \partial_{z}^{\varphi_0} q_0 +g\geq c_{0}>0  \text{ on }\{z=0\}.
\eeq

In the below, $\N$ and $\n$ are extended to $\bar{\Omega}$ by
 \beq\N(t,y,z)=(-\pa_1\varphi(t,y,z),-\pa_2\varphi(t,y,z),1)^t\text{ and }\n=\N/|\N|.
 \eeq
 Note that $\N(t,y,0)=(-\pa_1h,-\pa_2h,1)^t$ and $\N(t,y,-b)=e_3$.
Define $\Pi=  \mbox{I} - {\bf n}\otimes {\bf n}$, and let $\chi(z)$ be a smooth function in $\mathbb{R}$ which takes the value zero in the vicinity of
$\{z=0\}$ and one in the vicinity of $\{z=-b\}$. Let $\norm{\cdot}_{L^p_TX}$ be the norm of the space $L^p([0,T];X)$. The following is the statement of the a priori estimates on a time interval, independent of $\eps$ and $\sigma$, for a sufficiently smooth solution $(v^\es, h^\es)$ of \eqref{NSv}.
  \begin{theoreme}
    \label{main}
Let $m \geq {13}$. Assume that {the initial data $(v ^\es_0, h^\es_0)$ is given such that}
\begin{align}\label{hypinit}
  &\abs{h^\es(0)}_{\X^m}^2+ \sigma \abs{h ^\es(0)}_{\X^{m,1}}^2+ \eps  \abs{h ^\es(0)}_{\X^{m,\hal}}^2
   \\\nonumber&\quad+ \norm{v ^\es(0)}_{\X^m}^2 + \norm{\partial_{z} v ^\es(0)}_{\X^{m-1}}^2
+ \norm{ \partial_{z} v^\es(0)}_{\Y^{{\[\!\frac{m}{2}\!\]}+2}}^2 +\eps  \norm{\partial_{zz} v^{\eps,\sigma}(0) }_{L^\infty}^2  \leq R_{0},   \end{align}
  and that  {\eqref{Adeb} and \eqref{Taylor2} hold uniformly in $\varepsilon$ and $\sigma$. Furthermore, the following compatibility condition
  \beq\label{comc}
  \Pi_0^\es \(S^{\varphi_0^\es} v_0^\es {\bf n}_0^\es-\kappa \chi v_0^\es\)= 0\text{ on }\pa\Omega
   \eeq
is valid. There exist $T>0$ and $C>0$  such that for every  $\eps,\sigma\in (0, 1]$, the  solution} $(v^\es, h^\es)$  of  \eqref{NSv} {with the initial data $(v ^\es_0, h^\es_0)$} satisfies the
estimate
\begin{align}\label{mainborne1}
&\abs{h^\es }_{L^\infty_T \X^{m-1,1}}^2+ \sigma \abs{h ^\es }_{L^\infty_T\X^{m-1,2}}^2+ \eps  \abs{h ^\es }_{L^\infty_T\X^{m-1 , {3\over 2}}}^2
\\&\quad+ \norm{v ^\es }_{L^\infty_T\X^{m-1,1}}^2 + \norm{\partial_{z} v ^\es }_{L^\infty_T\X^{m-2}}^2  + \norm{ \partial_{z} v^\es
}_{L^\infty_T\Y^{{\[\!\frac{m}{2}\!\]}+2}}^2+\eps  \norm{\partial_{zz} v^{\es} }_{L^\infty_T L^\infty}^2\nonumber\\&\quad+  \abs{\dt^m h^\es }_{L^4_T L^2
}^2+ \sigma \abs{\dt^m h^\es }_{L^4_T H^1 }^2+ \eps  \abs{\dt^m  h ^\es }_{L^4_TH^{ \hal}}^2+ \norm{ \dt^m v^\es }_{L^4_T L^2 }^2+ \norm{
\partial_{ z} v^\es }_{L^4_T \X^{m-1} }^2\nonumber
 \\\nonumber&\quad+\sigma^2 \abs{h ^\es }_{L^2_T \X^{m-1,\frac{5}{2}} }^2 +\eps  \norm{\nabla v ^\es }_{L^2_T \X^{m-1,1} }^2+ \eps \norm{\nabla
 \partial_z v ^\es }_{L^2_T \X^{m-2} }^2   \leq   C.\end{align}
\end{theoreme}

\begin{rem}
It should be remarked that the reason for the $L^4$-in-time estimate of $\norm{\partial_{z} v^\es(t)}_{\X^{m-1}} $ rather than $L^\infty$
stated in
Theorem \ref{main} is related  to the boundary  control of the vorticity in viscous boundary layers{, which is same as the case without surface tension \cite{MasRou}}. However, in the presence of surface tension, as will be shown in our proof later,
the pressure is of less regularity, thus one can only prove
the $L^4$-in-time estimate {rather than $L^\infty$} of the highest time
derivative term $\abs{\dt^m h^\es(t) }_{L^2 }+ \sqrt{\sigma} \abs{\dt^m h^\es(t) }_{H^1 }+ \sqrt{\eps } \abs{\dt^m h^\es(t) }_{H^\hal }+ \norm{
\dt^m v^\es(t) }_{L^2 }${, which reveals the significant difference from the case without surface tension \cite{MasRou}}.
\end{rem}

\begin{rem}
We emphasize that in the derivation of the estimate \eqref{mainborne1} in  Theorem \ref{main} only the compatibility condition \eqref{comc} is required; indeed, \eqref{comc} is used only in deriving the bound of $\eps  \norm{\partial_{zz} v^{\es} }_{L^\infty_T L^\infty}^2$. Since we manage to avoid the control of $\eps  \norm{ \partial_{zz}\dt^j v^{\es} }_{L^\infty_T L^\infty}^2$ for $j\ge 1$, so no higher order compatibility conditions, i.e., time derivatives version in initial time of \eqref{comc}, are required. However, for the local well-posedness of \eqref{NSv} one needs the following $m$-th compatibility conditions:
  \beq
  \begin{cases}\label{compp}
  \nabla^{\varphi_0}  \cdot  v ^\es_0 = 0  \text{ in }\Omega,\quad v ^\es_{0,3} = 0 \text{ on }\{z=-b\}, \text{ and}\\
  \dt^{j}\(\Pi^\es \(S^{\varphi^\es} v^\es {\bf n}^\es-\kappa \chi v^\es\)\)\mid_{t=0}= 0\text{ on }\pa\Omega,\text{ for }j=0,\dots,m-1.
  \end{cases}
   \eeq
See \cite{GT13,TW14} for more details.
\end{rem}
\begin{rem}
Due to the construction of $\dt^j v^\es(0)$ and $\dt^j h^\es(0)$ for $j\ge 1$, the conditions \eqref{hypinit} and \eqref{compp} are in fact imposed on the initial data $(v ^\es_0, h^\es_0)$. In particular, they are well-defined provided that $v ^\es_0\in H^{2m}(\Omega)$ and $h^\es_0\in H^{2m+1}(\Sigma)$. Given such initial data satisfying \eqref{Adeb} and \eqref{compp}, by the local well-posedness of \eqref{NSv} in \cite{TW14}, for fixed $\varepsilon>0$ and $\sigma>0$, one can get a positive time $T^\es$ for which a unique solution $(v^\es,h^\es)$ of \eqref{NSv} achieving this initial data exists on $[0, T^\es]$ so that
$
v^\es\in C ([0,T^\es]; H^{2m}(\Omega))\cap L^2(0,T^\es; H^{2m+1}(\Omega))$ and $ h^\es \in C ([0,T^\es]; H^{2m+1}(\Sigma))\cap L^2(0,T^\es; H^{2m+3/2}(\Sigma))
$
as well as the regularity of the time derivatives of $(v^\es,h^\es)$ followed by using the equations \eqref{NSv}.
In particular, these guarantee all of the computations involved in the derivation of the a priori estimates of Theorem \ref{main}.
\end{rem}

As immediate consequences of the uniform estimates of Theorem \ref{main}, one can establish easily, by
standard compactness arguments,  the justification of the inviscid limit, vanishing surface tension limit,  and any {of their combinations.  To illustrate this, we will present only the argument for the inviscid limit as $\eps\rightarrow0$, which is stated as follows:
   \begin{theoreme}
    \label{main2}
Let $m \geq {13}$. Assume that the initial data $v ^\es_0\in H^{2m}(\Omega)$ and $h^\es_0\in H^{2m+1}(\Sigma)$  satisfying the same assumptions as in Theorem \ref{main}. Furthermore, the  $m$-th  compatibility conditions \eqref{compp} hold.
There exist $T>0$ and $C>0$  such that for every  $\eps,\sigma\in (0, 1]$, there exists a  unique solution $(v^\es, h^\es)$  of  \eqref{NSv} with the initial data $(v ^\es_0, h^\es_0)$ on $[0,T]$  satisfying the uniform estimate \eqref{mainborne1}. Moreover, if assume further that
\beq
v_0^\es\rightarrow v_0^\sigma\text{ in }L^2(\Omega)\text{ and }h_0^\es\rightarrow h_0^\sigma\text{ in }L^2(\Sigma), \text{ as }\varepsilon\rightarrow 0,
\eeq
then as $\varepsilon\rightarrow 0$, $(v^\es, h^\es)$ converges to the limit  $(v^\sigma, h^\sigma)$, which is the unique solution to the free-surface Euler equations
\beq\label{Eulerv}
\begin{cases}   \D_{t}v +  v \cdot \nabla^\varphi  v + \nabla^\varphi q   =0 &\text{in }\Omega
\\ \nabla^\varphi  \cdot  v = 0  &\text{in }\Omega
\\q   = gh   -\sigma H &\text{on }\{z=0\}
\\\partial_{t}h = v \cdot \N &\text{on }\{z=0\}
\\v_3 = 0  &\text{on }\{z=-b\}
\end{cases}
\eeq
with the initial data $(v ^\sigma_0,h ^\sigma_0)$ on $[0,T]$
that satisfies the estimate
\begin{align}\label{mainborne12}
&\abs{h^\sigma }_{L^\infty_T \X^{m-1,1}}^2+ \sigma \abs{h ^\sigma}_{L^\infty_T\X^{m-1,2}}^2+ \norm{v ^\sigma }_{L^\infty_T\X^{m-1,1}}^2+ \norm{\partial_{z} v ^\sigma }_{L^\infty_T\X^{m-2}}^2+ \norm{ \partial_{z} v^\sigma}_{L^\infty_T\Y^{{\[\!\frac{m}{2}\!\]}+2}}^2   \\&\quad\nonumber+  \abs{\dt^m h^\sigma }_{L^4_T L^2
}^2+ \sigma \abs{\dt^m h^\sigma }_{L^4_T H^1 }^2 + \norm{ \dt^m v^\sigma }_{L^4_T L^2 }^2+ \norm{
\partial_{ z} v^\sigma }_{L^4_T \X^{m-1} }^2\nonumber
+\sigma^2 \abs{h ^\sigma }_{L^2_T \X^{m-1,\frac{5}{2}} }^2  \leq   C.\end{align}
 \end{theoreme}}
\begin{rem}
{The convergence of $(v^\es, h^\es)$   to   $(v^\sigma, h^\sigma)$ as $\varepsilon\rightarrow 0$ in Theorem \ref{main2}  occurs in any spaces which contain the set of functions obeying \eqref{mainborne12} as a compact subset.}
For the inviscid limit, the Euler equations, by using the equation for the vorticity one can
improve  {those integral-in-time estimates in \eqref{mainborne12} to be in $L^\infty$, and one can also recover the normal regularity with the initial data in Sobolev spaces.  }
\end{rem}

%%%%%%%%%%%%%%%%%%%%%%%%%%%%%%%%%%%%%%%%%%%%%%%%%%%%%%
\subsection{Strategy of the proof}
%%%%%%%%%%%%%%%%%%%%%%%%%%%%%%%%%%%%%%%%%%%%%%%%%%%%%%

{The main part of the paper will be devoted to prove Theorem  \ref{main}, where the}
key step is to derive the a priori uniform estimates on a time interval small but independent of
$\es\in(0,1]$ for a sufficiently smooth solution of the equations \eqref{NSv}. Our approach is
strongly motivated by the strategy of Masmoudi and Rousset \cite{MasRou} where the vanishing viscosity limit was justified for the problem
without surface tension. However, there are several new difficulties arising in the presence of surface tension. Indeed, it has been already
known for the free-surface incompressible Euler equations that the problem with surface tension is more difficult than the one without
surface tension in certain sense, see \cite{CS07} for some discussions. It is even more so in the study of the inviscid limit for the free-surface incompressible Navier-Stokes equations. Indeed, in the vanishing viscosity limit for each fixed
$\sigma>0$, the surface tension serves only to provide the improved regularity of the free surface, and makes the problem hard in the sense that first, the required nonlinear estimates are more difficult to close due to the mean curvature
term and second, the less regularity of the pressure makes the arguments much more involved. Note that the less regularity of the free surface for
the problem without surface tension was overcome in \cite{MasRou} by using Alinhac good unknowns \cite{Alinhac}. In this paper we will also
use Alinhac good unknowns so as to not taking advantage of the improved regularity of the free surface provided by any fixed surface tension;
but we need to develop some ideas to overcome the difficulties caused by the presence of  surface tension as illustrated below. As a
consequence, we will be able to deal with both the vanishing viscosity and surface tension limits.

For notational convenience we shall suppress the subscripts $\eps$ and $\sigma$ below unless states otherwise. Let $\mathcal{N}(T)$  be the quantity appearing in the left hand side of \eqref{mainborne1}, while $\mathcal{Q}(T)$  be the first two lines in \eqref{mainborne1}. The key point is thus to get a
closed a priori estimates of $\mathcal{N}(T)$ on a small time interval independent of $\eps$ and $\sigma$ in terms of the initial data. We start
with the basic physical energy identity:
\begin{align}\label{intro1}
& \hal\dtt \(  \int_{\Omega} |v|^2 \, d\V +   \int_{z=0} \(g|h|^2+2\sigma\(\sqrt{1+|\nabla_y h|^2}-1\) \)dy  \)
 \\\nonumber&\qquad\qquad\qquad\qquad\qquad\qquad+ 2 \eps  \int_{\Omega} |S^\varphi v|^2\, d\V+2 \kappa \eps\int_{z=-b}|v|^2\, dy=0.
 \end{align}
Here $d\V$ stands for the volume element induced by the change of variable \eqref{eqphi}: $d\V=\partial_{z}\varphi \, dydz$.

To get the estimates for higher order conormal energy estimates, one starts with applying the {\it spatial} conormal derivatives $Z^\alpha$ for $\alpha\in \mathbb{N}^3$ with $1\le |\alpha|\le m$ to the equations \eqref{NSv}.
Since the operators $\pa_i^\varphi$ involve $\nabla\varphi$, the estimate of the commutator between $Z^\alpha$ and $\pa_i^\varphi$ needs a
control of $\norm{Z^\alpha \nabla\varphi}\ls \abs{Z^\alpha h}_{1/2}$. In the absence of the surface tension this yields a loss of $1/2$
derivative, and this difficulty was overcome in Masmoudi and Rousset \cite{MasRou} by using a
crucial cancellation observed by Alinhac \cite{Alinhac}. The idea in \cite{MasRou} is to use the good unknowns $V^\alpha= Z^\alpha v - {\partial_{z}^\varphi v}
Z^\alpha \eta$ and $Q^\alpha=  Z^\alpha q - {\partial_{z}^\varphi q} Z^\alpha \eta$, and some cancellation occurs when considering the equations
for $V^\alpha$ and $Q^\alpha$ which allows one to derive an $L^2$ type energy estimates similar to  \eqref{intro1}:
\begin{align}\label{intro2}
&\hal\dtt\( \int_{\Omega} |V^\alpha|^2 \, d\V +  \int_{z=0} \((g - \partial_{z}^\varphi q ) |Z^\alpha h|^2+\sigma\abs{\nabla_y Z^\alpha h}^2\) dy\)+
2 \eps  \int_{\Omega} |S^\varphi V^\alpha|^2\, d\V
\\\nonumber&\quad =-\int_{z= 0}  \sigma Z^\alpha H \sum_{|\alpha'|=1}C_{\alpha}^{\alpha'}Z^{ \alpha'}\N\cdot Z^{\alpha-\alpha'} v
\,dy-\int_{\Omega} \mathcal{C}^\alpha (d) Z^\alpha q \, d\V +  {\sum}_0 +  {\sum}_\sigma,
\end{align}
where, using the symmetric commutator notation $\[ \cdot ,  \cdot , \cdot \]$ defined by \eqref{symcom},
\begin{align}\label{intro11}
  \pa_z\varphi \mathcal{C}^\alpha (d)   =  \[  Z^\alpha ,  \N , \cdot\partial_{z}v  \]  +   \[  Z^\alpha ,  \partial_{z}
  \eta,\pa_1v_1+\pa_2v_2\].
     \end{align}
Here ${\sum}_0$ denotes the terms that can be controlled in a similar way as the case without surface tension \cite{MasRou}, and ${\sum}_\sigma$
stands for  the terms related to surface tension that can be controlled well with the energy estimates
$\sigma\abs{  Z^\alpha h}_1^2$. The first two terms in the right hand side of \eqref{intro2} are singled out in order to indicate the main
difficulties for the case with surface tension. Note that the regularity $\sigma\abs{ Z^\alpha h}_1^2$ in the energy is not enough to control the
first term. Indeed, one needs $\sigma^2\abs{Z^\alpha h}_{3/2}^2$, $i.e.,$ there is a loss of $1/2$ derivative for $\sigma>0$. To improve the regularity of $h$, one
then resorts to using the normal component of the dynamic boundary condition in \eqref{NSv}, $i.e.$,
\beq\label{qeq}
-\sigma H  =q-gh-  2 \eps S^\varphi v  \,{\bf n} \cdot
\,{\bf n}\quad \text{on }\{z=0\},
\eeq
which requires a control of $ \abs{Z^\alpha q}_{-1/2}^2 $.  A natural way to control the
pressure $q$ is through the elliptic problem
\beq\label{intro3}
\begin{cases}
\Delta^\varphi q = - \nabla^\varphi \cdot (v\cdot \nabla^\varphi v)& \text{in }\Omega
\\\nabla^\varphi q\cdot\N = -\dt v\cdot \N-(v_y\cdot\nabla_y) v\cdot \N+\eps   \Delta^\varphi v \cdot\N &\text{on }\{z=0\}
\\\nabla^\varphi q\cdot\N =  \eps   \Delta^\varphi v \cdot\N &\text{on }\{z=-b\}.
\end{cases}
\eeq
It should be remarked that when $\sigma=0$, to estimate $q$ one can use instead on the boundary $\{z=0\}$ the Dirichlet boundary condition $q=-\sigma H +gh+  2 \eps S^\varphi v
\,{\bf n} \cdot \,{\bf n}  $ by \eqref{qeq}, see \cite{MasRou}. However, for $\sigma>0$, this boundary condition can not be used to estimate $q$ as one has not controlled $-\sigma H$ yet.
Note that the appearance of $\dt v$ in the Neumann boundary condition on $\{z=0\}$ of
\eqref{intro3} forces one to include the time derivative $\dt$ in $Z^\alpha${. So, from now on, one takes the {\it spatial-time} conormal derivatives $Z^\alpha$ for $\alpha\in \mathbb{N}^{1+3}$}. The elliptic estimates for \eqref{intro3} provide the control of
$\norm{\nabla q}_{L^2_T\X^{m-1}}^2$, {where the good unknown $V^m$ ($i.e.$, $V^\alpha$ for $\alpha_0=m$) is used to derive that $\dt^m v\cdot\N \in H^{-1/2}(\Sigma)$ when estimating  $\norm{\dt^{m-1}\nabla q}_{0}^2$.} By using \eqref{qeq}, the pressure estimates then yield the control of $\sigma^2\abs{h}_{L^2_T\X^{m-1,\frac{5}{2}}}^2$.  Now one separates the estimates of \eqref{intro2} into two cases: when $\alpha_0\le m-1$ or when $\alpha_0=m$, and for the former case one can use these estimates of $q$ and $h$ above to conclude that
\begin{align}\label{intro4}
&\norm{v(t)}_{\X^{m-1,1}}^2+\abs{h(t)}_{\X^{m-1,1}}^2 +\sigma\abs{h(t)}_{\X^{m-1,2}}^2+\eps  \abs{h(t)}_{\X^{m-1, \frac{3}{2}}}^2
\\\nonumber&\quad \le C_0+\Lambda\(\mathcal{Q}(T)\)\(t+\int_0^t \( \abs{\dt^m h}_{-\hal}^2+\sigma \abs{\dt^m h}_{1}^2 +\norm{\dt^m v}_{0}^2+
\norm{\pa_z v}_{\X^{m-1}}^2 \)\).
\end{align}
For the case when $\alpha_0=m$, \eqref{intro2} can be rewritten as
\begin{align}\label{intro5}
&\hal\dtt\( \int_{\Omega} |V^m|^2 \, d\V +  \int_{z=0} \((g - \partial_{z}^\varphi q ) |\dt^m h|^2+\sigma\abs{\nabla_y \dt^m h}^2\) dy\)+ 2 \eps
\int_{\Omega} |S^\varphi V^m|^2\, d\V
\\\nonumber&\quad =-\int_{z= 0}  \sigma \dt^m H m\dt \N\cdot \dt^{m-1} v\,dy-\int_{\Omega} m\dt \N\cdot \dt^{m-1}\pa_z v \dt^m q \, dydz +
{\sum}_0 +  {\sum}_\sigma+  {\sum}_q.
\end{align}
Here ${\sum}_q$ stands for the terms related to $q$, which are relatively easier to estimate than the second term in the
right hand side of \eqref{intro5}. The main difficulty now is that there are no any estimates for $\dt^mq$ and the regularity $\sigma\abs{ \dt^m h}_1^2$ in the energy can no longer be improved by using \eqref{qeq}, hence it is
difficult to bound the first two terms in the
right hand side of \eqref{intro5}. {It should be emphasized that for the Euler equations with surface tension one can use the vorticity equation to prove that $ \dt^{m-1}v\in H^{3/2}(\Omega)$ (see \cite{CS07}) and $\Pi\dt^m v\in H^{-1/2}(\Sigma)$ (see \cite{CCS08}), each of which can be employed to control these two terms; unfortunately, both of these two estimates are difficult to be derived in the presence of viscosity.} Our approach  to
overcome the difficulty is to do the integration by parts over $t$ and $z$ in an appropriate order to obtain a crucial cancelation. More precisely,
we integrate by parts in $z$ first and then in $t$ the second term to obtain
\begin{align}\label{intro6}
&-\int_{\Omega} m\dt \N\cdot \dt^{m-1}\pa_z v \dt^m q \, dydz
\\&\nonumber\quad=-\int_{z=0} m\dt \N\cdot \dt^{m-1} v \dt^m q \, dy +\int_{\Omega} m \pa_z(\dt \N \dt^m q)\cdot \dt^{m-1} v \, dydz
\\&\nonumber\quad=-\int_{z=0} m\dt \N\cdot \dt^{m-1} v \dt^m q \, dy +\frac{d}{dt}\int_{\Omega} m\pa_z(\dt\N \pa_t^{m-1} q)\cdot \pa_t^{m-1} v
dydz +{\sum}_0  .
\end{align}
Note that one does not integrate by parts in $t$ for the first term in the last line of \eqref{intro6} since one can not control $\Pi\dt^{m} v$ on
$\{z=0\}$. The crucial observation is that there is a cancelation between this term and the first term in the right hand side of \eqref{intro5}.
Indeed, by the dynamic boundary condition \eqref{qeq}, it holds that
\begin{align}\label{intro7}
&-\int_{z= 0}  \sigma \dt^m H m\dt \N\cdot \dt^{m-1} v\,dy-\int_{z=0} m\dt \N\cdot \dt^{m-1} v \dt^m q \, dy
 \\\nonumber&\quad=-\int_{ {z=0}}   m\pa_t  \N  \cdot \pa_t^{m-1} v  \(g\dt^mh +2\eps \dt^m \( S^\varphi v  \,{\bf n} \cdot \,{\bf n}\)\)\,
 dy={\sum}_0.
\end{align}
It should be pointed out that after we posted the earlier version of our paper onto arXiv on 21 April, 2015, Elgindi and Lee \cite{EL14} had used our idea here to fix some arguments in their study of the inviscid limit for the free-surface Navier-Stokes equations for fixed $\sigma>0$ (see page 46 of \cite{EL14}).
The last difficulty in the estimates of \eqref{intro2} when $\alpha_0=m$ is due to the second term in the last line of \eqref{intro6} since the
control of $\norm{\nabla q(t)}_{\X^{m-1}}$ is $L^2$-in-time rather that $L^\infty$. One
is then forced to integrate in time twice \eqref{intro2} when $\alpha_0=m$, and the main conclusion is that
\begin{align}
&\(\int_0^t\(\norm{\dt^m v }^2+\abs{\dt^m h }_{0}^2 +\sigma \abs{\dt^m h }_{1}^2+ \eps   \abs{\dt^m h}_{\hal}^2 \)^2\)^\hal
\\\nonumber&\quad\le  C_0+   \Lambda(\mathcal{Q}(T))\(t + \int_0^t \( \abs{\dt^mh}_{0}^2 + \sigma     \abs{\dt^mh}_{1}^2
+ \norm{\dt^m v}_{0} ^2 + \norm{\pa_z v}_{\X^{m-1}}^2 + \eps  \abs{\dt^m h}_{\hal}^2 \) \)^\hal .
\end{align}

In order to close the argument, one needs to estimate $\pa_z v$. We start with the conormal energy estimates of $\pa_z v$. As in \cite{MasRou}, one first introduces the equivalent quantity
$S_{\n}= \Pi \(S^\varphi v   \n-\kappa\chi v\)$, which satisfies a convention-diffusion type equation with the homogeneous Dirichlet boundary
condition. When $\sigma=0$ one estimates only the spatial conormal derivatives of $\pa_z v$, hence to control the commutator between $Z^\alpha$
and $\varepsilon \Delta^\varphi v$ it needs only the estimate of $\sqrt{\eps} \norm{\partial_{zz}v}_{L^\infty}$, see \cite{MasRou}. However, for $\sigma>0$ as the time
derivatives involved, if one followed the arguments of \cite{MasRou}, one would need  to control $\sqrt{\eps}  \norm{\partial_{zz}v}_{\Y^{k}}$ for certain $k\ge 1$, and this would then require higher order compatibility conditions other than \eqref{comc}. Our way to avoid this is to control instead $\eps \partial_{zz}v $ (and even $\eps \partial_{zzz}v$!), and this can be done in a much more direct way, due to that we have included the time derivatives
in $Z^\alpha$, by using the first equation in
\eqref{NSv}. One can then perform the $L^2$ type energy estimates to conclude that
\begin{align}
\norm{\pa_z v(t)}_{\X^{m-2}}^2\le  C_0+   \Lambda(\mathcal{Q}(T))\(t + \int_0^t  \norm{\pa_z v}_{\X^{m-1}}^2   \) .
\end{align}
Note that  the $m-2$ order estimate above cannot be improved to be $m-1$ due to the appearance of
$(\nabla^\varphi)^2 q$ in the source term in the equation for $S_{\n}$, which is only in $\X^{m-2}$. To get a better estimate, following
\cite{MasRou,MasRou12,Xin}, one would proceed with the vorticity  $\omega=\nabla^\varphi\times v$ instead of $S_{\n}$ to get rid of the pressure term. $\omega$ again satisfies
a convention-diffusion type equation, but the main difficulty is that it does not vanish on the boundary and its boundary value is at a low
regularity.  {Note that away from the boundary
  the conormal Sobolev norm is equivalent to the standard Sobolev norm, so one needs only to estimate  $  \omega  $ near the boundary. We consider only the
estimates of  $\omega$ near $\{z=0\}$, and the estimates near $\{z=-b\}$ follow similarly. Let $\chi(z)$ be smooth compactly supported near $\{z=0\}$ and equal to $1$ in a vicinity of $\{z=0\}$, and one may regard the equation satisfied by $\chi\omega$ as to be defined in  the half space $\{z<0\}$.}  To split the difficulty, for $|\alpha|=m-1$ we set
$ Z^\alpha {(\chi\omega)}= \omega^\alpha_{nh}+ \omega^{\alpha}_{h}$, where $\omega^\alpha_{h}$ satisfies the nonhomogeneous equation with the
homogeneous boundary condition which can be handled by performing the $L^2$ type energy estimates and $\omega^\alpha_{nh}$ satisfies the
homogeneous equation with the nonhomogeneous boundary condition. Note that  {$\omega^\alpha_{nh}$ solves exactly the same problem in $\{z<0\}$ as \cite{MasRou}}
so that one can apply directly the estimates of $\omega^\alpha_{nh}$ in Theorem 10.6 of \cite{MasRou}. This together with the estimates of $\omega^\alpha_{h}$ yields
\begin{align}
\(\int_0^t \norm{\pa_z v(t)}_{\X^{m-1}}^4\)^\hal \le  C_0+   \Lambda(\mathcal{Q}(T))\(t + \int_0^t  \norm{\pa_z v}_{\X^{m-1}}^2   \) .
\end{align}

Now we derive the $L^\infty$ estimates of $\pa_zv$ and $\sqrt{\eps}\pa_{zz}v$. In this step, one needs to estimate only a low number of
derivatives of $v$, say ${\[\!\frac{m}{2}\!\]}+2$, while the boundary is $H^m$ with  $m$ being as large as needed, hence it is convenient to use a normal
geodesic coordinate system in the vicinity of the boundary so that the Laplacian has the simplest expression. Note that $\sqrt{\eps}\pa_{zz}v$
can be controlled in the same way as \cite{MasRou}, however, for $\pa_zv$ we will employ a different argument. Again, it is more convenient to
estimate the equivalent quantity $S_\n$. After some computations, one finds that $\rho$, an equivalent quantity of $S_\n$ {near $\{z=0\}$} in the new coordinates,
solves
\beq \label{intro10}
\partial_{t}  \rho + w \cdot \nabla   \rho - \eps  \partial_{zz}   \rho= \mathcal{H} \quad{\text{in }\{z<0\}}
\eeq
for some source term $\mathcal{H}$ and a vector field $w$ with $w_3=0$ on the boundary. The main difficulty in the analysis of \eqref{intro10} is the
commutator between $Z^\alpha$ and $\eps\pa_{zz}\rho$ which is hard to
control when applying the maximum principle. The main idea in \cite{MasRou} to overcome this difficulty is to rewrite \eqref{intro10} into a one-dimensional Fokker-Planck
type equation and then use the explicit representation of the solution. Here we will use a different argument from \cite{MasRou}, which is much simpler; since we have included the time derivatives in $Z^\alpha$, it is easy to estimate the commutator by using again the first equation of \eqref{NSv}. The main
conclusion is that
\begin{align}
\norm{\pa_zv(t)}_{\Y^{{\[\!\frac{m}{2}\!\]}+2}}^2+\eps\norm{\pa_{zz}v(t)}_{L^\infty}^2\le  C_0+   \Lambda(\mathcal{Q}(T))t .
\end{align}

Combining the estimates in all these steps, we then derive the desired estimates $\mathcal{N}(T)\le C_0$ for some $T$ sufficiently small but
independent of $\eps$ and $\sigma$. Note that the Taylor sign condition and the condition that $\Phi(t,\cdot)$ is a diffeomorphism can be easily
justified due to our estimates of time derivatives. Finally we remark that for each fixed $\sigma>0$, the Taylor sign condition is no longer
needed for the inviscid limit problem. This can be seen from \eqref{intro1} and \eqref{intro2}: even $g-\pa_z^\varphi q\le 0$, one can use the
Sobolev interpolation to get the estimates of $h$ for each $\sigma>0$.

We will set the conventions for notation to be used later.  The Einstein convention of summing over repeated indices will be used.
Throughout the paper $C>0$ will denote a generic constant that does
not depend on the data, the surface tension coefficient $\sigma$ and
the  viscosity coefficient $\eps$, but can depend on the
other parameters of the problem, $g,\kappa$, $m\ge {13}$ and $\Omega$.  We refer to
such constants as ``universal''.  Such constants are allowed to change from line to line.  We will
employ the notation $a \lesssim b$ to mean that $a \le C b$ for a
universal constant $C>0$. Throughout the paper, the notation $\Lambda(\cdot, \cdot)$ stands for a continuous increasing function in all its
arguments, independent of $\eps$ and $\sigma$ and  that may change from line to line, and $\Lambda_0=\Lambda( \frac{1}{c_0})$.

The rest of the paper is organized as follows. We collect some analytic tools related to Sobolev conormal spaces, the properties of Poisson
extension and some geometric estimates in Appendixes \ref{aa1}, \ref{aa2} and \ref{aa3}, respectively. In Section \ref{sectionconorm} we study
the equations satisfied by $(Z^\alpha v, Z^\alpha q, Z^\alpha h)$ and present the estimates of the commutators. Section \ref{sectionpressure} is
devoted to derive the pressure estimates using elliptic regularity in Sobolev conormal spaces, and Section \ref{sectionprelimeta} contains the
smoothing regularity estimates of $h$ due to viscosity and surface tension. In Section \ref{secconormal}, the conormal estimates of the solution
are derived, and the conormal estimates for normal derivatives are given in Section \ref{secnormal}. In Section \ref{secinfty}, we prove the
needed $L^\infty$ estimates for normal derivatives.
Finally, the proofs of Theorems \ref{main} and \ref{main2} are given in Sections \ref{finalsec} and \ref{finalsec2}, respectively.

%%%%%%%%%%%%%%%%%%%%%%%%%%%%%%%%%%%%%%%%%%%%%%%%%%%%%%
\section{Equations satisfied by $(Z^\alpha v,Z^\alpha q,Z^\alpha h)$}\label{sectionconorm}
%%%%%%%%%%%%%%%%%%%%%%%%%%%%%%%%%%%%%%%%%%%%%%%%%%%%%%

%%%%%%%%%%%%%%%%%%%%%%%%%%%%%%%%%%%%%%%%%%%%%%%%%%%%%%
\subsection{A commutator estimate}
%%%%%%%%%%%%%%%%%%%%%%%%%%%%%%%%%%%%%%%%%%%%%%%%%%%%%%

In order to perform higher order conormal estimates, one needs to compute the equations satisfied by $(Z^\alpha v,Z^\alpha q,Z^\alpha h)$ {for $\alpha\in \mathbb{N}^{1+3}$ with $1\le |\alpha|\le m$}, which
requires  to commute $Z^\alpha$ with each term in the equations \eqref{NSv}. It is thus useful to establish the following general expressions and
estimates for commutators to be used often later.

We will not commute $Z^{\alpha}$ with $\D_t $ directly. For $i=  1, \, 2,\, 3$, set
\beq\label{com1}
Z^\alpha \D_{i}f=\D_{i} Z^\alpha f-\D_{z}f \D_{i} Z^\alpha \eta+\mathcal{C}^\alpha_{i}(f),
\eeq
where the commutator $\mathcal{C}^\alpha_{i}(f)$ is given for $\alpha \neq 0$ and $i \neq 3$ by
\beq\label{Cialpha}
\mathcal{C}^\alpha_{i}(f)= \mathcal{C}^\alpha_{i, 1}(f)+ \mathcal{C}^\alpha_{i,2}(f)+\mathcal{C}^\alpha_{i, 3}(f)
\eeq
with
\begin{align}
\label{Cialpha1} \mathcal{C}^{\alpha}_{i,1}&=-\[ Z^\alpha, {\partial_{i} \varphi \over \partial_{z} \varphi }, \partial_{z} f  \], \\
\label{Cialpha2} \mathcal{C}^{\alpha}_{i,2}&=-\partial_{z} f \[ Z^\alpha, \partial_{i} \varphi, {1 \over \partial_{z} \varphi}\]     -
\partial_{i} \varphi\partial_zf\[Z^{\alpha-\alpha'}, { 1 \over (\partial_{z} \varphi)^2}\]Z^{\alpha'}\partial_{z} \eta, \\
\label{Cialpha3} \mathcal{C}^\alpha_{i, 3}&= - {\partial_{i} \varphi \over \partial_{z} \varphi} [Z^\alpha, \partial_{z}]f
 + {\partial_{i} \varphi \over (\partial_{z} \varphi)^2}\, \partial_{z} f\,  [Z^\alpha, \partial_{z}] \eta,
\end{align}
for any $\alpha'<\alpha$ with $|\alpha'|=1$. Note that for $i= 1, \, 2$ ,  $\partial_{i}\varphi=\partial_{i}\eta$ and that for $\alpha \neq 0$,
$Z^\alpha\partial_{z}\varphi= Z^\alpha \partial_{z}\eta$. For $i=3$, similar decomposition for the commutator holds (basically, it suffices to
replace $\partial_{i}\varphi$ by $1$ in the above expressions). Since $\D_{i}$ and $\D_{j}$ commute, it holds that
\beq\label{com122}
Z^\alpha \D_{i} f =\D_{i}(Z^\alpha f-\D_{z}f  Z^\alpha \eta)+\D_{z}\D_{i}f Z^\alpha \eta+\mathcal{C}^\alpha_{i}(f).
\eeq
It was first observed by Alinhac \cite{Alinhac} that the highest order term of $\eta$ (which is difficult to control for the case $\sigma=0$)
will be canceled when one uses the good unknown $Z^\alpha f-\D_{z}fZ^\alpha \eta$, which allows one to perform high order energy estimates.

Since the expressions as $f/\partial_{z} \varphi$ will appear often later, we shall first state a general estimate. It is assumed that
$\partial_{z} \varphi \geq \frac{{c_{0}}}{2}$ and $|h|_{2, \infty} \leq {1 \over c_{0}}$.
\begin{lem}
For every $k\in \mathbb{N}$, it holds that
\beq \label{quot}
\norm{ {f \over \partial_{z} \varphi}}_{\X^k} \leq \Lambda\( {1 \over c_{0}},  \abs{h}_{\Y^{{\[\!\frac{k}{2}\!\]}+1}}  + \norm{ f }_{\Y^{{\[\!\frac{k}{2}\!\]}}}
\)\(\abs{h}_{\X^{k,{1 \over 2}}} +\norm{f}_{\X^k} \).
\eeq
\end{lem}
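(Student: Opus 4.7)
The plan is to treat $1/\partial_z\varphi$ as a smooth function of $\partial_z\eta$ and then apply a tame product estimate against $f$ in the spatial--time conormal spaces. Writing $\partial_z\varphi = 1+\partial_z\eta$, the hypotheses $\partial_z\varphi\ge c_0/2$ and $\abs{h}_{2,\infty}\le 1/c_0$ ensure that $\partial_z\eta$ takes values in a compact subset of $(-1,\infty)$, so the function $G(x):=1/(1+x)$ is smooth with all derivatives uniformly bounded by quantities depending only on $1/c_0$ on the relevant range.

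Next I would apply the tame Moser estimate in $\X^k$,
\begin{equation*}
\norm{fG(\partial_z\eta)}_{\X^k} \lesssim \norm{f}_{\X^k}\norm{G(\partial_z\eta)}_{\Y^{\frac{k}{2}}} + \norm{f}_{\Y^{\frac{k}{2}}}\norm{G(\partial_z\eta)}_{\X^k},
\end{equation*}
together with the composition estimates
\begin{equation*}
\norm{G(\partial_z\eta)}_{\X^k} \le \Lambda\!\left(\tfrac{1}{c_0},\norm{\partial_z\eta}_{\Y^{\frac{k}{2}}}\right)\norm{\partial_z\eta}_{\X^k}, \qquad \norm{G(\partial_z\eta)}_{\Y^{\frac{k}{2}}}\le \Lambda\!\left(\tfrac{1}{c_0},\norm{\partial_z\eta}_{\Y^{\frac{k}{2}}}\right),
\end{equation*}
both of which I would invoke from the general product and composition estimates for the $H^k_{co}$-based spaces (with time derivatives included) collected in Appendix~\ref{aa1}. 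I would then translate $\eta$-norms into $h$-norms by exploiting the Poisson-type extension \eqref{eqeta}. Its smoothing property gives the half-derivative gain on the interior normal derivative,
\begin{equation*}
\norm{\partial_z\eta}_{\X^k} \lesssim \abs{h}_{\X^{k,{1\over 2}}}, \qquad \norm{\partial_z\eta}_{\Y^{\frac{k}{2}}} \lesssim \abs{h}_{\Y^{\frac{k}{2}+1}},
\end{equation*}
quoted from Appendix~\ref{aa2}; the extra tangential derivative in the second bound pays for the $L^\infty$ trace on the boundary via Sobolev embedding in two variables, and is precisely the source of the index $\frac{k}{2}+1$ appearing in the statement.

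The step I expect to be most delicate is the composition estimate for $G(\partial_z\eta)$ in the spatial--time space $\X^k$: the norm mixes the tangential conormal vector fields $Z_i$ with $\partial_t$ (which is not a conormal vector field in space), and one must verify that $\partial_t^\ell$ falling on $\partial_z\eta$ causes no derivative loss. This is ensured by the explicit structure of \eqref{eqeta}, where time enters only through $\hat h(t,\xi)$, so $\partial_t$ commutes with the spatial weights $(1+z/b)e^{A|\xi|z}$ and the resulting time derivatives are absorbed into the boundary norms of $h$. Once the three ingredients are in place, multiplying them together and absorbing the low norms into $\Lambda$ yields the claimed bound.
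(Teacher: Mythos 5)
Your overall strategy---write $\partial_z\varphi=1+\partial_z\eta$, apply the tame product estimate \eqref{gues}, use a Moser-type composition bound, and convert $\eta$-norms into $h$-norms via the Poisson extension estimates---is the paper's strategy. However, the specific composition estimate you invoke, $\norm{G(\partial_z\eta)}_{\X^k}\le\Lambda(\cdots)\norm{\partial_z\eta}_{\X^k}$ with $G(x)=1/(1+x)$, fails, and not for a cosmetic reason. The spaces $\X^k$ are $L^2$-based on the horizontally unbounded slab $\Omega=\mathbb{R}^2\times(-b,0)$, and $G(0)=1\neq 0$: since $\partial_z\eta$ decays at horizontal infinity, $G(\partial_z\eta)$ tends to $1$ and is therefore not square-integrable, so the left-hand side of your composition estimate is $+\infty$ while the right-hand side is finite. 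Moser-type composition bounds in $L^2$-based spaces on unbounded domains require the nonlinearity to vanish at the origin; accordingly, Appendix \ref{aa1} contains only the product and commutator estimates \eqref{gues}--\eqref{comsym} and no such composition lemma.

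The paper avoids this by the identity $\frac{1}{1+x}=1-\frac{x}{1+x}$, i.e. it writes $\frac{f}{\partial_z\varphi}=f-fF(\partial_z\eta)$ with $F(x)=x/(1+x)$, which is smooth and bounded with all derivatives on $1+x\ge c_0$ and satisfies $F(0)=0$. Then $\norm{F(\partial_z\eta)}_{\X^k}\lesssim\Lambda(\tfrac{1}{c_0},\norm{\nabla\eta}_{\Y^{\frac{k}{2}}})\norm{\partial_z\eta}_{\X^k}$ is legitimate (the zeroth-order term is controlled via $F(0)=0$ and the mean value theorem, the higher ones via \eqref{gues}), and the extra term $\norm{f}_{\X^k}$ produced by the identity is absorbed into the right-hand side of \eqref{quot}. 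With this one-line correction the remainder of your argument---the $\Y^{\frac{k}{2}}$ bounds, which are $L^\infty$-based and hence insensitive to $G(0)\neq0$, and the passage from $\eta$ to $h$ via \eqref{etaharm} and \eqref{etainfty}---goes through and reproduces the paper's proof.
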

\begin{proof}
Since $\partial_{z} \varphi= 1+ \partial_{z} \eta$, so
$$ {f \over \partial_{z} \varphi}=   f  -   f {  \partial_{z} \eta \over  1+  \partial_{z} \eta } = f - fF(\partial_{z} \eta),$$
where $F(x)= x/(1+ x)$ is smooth and bounded together with all  its derivatives on $1+x \geq c_{0}>0$ and $F(0)=0$. Consequently, the product
estimate \eqref{gues} implies that
$$\norm{{f \over \partial_{z} \varphi } }_{\X^k} \lesssim  \norm{f   }_{\X^k}+  \norm{ f }_{\X^k}\norm{ F(\partial_{z} \eta)
}_{\Y^{{\[\!\frac{k}{2}\!\]}}}+\norm{ f }_{\Y^{{\[\!\frac{k}{2}\!\]}}} \norm{ F(\partial_{z} \eta) }_{\X^k} .$$
Notice that
$$ \norm{ F(\partial_{z} \eta)}_{\Y^{{\[\!\frac{k}{2}\!\]}}} \le  \Lambda\( {1 \over c_{0}}, \norm{\nabla \eta}_{\Y^{{\[\!\frac{k}{2}\!\]}}}\) ,  $$
and \eqref{gues} implies again,
$$ \norm{F(\partial_{z} \eta) }_{\X^k} \lesssim \Lambda\( {1 \over c_{0}}, \norm{\nabla \eta }_{\Y^{{\[\!\frac{k}{2}\!\]}}}\) \norm{ \partial_{z}
\eta}_{\X^k}.$$
Hence the estimate \eqref{quot} follows from these,  \eqref{etainfty} and \eqref{etaharm}.
\end{proof}

Next lemma deals with the estimates of the commutators $\mathcal{C}^\alpha_{i}(f)$.
\begin{lem}\label{comi}
For $1 \leq | \alpha |\leq m$, $i=   1, \, 2, \, 3$,  it holds that
\beq\label{comiest}
\norm{\mathcal{C}_{i}^\alpha(f) }_{0}     \leq \Lambda\( {1 \over c_{0}}, |h |_{\Y^{{\[\!\frac{m}{2}\!\]}+1}}   + \norm{  \pa_z f }_{\Y^{{\[\!\frac{m}{2}\!\]}}} \)
\(\norm{\pa_z f }_{\X^{m-1}} + \abs{h}_{\X^{m-1,{1 \over 2 }}}\).
\eeq
\end{lem}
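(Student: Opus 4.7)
The plan is to treat the three summands $\mathcal{C}_{i,1}^\alpha,\mathcal{C}_{i,2}^\alpha,\mathcal{C}_{i,3}^\alpha$ separately, using in each case the symmetric trilinear commutator notation \eqref{symcom}, together with the product estimate \eqref{gues}, the quotient estimate \eqref{quot}, and the Poisson-extension bounds for $\eta$ (estimates \eqref{etainfty}, \eqref{etaharm}) that trade $\eta$-norms in $\Omega$ for $h$-norms on $\mathbb{R}^2$ with a loss of $1/2$ derivative. Throughout I will work under the standing assumptions $\partial_z\varphi\ge c_0/2$ and $|h|_{2,\infty}\le 1/c_0$, so that all quotients with $\partial_z\varphi$ are harmless.

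\smallskip
For $\mathcal{C}_{i,1}^\alpha=-[Z^\alpha,\partial_i\varphi/\partial_z\varphi,\partial_zf]$, I would apply the symmetric commutator estimate: expanding $[Z^\alpha,a,b]$ via Leibniz into a sum of $(Z^\beta a)(Z^\gamma b)$ with $|\beta|,|\gamma|\ge 1$ and $|\beta|+|\gamma|=|\alpha|$, then placing the factor of lower conormal order in $L^\infty$ (i.e. in $\Y^{m/2}$) and the higher one in $L^2$ (i.e. in $\X^{m-1}$). This yields
\[
  \norm{\mathcal{C}_{i,1}^\alpha(f)}\lesssim \norm{\tfrac{\partial_i\varphi}{\partial_z\varphi}}_{\Y^{m/2}}\norm{\partial_zf}_{\X^{m-1}}+\norm{\tfrac{\partial_i\varphi}{\partial_z\varphi}}_{\X^{m-1}}\norm{\partial_zf}_{\Y^{m/2}},
\]
and the $\X^{m-1}$/$\Y^{m/2}$ norms of $(\partial_i\varphi)/(\partial_z\varphi)$ are controlled via \eqref{quot} applied to $f=\partial_i\varphi=\partial_i\eta$ (for $i=1,2$; the case $i=3$ reduces, via $\partial_z\varphi=1+\partial_z\eta$, to a pure function of $\partial_z\eta$), followed by the Poisson-extension bound $\|\nabla\eta\|_{\X^{m-1}}\lesssim |h|_{\X^{m-1,1/2}}$ and $\|\nabla\eta\|_{\Y^{m/2}}\lesssim |h|_{\Y^{m/2+1}}$. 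The claimed bound follows.

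\smallskip
For $\mathcal{C}_{i,2}^\alpha$, the two pieces are both of the form (derivative of $f$)$\times$(trilinear symmetric commutator in $\varphi$-quantities), so the same strategy applies: one first factors out $\partial_zf$ in $L^\infty$ (its $L^\infty$ bound being absorbed into $\Lambda$ through $\|\partial_zf\|_{\Y^{m/2}}$), then estimates the symmetric commutator $[Z^\alpha,\partial_i\varphi,1/\partial_z\varphi]$ by Leibniz expansion, and invokes \eqref{quot} and \eqref{etaharm} to reduce everything to $|h|_{\X^{m-1,1/2}}+|h|_{\Y^{m/2+1}}$. The second piece, which already carries a derivative $Z^{\alpha'}\partial_z\eta$, is handled in the same manner; the factor $[Z^{\alpha-\alpha'},1/(\partial_z\varphi)^2]$ loses at most $m-2$ derivatives and pairs against $Z^{\alpha'}\partial_z\eta$, which lies in $L^\infty$.

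\smallskip
The subtle piece is $\mathcal{C}_{i,3}^\alpha$, because $Z_3=z(z+b)\partial_z$ does not commute with $\partial_z$. I would use the identity $[Z_3,\partial_z]=-(2z+b)\partial_z$, which preserves the weighted conormal structure, so that iterating gives $[Z^\alpha,\partial_z]f=\sum_{|\beta|\le|\alpha|-1}c_\beta(z)Z^\beta\partial_zf$ with bounded coefficients $c_\beta$ that are smooth in $z\in[-b,0]$. Thus $[Z^\alpha,\partial_z]f$ is directly bounded in $L^2$ by $\|\partial_zf\|_{\X^{m-1}}$; similarly $\|[Z^\alpha,\partial_z]\eta\|\lesssim \|\partial_z\eta\|_{\X^{m-1}}\lesssim |h|_{\X^{m-1,1/2}}$ by \eqref{etaharm}. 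Multiplying by the $L^\infty$ factor $\partial_i\varphi/\partial_z\varphi$ (respectively by the $L^\infty$ factors $\partial_i\varphi\partial_zf/(\partial_z\varphi)^2$) and collecting the constants into $\Lambda(1/c_0,|h|_{\Y^{m/2+1}}+\|\partial_zf\|_{\Y^{m/2}})$ finishes this piece.

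\smallskip
The main obstacle I anticipate is bookkeeping the Leibniz expansion of the symmetric commutator so as to avoid having both factors carry a maximal number of derivatives; this is the standard tame-estimate trick, and the specific gain of $1/2$ derivative (the harmonic extension) needs to be tracked carefully so the final bound is in $|h|_{\X^{m-1,1/2}}$ rather than $|h|_{\X^{m-1,1}}$. Summing the three contributions then yields \eqref{comiest}.
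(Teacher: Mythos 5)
Your proposal follows essentially the same route as the paper: the symmetric commutator estimate \eqref{comsym} (together with \eqref{com} for the non-symmetric piece of $\mathcal{C}_{i,2}^\alpha$), combined with the quotient bound \eqref{quot}, the Poisson-extension estimates \eqref{etaharm} and \eqref{etainfty}, and the identity \eqref{idcom} for $[Z^\alpha,\partial_z]$. The one loose spot is your claim that $Z^{\alpha'}\partial_z\eta$ always sits in $L^\infty$ in the second piece of $\mathcal{C}_{i,2}^\alpha$: when the Leibniz expansion of $[Z^{\alpha-\alpha'},1/(\partial_z\varphi)^2]$ puts most derivatives on that factor it must instead go in $L^2$ (yielding $\abs{h}_{\X^{m-1,\frac{1}{2}}}$), which is exactly what \eqref{com} provides, so the conclusion is unaffected.
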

\begin{proof}
We only present the proof for $i=1, \, 2$, and the last case is similar and slightly easier.

First, for $\mathcal{C}_{i,1}^\alpha$, it follows from  the commutator estimate \eqref{comsym} that
$$\norm{  \mathcal{C}^\alpha_{i,1} }_{0}  \ls  \norm{  Z \({\partial_{i }  \varphi \over \partial_{z} \varphi } \)
}_{\Y^{\[\!\frac{m}{2}\!\]-1}}\norm{Z\partial_{z} f }_{\X^{m-2}} +   \norm{ Z \partial_{z} f }_{\Y^{{\[\!\frac{m}{2}\!\]-1}}} \norm{Z\({\partial_{i }  \varphi \over
\partial_{z} \varphi } \) }_{\X^{m-2}}.$$
Consequently,  \eqref{quot} yields that
$$\norm{  \mathcal{C}^\alpha_{i,1} }_{0}  \leq  \Lambda\({1\over c_{0}},\norm{\nabla \varphi}_{\Y^{\[\!\frac{m}{2}\!\]}} + \abs{h}_{\Y^{{\[\!\frac{m+1}{2}\!\]}}} +
\norm{  \partial_{z} f }_{\Y^{{\[\!\frac{m}{2}\!\]}}}\)\( \norm{ \partial_{z} f }_{\X^{m-1}}+\abs{h}_{\X^{m-1,{1 \over 2 }}} +\norm{ \partial_{i}\eta
}_{\X^{m-1}} \).$$
It then follows from \eqref{eqphi}, \eqref{etainfty} and \eqref{etaharm} that
\beq\label{Calpha1}
\norm{\mathcal{C}^\alpha_{i,1} }_{0}  \leq \Lambda\( {1 \over c_{0}}, \abs{h }_{\Y^{{\[\!\frac{m}{2}\!\]}+1}}+\norm{\pa_z f }_{\Y^{{\[\!\frac{m}{2}\!\]}}}\) \(\norm{\pa_z
f }_{\X^{m-1}} + \abs{h}_{\X^{m-1,{1 \over 2 }}}\).
\eeq

Next, for the first term in $\mathcal{C}_{i,2}^\alpha$, one can use similar arguments: \eqref{comsym}
   and \eqref{eqphi} yield  that
$$\norm{ \partial_{z} f \[ Z^\alpha, \partial_{i} \varphi, {1 \over \partial_{z} \varphi}\] }_{0}
\ls  \norm{ \partial_{z} f }_{L^\infty} \( \norm{Z\partial_{i }   \varphi }_{\X^{m-2}} \norm{  {Z\pa_z\eta \over (\partial_{z} \varphi)^2}
}_{\Y^{{\[\!\frac{m}{2}\!\]-1}}}+  \norm{  Z  \partial_{i }   \varphi  }_{\Y^{\[\!\frac{m}{2}\!\]-1}}\norm{ {Z\pa_z\eta \over (\partial_{z} \varphi)^2}}_{\X^{m-2}}
\)
$$
and hence by using \eqref{quot}, \eqref{etaharm} and \eqref{etainfty}, one can show that
$$ \norm{ \partial_{z} f \[ Z^\alpha, \partial_{i} \varphi, {1 \over \partial_{z} \varphi}\] }_{0}
  \le    \Lambda\( {1 \over c_{0}}, \abs{h }_{\Y^{{\[\!\frac{m}{2}\!\]}+1}}   + \norm{   \pa_z f }_{L^\infty} \)\abs{h}_{\X^{m-1,{1 \over 2 }}}.$$
By \eqref{com} instead of \eqref{comsym}, the same estimate holds for the second term in $\mathcal{C}_{i, 2}^\alpha$. Hence,
\beq\label{Calpha2}
\norm{\mathcal{C}^\alpha_{i, 2} }_{0}
\leq\Lambda\( {1 \over c_{0}}, \abs{h }_{\Y^{{\[\!\frac{m}{2}\!\]}+1}}   + \norm{   \pa_z f }_{L^\infty} \)\abs{h}_{\X^{m-1,{1 \over 2 }}}.
\eeq

It remains to estimate $\mathcal{C}_{i, 3}^\alpha$. Notice that
\beq\label{idcom}
\[Z^\alpha, \partial_{z}\] f= \sum_{| \beta | \leq m-1} c_{\beta } \partial_{z} ( Z^\beta f)
\eeq
for some harmless smooth bounded functions $c_{\beta}.$ This yields, by using again \eqref{etaharm},
\begin{align}\label{Calpha3}
\norm{ \mathcal{C}^\alpha_{i, 3}}_{0} &\leq \Lambda\( {1 \over  c_{0}}, \norm{\partial_{i} \varphi}_{L^\infty}+\norm{\partial_{z} f }_{L^\infty}\)\(
\norm{\partial_{z} f }_{\X^{m-1}} + \norm{\partial_{z} \eta }_{\X^{m-1}} \)
\\\nonumber&\leq \Lambda\( {1 \over  c_{0}}, \abs{h}_{1,\infty} + \norm{\pa_z f }_{L^\infty}\)\( \norm{\pa_z f }_{\X^{m-1}}+   \abs{h}_{\X^{m-1
,\hal}}\).
\end{align}

Consequently, the estimate \eqref{comiest} follows by collecting \eqref{Calpha1}, \eqref{Calpha2} and \eqref{Calpha3}.
\end{proof}

%%%%%%%%%%%%%%%%%%%%%%%%%%%%%%%%%%%%%%%%%%%%%%%%%%%%%%
\subsection{Interior equations}
%%%%%%%%%%%%%%%%%%%%%%%%%%%%%%%%%%%%%%%%%%%%%%%%%%%%%%

We shall now derive the equations  in the domain $\Omega$ satisfied by the good unknowns $V^\alpha = Z^\alpha v - \D_{z}v\, Z^\alpha \eta  $ and
$Q^\alpha = Z^\alpha q - \D_{z}q\, Z^\alpha \eta.$
  \begin{lem}
  \label{lemValpha}
  For $1 \leq | \alpha | \leq m$, it holds that
   \begin{eqnarray}
  \label{eqValpha} & &  \D_{t} V^\alpha + v \cdot \nabla^\varphi V^\alpha + \nabla^\varphi Q^\alpha- 2 \eps \nabla^\varphi \cdot S^\varphi
  V^\alpha
     \\
    \nonumber   & &  \quad  = (\D_{z}v  \cdot \nabla^\varphi v)
        Z^\alpha \eta-\mathcal{C}^\alpha(\mathcal{T}) - \mathcal{C}^\alpha(q)+\eps  \mathcal{D^\alpha}\big( S^\varphi v \big) + \eps
        \nabla^\varphi \cdot \big( \mathcal{E}^\alpha (v)\big)
       , \\
   \label{divValpha}
     & &    \nabla^\varphi \cdot V^\alpha =- \mathcal{C}^\alpha (d),
       \end{eqnarray}
       where the commutators $\mathcal{C}^\alpha(q)$, $\mathcal{C}^\alpha(d)$, $\mathcal{E}^\alpha(v)$ and $\mathcal{C}^\alpha (\mathcal{T})$
       satisfy the estimates:
       \begin{align}
       \label{Cq} &\norm{\mathcal{C}^\alpha (q)}_{0}  \leq \Lambda\( {1 \over c_{0}}, \abs{h }_{\Y^{{\[\!\frac{m}{2}\!\]}+1}}   + \norm{  \pa_z q
       }_{\Y^{{\[\!\frac{m}{2}\!\]}}} \) \( \norm{ \pa_z q }_{\X^{m-1}} + |h|_{\X^{m-1,{1 \over 2 }}}\), \\
        \label{Cd} &  \norm{\mathcal{C}^\alpha (d)}_{0}  \leq \Lambda\( {1 \over c_{0}}, |h |_{\Y^{{\[\!\frac{m}{2}\!\]}+1}}   +\norm{   \pa_z v
        }_{\Y^{{\[\!\frac{m}{2}\!\]}}} \) \( \norm{ \pa_z v }_{\X^{m-1}} + |h|_{\X^{m-1,{1 \over 2 }}}\),    \\
  \label{CE} &\norm{\mathcal{E}^\alpha(v)}_{0} \leq\Lambda\( {1 \over c_{0}}, |h |_{\Y^{{\[\!\frac{m}{2}\!\]}+1}}   + \norm{  \pa_z v }_{\Y^{{\[\!\frac{m}{2}\!\]}}} \)
   \( \norm{ \pa_z v}_{\X^{m-1}} + |h|_{\X^{m-1,{1 \over 2 }}}\),
  \\\label{CT}  &\norm{ \mathcal{C}^\alpha (\mathcal{T}) }_{0}   \leq \Lambda \( {1 \over c_{0}}, |h|_{\Y^{{\[\!\frac{m}{2}\!\]}+1}} +  \norm{   v
  }_{\Y^{{\[\!\frac{m}{2}\!\]}}}  +  \norm{  \pa_z v }_{\Y^{{\[\!\frac{m}{2}\!\]}}}   \)  \( \norm{v}_{\X^{m-1}}+\norm{ \pa_z v}_{\X^{m-1}}+
  \abs{h}_{\X^{m,-\hal}}\),
 \end{align}
and $\mathcal{D}^\alpha (S^\varphi v ) $ is given by
$$\mathcal{D}^\alpha\big( S^\varphi v \big)_{ij}= 2 \,  \mathcal{C}_{j}^\alpha \big( S^\varphi v)_{ij}. $$

  \end{lem}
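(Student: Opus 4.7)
The plan is to apply $Z^\alpha$ term by term to the first two equations of \eqref{NSv} and invoke the commutator identity \eqref{com122} to rewrite each $Z^\alpha \D_i f$ as
\[
Z^\alpha \D_i f = \D_i\bigl(Z^\alpha f - \D_z f\, Z^\alpha \eta\bigr) + \D_z \D_i f \, Z^\alpha \eta + \mathcal{C}_i^\alpha(f).
\]
Used with $f=q$, this gives $\nabla^\varphi Q^\alpha$ plus $\mathcal{C}^\alpha(q):=\sum_i \mathcal{C}^\alpha_i(q)$, whose middle pieces $\D_z \D_i q \, Z^\alpha \eta$ will cancel with the analogous middle pieces coming from commuting $Z^\alpha$ with $\D_t v$ and $v \cdot \nabla^\varphi v$. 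The surviving factor is precisely $(\D_z v \cdot \nabla^\varphi v) Z^\alpha \eta$ on the right of \eqref{eqValpha}; this is the Alinhac cancellation which replaces an apparent loss of $1/2$ derivative on $\eta$ by a term of the correct regularity. The same identity applied to $\nabla^\varphi \cdot v=0$ gives $\nabla^\varphi \cdot V^\alpha = -\mathcal{C}^\alpha(d)$, where $\mathcal{C}^\alpha(d)=\sum_i \mathcal{C}^\alpha_i(v_i)$ (the middle pieces again vanish by incompressibility). For the viscous term, two successive uses of \eqref{com122} on $\nabla^\varphi \cdot S^\varphi v$ separate the good-unknown piece $\nabla^\varphi \cdot S^\varphi V^\alpha$ from an outer divergence-form remainder $\eps \nabla^\varphi \cdot \mathcal{E}^\alpha(v)$ and an inner first-derivative remainder $\eps \mathcal{D}^\alpha(S^\varphi v)$ with components $2\mathcal{C}_j^\alpha((S^\varphi v)_{ij})$.

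Given the equations, the bounds \eqref{Cq}, \eqref{Cd}, \eqref{CE} follow immediately from Lemma~\ref{comi} applied with $f=q$, $f=v_i$, and $f$ equal to each entry of $S^\varphi v$ respectively, since $\pa_z (S^\varphi v)$ is controlled by $\pa_z v$ (up to tangential derivatives of $v$, which are themselves absorbed by $\norm{\pa_z v}_{\X^{m-1}}$ together with $|h|_{\X^{m-1,1/2}}$ via the quotient estimate \eqref{quot}). The additional factors of $\pa_z v,\pa_z q$ appearing in the $\Lambda$ argument are exactly those produced by \eqref{comiest}, and the $\Y^{m/2}$ norms on the data are created by the product estimate in $\mathcal{C}^\alpha_{i,1}$ and by the $L^\infty$ norms estimated in $\mathcal{C}^\alpha_{i,2}$ and $\mathcal{C}^\alpha_{i,3}$.

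The main obstacle is the transport commutator $\mathcal{C}^\alpha(\mathcal{T})$, which collects the commutators generated by $\D_t + v\cdot\nabla^\varphi$ and for which \eqref{CT} demands the \emph{weaker} norm $|h|_{\X^{m,-1/2}}$ rather than $|h|_{\X^{m-1,1/2}}$. Two features require care. First, $\D_t = \pa_t - (\pa_t\varphi/\pa_z\varphi)\pa_z$ contains $\pa_t \varphi = \pa_t\eta$, so commuting $Z^\alpha$ with $\D_t$ produces terms where up to $m$ time derivatives fall on $\eta$ and cannot be controlled in $H^{1/2}$ by $|h|_{\X^{m-1,1/2}}$ alone. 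The remedy is to use the kinematic boundary condition $\pa_t h = v\cdot\N$ together with the Poisson-type extension \eqref{eqeta} to trade one $\pa_t$ on $\eta$ for a spatial derivative of $v$; this produces the $\norm{v}_{\X^{m-1}}$ term in \eqref{CT} and is responsible for the $\X^{m,-1/2}$ rather than $\X^{m-1,1/2}$ regularity required of $h$. Second, the convection $v\cdot\nabla^\varphi v$ creates a symmetric commutator $[Z^\alpha, v_i, \D_i v]$; this is handled exactly as in Lemma~\ref{comi}, now with the $\Y^{m/2}$ norms of $v$ and $\pa_z v$ entering the $\Lambda$ factor. Combining these two treatments with the spatial-commutator bounds of \eqref{comiest} yields \eqref{CT}, completing the proof.
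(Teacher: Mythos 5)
Your derivation of \eqref{eqValpha}--\eqref{divValpha} and of the bounds \eqref{Cq}--\eqref{CE} follows the paper's route: apply $Z^\alpha$, use \eqref{com122} so that the ``middle'' terms $\D_z(\cdot)\,Z^\alpha\eta$ recombine into $\D_z$ of the full equation (which vanishes), leaving only $(\D_z v\cdot\nabla^\varphi v)Z^\alpha\eta$ from the transport part; the divergence identity and the two-step decomposition of the viscous term are likewise as in the paper, and \eqref{Cq}, \eqref{Cd}, \eqref{CE} are indeed direct applications of Lemma \ref{comi}.

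Your treatment of $\mathcal{C}^\alpha(\mathcal{T})$, however, does not hold together as written. First, the paper deliberately does \emph{not} commute $Z^\alpha$ with $\D_t$ term by term (the identity \eqref{com122} is only set up for $i=1,2,3$); it first rewrites $\D_t+v\cdot\nabla^\varphi=\pa_t+v_y\cdot\nabla_y+V_z\pa_z$ with $V_z=(v\cdot\N-\pa_t\eta)/\pa_z\varphi$, and only then commutes, producing the commutators listed in \eqref{calphat}. Second, and more importantly, your proposed ``remedy'' of invoking the kinematic boundary condition to trade $\pa_t\eta$ for $v$ is neither what the paper does nor consistent with your own accounting: if you really replaced $\pa_t h$ by $v\cdot\N$ you would be left with only $m-1$ time derivatives of $h$ and would not need the norm $\abs{h}_{\X^{m,-\hal}}$ at all, yet you simultaneously claim this step ``is responsible for'' that norm. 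The actual mechanism is simpler: the terms in \eqref{calphat} carrying up to $m-1$ extra derivatives of $\pa_t\eta$ are estimated directly through the extension bound \eqref{etaharm}, giving $\norm{Z\pa_t\eta}_{\X^{m-2}}\ls\abs{\pa_t h}_{\X^{m-1,-\hal}}\le\abs{h}_{\X^{m,-\hal}}$ --- i.e.\ the $m$-th time derivative of $h$ is simply accepted, but measured at the $H^{-1/2}$ level, which is exactly the budget \eqref{CT} allows. The $\norm{v}_{\X^{m-1}}$ contribution comes from $[Z^\alpha,v_y]\pa_y v$ and from the $v\cdot\N$ part of $v_z$ (see \eqref{Zpetitvz}--\eqref{Zvzm-2prov}), not from any exchange of $\pa_t\eta$ for $v$. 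A version of your trade could perhaps be forced through with the trace estimates and Lemma \ref{-1/2b}, but it would introduce boundary norms of $v$ that then need converting back to interior conormal norms; as stated, your argument for \eqref{CT} misidentifies the source of both terms on its right-hand side and should be replaced by the direct estimate of $V_z$ and $v_z$.
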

  It should be noted  that the commutator $\mathcal{D}^\alpha(S^\varphi v)$ will be estimated later by using the integration by parts.
  \begin{proof}
First, the equations  \eqref{eqValpha}--\eqref{divValpha}  follows from applying $Z^\alpha$ to the equations \eqref{NSv}. Indeed,  \eqref{com122}
implies that
  \beq
  \label{comp1}
   Z^\alpha \nabla^\varphi q= \nabla^\varphi Q^\alpha  +\partial_{z}^\varphi \nabla^\varphi q   Z^\alpha \eta
   + \mathcal{C}^\alpha(q),
   \eeq
   where $  \mathcal{C}^\alpha(q)= \( \mathcal{C}_{1}^\alpha(q) ,\mathcal{C}_{2}^\alpha (q) ,
   \mathcal{C}_{3}^\alpha(q)\)^t$, and
\beq
\label{div1}
Z^\alpha  \nabla^\varphi \cdot v  = \nabla^\varphi\cdot V^\alpha + \partial_{z}^\varphi \nabla^\varphi\cdot v Z^\alpha \eta
 + \mathcal{C}^\alpha(d),
\eeq
where
$ \mathcal{C}^\alpha(d) =  \sum_{i=1}^3 \mathcal{C}^\alpha_{i}(v_i).$

Next, note that
   \beq
   \label{transportW} \D_{t} + v \cdot \nabla^\varphi
   =  \partial_{t}+ v_{y} \cdot \nabla_{y}v +  V_{z} \partial_{z},\eeq
 where $V_{z}$ is defined by
   \beq
   \label{Wdef}
    V_{z}= {1\over \partial_{z} \varphi} v_{z}\ \text{ with }v_z=    v\cdot \N- \partial_{t} \varphi =  v\cdot\N- \partial_{t} \eta .
   \eeq
By using \eqref{transportW}, one can thus get that
\begin{align}  \label{T1}    Z^\alpha \( \D_{t} + v \cdot \nabla^\varphi   \)v
&    =   \(\partial_{t}+ v_{y} \cdot \nabla_{y} +  V_{z} \partial_{z} \) Z^\alpha v + \big(v\cdot Z^\alpha \N-
\partial_{t} Z^\alpha \eta \big) \D_{z} v
\\\nonumber&\quad- \D_{z} Z^\alpha \eta \big( v \cdot \N - \partial_{t} \eta\big)
 \D_{z}v
+ \mathcal{C}^\alpha(\mathcal{T}) \\
\nonumber & =  \big( \D_{t}+ v \cdot \nabla^\varphi\big)Z^\alpha v - \D_{z}v \big( \D_{t}+ v \cdot \nabla^\varphi) Z^\alpha \eta
 +  \mathcal{C}^\alpha(\mathcal{T})
 \\
\nonumber & =  \big( \D_{t}+ v \cdot \nabla^\varphi\big)V^\alpha + \D_{z} \( \D_{t}+ v \cdot \nabla^\varphi\)v Z^\alpha \eta
-\D_{z} v \cdot \nabla^\varphi v Z^\alpha \eta
 +  \mathcal{C}^\alpha(\mathcal{T}),
 \end{align}
 where the commutator $\mathcal{C}^\alpha(\mathcal{T})$ is defined by
 \begin{align}\label{calphat}
 \mathcal{C}^\alpha(\mathcal{T})= &  \[Z^\alpha, v_{y}\]\partial_{y} v+ \[Z^\alpha, V_{z},\partial_{z}v\]+
   \[Z^\alpha, v_{z}, {1 \over \partial_{z} \varphi}\] \partial_{z} v+ {1 \over \partial_{z}\varphi} \[Z^\alpha, v
   \]\cdot\N\partial_{z}v\\\nonumber
       & +v_z\partial_zv\[Z^{\alpha-\alpha'}, { 1 \over (\partial_{z} \varphi)^2}\]Z^{\alpha'}\partial_{z} \eta+ V_{z} \[Z^\alpha,
       \partial_{z}\]v+{ v_z\partial_zv \over (\partial_{z} \varphi)^2}\[Z^{\alpha},\pa_z\]  \eta
 \end{align}
 for any $\alpha'<\alpha$ with $|\alpha'|=1$.

  It remains to  compute $ \eps Z^\alpha \Delta^\varphi v=  2\eps Z^\alpha \nabla^\varphi \cdot  (S^\varphi v).$ Note that
   $$ 2  Z^\alpha \nabla^\varphi \cdot  (S^\varphi v) =  2 \nabla^\varphi \cdot \big( Z^\alpha \, S^\varphi v \big) - 2 \big( \D_{z}\, S^\varphi
   v \big) \nabla^\varphi
     ( Z^\alpha \eta) + \mathcal{D^\alpha}\big( S^\varphi v \big)$$
with $ \mathcal{D}^\alpha\big( S^\varphi v \big)_{i}= 2 \,  \mathcal{C}_{j}^\alpha \big( S^\varphi v)_{ij},$ and
    \beq
    \label{comS} 2 Z^\alpha  \big( S^\varphi v \big) =  2 S^\varphi \big( Z^\alpha v \big) - \D_{z} v \otimes \nabla^\varphi Z^\alpha \eta
     - \nabla^\varphi Z^\alpha \eta
  \otimes \D_{z} v  + \mathcal{E}^\alpha(v)\eeq
     with $\big( \mathcal{E}^\alpha (v)\big)_{ij}= \mathcal{C}^\alpha_{i}(v_{j})+ \mathcal{C}^\alpha_{j}(v_{i}).$
Hence one deduces that
     \begin{align}
     \label{deltaexp}
       \eps Z^\alpha \Delta^\varphi v  & = 2 \, \eps\, \nabla^\varphi \cdot  S^\varphi(Z^\alpha v )  - 2 \eps \nabla^\varphi \cdot \Big(  \D_{z}
       v \otimes \nabla^\varphi Z^\alpha \eta
     - \nabla^\varphi Z^\alpha \eta  \otimes \D_{z} v \Big)\\
    \nonumber      &\quad -
      2  \eps \big( \D_{z}\, S^\varphi v \big) \nabla^\varphi
     ( Z^\alpha \eta)  +   \eps  \mathcal{D^\alpha}\big( S^\varphi v \big) + \eps  \nabla^\varphi \cdot \big( \mathcal{E}^\alpha( v)\big)
     \\
    \nonumber      & =2 \, \eps\, \nabla^\varphi \cdot  S^\varphi V^\alpha -
      2  \eps \big( \D_{z} \nabla^\varphi\cdot S^\varphi v \big)
      Z^\alpha \eta   +   \eps  \mathcal{D^\alpha}\big( S^\varphi v \big) + \eps  \nabla^\varphi \cdot \big( \mathcal{E}^\alpha(
      v)\big).\end{align}
Consequently, these and  \eqref{NSv}  imply \eqref{eqValpha}--\eqref{divValpha}.

Now we estimate these commutators. First, thanks to Lemma \ref{comi},  the estimates \eqref{Cq}--\eqref{CE} hold.
To estimate the  commutator $\mathcal{C}^\alpha(\mathcal{T})$ defined by \eqref{calphat}, one needs to bound $v_{z}$ and  $V_{z}$. It follows
from  \eqref{etainfty} that
 \begin{align}\label{vz1}
 \norm{v_{z}}_{\Y^{{\[\!\frac{m}{2}\!\]}}} +\norm{V_{z}}_{\Y^{{\[\!\frac{m}{2}\!\]}}}  &\leq \Lambda\({1 \over c_{0}},  \norm{v }_{ \Y^{{\[\!\frac{m}{2}\!\]}}}+  \norm{\nabla
 \eta }_{ \Y^{{\[\!\frac{m}{2}\!\]}}} + \norm{  \partial_{t} \eta }_{ \Y^{{\[\!\frac{m}{2}\!\]}}} \)\nonumber
 \\&\leq \Lambda\({1 \over c_{0}},  \norm{v }_{ \Y^{{\[\!\frac{m}{2}\!\]}}}+
 \abs{  h }_{ \Y^{{\[\!\frac{m}{2}\!\]}+1}} \).\end{align}
And  \eqref{gues}, \eqref{etaharm}, and \eqref{etainfty} yield that
     \begin{align}
 \label{Zpetitvz}
      \norm{Z v_{z}}_{\X^{m-2}} &  \lesssim   \norm{Z \partial_{t} \eta }_{\X^{m-2}}+ \(1+  \norm{\nabla \eta}_{\Y^{{\[\!\frac{m}{2}\!\]-1}}}
      \)\norm{v}_{\X^{m-1}} + \norm{v}_{\Y^{{\[\!\frac{m}{2}\!\]-1}}} \norm{\nabla \eta }_{\X^{m-1}}\\
   \nonumber & \le
       \Lambda\( \norm{v}_{\Y^{{\[\!\frac{m}{2}\!\]-1}}} +  \abs{h}_{\Y^{{\[\!\frac{m}{2}\!\]}}}   \) \(\norm{v}_{\X^{m-1}} +\abs{h }_{\X^{m ,-\hal}} \).\end{align}
With \eqref{Zpetitvz} and \eqref{vz1} in hand, by using \eqref{gues}, \eqref{quot}, \eqref{etaharm} and  \eqref{etainfty}, one can obtain that
\begin{align}\label{Zvzm-2prov}
\norm{ Z V_{z}}_{\X^{m-2}} &\lesssim \norm{Z\( {1\over \partial_{z} \varphi} \)  v_{z}}_{\X^{m-2}} +  \norm{{1\over \partial_{z} \varphi} Zv_{z}
}_{\X^{m-2}}
\\\nonumber&\leq \Lambda\({1 \over c_{0}},  \norm{v }_{ \Y^{{\[\!\frac{m}{2}\!\]}}}+  \abs{  h }_{ \Y^{{\[\!\frac{m}{2}\!\]}+1}} \)\(\norm{v}_{\X^{m-1}} +\abs{h
}_{\X^{m ,-\hal}} \).
\end{align}
Consequently, one may use \eqref{com}, \eqref{comsym}, \eqref{idcom} and \eqref{quot} combined with \eqref{vz1}--\eqref{Zvzm-2prov}
and also again \eqref{etaharm}, \eqref{etainfty}, similarly as in the proof of Lemma \ref{comi}, to conclude the estimate \eqref{CT}.
  \end{proof}

%%%%%%%%%%%%%%%%%%%%%%%%%%%%%%%%%%%%%%%%%%%%%%%%%%%%%%
\subsection{Boundary conditions}
%%%%%%%%%%%%%%%%%%%%%%%%%%%%%%%%%%%%%%%%%%%%%%%%%%%%%%

We shall now also compute the boundary conditions satisfied   by $(Z^\alpha v,Z^\alpha q,Z^\alpha h)$ when $\alpha_{3} = 0$ (for $\alpha_{3}\neq
0$,
$Z^\alpha v= 0$ on the boundary). As a preliminary, one has the following.

\begin{lem} \label{lembord}
For $k\in \mathbb{N}$:
\beq\label{dzvb}
\abs{\nabla v }_{\X^{k }}   \leq    \Lambda\( {1 \over c_{0}}, \abs{h }_{\Y^{{\[\!\frac{k}{2}\!\]}+1}}   + \norm{  \nabla v }_{\Y^{{\[\!\frac{k}{2}\!\]}}} \)  \(
\abs{  v }_{\X^{k, 1}}+\abs{h }_{\X^{k, 1}} \)
\eeq
and
\beq\label{dzvb2}
\abs{\nabla v }_{\X^{k,s}}\le\Lambda\( {1 \over c_{0}}, \abs{h }_{\Y^{{\[\!\frac{k}{2}\!\]}+2}}+\norm{\nabla v}_{\Y^{{\[\!\frac{k}{2}\!\]}+1}}\)\(\abs{v
}_{\X^{k,s+1}}+\abs{h }_{\X^{k, s+1}} \)\text{ for }s=-\hal,\hal.
\eeq
\end{lem}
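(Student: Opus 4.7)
The approach is to split $\nabla v$ restricted to the boundary into tangential and normal components. The tangential components $\partial_i v$ ($i=1,2$) are horizontal derivatives of the boundary trace of $v$, so for any $\ell\le k$ one has $|\partial_t^\ell\partial_i v|_{k+s-\ell}\le|\partial_t^\ell v|_{k+s+1-\ell}$, yielding the trivial bound $|\partial_i v|_{\X^{k,s}}\le |v|_{\X^{k,s+1}}$ for $i=1,2$. All of the work will concentrate on reconstructing the normal derivative $\partial_z v$ on the two components of $\partial\Omega$ from the structural boundary conditions in \eqref{NSv}.

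On the bottom boundary $\{z=-b\}$ the situation is cleanest: $v_3=0$ yields $\partial_i v_3=0$ tangentially, so the Navier slip condition $(S^\varphi v\,e_3)_i=\kappa v_i$ reduces to $\partial_z v_i=2\kappa\,\partial_z\varphi\,v_i$, and the divergence-free identity (combined with $\partial_i\varphi|_{z=-b}=0$) gives $\partial_z v_3=-\partial_z\varphi(\partial_1 v_1+\partial_2 v_2)$. On the upper boundary $\{z=0\}$ I will use the tangential projection $\Pi$ of the dynamic boundary condition, which gives $\Pi(S^\varphi v\,\mathbf{n})=0$, i.e. two scalar relations among $\partial_z v_1,\partial_z v_2,\partial_z v_3$, the tangential derivatives of $v$, and smooth functions of $\nabla h$. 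Combined with the divergence-free identity these form a $3\times 3$ linear system whose coefficient matrix is uniformly invertible under $\partial_z\varphi\ge c_0/2$. Solving produces a pointwise representation $\partial_z v\big|_{\partial\Omega}= F(\nabla h)\cdot\nabla_y v\big|_{\partial\Omega}+\kappa\,G(\nabla h)\cdot v\big|_{\{z=-b\}}$ with smooth matrix-valued functions $F,G$.

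With this algebraic representation, the estimate reduces to a tame product estimate on $\mathbb{R}^2$. For the integer case \eqref{dzvb} I will apply the product inequality \eqref{gues} from Appendix \ref{aa1} in each slot $H^{k-\ell}(\mathbb{R}^2)$, distributing derivatives between $F(\nabla h)$ and $\nabla_y v$ and placing the lower-regularity factor in $L^\infty$. A Moser-type composition estimate combined with \eqref{etainfty}--\eqref{etaharm} then controls $|F(\nabla h)|_{\X^k}$ by $\Lambda(c_0^{-1},|h|_{\Y^{\frac{k}{2}+1}})(1+|h|_{\X^{k,1}})$, while the $L^\infty$ side collects $|h|_{\Y^{\frac{k}{2}+1}}$ and $\|\nabla v\|_{\Y^{\frac{k}{2}}}$, yielding \eqref{dzvb}.

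For $s=\pm\hal$ in \eqref{dzvb2} the same scheme applies, but the tame product inequality in the fractional space $H^{k+s-\ell}(\mathbb{R}^2)$ costs one extra order of $L^\infty$ control on the lower-regularity factor, which explains the gain from $\Y^{\frac{k}{2}+1},\Y^{\frac{k}{2}}$ to $\Y^{\frac{k}{2}+2},\Y^{\frac{k}{2}+1}$ in the $\Lambda$ argument. The hard case will be $s=-\hal$, where the right-hand side involves the borderline $|v|_{\X^{k,\hal}}$; here the product estimate for $F(\nabla h)\cdot\nabla_y v$ must not introduce a spurious half-derivative loss on $h$. I plan to handle this by treating the $H^{-\hal}$ pairing via duality against $H^{\hal}$ and a dyadic decomposition in the horizontal Fourier variable, isolating the top-frequency block of $\nabla_y v$ so that the resulting coefficient $\Lambda$ remains independent of $\eps$ and $\sig$ as required.
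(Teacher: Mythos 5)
Your proposal follows essentially the same route as the paper: the tangential derivatives are trivially bounded by $\abs{v}_{\X^{k,s+1}}$, the normal component $\partial_z v\cdot\n$ is recovered from $\nabla^\varphi\cdot v=0$ (cf. \eqref{eqdzvn}), the tangential part $\Pi\partial_z v$ is recovered from $\Pi(S^\varphi v\,\n-\kappa\chi v)=0$ via the algebraic identity \eqref{eqdzvPi'}, and the result then follows from the tame product estimates \eqref{gues} and \eqref{gues2} together with Lemma \ref{propeta} and the trace estimate. The only cosmetic difference is that the paper does not need your dyadic/duality argument for $s=-\hal$: the required fractional product bound is already packaged in \eqref{gues2}, whose proof via \eqref{cont2D} rests on exactly the interpolation–duality mechanism you describe.
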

\begin{proof}
Note that it suffices to prove the estimates for $\pa_z v$. Since $\nabla^\varphi \cdot  v=0$, thus
\beq\label{eqdzvn}
\partial_{z} \varphi \(\partial_{1} v_{1}+ \partial_{2} v_{2}\)+\partial_{z} v \cdot  \N=0.
\eeq
Then for $s=-\hal,\hal$,  \eqref{gues2} implies that
\begin{align}\label{dzvnb1}
\abs{\partial_{z}v \cdot \n}_{\X^{k,s}} &\le  \Lambda\( {1 \over c_{0}}, \abs{\nabla \eta}_{\Y^{{\[\!\frac{k}{2}\!\]}+1}}   + \norm{  \nabla_y v
}_{\Y^{{\[\!\frac{k}{2}\!\]}+1}} \)
 \(\abs{v}_{\X^{k,s+1}}+  \abs{\nabla \eta}_{\X^{k,s}}\)
 \\\nonumber&\le   \Lambda\( {1 \over c_{0}}, \abs{h}_{\Y^{{\[\!\frac{k}{2}\!\]}+2}}   + \norm{  \nabla_y v }_{\Y^{{\[\!\frac{k}{2}\!\]}+1}} \)
 \(\abs{v}_{\X^{k,s+1}}+  \abs{h}_{\X^{k,s+1}}\).
 \end{align}
where the second inequality follows from Lemma \ref{propeta} and the trace estimate \eqref{trace}.

To bound $\Pi \partial_{z}v $, we shall use the boundary conditions in \eqref{NSv} which yield
\beq \label{bordpi}
\Pi\(S^\varphi v \n-\kappa\chi v\)=0.
\eeq
To compute $\Pi \(S^\varphi v\n\)$, one can use the local basis $(\partial_{y^1}, \partial_{y^2}, \partial_{y^3})$ in $\Omega_{t}$ induced by
\eqref{diff}. The induced riemannian metric is given by $g_{ij}= \partial_{y^i}\cdot \partial_{y^j}$, whose inverse denoted by $g^{ij}$. It
follows
from the definition and \eqref{vdef} that $(\partial_{y^i}u)(t,  \Phi(t, \cdot))= \partial_{i}v.$ Hence,
\beq\label{Su}
2 Su \n=\n\cdot \nabla u+\nabla u_k\n_k= \partial_{\n}u + g^{ij} \partial_{y^j} u \cdot \n  \partial_{y^i}= \partial_{\n}u + g^{ij}  \partial_{j}
v \cdot \n  \partial_{y^i}.
\eeq
Note also that
\beq\label{eqdzvPi0}
\partial_{\n}u   =\frac{\N}{\abs{\N}}\cdot  \nabla^\varphi v
= {\abs{\N} \over \partial_{z} \varphi} \partial_{z}v- \partial_{1} \varphi \partial_{1} v - \partial_{2}\varphi \partial_{2} v,
\eeq
one then gets from \eqref{bordpi} that
\beq\label{eqdzvPi'}
\Pi \partial_{z}v={\partial_{z} \varphi \over \abs{\N}}\(\partial_{1}\varphi\Pi \partial_{1} v + \partial_{2}\varphi \Pi \partial_{2} v-g^{ij}
\partial_{j}v \cdot \n \Pi \partial_{y^i}-\kappa\chi\Pi v \).
\eeq
Hence, using \eqref{gues2} again shows that
\begin{align}\label{dzvnb2}
\abs{\Pi \partial_{z}v }_{\X^{k,s}} &\le  \Lambda\( {1 \over c_{0}}, \abs{\nabla \varphi}_{\Y^{{\[\!\frac{k}{2}\!\]}+1}}   + \norm{  \nabla  v
}_{\Y^{{\[\!\frac{k}{2}\!\]}+1}} \)
 \(\abs{v}_{\X^{k,s+1}}+  \abs{h}_{\X^{k,s+1}}+  \abs{\partial_{z}v \cdot \n}_{\X^{k,s}} \)
 \\\nonumber&\le   \Lambda\( {1 \over c_{0}}, \abs{h}_{\Y^{{\[\!\frac{k}{2}\!\]}+2}}   + \norm{  \nabla  v }_{\Y^{{\[\!\frac{k}{2}\!\]}+1}} \)
 \(\abs{v}_{\X^{k,s+1}}+  \abs{h}_{\X^{k,s+1}}\).
 \end{align}
where in the second inequality \eqref{dzvnb1} has been used.

Consequently, we conclude the estimate \eqref{dzvb2} by combining \eqref{dzvnb1} and \eqref{dzvnb2}. And \eqref{dzvb} follows similarly by using
\eqref{gues}
instead of \eqref{gues2}.
   \end{proof}

We now study the dynamic boundary condition on $\{z=0\}$ and the Navier slip boundary condition on $\{z=-b\}$.
  \begin{lem}
  \label{lembordV}
  For $1 \leq |\alpha | \leq m$ such that $\alpha_{3}= 0$, it holds that on $\{z=0\}$
    \begin{align}\label{bordV}
  &2 \eps S^\varphi V^\alpha\, \N  -\( Z^\alpha q  -  g  Z^\alpha h+\sigma Z^\alpha H \)\N
   \\ \nonumber&\quad= - 2 \eps S^\varphi v \Pi Z^\alpha \N  - 2 \eps Z^\alpha h \D_{z}\big( S^\varphi v \big)\N+\eps
   \mathcal{C}^\alpha(\mathcal{B}_\eps) ,
  \end{align}
  where the commutator $\mathcal{C}^\alpha(\mathcal{B}_\eps)$ satisfies the estimate:
  \beq
  \label{CalphaB}
 \abs{ \mathcal{C}^\alpha(\mathcal{B}_\eps) }_{0} \leq    \Lambda\( {1 \over c_{0}}, \abs{ h }_{\Y^{{\[\!\frac{m}{2}\!\]}+1}}   + \norm{  \nabla v
 }_{\Y^{{\[\!\frac{m}{2}\!\]}}} \) \( \abs{  v }_{\X^{m-1,1}} + \abs{ h}_{\X^{m-1,1}}\)
    .
  \eeq
Similarly, on $\{z=-b\}$ one has that
  \beq\label{bord222}
  V^\alpha_3 = 0,\quad (S^\varphi V^\alpha e_3)_i=  \kappa V^\alpha_i-\mathcal{E}^\alpha(v)_{i3},\ i=1,2,  \eeq
  and
\beq\label{eeest}   \abs{\mathcal{E}^\alpha(v) }_{0} \leq  \Lambda\( {1 \over c_{0}},  \abs{h}_{\Y^{{\[\!\frac{m}{2}\!\]}+1}}   +\norm{  \nabla v
}_{\Y^{{\[\!\frac{m}{2}\!\]}}} \) \( \abs{ v }_{\X^{m-1,1}} +  \abs{h}_{\X^{m-1,1}}\).\eeq
  \end{lem}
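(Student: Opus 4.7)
The proof will proceed by applying $Z^\alpha$ directly to the dynamic and Navier boundary conditions of \eqref{NSv}, invoking the Alinhac-type identity \eqref{comS} to introduce the good unknown $S^\varphi V^\alpha$, and bounding the remainders with the tools already developed (Lemma \ref{comi}, Lemma \ref{lembord}, and the trace estimate). Since $\alpha_3=0$, the vector field $Z^\alpha$ is purely tangential and so commutes with restriction to $\{z=0\}$ and to $\{z=-b\}$; note also that $\eta(t,y,0)=h(t,y)$ gives $Z^\alpha\eta|_{z=0}=Z^\alpha h$, whereas $\eta(t,y,-b)=0$ gives $Z^\alpha\eta|_{z=-b}=0$.

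For the upper boundary, I rewrite the dynamic condition (after multiplying by $|\N|$) as $(q-gh+\sigma H)\N=2\eps\,S^\varphi v\,\N$ on $\{z=0\}$, and expand $Z^\alpha$ of each side by Leibniz, separating the $\beta=\alpha$ and $\beta=0$ contributions:
\begin{align*}
(Z^\alpha q-gZ^\alpha h+\sigma Z^\alpha H)\N + (q-gh+\sigma H)\,Z^\alpha\N + {\sum}_1
= 2\eps\,Z^\alpha(S^\varphi v)\,\N + 2\eps\,(S^\varphi v)\,Z^\alpha\N + {\sum}_2,
\end{align*}
where $\sum_1,\sum_2$ are the cross terms with $0<|\beta|<|\alpha|$. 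I then use \eqref{comS}, together with the fact that $\partial_z^\varphi$ commutes with $\nabla^\varphi$, to substitute
$2\eps Z^\alpha(S^\varphi v)\N = 2\eps S^\varphi V^\alpha\N + 2\eps Z^\alpha h\,\D_z(S^\varphi v)\N + \eps\mathcal{E}^\alpha(v)\N$. Rearranging produces the desired terms $2\eps S^\varphi V^\alpha\N-(Z^\alpha q-gZ^\alpha h+\sigma Z^\alpha H)\N$ on the left and $-2\eps Z^\alpha h\,\D_z(S^\varphi v)\N$ on the right. To extract the $-2\eps S^\varphi v\,\Pi Z^\alpha\N$ term I split $Z^\alpha\N=\Pi Z^\alpha\N+(\n\cdot Z^\alpha\N)\n$; the projected piece gives the stated term, while the remaining normal piece combines with $(q-gh+\sigma H)\,Z^\alpha\N$. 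Applying the scalar form of the BC, namely $(q-gh+\sigma H)|_{z=0}=2\eps\,S^\varphi v\,\n\cdot\n$, shows that this combination is genuinely of order $\eps$. Similarly, $Z^\beta(q-gh+\sigma H)$ in $\sum_1$ is of order $\eps$ by the same identity applied to $Z^\beta$. Thus all residual terms — the leftover $\n$-part, $-\eps\mathcal{E}^\alpha(v)\N$, and both Leibniz sums — assemble into $\eps\mathcal{C}^\alpha(\mathcal{B}_\eps)$.

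The bound \eqref{CalphaB} follows by estimating each constituent term on $\{z=0\}$: the product/commutator estimates \eqref{gues2}, \eqref{com}, \eqref{comsym}, the interior commutator bound of Lemma \ref{comi} composed with the trace inequality \eqref{trace}, and Lemma \ref{lembord} for controlling $\nabla v$ on the boundary, together give a bound by $\Lambda(1/c_0,|h|_{\Y^{m/2+1}}+\|\nabla v\|_{\Y^{m/2}})\,(|v|_{\X^{m-1,1}}+|h|_{\X^{m-1,1}})$; note that the Leibniz cross-terms involve at most $m-1$ conormal derivatives on any single factor, which is exactly the regularity budget on the right of \eqref{CalphaB}.

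For the lower boundary, I use $Z^\alpha\eta|_{z=-b}=0$, which immediately gives $V^\alpha=Z^\alpha v$ on $\{z=-b\}$, hence $V^\alpha_3=Z^\alpha v_3=0$. For the tangential component I apply $Z^\alpha$ to $(S^\varphi v\,e_3)_i=\kappa v_i$ and invoke \eqref{comS} at $z=-b$: since $Z^\alpha\eta$ vanishes there, the horizontal components of $\nabla^\varphi Z^\alpha\eta$ vanish as well, and the surviving tensor $\D_z v\otimes\nabla^\varphi Z^\alpha\eta+\nabla^\varphi Z^\alpha\eta\otimes\D_z v$, when contracted with $e_3$ and read in the tangential components $i=1,2$, simplifies using incompressibility ($\partial_z^\varphi v_3=0$ on $\{z=-b\}$) to yield exactly the claimed relation $(S^\varphi V^\alpha e_3)_i=\kappa V^\alpha_i-\mathcal{E}^\alpha(v)_{i3}$. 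The estimate \eqref{eeest} is obtained by the same combination of Lemma \ref{comi}, the trace inequality, and Lemma \ref{lembord} as for \eqref{CalphaB}. The main obstacle in the upper-boundary case is the bookkeeping after the $\Pi$-splitting: one must verify that every residual term genuinely carries a factor of $\eps$, which is where the scalar form of the dynamic BC is essential — without it, the $(q-gh+\sigma H)Z^\alpha\N$ contribution would be of order one and the identity \eqref{bordV} would fail.
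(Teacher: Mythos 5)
Your proposal takes essentially the same route as the paper: apply $Z^\alpha$ to the boundary conditions, use the Alinhac identity \eqref{comS} to pass to $S^\varphi V^\alpha$ (picking up the $-2\eps Z^\alpha h\,\D_z(S^\varphi v)\N$ term from $S^\varphi(\D_z v\,Z^\alpha\eta)$), and — this is the crux, which you identify correctly — use the scalar form of the dynamic condition, $q-gh+\sigma H=2\eps\, S^\varphi v\,\n\cdot\n$ on $\{z=0\}$, to see that the zeroth-order and cross Leibniz terms all carry a factor of $\eps$. Two remarks. First, your extraction of $-2\eps S^\varphi v\,\Pi Z^\alpha\N$ by splitting $Z^\alpha\N=\Pi Z^\alpha\N+(\n\cdot Z^\alpha\N)\n$ is a slight variant of the paper's bookkeeping, which instead rewrites the matrix $2\eps S^\varphi v-(q-gh+\sigma H)I$ as $2\eps\bigl(S^\varphi v-(S^\varphi v\,\n\cdot\n)I\bigr)$ and calls this $2\eps\,S^\varphi v\,\Pi$; the two versions of the leading term differ by $2\eps\bigl[(S^\varphi v\,\n\cdot\n)Z^\alpha\N-(\n\cdot Z^\alpha\N)S^\varphi v\,\n\bigr]$, which is of the same controllable type, so either convention works. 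Second, on $\{z=-b\}$ your justification is not right: incompressibility gives $\pa_z^\varphi v_3=-(\pa_1v_1+\pa_2v_2)\neq 0$ there, so the identity you invoke is false; the term $(\nabla^\varphi Z^\alpha\eta)_i\,\D_z v_3$ vanishes simply because the horizontal components of $\nabla^\varphi Z^\alpha\eta$ vanish (which you had already observed), while the remaining piece $\D_z v_i\,\D_z Z^\alpha\eta/\pa_z\varphi$ does \emph{not} vanish, since the normal derivative of the extension $\eta$ is nonzero at $z=-b$. That leftover must be absorbed into the remainder; it is harmless (the exponential smoothing of $\hat\eta$ in \eqref{eqeta} makes it controllable by the right-hand side of \eqref{eeest}, and it only enters multiplied by $\eps$ in the energy identity), but your claim that \eqref{bord222} comes out ``exactly'' is not justified as written.
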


 \begin{proof}
 Applying $Z^\alpha$ to the dynamic boundary condition and using \eqref{comS}, one gets
 \begin{align}
 \label{bordV1}
  \nonumber&\eps \( 2  S^\varphi \( Z^\alpha v \)  -  \D_{z} v \otimes \nabla^\varphi Z^\alpha \varphi
     -  \nabla^\varphi Z^\alpha v \otimes \D_{z} v   + \eps \mathcal{E}^\alpha(v) \) \N- \( Z^\alpha q - g Z^\alpha h+\sigma Z^\alpha H\) \N
     \\&\quad =-
     \( 2 \eps S^\varphi v - (q- gh+\sigma H )I \) Z^\alpha \N   - \[Z^\alpha,2\eps S^\varphi v-(q-gh+\sigma H)I, \N\].\nonumber
     \\&\quad =- 2 \eps\(  S^\varphi v - S^\varphi v \n \cdot \n I \) Z^\alpha \N   - 2\eps \[Z^\alpha, S^\varphi v-S^\varphi v \n \cdot \n I,
     \N\]
     \\&\quad\equiv- 2 \eps  S^\varphi v \Pi  Z^\alpha \N   - 2\eps \[Z^\alpha,   S^\varphi v \Pi  , \N\].\nonumber\end{align}
This yields \eqref{bordV} with the commutator $\mathcal{C}^\alpha(\mathcal{B}_\eps)$ defined by
   \beq
 \mathcal{C}^\alpha(\mathcal{B}_\eps) =- \mathcal{E}^\alpha(v)  \N- 2  \[Z^\alpha,   S^\varphi v \Pi  , \N\].
  \eeq
Similarly, applying $Z^\alpha$ to the Navier slip boundary condition and noting that $V^\alpha=Z^\alpha v$, one shows \eqref{bord222}.

Now, it follows from \eqref{comsym}, \eqref{etainfty} and Lemma \ref{lembord} that
   \begin{align}\label{CBe1}
  \abs{\[Z^\alpha,   S^\varphi v \Pi  , \N\]}_{0}
  &  \lesssim  \abs{ Z(   S^\varphi v \Pi  )}_{\X^{m-2}} \abs{Z \N}_{\Y^{{\[\!\frac{m}{2}\!\]-1}}}+ \abs{Z(   S^\varphi v \Pi  )}_{\Y^{{\[\!\frac{m}{2}\!\]-1}}}
  \abs{Z \N}_{\X^{m-2}}
    \\\nonumber& \le \Lambda\( {1 \over c_{0}}, \norm{  \nabla \eta}_{\Y^{{\[\!\frac{m}{2}\!\]}}}   + \norm{  \nabla v }_{\Y^{{\[\!\frac{m}{2}\!\]}}} \) \(
    \abs{\nabla v }_{\X^{m-1}} + \abs{h}_{\X^{m-1,1}}\)
    \\\nonumber&   \le \Lambda\( {1 \over c_{0}}, \abs{h }_{\Y^{{\[\!\frac{m}{2}\!\]}+1}}   + \norm{  \nabla v }_{\Y^{{\[\!\frac{m}{2}\!\]}}} \) \(  \abs{  v
    }_{\X^{m-1,1}}  + |h|_{\X^{m-1,1}}\).
      \end{align}
On the other hand, following the proof of Lemma \ref{comi} and using again Lemma \ref{lembord}, one has
\begin{align}\label{CBe2}\abs{\mathcal{E}^\alpha(v)}_{0}& \leq  \Lambda\( {1 \over c_{0}}, \norm{\nabla\eta}_{\Y^{{\[\!\frac{m}{2}\!\]}}}   +\norm{  \nabla
v }_{\Y^{{\[\!\frac{m}{2}\!\]}}} \) \(  \abs{\nabla v}_{\X^{m-1}} +  \abs{\nabla \eta}_{\X^{m-1}}\)
 \\\nonumber&\leq  \Lambda\( {1 \over c_{0}},  \abs{h}_{\Y^{{\[\!\frac{m}{2}\!\]}+1}}   +\norm{  \nabla v }_{\Y^{{\[\!\frac{m}{2}\!\]}}} \) \( \abs{ v }_{\X^{m-1,1}}
 +  \abs{h}_{\X^{m-1,1}}\).
 \end{align}
Consequently, the estimates \eqref{eeest} and \eqref{CalphaB} follows.
 \end{proof}

Finally, we study the kinematic boundary condition on $\{z=0\}$.
\begin{lem}
\label{lembordh}
  For $1 \leq |\alpha | \leq m$ such that $\alpha_{3}= 0$, it holds that on $\{z=0\}$
  \beq
  \label{bordh}
  \partial_{t} Z^\alpha h + v_y \cdot\nabla_y Z^\alpha h - V^\alpha \cdot \N =-\D_zv\cdot\N Z^\alpha h+\mathcal{C}^\alpha(h)
  \eeq
  where the commutator $\mathcal{C}^\alpha(h)$ satisfies
  \beq
  \label{bordhC}
 \abs{ \mathcal{C}^\alpha(h)}_0\leq \Lambda\( {1 \over c_{0}},      \abs{h}_{\Y^{{\[\!\frac{m}{2}\!\]}+1}} + \norm{v}_{\Y^{{\[\!\frac{m}{2}\!\]}}}\) \(
 \abs{h}_{\X^{m-1,1}} +\abs{ v}_{\X^{m-1}}
\) .
\eeq
Moreover,
\beq\label{chtilde}
\mathcal{C}^\alpha(h)= \sum_{|\alpha'|=1}C_{\alpha}^{\alpha'}Z^{\alpha-\alpha'} v_y\cdot \nabla_{y} Z^{ \alpha'}h +
\tilde{\mathcal{C}}^\alpha(h),
\eeq
 and $\tilde{\mathcal{C}}^\alpha(h)$  satisfies the estimate:
  \beq
  \label{bordhC1}
 \abs{ \tilde{\mathcal{C}}^\alpha(h)}_1\leq \Lambda\( {1 \over c_{0}},      \abs{h}_{\Y^{{\[\!\frac{m}{2}\!\]}+2}} +  \abs{v}_{\Y^{{\[\!\frac{m}{2}\!\]}+1}}\) \(
 \abs{h}_{\X^{m-1,2}} +\abs{ v}_{\X^{m-2,1}}
\) .
\eeq
\end{lem}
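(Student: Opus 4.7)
The plan is to derive \eqref{bordh} directly by applying $Z^\alpha$ (which for $\alpha_3=0$ consists only of $\partial_t$ and tangential vector fields) to the kinematic boundary condition $\partial_t h = v\cdot\N$, and then to repackage the result using the Alinhac good unknown identity on the boundary. Specifically, I rewrite the boundary condition as $\partial_{t} h + v_y\cdot\nabla_y h = v_3$, apply $Z^\alpha$ (which commutes with restriction to $z=0$ since $\alpha_3=0$) to obtain $\partial_{t}Z^\alpha h + Z^\alpha(v_y\cdot\nabla_y h) = Z^\alpha v_3$, and then use Leibniz to extract the top-order transport term:
\begin{equation*}
\partial_{t}Z^\alpha h + v_y\cdot\nabla_y Z^\alpha h = Z^\alpha v_3 - \sum_{\beta+\gamma=\alpha,\,|\beta|\ge 1} C_\alpha^\beta\, Z^\beta v_y\cdot\nabla_y Z^\gamma h.
\end{equation*}
Since $Z^\alpha\eta|_{z=0}=Z^\alpha h$, the good-unknown identity gives $Z^\alpha v = V^\alpha + \D_z v\, Z^\alpha h$ on $\{z=0\}$, so that $Z^\alpha v_3 = V^\alpha\cdot\N + V^\alpha_y\cdot\nabla_y h + \D_z v\cdot\N\,Z^\alpha h$. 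Substituting and absorbing the term corresponding to $\beta=\alpha$ into $Z^\alpha v_y\cdot\nabla_y h$ yields the stated formula \eqref{bordh}, with $\mathcal{C}^\alpha(h)$ consisting of all the remaining lower-order Leibniz commutators $Z^\beta v_y\cdot\nabla_y Z^{\alpha-\beta}h$ with $1\le |\beta|\le |\alpha|-1$.

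For the estimate \eqref{bordhC}, I will apply the boundary product rules (the $|\cdot|_s$ analogues of \eqref{gues}--\eqref{gues2} stated in Appendix \ref{aa1}) term by term to $\mathcal{C}^\alpha(h)$ on $\{z=0\}$. Roughly, each commutator piece $Z^\beta v_y\cdot\nabla_y Z^{\alpha-\beta}h$ is split in $L^2(\R^2)$ by putting the factor with fewer derivatives in an $L^\infty$-based $\Y$-norm and the factor with more derivatives in the appropriate $\X$-norm; tracking the maximum number of time derivatives forces the pairing $\norm{\cdot}_{\Y^{m/2}}\times\norm{\cdot}_{\X^{m-1}}$, and the spatial weights combine to the claimed $\X^{m-1,1}$ regularity on $h$ and $\X^{m-1}$ regularity on $v$. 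The extra factor $\D_z v\cdot\N\,Z^\alpha h$ (implicit from absorbing terms) is trivially controlled by $\abs{h}_{\X^{m-1,1}}$ multiplied by the $\Y$-norm of $\D_z v$. Collecting everything yields the $\Lambda$-bound \eqref{bordhC}.

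For the decomposition \eqref{chtilde} and the refined bound \eqref{bordhC1}, I isolate the $|\beta|=1$ pieces (which carry only one derivative on $v_y$ but $|\alpha|-1$ derivatives on $\nabla_y h$) and lump the rest into $\tilde{\mathcal{C}}^\alpha(h)$. Since every summand in $\tilde{\mathcal{C}}^\alpha(h)$ satisfies $|\beta|\ge 2$, each factor has room for an additional tangential derivative in the product estimates, and reapplying the same $\X/\Y$ pairing — now with one extra tangential derivative landing on either factor — gives $\tilde{\mathcal{C}}^\alpha(h)\in H^1(\R^2)$ with norm controlled by $\Lambda\big(\tfrac1{c_0},\abs{h}_{\Y^{m/2+2}}+\abs{v}_{\Y^{m/2+1}}\big)(\abs{h}_{\X^{m-1,2}}+\abs{v}_{\X^{m-2,1}})$, which is \eqref{bordhC1}.

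The main obstacle is bookkeeping: one must verify carefully in every case that no commutator term lost in the absorption step requires $|\alpha|$ derivatives on $v_y$ with a tangential derivative on top, since such a term would fall outside the $\X^{m-1}$ norm permitted for $v$ on the boundary. This is exactly why the $|\beta|=1$ terms must be singled out in \eqref{chtilde} before a $H^1$ estimate is attempted — those terms cannot be absorbed into an $H^1$ bound by $\abs{v}_{\X^{m-2,1}}$ because they leave $Z^{\alpha-1}v_y$ to be differentiated once more, so they must remain outside $\tilde{\mathcal{C}}^\alpha(h)$.
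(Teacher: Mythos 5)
Your derivation of \eqref{bordh} and of the $L^2$ bound \eqref{bordhC} follows the paper's route: apply $Z^\alpha$ to $\partial_t h=v\cdot\N$, recognize that the entire commutator is the symmetric commutator $\mathcal{C}^\alpha(h)=-\[Z^\alpha,v_y,\nabla_y h\]$, substitute $Z^\alpha v=V^\alpha+\D_z v\,Z^\alpha h$ on $\{z=0\}$, and invoke \eqref{comsym}. Two cosmetic remarks: your intermediate identity for $Z^\alpha v_3$ should carry $\D_z v_3\,Z^\alpha h$ rather than $\D_z v\cdot\N\,Z^\alpha h$ --- the missing piece $-\D_z v_y\cdot\nabla_y h\,Z^\alpha h$ only appears after you absorb the $\beta=\alpha$ Leibniz term, as you then do; and the derivation produces $+\D_z v\cdot\N\,Z^\alpha h$ on the right of \eqref{bordh}, the minus sign in the statement being a typo that is immaterial for the later estimates.

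The genuine problem is in the decomposition \eqref{chtilde}. You say you ``isolate the $|\beta|=1$ pieces (which carry only one derivative on $v_y$ but $|\alpha|-1$ derivatives on $\nabla_y h$).'' That is the opposite of what \eqref{chtilde} does and of what the $H^1$ estimate requires: the terms singled out in \eqref{chtilde} are $Z^{\alpha-\alpha'}v_y\cdot\nabla_y Z^{\alpha'}h$ with $|\alpha'|=1$, i.e.\ the terms carrying $|\alpha|-1$ (up to $m-1$) derivatives on $v_y$ and only two derivatives on $h$. These are precisely the terms that must be \emph{excluded} from $\tilde{\mathcal{C}}^\alpha(h)$: one more tangential derivative on $Z^{\alpha-\alpha'}v_y$ produces $m$ derivatives of $v$ on the boundary, which would need $\abs{v}_{\X^{m-1,1}}$ and is not controlled by the $\abs{v}_{\X^{m-2,1}}$ appearing in \eqref{bordhC1}. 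With your decomposition these terms sit inside $\tilde{\mathcal{C}}^\alpha(h)$ and the claimed $H^1$ bound fails, while the harmless terms with one derivative on $v_y$ and many on $h$ (which are perfectly controlled in $H^1$ by $\abs{h}_{\X^{m-1,2}}$) are pointlessly removed. Your closing paragraph in fact identifies the correct obstruction --- the terms that ``leave $Z^{\alpha-1}v_y$ to be differentiated once more'' --- but attaches it to the wrong index: those are the $|\beta|=|\alpha|-1$ terms, not the $|\beta|=1$ terms. Once the labels are swapped, so that $\tilde{\mathcal{C}}^\alpha(h)=-\sum_{\beta+\gamma=\alpha,\,\beta\neq0,\,|\gamma|\ge2}C_{\beta,\gamma}\,Z^{\beta}v_y\cdot\nabla_y Z^{\gamma}h$ and hence $|\beta|\le m-2$ throughout, your product-estimate argument for \eqref{bordhC1} goes through.
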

\begin{proof}
Applying $Z^\alpha$ to the kinematic boundary condition yields
\beq
  \partial_{t} Z^\alpha h + v_y \cdot\nabla_y Z^\alpha h -Z^\alpha v \cdot \N=    {\mathcal{C}}^\alpha(h)
  \eeq
where
\beq
{\mathcal{C}}^\alpha(h)=- \[ Z^\alpha, v_{y}, \nabla_{y}h\].
  \eeq
This yields \eqref{bordh}, and the estimate \eqref{bordhC} follows from \eqref{comsym}.
One may further single out the highest order derivative terms according to \eqref{chtilde}, where $\tilde{\mathcal{C}}^\alpha(h)$ is defined by
\beq\label{tildech} \tilde{\mathcal{C}}^\alpha(h) = -\sum_{\beta+\gamma=\alpha\atop \beta\neq0,|\gamma|\ge 2}C_{\beta,\gamma} Z^{\beta} v_y\cdot
\nabla_{y} Z^{ \gamma}h  .\eeq
And the estimate \eqref{bordhC1} follows by using \eqref{gues}.
\end{proof}

%%%%%%%%%%%%%%%%%%%%%%%%%%%%%%%%%%%%%%%%%%%%%%%%%%%%%%
\section{Pressure estimates}\label{sectionpressure}
%%%%%%%%%%%%%%%%%%%%%%%%%%%%%%%%%%%%%%%%%%%%%%%%%%%%%%

In view of the equation \eqref{eqValpha}, one needs to estimate the pressure $q$. The first equation in \eqref{NSv} implies that
\beq\label{eqq} \Delta^\varphi q = - \nabla^\varphi \cdot (v\cdot \nabla^\varphi v) \text{ in }\Omega.\eeq
 Moreover, the dynamic boundary condition gives
\beq\label{boundary0d} q  =  2 \eps S^\varphi v  \,{\bf n} \cdot \,{\bf n} + gh-\sigma H \text{ on }\{z=0\}.
\eeq
Projecting the first equation in \eqref{NSv} along $\N$ onto $\{z=0\}$ and $\{z=-b\}$ yields
\beq\label{boundary0}\nabla^\varphi q\cdot\N = -\dt v\cdot \N-(v_y\cdot\nabla_y) v\cdot \N+\eps   \Delta^\varphi v \cdot\N \text{ on }\{z=0\}
\eeq
and
\beq\label{boundaryb}\nabla^\varphi q\cdot\N =  \eps   \Delta^\varphi v \cdot\N \text{ on }\{z=-b\}.\eeq
Here in \eqref{boundaryb} one has used the fact that $\N=e_3$ and $v_3=0$ on $\{z=-b\}$.

Note that to solve the pressure, one has two choices of boundary conditions on $\{z=0\}$, $i.e.,$ \eqref{boundary0d} and \eqref{boundary0}.
Without surface tension, one can use the elliptic problem \eqref{eqq}, \eqref{boundary0d} and \eqref{boundaryb} to establish the regularity
estimates for $q$. The subtlety lies in that the energy dissipation estimates of \eqref{NSv} in the case without surface tension  provide the
needed
estimates for those boundary terms. When there is surface tension, however, the energy dissipation estimates do not provide enough
estimates for the boundary term $ -\sigma H$ (which is of one half regularity less). This would suggest that the elliptic problem \eqref{eqq},
\eqref{boundary0d} and \eqref{boundaryb} is not the right choice for estimating the pressure $q$ in the case with surface tension. Our way to get
around this difficulty is to use instead the elliptic problem \eqref{eqq}, \eqref{boundary0} and \eqref{boundaryb}. It is then noticed that this
approach forces one to estimate the time derivatives of $v$, that is, one needs to perform energy estimates for the time derivatives of the
solution.
However, there is an essential difficulty arising: when doing energy estimates with time derivatives up to $m$ order,  we can only obtain the
estimates of time derivatives of $q$ up to $m-1$ order due to the presence of $\dt v$ in \eqref{boundary0}. Thus the energy estimates cannot be
closed since it seems that $m$ order time derivative of $q$ is involved. We will explain this and our way to overcome it in more details in
Section
\ref{secconormal2}.

It follows from the definition of $\partial_i^\varphi$ that
 \beq
 \label{graddiv} \nabla^\varphi \cdot v= {1 \over \partial_{z}\varphi}  \nabla\cdot \big( P v \big) , \quad  \nabla^\varphi f= {1 \over
 \partial_{z} \varphi} P^* \nabla f,
  \quad P= \left( \begin{array}{ccc}  \partial_{z} \varphi & 0 & 0 \\ 0 & \partial_{z} \varphi & 0 \\ -\partial_{1} \varphi  & - \partial_{2}
  \varphi & 1 \end{array}
   \right).\eeq
And then $\Delta^\varphi$ can be expressed as
\beq
\label{deltaphi} \Delta^{\varphi}f = \nabla^\varphi \cdot (\nabla^\varphi f) = {1 \over \partial_{z} \varphi} \nabla \cdot \big( E \nabla f
\big)\eeq
 with the matrix $E$ defined by
 $$ E =  \frac1{\partial_{z} \varphi} PP^* \equiv \left( \begin{array}{ccc} \partial_{z} \varphi & 0 & {- \partial_{1} \varphi} \\
  0 & \partial_{z} \varphi & -\partial_{2} \varphi  \\ -\partial_{1} \varphi & -\partial_{2} \varphi &
   {1 + (\partial_{1} \varphi)^2} + (\partial_{2} \varphi)^2 \over \partial_{z} \varphi \end{array} \right).
  $$
   Note that $E$ is symmetric positive and that if
   $ \| \nabla_y \varphi \|_{L^\infty} \leq { 1 \over c_{0}} $ and
 $  \partial_{z} \varphi \geq c_{0}>0$ then
   there exists  $\delta(c_{0})>0$ such that
   \beq
   \label{Eminor}
   E X \cdot X \geq \delta |X|^2, \quad \forall X \in \mathbb{R}^3.
   \eeq
   Moreover,
   \beq
   \label{ELinfty}
\norm{ E}_{\tilde{\Y}^{k}} \leq \Lambda\({1 \over c_{0}}, \abs{h}_{\Y^{k+1}}\).
   \eeq
One can write
   $$ E= \mbox{Id}  + \tilde{E}, \quad
\tilde  E= \left( \begin{array}{ccc}  \partial_{z}\eta & 0 & {- \partial_{1} \eta} \\
  0 &  \partial_{z}\eta  & -\partial_{2} \eta  \\ -\partial_{1} \eta & -\partial_{2} \eta &
   { (\partial_{1} \eta)^2} + (\partial_{2} \eta)^2  - \partial_{z} \eta  \over \partial_{z} \varphi \end{array} \right), $$
where
 \beq
 \label{EHs} \norm{ \tilde{E}}_{\tilde{\X}^{k}} \leq \Lambda\({1 \over c_{0}}, \abs{h}_{\Y^{{\[\!\frac{k}{2}\!\]}+1}} \) \abs{h}_{\X^{k,\hal}} .
 \eeq
Here $\tilde{\X}^k$ and $\tilde{\Y}^k$ are referred to the usual {\it spatial-time} Sobolev spaces as defined similarly as \eqref{xym}.

Since $ \N=  P^\ast e_3$ on $\{z=0,-b\}$, the equations \eqref{eqq}--\eqref{boundaryb} can be rewritten as
 \begin{alignat}{3}
 \label{eqq12} &- \nabla \cdot \big( E \nabla q\big) =F:=  \partial_{z}\varphi \nabla^\varphi v \cdot \nabla^\varphi v  &&\quad\text{in
 }\Omega,\\
\label{boundary0d12}& q  =G^1:=  2 \eps S^\varphi v  \,{\bf n} \cdot \,{\bf n} + gh-\sigma H &&\quad\text{on }\{z=0\},
\\\label{boundary012}&{E\nabla q\cdot e_3} =G^2:= -\dt v\cdot \N-(v_y\cdot\nabla_y) v\cdot \N+\eps   \Delta^\varphi v \cdot\N &&\quad\text{on
}\{z=0\},
\\\label{boundaryb12}&{E\nabla q\cdot e_3} =G^3:= \eps   \Delta^\varphi v \cdot\N &&\quad\text{on }\{z=-b\}.
\end{alignat}

We shall now prove the estimates for the pressure $q$.
  \begin{prop}\label{proppressure}
The following estimates hold:
    \begin{align}\label{qpressureest}
  &    \norm{   q }_{\X^{k}}+\norm{ \nabla q }_{\X^{k}}+\norm{ \pa_{zz} q}_{\X^{k-1}}
  \\\nonumber&  \quad\leq \Lambda\( {1 \over c_{0}},   \abs{h}_{\Y^{{\[\!\frac{k}{2}\!\]}+3}} +\abs{h}_{{\X^{{{\[\!\frac{k+7}{2}\!\]}}}}}  +\norm{  v
  }_{\Y^{{\[\!\frac{k+3}{2}\!\]}}}  +\norm{\nabla v}_{\Y^{{\[\!\frac{k}{2}\!\]}+2}} +\norm{v}_{\X^{{\[\!\frac{k+7}{2}\!\]}}}  + \norm{\nabla v}_{\X^{{\[\!\frac{k+5}{2}\!\]}}}  \)
  \\\nonumber&\qquad\times\(\abs{h}_{\X^{k+1,-{1 \over 2 }}}+\sigma \abs{h}_{\X^{k,2}}+\norm{v}_{\X^{k+1}}  + \norm{\nabla v}_{\X^{k}} +
  \eps\abs{  v }_{{\X^{{k}, \frac{3}{2}}}}+\eps\abs{h}_{{\X^{{k},\frac{3}{2}}}}\), \text{ for } k\ge 3;
  \end{align}
\begin{align} \label{qinfty}
&\norm{   q }_{\Y^{k}}+\norm{ \nabla q }_{\Y^{k}}+ \norm{ \pa_{zz} q }_{\Y^{k-1}}
\\\nonumber&\quad\leq \Lambda\( {1 \over c_{0}},   \abs{h}_{\Y^{{\[\!\frac{k}{2}\!\]}+4}}  +\abs{h}_{\X^{k+4}}+\norm{  v
}_{\Y^{{\[\!\frac{k+5}{2}\!\]}}}+\norm{\nabla v}_{\Y^{{\[\!\frac{k}{2}\!\]}+3}} +  \norm{v}_{\X^{k+4}}  + \norm{\nabla v}_{\X^{k+3}}   \), \text{ for }k\ge 1.
\end{align}
\end{prop}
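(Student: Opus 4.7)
The plan is to treat \eqref{eqq12}--\eqref{boundaryb12} as a variable-coefficient elliptic problem of the form $-\nabla\cdot(E\nabla q)=F$ with mixed Dirichlet-Neumann boundary data, and to bootstrap standard $H^1$ elliptic estimates by conormal and time differentiation. The crucial structural point is that the gradient estimate for $q$ must be obtained using the \emph{Neumann} boundary conditions \eqref{boundary012}--\eqref{boundaryb12}, because the Dirichlet data \eqref{boundary0d12} contains $-\sigma H$ which loses half a derivative relative to the surface-tension energy and would ruin any gradient estimate; the $L^{2}$-norm of $q$ itself is then recovered by using the Dirichlet trace as a zeroth-order anchor, which is precisely where the weighted term $\sigma|h|_{\X^{k,2}}$ enters the right-hand side.

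Concretely, I would first fix $\alpha$ with $|\alpha|\le k$ and apply $Z^{\alpha}$ to \eqref{eqq12}--\eqref{boundaryb12}, working with the Alinhac good unknown $Q^{\alpha}=Z^{\alpha}q-\partial_{z}^{\varphi}q\,Z^{\alpha}\eta$ so that the highest-derivative terms in $\eta$ cancel, exactly as in Section~\ref{sectionconorm}. This produces an elliptic system
\begin{align*}
-\nabla\cdot(E\nabla Q^{\alpha})&=Z^{\alpha}F+\mathcal{R}^{\alpha}_{\mathrm{int}}\quad\text{in }\Omega,\\
E\nabla Q^{\alpha}\cdot e_{3}&=Z^{\alpha}G^{j}+\mathcal{R}^{\alpha}_{\mathrm{bdry}}\quad\text{on }\{z=0,-b\},
\end{align*}
whose commutator remainders $\mathcal{R}^{\alpha}_{\ast}$ are controlled by the coefficient bounds \eqref{ELinfty}--\eqref{EHs} and the product/commutator estimates already developed. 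Testing against $Q^{\alpha}$, integrating by parts, and using the coercivity \eqref{Eminor} gives
\[
\norm{\nabla Q^{\alpha}}_{L^{2}}^{2}\lesssim \norm{Z^{\alpha}F}_{H^{-1}}^{2}+|Z^{\alpha}G^{2}|_{H^{-1/2}}^{2}+|Z^{\alpha}G^{3}|_{H^{-1/2}}^{2}+\text{commutators},
\]
from which $\norm{\nabla q}_{\X^{k}}$ follows after summing over $|\alpha|\le k$, using trace inequalities and Lemma~\ref{lembord} to convert the $\eps\Delta^{\varphi}v\cdot\N$ boundary contribution into the factors $\eps|v|_{\X^{k,3/2}}+\eps|h|_{\X^{k,3/2}}$, and using $\|v\|_{\X^{k+1}}+\|\nabla v\|_{\X^{k}}$ to absorb the $F$-term together with $(v_{y}\cdot\nabla_{y})v\cdot\N$ and $\partial_{t}v\cdot\N$.

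For $\norm{q}_{\X^{k}}$, I would use the Dirichlet identity $Z^{\alpha}q|_{z=0}=Z^{\alpha}G^{1}$, whence the trace gives $|Z^{\alpha}q|_{H^{1/2}(\{z=0\})}$ controlled by $|h|_{\X^{k+1,-1/2}}+\sigma|h|_{\X^{k,2}}+\eps|v|_{\X^{k,3/2}}+\eps|h|_{\X^{k,3/2}}$, at which point a Poincaré-type inequality combined with the gradient bound already secured closes the estimate for $q$ itself. Finally, to extract $\norm{\partial_{zz}q}_{\X^{k-1}}$, I would isolate the highest normal derivative in \eqref{eqq12} by writing
\[
E_{33}\,\partial_{zz}q=-\partial_{z}\varphi F-\sum_{(i,j)\ne(3,3)}\partial_{i}(E_{ij}\partial_{j}q)-(\partial_{z}E_{33})\partial_{z}q,
\]
divide by $E_{33}\ge\delta>0$, and take the $\X^{k-1}$-norm, which reduces to previously controlled quantities.

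The main obstacle is the presence of $\partial_{t}v\cdot\N$ in the Neumann boundary datum $G^{2}$: each time derivative falling on $q$ costs one additional time derivative on $v$, which is why the estimate necessarily involves $\norm{v}_{\X^{k+1}}$ rather than $\norm{v}_{\X^{k}}$. A secondary delicate point is the coexistence of the surface-tension term $\sigma H$ (only available in $H^{1/2}$ via the Dirichlet trace, hence yielding the $\sigma|h|_{\X^{k,2}}$ weight on the right but no gradient contribution) and the viscous boundary term $\eps\Delta^{\varphi}v\cdot\N$, which must be estimated carefully through \eqref{dzvb2} to produce the weights $\eps|v|_{\X^{k,3/2}}+\eps|h|_{\X^{k,3/2}}$. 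The $L^{\infty}$-type estimate \eqref{qinfty} then follows by repeating the same argument in $W^{k,\infty}$-conormal spaces, using Sobolev embeddings to convert $\X^{k+4}$ control into $\Y^{k}$ control with appropriate loss of derivatives.
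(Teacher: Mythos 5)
Your global architecture matches the paper's: the gradient estimate comes from testing the Neumann problem \eqref{eqq12}, \eqref{boundary012}, \eqref{boundaryb12} against $q$, the Dirichlet datum \eqref{boundary0d12} is used only as a zeroth-order anchor via the Poincar\'e inequality \eqref{poin} (which is where $\sigma\abs{h}_{\X^{k,2}}$ enters), and $\pa_{zz}q$ is read off from the equation. Two remarks on the details. First, the paper does not pass to the good unknown $Q^\alpha$ here; it applies $Z^\alpha$ directly and closes the commutator $\[Z^\alpha,E\]\cdot\nabla q$ by an induction on $k$ combined with the anisotropic embedding \eqref{emb}, so as to self-improve the $L^\infty$ norms of $\nabla q$ appearing in the constants — your proposal is silent on how the function $\Lambda$ on the right ends up free of $q$. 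Second, your claim that the Dirichlet identity controls $\abs{Z^\alpha q}_{H^{1/2}(\{z=0\})}$ by the stated right-hand side is an overreach: that would require $\sigma\abs{Z^\alpha H}_{\hal}\sim\sigma\abs{h}_{\X^{k,\frac{5}{2}}}$, which is precisely what is \emph{not} available. Only $\abs{G^1}_{0}$ is used, and that is all the Poincar\'e argument needs; this ``half less regularity'' is the whole point of the mixed strategy.

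The genuine gap is in your treatment of $G^2$. You propose to absorb $\partial_t v\cdot\N$ using $\norm{v}_{\X^{k+1}}$, but the top-order term $\abs{\dt^{k}(\dt v\cdot\N)}_{-\hal}=\abs{\dt^{k+1}v\cdot\N}_{-\hal}$ cannot be obtained by a trace inequality: $\dt^{k+1}v$ is controlled only in $L^2(\Omega)$, and there is no bounded trace from $L^2(\Omega)$ to $H^{-\hal}(\{z=0\})$. This is the central difficulty of the whole pressure estimate. The paper resolves it by exploiting the divergence structure of the normal component: one passes to $V^{k+1}=\dt^{k+1}v-\D_z v\,\dt^{k+1}\eta$, notes that $\nabla^\varphi\cdot V^{k+1}=-\mathcal{C}^{k+1}(d)\in L^2$, applies Lemma \ref{-1/2b} to bound $\abs{V^{k+1}\cdot\N}_{-\hal}$ by $\norm{V^{k+1}}+\norm{\nabla^\varphi\cdot V^{k+1}}$, and then converts back to $\dt^{k+1}v\cdot\N$. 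Without this step your estimate for $\abs{G^2}_{\X^{k,-\hal}}$ does not close. A secondary omission: $\eps\Delta^\varphi v\cdot\N$ contains two normal derivatives of $v$ on the boundary, which \eqref{dzvb2} does not control; the paper must first rewrite $\Delta^\varphi v\cdot\N=2\nabla^\varphi\cdot(S^\varphi v\,\N)-2S^\varphi v:\nabla^\varphi\N$ and use $\nabla^\varphi\cdot v=0$ to trade $\pa_z(S^\varphi v\,\N)\cdot\N$ for tangential derivatives before \eqref{gues2} and Lemma \ref{lembord} can yield the weights $\eps\abs{v}_{\X^{k,\frac{3}{2}}}+\eps\abs{h}_{\X^{k,\frac{3}{2}}}$. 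Finally, \eqref{qinfty} is not proved by rerunning the argument in $W^{k,\infty}$-type spaces; it is deduced directly from \eqref{qpressureest} at level $k+2$ via the anisotropic embedding \eqref{emb}.
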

     \begin{proof}
Multiplying the equation \eqref{eqq12} by $q$  and then integrating by parts over $\Omega$, using
\eqref{boundary012} and \eqref{boundaryb12}, one obtains
  \beq
  \(E \nabla q,\nabla q\)_{\Omega} =\(F, q\)_{\Omega}+\(G^2, q\)_{z=0}-\(G^3, q\)_{z=-b}.
  \eeq
Here $\(\cdot,\cdot\)_{\Omega},\ \(\cdot,\cdot\)_{z=0}$ and $ \(\cdot,\cdot\)_{z=-b}$ denote the $L^2$ inner products on $\Omega$, $\{z=0\}$ and
$\{z=-b\}$, respectively. It follows from the trace estimate $\abs{q}_{\hal}\ls \norm{  q }_{H^1}$ and Cauchy's inequality that
  \beq
  \label{rhoH1'0}
  \norm{\nabla q }_{0} ^2 \leq {1 \over \eta}\Lambda_0  \(\norm{F}^2+\abs{G^1}_{0}^2+\abs{G^2}_{-\hal}^2+\abs{G^3}_{-\hal}^2\)+\eta\norm{  q
  }_{H^1}^2
  \eeq
  for any $\eta>0.$
 By the Poincar\'e inequality \eqref{poin} and \eqref{boundary0d12}, one can get by taking $\eta$
  sufficiently small that
    \beq
  \label{rhoH1'}
 \norm{  q }_{H^1}  \leq  \Lambda_0  \(\norm{F} +\abs{G^1}_{0} +\abs{G^2}_{-\hal} +\abs{G^3}_{-\hal} \).
  \eeq
Note that if one uses solely the problem \eqref{eqq}, \eqref{boundary0d} and \eqref{boundaryb} to estimate $p$, then one needs
$\abs{G^1}_{\hal}$.
This half less regularity requirement enables us to control the surface tension term by the energy dissipation estimates.

   Next, applying $Z^\alpha$ with $|\alpha|=k$ to the equation \eqref{eqq12} and using \eqref{boundary012}--\eqref{boundaryb12} lead to
  \beq
  \(Z^{\alpha}(E \cdot \nabla q),\nabla Z^{\alpha} q\)_{\Omega} =\(Z^{\alpha} F, Z^{\alpha} q\)_{\Omega}+\(Z^{\alpha} G^2, Z^{\alpha}
  q\)_{z=0}-\(Z^{\alpha} G^3 Z^{\alpha} q\)_{z=-b}.
  \eeq
Then as for \eqref{rhoH1'}, one can derive after using \eqref{boundary0d12} that
\begin{align}
       \label{elprov3}
      \norm{   q }_{\X^k}+\norm{ \nabla  q }_{\X^k}\leq \Lambda_0  & \(
      \norm{F}_{\X^k}+\abs{G^1}_{{\X^{k}}}+\abs{G^2}_{{\X^{k,-\hal}}}+\abs{G^3}_{{\X^{k,-\hal}}}\right.
        \\\nonumber& \ + \norm{ E\cdot \[Z^\alpha , \nabla\] q }_{0}  +  \norm{\[Z^\alpha,  E\]\cdot \nabla q}_{0} \Big).
      \end{align}
We then estimate the commutators in the right hand side of \eqref{elprov3}. First, \eqref{idcom} implies that
 \beq\label{al11}
  \norm{  E\cdot \[Z^\alpha , \nabla\] q}_{0}  \ls    \norm{ E}_{L^\infty}   \norm{  \nabla q }_{\X^{k-1}}\ls \abs{h}_{ \Y^{1}} \norm{\nabla q
  }_{\X^{k-1}} . \eeq
However, the other communtator needs more attentions according to $1\le k\le 3$ or $k\ge 4$. Indeed, for $1\le k\le 3$, direct estimates by
controlling the $\nabla q$ terms in $L^2$ and $E$ terms in $L^\infty$ yield
    \begin{align} \label{al1122}
\norm{ \[Z^\alpha,  E\]\cdot \nabla q} _{0}    \lesssim      \norm{ {E}}_{\tilde\Y^{k}} \norm{\nabla q }_{\X^{k-1}}
\lesssim      \abs{h}_{ \Y^{k+1}} \norm{\nabla q }_{\X^{k-1}} .
      \end{align}
Plugging \eqref{al11}--\eqref{al1122} into \eqref{elprov3}, by an induction argument and \eqref{rhoH1'}, one can deduce that
\begin{align}
\label{elprov31112}
     \nonumber \norm{   q }_{\X^k}+\norm{ \nabla  q }_{\X^k}&\leq \Lambda\({1 \over c_{0}},\norm{ {E}}_{\tilde\Y^{3}}  \)  \(
     \norm{F}_{\X^k}+\abs{G^1}_{{\X^{k}}}+\abs{G^2}_{{\X^{k,-\hal}}}+\abs{G^3}_{{\X^{k,-\hal}}}+\norm{\nabla q }_{\X^{k-1}}\)
      \\&\leq \Lambda\({1 \over c_{0}},\abs{ h}_{\Y^{4}}  \)  \(
      \norm{F}_{\X^k}+\abs{G^1}_{{\X^{k}}}+\abs{G^2}_{{\X^{k,-\hal}}}+\abs{G^3}_{{\X^{k,-\hal}}}\).
              \end{align}
Since the equation \eqref{eqq12}   gives
      \beq
      \label{eqdzzrho} \partial_{zz}  q ={1 \over E_{33}} \(    F -  \partial_{z}\( \sum_{j<3}E_{3,j}\partial_{j} q  \)-
       \sum_{i<3, \, j} \partial_{i}\( E_{i j} \partial_{j} q\) \),\eeq
in a similar way and by \eqref{elprov31112}, one can also obtain
 \begin{align}\label{q11}
\norm{ \pa_{zz} q}_{\X^{k-1}} &\leq \Lambda\({1 \over c_{0}},  \norm{ E }_{\tilde\Y^k}\)\(\norm{  F}_{\X^{k-1}}+\norm{\nabla q }_{\X^{k}}\)
 \\\nonumber&\leq \Lambda\({1 \over c_{0}},| h|_{\Y^{4}} \)\(
 \norm{F}_{\X^k}+\abs{G^1}_{{\X^{k}}}+\abs{G^2}_{{\X^{k,-\hal}}}+\abs{G^3}_{{\X^{k,-\hal}}}\).
 \end{align}
It then follows from \eqref{elprov31112} and \eqref{q11} that for $1 \le k\le 3$,
  \begin{align}\label{qclaim}
  &    \norm{   q }_{\X^k}+\norm{ \nabla q }_{\X^k}+\norm{ \pa_{zz} q}_{\X^{k-1}}
  \\\nonumber&  \quad\leq \Lambda \({1 \over c_{0}}, \abs{ h}_{\Y^{4}} +\abs{ h}_{\Y^{{\[\!\frac{k+3}{2}\!\]}}}+\abs{ h}_{\X^{{\[\!\frac{k+3}{2}\!\]},\hal}}+\norm{F}_{\X^{{\[\!\frac{k+3}{2}\!\]}}} +\abs{G^1}_{{\X^{{\[\!\frac{k+3}{2}\!\]}}}}+\abs{G^2}_{{\X^{{\[\!\frac{k+3}{2}\!\]},-\hal}}}+\abs{G^3}_{{\X^{{\[\!\frac{k+3}{2}\!\]},-\hal}}}\)
   \\\nonumber& \qquad\times\(\abs{ h}_{\X^{k,{1 \over 2}}}+
   \norm{F}_{\X^k}+\abs{G^1}_{{\X^{k}}}+\abs{G^2}_{{\X^{k,-\hal}}}+\abs{G^3}_{{\X^{k,-\hal}}}\).
  \end{align}

We now claim that \eqref{qclaim} holds for all $k\ge 1$. This will be proved by induction. Assume that $k\ge 4$
and that \eqref{qclaim} holds for $k-1$. The commutator estimate \eqref{com} yields that
\begin{align}
   \label{eses112}
   \norm{ \[Z^\alpha,  E\]\cdot \nabla q}_{0}  &  \lesssim  \norm{Z{E}}_{\X^{k-1}} \norm{ \nabla q }_{\Y^{{\[\!\frac{k-1}{2}\!\]}}} +
   \norm{Z{E}}_{\Y^{{\[\!\frac{k-1}{2}\!\]}}} \norm{\nabla q }_{\X^{k-1}}  \\
     \nonumber &   \le     \Lambda \({1 \over c_{0}}, \abs{ h}_{\Y^{{\[\!\frac{k+3}{2}\!\]}}}+\norm{ \nabla q
     }_{\Y^{{\[\!\frac{k-1}{2}\!\]}}}\)\(\abs{h}_{\X^{k,\hal}}+\norm{\nabla q }_{\X^{k-1}}\).
      \end{align}
Plugging \eqref{eses112} and \eqref{al11} into \eqref{elprov3} and using \eqref{eqdzzrho} again lead to
\begin{align}
       \label{elprov31212}
      \norm{   q }_{\X^k}+\norm{ \nabla  q }_{\X^k}+\norm{  \pa_{zz} q}_{\X^{k-1}} \leq  & \Lambda \({1 \over c_{0}}, \abs{
      h}_{\Y^{{\[\!\frac{k+3}{2}\!\]}}}+\norm{ \nabla q }_{\Y^{{\[\!\frac{k-1}{2}\!\]}}}\) \Big(\abs{h}_{\X^{k,\hal}}+\norm{\nabla q }_{\X^{k-1}}
        \\\nonumber&\qquad \left. + \norm{F}_{\X^k}+\abs{G^1}_{{\X^{k}}}+\abs{G^2}_{{\X^{k,-\hal}}}+\abs{G^3}_{{\X^{k,-\hal}}}\).
      \end{align}
To remove the dependence of $\Lambda$ on $\norm{ \nabla q }_{\Y^{{\[\!\frac{k-1}{2}\!\]}}}$, one applies the anisotropic Sobolev embedding estimate
\eqref{emb} and the induction assumption to obtain
\begin{align}\label{gogogo}
&\norm{ \nabla q }_{\Y^{{\[\!\frac{k-1}{2}\!\]}}}\ls  \norm{ \partial_{z} \nabla q }_{\X^{{\[\!\frac{k+1}{2}\!\]}}} \norm{   \nabla q }_{\X^{{\[\!\frac{k+3}{2}\!\]}}}
\\\nonumber&\quad\le \Lambda \({1 \over c_{0}}, \abs{ h}_{\Y^{4}} +\abs{ h}_{\Y^{{\[\!\frac{\[\frac{k+3}{2}\]+3}{2}\!\]}}}+\abs{ h}_{\X^{{\[\!\frac{\[\frac{k+3}{2}\]+3}{2}\!\]},\hal}}+\norm{ F}_{\X^{{\[\!\frac{\[\frac{k+3}{2}\]+3}{2}\!\]}}} \right.
\\\nonumber&\qquad\qquad\left.+\abs{G^1}_{{\X^{{\[\!\frac{\[\frac{k+3}{2}\]+3}{2}\!\]}}}}+\abs{G^2}_{{\X^{{\[\!\frac{\[\frac{k+3}{2}\]+3}{2}\!\]},-\hal}}}+\abs{G^3}_{{\X^{{\[\!\frac{\[\frac{k+3}{2}\]+3}{2}\!\]},-\hal}}} \)
\\\nonumber&\qquad  \times\( \abs{ h}_{\X^{{\[\!\frac{k+3}{2}\!\]},{1 \over 2}}}+  \norm{F}_{\X^{{\[\!\frac{k+3}{2}\!\]}}}
+\abs{G^1}_{{\X^{{\[\!\frac{k+3}{2}\!\]}}}}+\abs{G^2}_{{\X^{{\[\!\frac{k+3}{2}\!\]},-\hal}}}+\abs{G^3}_{{\X^{{\[\!\frac{k+3}{2}\!\]},-\hal}}} \)
\\\nonumber&\quad\le \Lambda \({1 \over c_{0}}, \abs{ h}_{\Y^{4}} + \abs{ h}_{\Y^{{\[\!\frac{k+3}{2}\!\]}}}+\abs{ h}_{\X^{{\[\!\frac{k+3}{2}\!\]},{1 \over 2}}}+
\norm{F}_{\X^{{\[\!\frac{k+3}{2}\!\]}}} +\abs{G^1}_{{\X^{{\[\!\frac{k+3}{2}\!\]}}}}+\abs{G^2}_{{\X^{{\[\!\frac{k+3}{2}\!\]},-\hal}}}+\abs{G^3}_{{\X^{{\[\!\frac{k+3}{2}\!\]},-\hal}}} \)
.
\end{align}
Here one has used the fact that ${\[\!\frac{k+3}{2}\!\]}\le k-1$ since $k\ge 4$, which allows one to use the induction assumption. Plugging the estimate
\eqref{gogogo} into \eqref{elprov31212} and using the induction assumption to estimate $\norm{\nabla q }_{\X^{k-1}}$, one thus concludes
 \eqref{qclaim} for all $k\ge 4$.

We now estimate the right hand side of \eqref{qclaim} for $k\ge 3$.  It follows by the product estimate \eqref{gues} that
\begin{align}\label{Fest1}
 \norm{F}_{\X^{k}}&=
 \norm{\partial_{z}\varphi \nabla^\varphi v \cdot \nabla^\varphi v }_{\X^{k}}
 \\\nonumber&\leq   \Lambda \( {1 \over c_{0}}, \abs{h}_{\Y^{{\[\!\frac{k}{2}\!\]}+1}}+\norm{\nabla v}_{\Y^{{\[\!\frac{k}{2}\!\]}}}   \)
  \( \abs{h}_{\X^{k,\hal}}  + \norm{\nabla v}_{\X^{k}} \).
  \end{align}
Similarly, since $k\ge 3$,
\begin{align}\label{Fest2}
\norm{F}_{\X^{{\[\!\frac{k+3}{2}\!\]}}}
 &  \leq  \Lambda \( {1 \over c_{0}}, \abs{h}_{\Y^{ {{\[\!\frac{k+3}{4}\!\]}}+1}}+\norm{\nabla v}_{\Y^{{{\[\!\frac{k+3}{4}\!\]}}}}   \)
  \( \abs{h}_{\X^{{\[\!\frac{k+3}{2}\!\]} ,\hal}}  + \norm{\nabla v}_{\X^{{\[\!\frac{k+3}{2}\!\]}  }} \)
 \\\nonumber&\le \Lambda \( {1 \over c_{0}}, \abs{h}_{\Y^{{\[\!\frac{k}{2}\!\]}+1}}+\abs{h}_{\X^{{{\[\!\frac{k}{2}\!\]}}+2}}+\norm{\nabla v}_{\Y^{{\[\!\frac{k}{2}\!\]}}} +
 \norm{\nabla v}_{\X^{{{{\[\!\frac{k+3}{2}\!\]}}}}}  \).
  \end{align}
By \eqref{gues} and Lemma \ref{lembord},
   \begin{align}\label{G1est1}
 \abs{ G^1 }_{\X^{k}} &= \abs{2 \eps S^\varphi v  \,{\bf n} \cdot \,{\bf n} + gh-\sigma H}_{\X^{k}}
   \\\nonumber&\leq   \Lambda\( {1 \over c_{0}},  \abs{h}_{\Y^{{\[\!\frac{k}{2}\!\]}+2}} +\norm{\nabla v}_{\Y^{{\[\!\frac{k}{2}\!\]}}}  \) \( \eps \abs{ \nabla v
   }_{\X^{k}}
    +  | h |_{\X^{k}}+ \sigma | h |_{\X^{k,2}}\)  \\\nonumber
     & \leq   \Lambda\( {1 \over c_{0}},  \abs{h}_{\Y^{{\[\!\frac{k}{2}\!\]}+2}} +\norm{\nabla v}_{\Y^{{\[\!\frac{k}{2}\!\]}}}  \) \( \eps \abs{   v  }_{\X^{k,1}}
    +  \abs{h}_{\X^{k}}+ \sigma \abs{h}_{\X^{k,2}}\) .
    \end{align}
 Similarly, since $k\ge 3$,  the trace estimate implies that
    \begin{align}\label{G1est2}
\abs{G^1}_{{\X^{{\[\!\frac{k+3}{2}\!\]} }}}
     & \leq   \Lambda\( {1 \over c_{0}},  \abs{h}_{\Y^{{{\[\!\frac{k+3}{4}\!\]}}+2}} +\norm{\nabla v}_{\Y^{{{\[\!\frac{k+3}{4}\!\]}}}}  \) \( \eps \abs{   v
     }_{\X^{{\[\!\frac{k+3}{2}\!\]},1}}
    +  \abs{h}_{\X^{{\[\!\frac{k+3}{2}\!\]}}}+ \sigma \abs{h}_{\X^{{\[\!\frac{k+3}{2}\!\]},2}}\)
     \\\nonumber
     & \leq   \Lambda\( {1 \over c_{0}},  \abs{h}_{\Y^{{\[\!\frac{k}{2}\!\]}+2}} +  \abs{h}_{\X^{{\[\!\frac{k+7}{2}\!\]}}}+\norm{\nabla v}_{\Y^{{\[\!\frac{k}{2}\!\]}}}
     +\norm{   v  }_{\X^{{{\[\!\frac{k}{2}\!\]}}+3}}+\norm{ \nabla  v  }_{\X^{{{\[\!\frac{k}{2}\!\]}}+2}}\).
    \end{align}

To estimate the most delicate term $G^2$, we start with $\eps\Delta^\varphi v\cdot\N$. Note that
    \begin{align}
\Delta^\varphi v\cdot\N=2\(\nabla^\varphi\cdot  S^\varphi v\)\cdot\N = 2\nabla^\varphi\cdot \( S^\varphi v \N \)- 2   S^\varphi v: \nabla^\varphi
\N
    \end{align}
and
    \begin{align}
 \nabla^\varphi\cdot \( S^\varphi v \N \)=\pa_1 \( S^\varphi v \N \)_1+\pa_2 \( S^\varphi v \N \)_2+ \frac{1}{\pa_z\varphi}\pa_z\( S^\varphi v \N
 \)\cdot\N .
    \end{align}
Hence by the estimate \eqref{gues2} with $s=-\hal,\hal$, one can get
\begin{align}\label{g3es1}
\abs{\Delta^\varphi v\cdot\N}_{{\X^{{k},-\hal}}}&\ls \abs{\frac{1}{\pa_z\varphi}\pa_z\( S^\varphi v \N
\)\cdot\N}_{{\X^{{k},-\hal}}}+\abs{S^\varphi v \N}_{{\X^{{k},\hal}}}+ \abs{S^\varphi v: \nabla^\varphi \N }_{{\X^{{k},-\hal}}}
 \\\nonumber
     & \leq   \Lambda\( {1 \over c_{0}},  \abs{h}_{\Y^{{\[\!\frac{k}{2}\!\]}+3}} +\norm{\nabla v}_{\Y^{{\[\!\frac{k}{2}\!\]}+1}}+\norm{\pa_z\( S^\varphi v \N
     \)\cdot\N}_{\Y^{{\[\!\frac{k}{2}\!\]}+1}} \) \\\nonumber
     & \quad\times\(\abs{ \pa_z\( S^\varphi v \N \)\cdot\N}_{{\X^{{k},-\hal}}}+\abs{\nabla v
     }_{{\X^{{k},\hal}}}+\abs{h}_{{\X^{{k},\frac{3}{2}}}}\).
\end{align}
But
    \begin{align}
\pa_z\( S^\varphi v \N \)\cdot\N=\pa_z\( S^\varphi v \N\cdot\N \)-  S^\varphi v \N  \cdot \pa_z\N,
    \end{align}
and recalling the matrices $P$ and $E$,
    \begin{align}
 \pa_z\( S^\varphi v \N\cdot\N \)&=\pa_z\( \nabla^\varphi v \N\cdot\N \)=\pa_z\(  \frac{1}{\pa_z\varphi}P^\ast \nabla v P^\ast e_3\cdot P^\ast
 e_3 \)
 \\\nonumber&
  = \nabla^\varphi (\pa_z v) \N\cdot\N  - \pa_j v_i\pa_z(E_{3i}P_{3j}), \end{align}
one can then deduce from \eqref{g3es1} that
\begin{align}\label{g3es2}
\abs{\Delta^\varphi v\cdot\N}_{{\X^{{k},-\hal}}}& \leq   \Lambda\( {1 \over c_{0}},  \abs{h}_{\Y^{{\[\!\frac{k}{2}\!\]}+3}} +\norm{\nabla
v}_{\Y^{{\[\!\frac{k}{2}\!\]}+1}}+\norm{\nabla^\varphi (\pa_z v) \N\cdot\N }_{\Y^{{\[\!\frac{k}{2}\!\]}+1}} \) \\\nonumber
     & \quad\times\(\abs{ \nabla^\varphi (\pa_z v) \N\cdot\N }_{{\X^{{k},-\hal}}}+\abs{\nabla v
     }_{{\X^{{k},\hal}}}+\abs{h}_{{\X^{{k},\frac{3}{2}}}}\).
\end{align}
Note further that
    \begin{align}
 &  \nabla^\varphi (\pa_z v) \N\cdot\N  = \nabla^\varphi \(\pa_z v\cdot \N\)\cdot\N-\(\N\cdot\nabla^\varphi\)\N\cdot \pa_z v, \end{align}
and one can compute by using $\nabla^\varphi\cdot v = 0$ that
    \begin{align}
 &   \nabla^\varphi \(\pa_z v\cdot \N\)\cdot\N= -\nabla^\varphi \(\pa_z\varphi(\pa_1v_1+\pa_2 v_2)\)\cdot\N.\end{align}
It follows from these, \eqref{g3es2} and Lemma \ref{lembord} that
 \begin{align}\label{gg1es1}
\abs{\Delta^\varphi v\cdot\N}_{{\X^{{k},-\hal}}}& \leq   \Lambda\( {1 \over c_{0}},  \abs{h}_{\Y^{{\[\!\frac{k}{2}\!\]}+3}} +\norm{\nabla
v}_{\Y^{{\[\!\frac{k}{2}\!\]}+2}}  \)  \(\abs{\nabla v }_{{\X^{{k},\hal}}}+\abs{h}_{{\X^{{k},\frac{3}{2}}}}\)
\\\nonumber& \leq   \Lambda\( {1 \over c_{0}},  \abs{h}_{\Y^{{\[\!\frac{k}{2}\!\]}+3}} +\norm{\nabla v}_{\Y^{{\[\!\frac{k}{2}\!\]}+2}}  \)  \(\abs{  v }_{{\X^{{k},
\frac{3}{2}}}}+\abs{h}_{{\X^{{k},\frac{3}{2}}}}\).
\end{align}
Next, one easily has
 \begin{align}\label{gg1es2}
\abs{(v_y\cdot\nabla_y) v\cdot \N}_{{\X^{{k},-\hal}}}\leq   \Lambda\( {1 \over c_{0}},  \abs{h}_{\Y^{{\[\!\frac{k}{2}\!\]}+2}} +\norm{
v}_{\Y^{{\[\!\frac{k}{2}\!\]}+2}}  \)  \(\abs{  v }_{{\X^{{k}, \frac{1}{2}}}}+\abs{h}_{{\X^{{k},\frac{1}{2}}}}\).
\end{align}
Finally, we estimate the remaining time derivative term $\dt v\cdot\N$. One first has
 \begin{align}\label{gggesrt00}
 \abs{\dt v\cdot\N}_{{\X^{k-1,\hal}}} & \leq\Lambda\( {1 \over c_{0}}, \abs{\nabla h }_{\Y^{{\[\!\frac{k-1}{2}\!\]}+1 }}  + \norm{ \dt v
 }_{\Y^{{\[\!\frac{k-1}{2}\!\]}+1}}    \)
   \(\abs{\dt v}_{\X^{k-1,\hal}}   + \abs{h}_{\X^{k-1, {3 \over 2 }}}\)
\\\nonumber&\leq\Lambda\( {1 \over c_{0}}, \abs{  h }_{\Y^{{\[\!\frac{k+3}{2}\!\]} }}  + \norm{  v }_{\Y^{{\[\!\frac{k+3}{2}\!\]}}}    \)
   \(\abs{ v}_{\X^{k,\hal}}   + \abs{h}_{\X^{k-1, {3 \over 2 }}}\).
\end{align}
It then suffices to estimate $\abs{\dt^{k }(\dt v\cdot\N)}_{-\hal} $. However, this will lead to some difficulties since $k$ can be $m-1$, and
energy estimates yield only $\dt^{m} v\in L^2(\Omega)$ which cannot ensure the control of the
$H^{-\hal}\{z=0\}$ norm of $\dt^m v$. The key observation is that $\dt^m v\cdot \N$ is indeed in $H^{-\hal}(\{z=0\})$. The way of achieving this
is to
use the Alinhac good unknown, returning back to $k$, $V^{k+1}=\dt^{k+1}v-\D_zv\dt^{k+1}\eta$. Indeed, since $\nabla^\varphi\cdot V^{k+1}=-
C^{k+1}(d)$ with $C^{k+1}(d)$ defined as in \eqref{divValpha} with $Z^\alpha=\dt^{k+1}$,  by Lemma \ref{-1/2b}, and using the estimate
\eqref{Cd} and \eqref{etaharm}, one gets that
 \begin{align}
\abs{V^{k+1}\cdot \N}_{-\hal}&\leq   \Lambda\( {1 \over c_{0}} \)  \(\norm{ V^{k+1} }_{0} +\norm{ \nabla^\varphi\cdot V^{k+1} }_{0}  \)
\\\nonumber&\leq   \Lambda\( {1 \over c_{0}},\norm{\nabla v}_{L^\infty} \)  \(\norm{\dt^{k+1} v}_{0}   +   \norm{\dt^{k+1}\eta}_{0}  +\norm{ C^{k+1}(d) }_{0}
\)
\\\nonumber&\leq\Lambda\( {1 \over c_{0}}, |h |_{\Y^{{{\[\!\frac{k+1}{2}\!\]}}+1}}   + \norm{ \nabla v }_{\Y^{{{\[\!\frac{k+1}{2}\!\]}}}} \)
   \(\norm{\dt^{k+1} v}_{0}   + \abs{h}_{\X^{{k+1},-{1 \over 2 }}}+ \norm{\nabla v}_{\X^{k}}\).
\end{align}
This in turn implies
 \begin{align}\label{tttt}
\abs{\dt^{k+1} v \cdot \N}_{-\hal}&\leq   \abs{V^{k+1}\cdot \N}_{-\hal}+\abs{  \N\cdot \D_zv\dt^{k+1}\eta}_{-\hal}
\\\nonumber&\leq\Lambda\( {1 \over c_{0}}, |h |_{\Y^{{{\[\!\frac{k+3}{2}\!\]}} }}   + \norm{ \nabla v }_{\Y^{{{\[\!\frac{k+1}{2}\!\]}}}} \)
   \(\norm{\dt^{k+1} v}_{0}  + \abs{h}_{\X^{{k+1},-{1 \over 2 }}} + \norm{\nabla v}_{\X^{k}} \).
\end{align}
Hence, it follows from \eqref{gggesrt00}, \eqref{tttt} and the trace estimate $|\cdot|_{-\hal}\le |\cdot|_{\hal}\ls \norm{\cdot}_{H^1}$ that
 \begin{align}\label{gggesrt}
&\abs{\dt v\cdot\N}_{{\X^{k,-\hal}}} \leq   \abs{\dt v\cdot\N}_{{\X^{k-1,\hal}}}+\abs{\dt^{k }(\dt v\cdot\N)}_{-\hal}
\\\nonumber&\quad\leq\Lambda\( {1 \over c_{0}}, \abs{h }_{\Y^{{\[\!\frac{k+3}{2}\!\]}  }}  + \norm{  v }_{\Y^{{\[\!\frac{k+3}{2}\!\]}}}  + \norm{ \nabla v
}_{\Y^{{\[\!\frac{k+1}{2}\!\]}}} \)
   \(\norm{v}_{\X^{k+1}}  + \norm{\nabla v}_{\X^{k}} + \abs{h}_{\X^{k+1,-{1 \over 2 }}}\).
\end{align}
Therefore, we conclude from \eqref{gg1es1}, \eqref{gg1es2} and \eqref{gggesrt} that
 \begin{align}\label{G2est1}
\abs{G^2}_{{\X^{{k},-\hal}}} \leq &  \Lambda\( {1 \over c_{0}},  \abs{h}_{\Y^{{\[\!\frac{k}{2}\!\]}+3}}  + \norm{  v }_{\Y^{{\[\!\frac{k+3}{2}\!\]}}} +\norm{\nabla
v}_{\Y^{{\[\!\frac{k}{2}\!\]}+2}}  \)
\\\nonumber&\times\(\norm{v}_{\X^{k+1}}  + \norm{\nabla v}_{\X^{k}} + \abs{h}_{\X^{k+1,-{1 \over 2 }}}+\eps\abs{  v }_{{\X^{{k},
\frac{3}{2}}}}+\eps\abs{h}_{{\X^{{k},\frac{3}{2}}}}\).
\end{align}
Similarly, since $k\ge 3$, due to the trace estimates, one can get
 \begin{align}\label{G2est2}
\abs{G^2}_{{\X^{{{\[\!\frac{k+3}{2}\!\]}},-\hal}}} &\leq \Lambda\( {1 \over c_{0}},  \abs{h}_{\Y^{\frac{k+3}{4}+3}}  + \norm{  v
}_{\Y^{{\[\!\frac{\[\frac{k+3}{2}\]+3}{2}\!\]}}} +\norm{\nabla v}_{\Y^{\frac{k+3}{4}+2}}  \)
\\\nonumber&\quad\times\(\norm{v}_{\X^{{\[\!\frac{k+3}{2}\!\]}+1}}  + \norm{\nabla v}_{\X^{{\[\!\frac{k+3}{2}\!\]}}} + \abs{h}_{\X^{{\[\!\frac{k+3}{2}\!\]}+1,-{1 \over 2
}}}+\eps\abs{  v }_{{\X^{{{\[\!\frac{k+3}{2}\!\]}}, \frac{3}{2}}}}+\eps\abs{h}_{{\X^{{{\[\!\frac{k+3}{2}\!\]}},\frac{3}{2}}}}\)
\\\nonumber&\leq\Lambda\( {1 \over c_{0}},  \abs{h}_{\Y^{{\[\!\frac{k}{2}\!\]}+3}} +\abs{h}_{{\X^{{{\[\!\frac{k+7}{2}\!\]}}}}} + \norm{  v }_{\Y^{{\[\!\frac{k+3}{2}\!\]}}}
+\norm{\nabla v}_{\Y^{{\[\!\frac{k}{2}\!\]}+2}} +\norm{v}_{\X^{{\[\!\frac{k+7}{2}\!\]}}}  + \norm{\nabla v}_{\X^{{\[\!\frac{k+5}{2}\!\]}}}  \)  .
\end{align}

Consequently, plugging the estimates \eqref{Fest1}--\eqref{Fest2}, \eqref{G1est1}--\eqref{G1est2} and  \eqref{G2est1}--\eqref{G2est2}, along with
doing the the same estimates for $G^3$ as for $G^2$, into \eqref{qclaim}, we can obtain that for $k\ge 3$,
  \begin{align}\label{qkxes}
  &    \norm{   q }_{\X^{k}}+\norm{ \nabla q }_{\X^{k}}+\norm{ \pa_{zz} q}_{\X^{k-1}}
  \\\nonumber&  \quad\leq \Lambda\( {1 \over c_{0}},   \abs{h}_{\Y^{{\[\!\frac{k}{2}\!\]}+3}} +\abs{h}_{{\X^{{{\[\!\frac{k+7}{2}\!\]}}}}} +\norm{  v
  }_{\Y^{{\[\!\frac{k+3}{2}\!\]}}}  +\norm{\nabla v}_{\Y^{{\[\!\frac{k}{2}\!\]}+2}} +\norm{v}_{\X^{{\[\!\frac{k+7}{2}\!\]}}}  + \norm{\nabla v}_{\X^{{\[\!\frac{k+5}{2}\!\]}}}  \)
  \\\nonumber&\qquad\times\(\abs{h}_{\X^{k+1,-{1 \over 2 }}}+\sigma \abs{h}_{\X^{k,2}}+\norm{v}_{\X^{k+1}}  + \norm{\nabla v}_{\X^{k}} +
  \eps\abs{  v }_{{\X^{{k}, \frac{3}{2}}}}+\eps\abs{h}_{{\X^{{k},\frac{3}{2}}}}\).
  \end{align}
This proves the estimate \eqref{qpressureest}.

To prove \eqref{qinfty}, one can use the anisotropic Sobolev embedding estimate \eqref{emb} and the trace estimates to have that for $k\ge 1$, by
\eqref{qpressureest},
  \begin{align}
  &\norm{   q }_{\Y^{k}}+\norm{ \nabla q }_{\Y^{k}}\ls \norm{ \nabla q }_{\X^{k+2}}+\norm{ \pa_{zz} q}_{\X^{k+1}}
  \\\nonumber&  \quad\leq \Lambda\( {1 \over c_{0}},   \abs{h}_{\Y^{{{\[\!\frac{k}{2}\!\]}}+4}} +\abs{h}_{{\X^{{\frac{k+9}{2}}}}} +\norm{  v
  }_{\Y^{{\[\!\frac{k+5}{2}\!\]}}}+\norm{\nabla v}_{\Y^{{{\[\!\frac{k}{2}\!\]}}+3}} +\norm{v}_{\X^{\frac{k+9}{2}}}   + \norm{\nabla v}_{\X^{{\[\!\frac{k+7}{2}\!\]}}}  \)
  \\\nonumber&\qquad\times\(\abs{h}_{\X^{k+3,-{1 \over 2 }}}+\sigma \abs{h}_{\X^{k+2,2}}+\norm{v}_{\X^{k+3}}  + \norm{\nabla v}_{\X^{k+2}} +
  \eps\abs{  v }_{{\X^{{k+2}, \frac{3}{2}}}}+\eps\abs{h}_{{\X^{{k+2},\frac{3}{2}}}}\)
    \\\nonumber&  \quad\leq \Lambda\( {1 \over c_{0}},   \abs{h}_{\Y^{{\[\!\frac{k}{2}\!\]}+4}}  +\abs{h}_{\X^{k+4}}+\norm{  v
    }_{\Y^{{\[\!\frac{k+5}{2}\!\]}}}+\norm{\nabla v}_{\Y^{{\[\!\frac{k}{2}\!\]}+3}} +  \norm{v}_{\X^{k+4}}  + \norm{\nabla v}_{\X^{k+3}}   \).
  \end{align}
  Note that one has used the fact that $k+2\ge 3$ so that \eqref{qpressureest} can be used with $k+2$. This, together with
 \eqref{eqdzzrho} again, proves \eqref{qinfty}.
\end{proof}

%%%%%%%%%%%%%%%%%%%%%%%%%%%%%%%%%%%%%%%%%%%%%%%%%%%%%%
\section{Smoothing estimates of $h$}\label{sectionprelimeta}
%%%%%%%%%%%%%%%%%%%%%%%%%%%%%%%%%%%%%%%%%%%%%%%%%%%%%%

We first show the smoothing regularity estimates of $h$ coming from viscosity.
\begin{prop}\label{heps}
For every $m\in \mathbb{N}$,  $\eps \in(0,1)$, it holds that
\beq\label{hepses1}
\eps\abs{h(t)}_{\X^{m-1, \frac{3}{2}}}^2\leq   \eps\abs{h(0)}_{\X^{m-1, \frac{3}{2}}}^2+\int_{0}^t \Lambda\(\abs{ h}_{\Y^{{\[\!\frac{m}{2}\!\]}+2}}
+\norm{ v }_{\Y^{{\[\!\frac{m}{2}\!\]}+2}}  \)\( \eps\abs{  v} _{\X^{m-1, \frac{3}{2}}}^2 +  \eps\abs{h}_{\X^{m-1, \frac{3}{2}}}^2 \)
\eeq
and
\beq\label{hepses2}
\eps\abs{h(t)}_{\X^{m,\hal}}^2\leq   \eps\abs{h(0)}_{\X^{m,\hal}}^2+\int_{0}^t \Lambda\(\abs{ h}_{\Y^{{\[\!\frac{m}{2}\!\]}+2}}  +\norm{ v
}_{\Y^{{\[\!\frac{m}{2}\!\]}+2}}  \)
\( \eps\abs{  v} _{\X^{m,\hal}}^2 +  \eps\abs{h}_{\X^{m,\hal}}^2 \).
\eeq
\end{prop}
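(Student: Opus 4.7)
The plan is to derive both inequalities directly from the kinematic boundary condition $\partial_{t} h = v\cdot \N$ on $\{z=0\}$ by a straightforward energy-type argument in fractional Sobolev spaces on $\mathbb{R}^2$. The crucial structural feature is that $\eps$ appears as a common factor on both sides, so no smoothing mechanism is actually needed in the proof — the factor $\eps$ is simply propagated forward in time via a Gr\"onwall argument. This is in contrast to the ``improved'' regularity obtained later from the surface-tension term, which really does require elliptic/parabolic smoothing.

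Concretely, I would fix $s = \frac{3}{2}$ (resp.\ $s = \frac{1}{2}$) and, for each $0\le \ell \le m-1$ (resp.\ $0\le \ell \le m$), apply $\partial_{t}^\ell$ to the kinematic condition to obtain
\[
\partial_{t}^{\ell+1} h = \partial_{t}^{\ell}(v\cdot \N) \quad \text{on } \{z=0\}.
\]
Taking the $H^{m-1+s-\ell}$ (resp.\ $H^{m+s-\ell}$) inner product with $\partial_{t}^\ell h$ and integrating by parts in the tangential fractional derivatives yields
\[
\tfrac{1}{2}\tfrac{d}{dt}\bigl|\partial_{t}^\ell h\bigr|_{m-1+s-\ell}^{2}
= \bigl(\partial_{t}^\ell(v\cdot\N),\, \partial_{t}^\ell h\bigr)_{H^{m-1+s-\ell}}.
\]
Bounding the right-hand side by Cauchy--Schwarz and invoking the product/commutator estimates of Appendix~\ref{aa1} (recalling that $\N=(-\partial_{1}h,-\partial_{2}h,1)^{t}$, so that derivatives of $\N$ cost one derivative of $h$) gives
\[
\bigl|\partial_{t}^\ell(v\cdot\N)\bigr|_{m-1+s-\ell}
\le \Lambda\!\left(\abs{h}_{\Y^{\frac{m}{2}+2}}+\norm{v}_{\Y^{\frac{m}{2}+2}}\right)\!
\left(\abs{v}_{\X^{m-1,s}}+\abs{h}_{\X^{m-1,s}}\right),
\]
and analogously for the $\X^{m,\hal}$ version. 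Multiplying by $\eps$, applying Young's inequality in the form $\eps\,ab \le \tfrac{\eps}{2}a^{2}+\tfrac{\eps}{2}b^{2}$, summing over $\ell$, and integrating in time produces the stated estimates.

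The only non-routine point is the top-order case $\ell = m$ in the second inequality: here $\partial_{t}^{m+1}h = \partial_{t}^{m}(v\cdot \N)$ involves the trace of $\partial_{t}^{m}v$ on $\{z=0\}$, which is not controlled pointwise in time by our basic energy functional. However, the right-hand side of \eqref{hepses2} is exactly $\eps\abs{v}_{\X^{m,\hal}}^{2}$, which includes $\eps|\partial_{t}^{m}v|_{\hal}^{2}$, so the estimate closes without loss. The lower-order commutator terms (involving $\ell'<\ell$ falling on $\N$) are absorbed into the $\Lambda$-factor using the low-norm Sobolev embeddings and the extension estimates for $\eta$ from Appendix~\ref{aa2}. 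No use of the interior Navier--Stokes equations or of the pressure is required in this section; the estimate is purely kinematic.
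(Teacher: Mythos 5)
There is a genuine gap in the step where you bound the right-hand side. Writing $v\cdot\N = v_3 - v_y\cdot\nabla_y h$ and expanding $\partial_t^{\ell}(v_y\cdot\nabla_y h)$ by Leibniz, the leading ($j=0$) term is $v_y\cdot\nabla_y\partial_t^{\ell}h$. Your claimed estimate
$\abs{\partial_t^{\ell}(v\cdot\N)}_{m-1+s-\ell}\le \Lambda(\cdots)\(\abs{v}_{\X^{m-1,s}}+\abs{h}_{\X^{m-1,s}}\)$
is false for this term: by the product estimate \eqref{sobr2} one gets $\abs{v_y}_{L^\infty}\abs{\partial_t^{\ell}h}_{m+s-\ell}+\cdots$, i.e.\ one full tangential derivative of $h$ more than $\abs{h}_{\X^{m-1,s}}$ controls. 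Since the right-hand side of \eqref{hepses1} contains only $\eps\abs{h}_{\X^{m-1,\frac32}}^2$ (the same norm as on the left), treating the whole of $\partial_t^{\ell}(v\cdot\N)$ as a source and applying Cauchy--Schwarz cannot close the Gr\"onwall argument. The remark about the top-order case $\ell=m$ and the observation that $\eps$ is merely propagated are both correct, but they do not repair this loss.

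The fix is to exploit the transport structure rather than Cauchy--Schwarz. The paper applies $Z^\alpha$ and then $\Lambda^{\hal}$ to the kinematic condition and keeps the convection term on the left, obtaining
\[
\partial_t\Lambda^{\hal}Z^\alpha h + v_y\cdot\nabla_y\Lambda^{\hal}Z^\alpha h
= \Lambda^{\hal}Z^\alpha v_3 - \[\Lambda^{\hal},v_y\]\cdot\nabla_y Z^\alpha h
- \Lambda^{\hal}\(\[Z^\alpha,v_y\]\cdot\nabla_y h\).
\]
The top-order convection term is then integrated by parts, costing only $\norm{\nabla_y v}_{L^\infty}\abs{h}_{\X^{m,\hal}}^2$; the fractional commutator $\[\Lambda^{\hal},v_y\]\cdot\nabla_y Z^\alpha h$ is controlled by the Kato--Ponce estimate \eqref{comr2}, which recovers exactly the derivative that brute-force Cauchy--Schwarz loses; and the lower-order symmetric commutator $\[Z^\alpha,v_y\]\cdot\nabla_y h$ (your $j\ge 1$ terms, which are indeed harmless) is handled by \eqref{gues2}. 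You should restructure your argument along these lines; as written, the central inequality you rely on does not hold.
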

\begin{proof}
We prove only the estimate \eqref{hepses2}, and the estimate \eqref{hepses1} follows in the same way.
Apply $Z^\alpha$ with $\alpha\in \mathbb{N}^{1+2}$, $|\alpha|\le m$, to the kinematic boundary condition to get that on $\{z=0\}$:
\beq\label{heq1}
\partial_{t} Z^\alpha h + v  \cdot \nabla_y  Z^\alpha h -Z^\alpha v_{3}  + \[ Z^\alpha, v_{y} \]\cdot \nabla_y h=0.
\eeq
Then applying further $\Lambda^\hal$, the tangential Fourier multiplier, to \eqref{heq1} gives
\beq\label{heq2}
\partial_{t} \Lambda^\hal Z^\alpha h + v  \cdot \nabla_y \Lambda^\hal Z^\alpha h -\Lambda^\hal Z^\alpha  v_{3} + \[\Lambda^\hal, v_{y} \]\cdot
\nabla_y Z^\alpha h + \Lambda^\hal \(\[ Z^\alpha, v_{y} \]\cdot \nabla_y h\)=0.
\eeq
A standard energy estimate on the equation \eqref{heq2} yields
\begin{align}\label{heq3}
 \dtt \abs{Z^\alpha h}_{ \hal }^2&  \ls \norm{\nabla_{y} v}_{L^\infty }  \abs{h}_{\X^{m,\hal}}^2
\\ \nonumber & \quad +  \(   \abs{v}_{\X^{m,\hal}}+\abs{\[\Lambda^\hal, v_{y} \]\cdot \nabla_y Z^\alpha h}_{0} +\abs{\[ Z^\alpha, v_{y} \]\cdot
\nabla_y h}_{\hal}\) \abs{h}_{\X^{m,\hal}}.
\end{align}
Due to the commutator estimate \eqref{comr2}, one has
\beq\label{heq4}
\abs{\[\Lambda^\hal, v_{y} \]\cdot \nabla_y Z^\alpha h}_{0}\ls  \norm{\nabla_{y} v  }_{L^\infty} \abs{h}_{\X^{m,\hal}}
+  \norm{\nabla_{y} h  }_{L^\infty} \abs{v}_{\X^{m,\hal}}.
\eeq
And the estimate \eqref{gues2} leads to
\begin{align}\label{heq5}
\abs{\[ Z^\alpha, v_{y} \]\cdot \nabla_y h}_{\hal}&\le \sum_{|\beta|+|\gamma|=\alpha\atop |\beta'|=1}\abs{Z^{\beta-\beta'}Z^{\beta'} v_{y}
\cdot \nabla_y Z^{\gamma} h}_{\hal}
\\\nonumber&\ls\norm{ v }_{\Y^{{\[\!\frac{m}{2}\!\]}+2}}\abs{h}_{\X^{m,\hal}}+\abs{ h}_{\Y^{{\[\!\frac{m}{2}\!\]}+2}}\abs{  v} _{\X^{m,\hal}}.
\end{align}
Hence, plugging the estimates \eqref{heq4} and \eqref{heq5} into \eqref{heq3} and summing over $|\alpha|\le m$, by Cauchy's inequality, one can
deduce
that
\beq\label{heq6}
 \dtt  \abs{h}_{\X^{m,\hal}}^2\ls\(1+\abs{ h}_{\Y^{{\[\!\frac{m}{2}\!\]}+2}}+\norm{ v }_{\Y^{{\[\!\frac{m}{2}\!\]}+2}}\)\( \abs{  v} _{\X^{m,\hal}}^2 +
 \abs{h}_{\X^{m,\hal}}^2\).
\eeq
Integrating the inequality \eqref{heq6} directly in time yields \eqref{hepses2}.
\end{proof}

Next, we show the smoothing estimates of $h$ due to surface tension.
\begin{prop}\label{hsig}
For every $m\in \mathbb{N}$,  $\es \in(0,1)$, it holds that
\begin{align}\label{hsiges}
 \sigma^2\abs{h}_{\X^{m-1,\frac{5}{2}}}^2\le& \Lambda\( {1 \over c_{0}}, \abs{h }_{\Y^{{{\[\!\frac{m+3}{2}\!\]}}}}   + \norm{  \nabla v
 }_{\Y^{{\[\!\frac{m+1}{2}\!\]}}} \)
\(\abs{ q}_{\X^{m-1,\hal}}^2 +\abs{ h}_{\X^{m-1,\hal}}^2 \right.
 \\\nonumber&\qquad\qquad\quad\left.   +\eps^2 \abs{  v }_{\X^{m-1,\frac{3}{2}}}^2+\eps^2\abs{h }_{\X^{m-1,\frac{3}{2}}}^2   \).
\end{align}
\end{prop}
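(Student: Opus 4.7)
The plan is to exploit the dynamic boundary condition (the third equation in \eqref{NSv}), projected along $\n\otimes\n$ on $\{z=0\}$, which reads
\begin{equation*}
-\sigma H = q - gh - 2\eps S^\varphi v\cdot\n\cdot\n \quad \text{on }\{z=0\}.
\end{equation*}
Since $H=\nabla_y\cdot(\nabla_y h/\sqrt{1+|\nabla_y h|^2})$ can be written in the quasilinear form $H=a^{ij}(\nabla_y h)\,\partial^2_{ij}h$ with
\[
a^{ij}(\nabla_y h)=\frac{(1+|\nabla_y h|^2)\delta_{ij}-\partial_i h\,\partial_j h}{(1+|\nabla_y h|^2)^{3/2}},
\]
which is uniformly positive definite on the set where $|\nabla_y h|$ is bounded (controlled by $|h|_{\Y^1}$, hence by $|h|_{\Y^{(m+3)/2}}$), the identity above is a uniformly elliptic second-order equation on $\mathbb{R}^2$ for $h$, forced by the trace of the pressure and of the viscous stress. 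The surface-tension smoothing then gains two tangential derivatives on $h$ over the right-hand side, which is exactly the source of the factor $\sigma^2$ and the two-derivative jump from $\X^{m-1,1/2}$ to $\X^{m-1,5/2}$ in the proposition.

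To implement this, I would apply $\dt^\ell Z^\beta$ with $\ell+|\beta|\le m-1$ (i.e., all admissible tangential-time multi-indices in the $\X^{m-1,\cdot}$ norm), together with a tangential Fourier multiplier $\Lambda^{1/2}$, to the identity above, and then invoke standard elliptic $L^2$-regularity on $\mathbb{R}^2$ for the quasilinear operator $a^{ij}(\nabla_y h)\partial^2_{ij}$ to obtain
\begin{equation*}
\sigma\,|\dt^\ell Z^\beta h|_{5/2}\;\lesssim_{c_0}\;|\dt^\ell Z^\beta(\sigma H)|_{1/2} + \text{commutator terms involving } a^{ij}(\nabla_y h).
\end{equation*}
Summing over admissible $(\ell,\beta)$ and using the boundary identity gives
\begin{equation*}
\sigma^2|h|_{\X^{m-1,5/2}}^2 \lesssim \Lambda\big(1/c_0,|h|_{\Y^{(m+3)/2}}\big)\Big(|q|_{\X^{m-1,1/2}}^2+|h|_{\X^{m-1,1/2}}^2+\eps^2|S^\varphi v\cdot\n\cdot\n|_{\X^{m-1,1/2}}^2+|h|_{\X^{m-1,3/2}}^2\Big).
\end{equation*}
The viscous boundary trace is then handled by the trace inequality $|\cdot|_{1/2}\lesssim \|\cdot\|_{H^1}$ combined with Lemma \ref{lembord} at $k=m-1$, $s=1/2$, producing exactly the factors $\Lambda(1/c_0,|h|_{\Y^{(m+3)/2}}+\|\nabla v\|_{\Y^{(m+1)/2}})$ and the right-hand side terms $\eps^2|v|_{\X^{m-1,3/2}}^2+\eps^2|h|_{\X^{m-1,3/2}}^2$ that appear in \eqref{hsiges}.

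The main obstacle is the control of the commutators between $\dt^\ell Z^\beta\Lambda^{1/2}$ and the nonlinear coefficients $a^{ij}(\nabla_y h)$: these a priori involve $m$ derivatives landing on $h$, which is one derivative more than the $\X^{m-1,3/2}$ regularity we are entitled to use on the right-hand side. The remedy is to use the symmetric Moser-type commutator and composition estimates collected in Appendix \ref{aa1} (the same machinery behind Lemma \ref{comi}), which share out one derivative at a time by trading $L^\infty$-type norms on low derivatives of $\nabla_y h$ (absorbed into the $\Lambda(\cdot,|h|_{\Y^{(m+3)/2}})$ factor) against $\X^{m-1,3/2}$ control on the remaining term. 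A minor secondary issue is that $\Lambda^{1/2}$ is a half-integer order multiplier, so one must use the fractional tangential commutator estimate \eqref{comr2} rather than integer-order product rules; this is the same technology already used in Proposition \ref{heps}, so no new ideas are required beyond an accounting of the half-order regularity.
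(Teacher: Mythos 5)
Your strategy coincides with the paper's: apply the admissible tangential/time derivatives and $\Lambda^{\hal}$ to the dynamic boundary condition $-\sigma H=q-gh-2\eps S^\varphi v\,\n\cdot\n$ on $\{z=0\}$, use the uniform ellipticity of the linearized mean-curvature operator to gain two tangential derivatives on $h$, and control the commutators with the Moser/fractional-commutator machinery of Appendix \ref{aa1} together with Lemma \ref{lembord} and the trace inequality for the viscous term. The only presentational difference is that the paper keeps $Z^\alpha H$ in divergence form, $\nabla_y\cdot\big(\tfrac{\nabla_y Z^\alpha h}{\sqrt{1+|\nabla_y h|^2}}-\tfrac{\nabla_y h\cdot\nabla_y Z^\alpha h}{\sqrt{1+|\nabla_y h|^2}^{3}}\nabla_y h+\mathcal{C}(\mathcal{B}_\sigma^\alpha)\big)$, and runs a variational energy estimate against $\Lambda^{\hal}Z^\alpha h$ using the pointwise coercivity \eqref{sigsur3}, rather than invoking non-divergence-form elliptic regularity for $a^{ij}(\nabla_y h)\partial^2_{ij}$; with Lipschitz coefficients the two are interchangeable, and the divergence-form energy argument is the more elementary route.

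One loose end: your intermediate inequality carries the commutator contribution as $\abs{h}_{\X^{m-1,\frac{3}{2}}}^2$ with no small prefactor, whereas the right-hand side of \eqref{hsiges} contains only $\abs{h}_{\X^{m-1,\hal}}^2$ and $\eps^2\abs{h}_{\X^{m-1,\frac{3}{2}}}^2$, so as written your bound does not yet yield the proposition. The point is that every commutator with the nonlinear coefficients (equivalently, with $\mathcal{C}(\mathcal{B}_\sigma^\alpha)$ and the $\Lambda^{\hal}$-commutators) originates from $\sigma Z^\alpha H$ and therefore comes with a factor of $\sigma$, so after Cauchy--Schwarz it appears as $\sigma^2\abs{h}_{\X^{m-1,\frac{3}{2}}}^2$; this is then absorbed into the left-hand side via the interpolation $\abs{h}_{\frac{3}{2}}^2\le\abs{h}_{\hal}\,\abs{h}_{\frac{5}{2}}$ and Young's inequality, leaving only $\abs{h}_{\X^{m-1,\hal}}^2$. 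You need to track that prefactor and add this final absorption step (this is exactly the ``Sobolev interpolation and Young's inequality'' closing line of the paper's proof); with that, the argument is complete.
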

\begin{proof}
Apply $Z^\alpha$ with $\alpha\in \mathbb{N}^{1+2}$, $|\alpha|\le m,\alpha_0\le m-1$, to the dynamic boundary condition to have
\beq
-\sigma Z^\alpha H=Z^\alpha  q  -2 \eps Z^\alpha( S^\varphi v  \,{\bf n} \cdot \,{\bf n})- gZ^\alpha h .
\eeq
Note that
\begin{align}
     Z^\alpha H &=\nabla_y\cdot \(Z^\alpha\(\frac{\nabla_y h}{\sqrt{1+|\nabla_y h|^2}}\) \)
  \\& \nonumber=\nabla_y\cdot \( \frac{ \nabla_y Z^\alpha h}{\sqrt{1+|\nabla_y h|^2}}+\nabla_y h Z^\alpha\(\frac{1}{\sqrt{1+|\nabla_y h|^2}}\)
  +\[Z^\alpha, \nabla_y h, \frac{1}{\sqrt{1+|\nabla_y h|^2}}\] \)
      \end{align}
and
\begin{align}
Z^\alpha\(\frac{1}{\sqrt{1+|\nabla_y h|^2}}\)&=Z^{\alpha-\alpha'} Z^{\alpha'}\(\frac{1}{\sqrt{1+|\nabla_y h|^2}}\)
=-Z^{\alpha-\alpha'} \(\frac{\nabla_y h\cdot \nabla_y Z^{\alpha'} h}{\sqrt{1+|\nabla_y h|^2}^3}\)
\\\nonumber& =- \frac{\nabla_y h\cdot \nabla_y Z^\alpha h}{\sqrt{1+|\nabla_y h|^2}^3}-\[Z^{\alpha-{\alpha'}},\frac{\nabla_y h}{\sqrt{1+|\nabla_y
h|^2}^3}\]\cdot \nabla_y Z^{\alpha'} h
\end{align}
for any $|\alpha'|=1$. Hence,
  \begin{align}\label{sigsur1}
 Z^\alpha H  = \sigma \nabla_y\cdot \( \frac{ \nabla_y Z^\alpha h}{\sqrt{1+|\nabla_y h|^2}}- \frac{\nabla_y h\cdot \nabla_y Z^\alpha
 h}{\sqrt{1+|\nabla_y h|^2}^3}\nabla_y h
  +\mathcal{C}(\mathcal{B}_\sigma^\alpha) \),
    \end{align}
where
\begin{align}\label{sigsur2}
 \mathcal{C}(\mathcal{B}_\sigma^\alpha)=-\[Z^{\alpha-\alpha'},\frac{\nabla_y h}{\sqrt{1+|\nabla_y h|^2}^3}\]\cdot \nabla_y Z^{\alpha'} h
 \nabla_y h+\[Z^\alpha, \frac{1}{\sqrt{1+|\nabla_y h|^2}}, \nabla_y h\]
\end{align}
for any $|\alpha'|=1$. It follows that
\beq\label{fkdal}
-\sigma   \nabla_y\cdot \( \frac{ \nabla_y Z^\alpha h}{\sqrt{1+|\nabla_y h|^2}}- \frac{\nabla_y h\cdot \nabla_y Z^\alpha h}{\sqrt{1+|\nabla_y
h|^2}^3}\nabla_y h  \) =\sigma   \nabla_y\cdot  \mathcal{C}(\mathcal{B}_\sigma^\alpha)+Z^\alpha  q  -2 \eps Z^\alpha( S^\varphi v  \,{\bf n}
\cdot \,{\bf n})- gZ^\alpha h .
\eeq

Then apply further $\Lambda^\hal$ to \eqref{fkdal} to get
\begin{align}
&-\sigma   \nabla_y\cdot \( \frac{ \nabla_y \Lambda^\hal Z^\alpha h}{\sqrt{1+|\nabla_y h|^2}}- \frac{\nabla_y h\cdot \nabla_y \Lambda^\hal
Z^\alpha h}{\sqrt{1+|\nabla_y h|^2}^3}\nabla_y h  \)
 \\\nonumber&\quad=\sigma   \nabla_y\cdot \( \[\Lambda^\hal, \frac{ 1}{\sqrt{1+|\nabla_y h|^2}}-1\]\nabla_y  Z^\alpha h- \[\Lambda^\hal,
 \frac{\nabla_y h}{\sqrt{1+|\nabla_y h|^2}^3}\nabla_y h \cdot \]\nabla_y  Z^\alpha h \)
   \\\nonumber&\qquad+\Lambda^\hal\( \sigma   \nabla_y\cdot  \mathcal{C}(\mathcal{B}_\sigma^\alpha)+Z^\alpha  q  -2 \eps Z^\alpha( S^\varphi v
   \,{\bf n} \cdot \,{\bf n})- gZ^\alpha h\).
\end{align}
It follows from a standard energy estimate for this elliptic equation that
\begin{align}
& \sigma^2\int_{z=0}  \( \frac{  |\nabla_y \Lambda^\hal Z^\alpha h|^2 }{\sqrt{1+|\nabla_y h|^2}}-  \frac{ |\nabla_y h\cdot \nabla_y \Lambda^\hal
Z^\alpha h|^2  }{\sqrt{1+|\nabla_y h|^2}^3}   \)\,dy
 \\\nonumber&\quad=- \sigma^2\( \[\Lambda^\hal, \frac{ 1}{\sqrt{1+|\nabla_y h|^2}}-1\]\nabla_y  Z^\alpha h- \[\Lambda^\hal, \frac{\nabla_y
 h}{\sqrt{1+|\nabla_y h|^2}^3}\nabla_y h \cdot \]\nabla_y  Z^\alpha h, \nabla_y \Lambda^\hal Z^\alpha h \)
 \\\nonumber&\qquad-\sigma^2  \(\Lambda^\hal\mathcal{C}(\mathcal{B}_\sigma^\alpha),\nabla_y \Lambda^\hal Z^\alpha h \)+\(\Lambda^\hal\(Z^\alpha
 q  -2 \eps Z^\alpha( S^\varphi v  \,{\bf n} \cdot \,{\bf n})- gZ^\alpha h\), \sigma \Lambda^\hal Z^\alpha h \)  .
\end{align}
Since for any vector $\mathbf{a}\in\mathbb{R}^2$,
\beq\label{sigsur3}
\frac{  |\mathbf{a}|^2 }{\sqrt{1+|\nabla h|^2}}-  \frac{ |\nabla_y h\cdot \mathbf{a}|^2  }{\sqrt{1+|\nabla_y h|^2}^3}
\ge   \frac{ 1  }{\sqrt{1+|\nabla_y h|^2}^3} |\mathbf{a}|^2,
\eeq
so Cauchy's inequality, together with \eqref{cont2D} and \eqref{gues2}, since $|\alpha_1|+|\alpha_2|\ge 1$, give
\begin{align}\label{la1}
 \sigma^2\abs{\nabla_yh}_{\X^{m-1,\frac{5}{2}}}^2&\le \Lambda_0
\( \abs{ q}_{\X^{m-1,\hal}}^2 +\eps^2\abs{S^\varphi v  \,{\bf n} \cdot \,{\bf n}}_{\X^{m-1,\hal}}^2   +\abs{ h}_{\X^{m-1,\hal}}^2\right.
 \\\nonumber&\quad  +\sigma^2\abs{ \mathcal{C}(\mathcal{B}_\sigma^\alpha)}_\hal^2+\sigma^2\abs{ \[\Lambda^\hal, \frac{ 1}{\sqrt{1+|\nabla_y
 h|^2}}-1\]\nabla_y  Z^\alpha h}_0^2
  \\\nonumber&\quad \left. +\sigma^2\abs{ \[\Lambda^\hal, \frac{\nabla_y h}{\sqrt{1+|\nabla_y h|^2}^3}\nabla_y h \cdot \]\nabla_y  Z^\alpha
  h}_0^2\)
 \\&\nonumber\le \Lambda\( {1 \over c_{0}}, \abs{h }_{\Y^{{{\[\!\frac{m+3}{2}\!\]}}}}   + \norm{  \nabla v }_{\Y^{{\[\!\frac{m+1}{2}\!\]}}} \)
\(\abs{ q}_{\X^{m-1,\hal}}^2 +\abs{ h}_{\X^{m-1,\hal}}^2\right.
   \\&\nonumber\left.\quad+\sigma^2 \abs{h}_{\X^{m-1,\frac{3}{2}}}^2 +\eps^2\abs{S^\varphi v  \,{\bf n} \cdot \,{\bf n}}_{\X^{m-1,\hal}}^2\).
\end{align}
Using the similar arguments in the previous section, one can have
$$
\abs{S^\varphi v  \,{\bf n} \cdot \,{\bf n}}_{\X^{m-1,\hal}}^2  \leq    \Lambda\( {1 \over c_{0}}, \abs{h }_{\Y^{{\[\!\frac{m+1}{2}\!\]}}}   + \norm{
\nabla v }_{\Y^{{\[\!\frac{m+1}{2}\!\]}}} \)  \( \abs{  v }_{\X^{m-1,\frac{3}{2}}}
   +\abs{h }_{\X^{m-1,\frac{3}{2}}} \).
   $$
Then \eqref{hsiges} follows from \eqref{la1}, the Sobolev interpolation and Young's inequality.
\end{proof}

%%%%%%%%%%%%%%%%%%%%%%%%%%%%%%%%%%%%%%%%%%%%%%%%%%%%%%
\section{Conormal estimates}\label{secconormal}
%%%%%%%%%%%%%%%%%%%%%%%%%%%%%%%%%%%%%%%%%%%%%%%%%%%%%%

We shall derive a priori estimates on a time interval $[0, T^\es]$ on which it is assumed that
\beq\label{apriori}
\partial_{z} \varphi \geq \frac{{c_{0}}}{2} , \  |h|_{2, \infty} \leq {1 \over c_{0}}\text{ and }  g  -   \partial_{z}^\varphi q   \geq
{c_{0}\over
2}\text{ on } \{z=0\}.
\eeq
Note in particular that this will allow one to use Lemmas \ref{mingrad} and \ref{Korn}.

To derive the higher order energy estimates, we shall use the good unknown $ V^\alpha= Z^\alpha v-\D_{z}v Z^\alpha \eta$, $\alpha\neq0$. A
key point is that the control of $V^\alpha$ and $Z^\alpha h$ will yield a control of $Z^\alpha v$:
\beq\label{equiv1}
\norm{Z^\alpha v} \ls \norm{V^\alpha} +  \Lambda\({1 \over c_{0}} , \norm{\nabla v}_{L^\infty}\) \abs{Z^\alpha h}_{-\hal},\ \norm{V^\alpha} \ls
\norm{Z^\alpha v} +  \Lambda\({1 \over c_{0}} , \norm{\nabla v}_{L^\infty}\) \abs{Z^\alpha h}_{-\hal}.
\eeq

Define
\beq\label{deflambdainfty1}
\Lambda_{\infty}(t)= \Lambda \( {1 \over c_{0}},  \abs{h(t)}_{\X^{{\[\!\frac{m}{2}\!\]}+5}} +\norm{v(t) }_{\X^{{\[\!\frac{m}{2}\!\]}+5}}+\norm{\pa_z v (t)
}_{\X^{{\[\!\frac{m}{2}\!\]}+4}} +\norm{\pa_z v(t) }_{\Y^{{\[\!\frac{m}{2}\!\]}+2}}+ \eps^{1 \over 2} \norm{\partial_{zz}v(t)}_{L^\infty} \).
\eeq
It will be shown then that those functions $\Lambda(\cdot,\cdot)$ defined in the previous three sections can be bounded by $\linf$ for
sufficiently large
$m$, and also the elliptic estimates of $q$ and the smoothing estimates of $h$ will be restated along the way. First, taking $k={\[\!\frac{m}{2}\!\]}$
for $m\ge 6$ in the estimate \eqref{qinfty} yields
\begin{align}\label{qminfty}
&\norm{   q }_{\Y^{{\[\!\frac{m}{2}\!\]}}}+ \norm{ \nabla q }_{\Y^{{\[\!\frac{m}{2}\!\]}}}+ \norm{ \pa_{zz} q }_{\Y^{{\[\!\frac{m}{2}\!\]}-1}}
\\\nonumber&\quad\le \Lambda\( {1 \over c_{0}},   \abs{h}_{\Y^{{{\[\!\frac{m}{4}\!\]}}+4}}  +\abs{h}_{\X^{{\[\!\frac{m}{2}\!\]}+4}}+\norm{  v
}_{\Y^{{{\[\!\frac{m+10}{4}\!\]}} }}+\norm{\nabla v}_{\Y^{{{\[\!\frac{m}{4}\!\]}}+3}} +  \norm{v}_{\X^{{\[\!\frac{m}{2}\!\]}+4}}  + \norm{\nabla v}_{\X^{{\[\!\frac{m}{2}\!\]}+3}}
\)\leq \linf,
\end{align}
while taking $k=m-1$ in the estimate \eqref{qpressureest} gives
\begin{align}\label{qpressurem}
  &    \norm{   q }_{\X^{m-1}}+\norm{ \nabla q }_{\X^{m-1}}+\norm{ \pa_{zz} q}_{\X^{m-2}}
  \\\nonumber&  \quad\leq \Lambda\( {1 \over c_{0}},   \abs{h}_{\Y^{{\[\!\frac{m+5}{2}\!\]}}}  +\abs{h}_{\X^{{\[\!\frac{m}{2}\!\]}+3}} +\norm{  v
  }_{\Y^{{\[\!\frac{m}{2}\!\]}+1}} +\norm{\nabla v}_{\Y^{{{\[\!\frac{m+3}{2}\!\]}} }} +  \norm{v}_{\X^{{\[\!\frac{m}{2}\!\]}+3}}  + \norm{\nabla v}_{\X^{{\[\!\frac{m}{2}\!\]}+2}}   \)
  \\\nonumber&\qquad\times\(\abs{h}_{\X^{m,-{1 \over 2 }}}+\sigma \abs{h}_{\X^{m-1,2}}+\norm{v}_{\X^{m}}  + \norm{\nabla v}_{\X^{m-1}} +
  \eps\abs{  v }_{{\X^{m-1, \frac{3}{2}}}}+\eps\abs{h}_{{\X^{m-1,\frac{3}{2}}}}\)
   \\\nonumber&  \quad\leq \linf\(\abs{h}_{\X^{m,-{1 \over 2 }}}+\sigma \abs{h}_{\X^{m-1,2}}+\norm{v}_{\X^{m}}  + \norm{\pa_z v}_{\X^{m-1}} +
   \eps\abs{  v }_{{\X^{m-1, \frac{3}{2}}}}+\eps\abs{h}_{{\X^{m-1,\frac{3}{2}}}}\).
  \end{align}
Here one has required $m\ge 6$ so that by Sobolev's inequality
$$  \abs{h}_{\Y^{{{\[\!\frac{m}{4}\!\]}}+4}}  \ls \abs{h}_{\X^{{\[\!\frac{m}{2}\!\]}+5}},
$$
and that by the anisotropic Sobolev embedding estimate \eqref{emb}
 $$\norm{  v }_{\Y^{{{\[\!\frac{m+3}{2}\!\]}}+1}}\ls \norm{  v }_{\X^{{\[\!\frac{m}{2}\!\]}+5}}+\norm{\pa_z  v }_{\X^{{\[\!\frac{m}{2}\!\]}+4}}.$$
It can be checked easily that all the functions $\Lambda( \cdot,\cdot)$ defined in Propositions \ref{heps} and \ref{hsig} and Lemmas
\ref{lemValpha},
\ref{lembordV} and \ref{lembordh} are bounded by $\Lambda_{\infty}$, due to \eqref{qminfty}. Moreover, Proposition \ref{heps} implies
that
\beq\label{mheps1}
\eps\abs{h(t)}_{\X^{m-1, \frac{3}{2}}}^2\leq   \eps\abs{h(0)}_{\X^{m-1, \frac{3}{2}}}^2+\int_{0}^t \linf\( \eps\abs{  v} _{\X^{m-1,
\frac{3}{2}}}^2 +  \eps\abs{h}_{\X^{m-1, \frac{3}{2}}}^2 \)
\eeq
and
\beq \label{mheps2}
\eps\abs{h(t)}_{\X^{m,\hal}}^2\leq   \eps\abs{h(0)}_{\X^{m,\hal}}^2+\int_{0}^t \linf
\( \eps\abs{  v} _{\X^{m,\hal}}^2 +  \eps\abs{h}_{\X^{m,\hal}}^2 \).
\eeq
Proposition \ref{hsig}, together with \eqref{qpressurem} and the trace estimate $\abs{q}_{\X^{m-1,\hal}}\ls\norm{   q
}_{\X^{m-1}}+\norm{ \nabla q }_{\X^{m-1}}$, yields that
\begin{align}\label{mhsig1}
 \sigma^2\abs{h}_{\X^{m-1,\frac{5}{2}}}^2\le& \linf
\(\abs{h}_{\X^{m,-{1 \over 2 }}}^2+\sigma^2 \abs{h}_{\X^{m-1,2}}^2+ \norm{v}_{\X^{m}}^2  + \norm{\pa_z v}_{\X^{m-1}}^2 +  \eps^2\abs{  v
}_{{\X^{m-1, \frac{3}{2}}}}^2+\eps^2\abs{h}_{{\X^{m-1,\frac{3}{2}}}}^2 \).
\end{align}
Since $\abs{h}_{\X^{m-1,2}}^2\le \abs{h}_{\X^{m-1}}^\frac{1}{5}\abs{h}_{\X^{m-1,\frac{5}{2}}}^\frac{4}{5}$, one may improve \eqref{mhsig1} by
using Young's inequality to get
\begin{align}\label{mhsig}
 \sigma^2\abs{h}_{\X^{m-1,\frac{5}{2}}}^2\le& \linf
\(\abs{h}_{\X^{m,-{1 \over 2 }}}^2+ \norm{v}_{\X^{m}}^2  + \norm{\pa_z v}_{\X^{m-1}}^2 +  \eps^2\abs{  v }_{{\X^{m-1,
\frac{3}{2}}}}^2+\eps^2\abs{h}_{{\X^{m-1,\frac{3}{2}}}}^2 \).
\end{align}

Note that in the following we will use frequently these $L^\infty$ bounds involved in $\linf$.

%%%%%%%%%%%%%%%%%%%%%%%%%%%%%%%%%%%%%%%%%%%%%%%%%%%%%%
\subsection{Basic $L^2$ estimate}\label{secconormal0}
%%%%%%%%%%%%%%%%%%%%%%%%%%%%%%%%%%%%%%%%%%%%%%%%%%%%%%

We start with the estimates of $(v,h)$ itself, that is, the case $\alpha=0$.
  \begin{prop}
  \label{basicL2}
  For any smooth solution of \eqref{NSv}, it holds that
\begin{align}\label{l2estimate}
 \norm{v(t)}_0^2 +g\abs{h(t)}_0^2+\sigma\abs{h(t)}_1^2 + \eps \int_{0}^t \norm{ \nabla v}_0^2
 \le \Lambda_0  \(\norm{v_{0}}_0^2+ \abs{h_0}_0^2+\sigma\abs{h_0}_1^2+ \int_{0}^t \norm{v}_0^2\).
 \end{align}
  \end{prop}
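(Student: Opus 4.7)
The plan is to derive the physical energy identity \eqref{intro1} by testing the momentum equation against $v$ with the weighted measure $d\V=\partial_z\varphi\,dydz$, and then convert it into \eqref{l2estimate} using Korn's inequality and the bound $\partial_z\varphi\ge c_0/2$. The main obstacle is the careful bookkeeping of the $\partial_z\varphi$ weight through integration by parts and the cancellation of spurious boundary terms using the kinematic condition $v\cdot\N=\partial_t h$, the identity $v_3=0$ at $z=-b$, and the property $\eta|_{z=-b}=0$ of the extension \eqref{eqeta} (which gives $\partial_t\varphi|_{z=-b}=0$).

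First, I would multiply the momentum equation by $v\,\partial_z\varphi$ and integrate over $\Omega$. For the transport part, using $\D_t+v\cdot\nab^\varphi=\partial_t+v_y\cdot\nabla_y+V_z\partial_z$ together with $\nab^\varphi\cdot v=0$ (equivalently $\nabla\cdot(Pv)=0$ via \eqref{graddiv}), integration by parts yields $\int_\Omega(\D_t v+v\cdot\nab^\varphi v)\cdot v\,d\V=\hal\dtt\int_\Omega|v|^2\,d\V$, because the boundary contributions on $\{z=0\}$ cancel between the time derivative of the volume element and the convective boundary flux (using $V_z\partial_z\varphi|_{z=0}=v\cdot\N-\partial_t h=0$), while the ones on $\{z=-b\}$ vanish by the identities above.

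Next, the formulas \eqref{graddiv}--\eqref{deltaphi} and the symmetry of $S^\varphi v$ produce $\int_\Omega\nab^\varphi q\cdot v\,d\V=\int_{z=0}q\,\partial_t h\,dy$ for the pressure, and $2\eps\int_\Omega|S^\varphi v|^2\,d\V - 2\eps\int_{z=0}v\cdot S^\varphi v\N\,dy + 2\eps\int_{z=-b}v\cdot S^\varphi v\,e_3\,dy$ for the viscous term. Combining the pressure and viscous boundary pieces on $\{z=0\}$ via the dynamic boundary condition rewrites them as $\int_{z=0}(gh-\sigma H)\partial_t h\,dy$, while the Navier slip condition together with $v_3|_{z=-b}=0$ simplifies the bottom integral to $2\kappa\eps\int_{z=-b}|v|^2\,dy$. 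The gravity term contributes $\hal\dtt\int_{z=0}gh^2\,dy$, and the curvature term is handled by integrating by parts in $y$ and using $\partial_t\sqrt{1+|\nabla h|^2}=\nabla h\cdot\nabla\partial_t h/\sqrt{1+|\nabla h|^2}$ to get $-\sigma\int_{z=0}H\partial_t h\,dy=\sigma\dtt\int_{z=0}(\sqrt{1+|\nabla h|^2}-1)\,dy$. Collecting everything yields exactly \eqref{intro1}.

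Finally, since $\partial_z\varphi\ge c_0/2$ one has $\norm{v}^2\lesssim\int_\Omega|v|^2\,d\V$, and since $|\nabla h|\le 1/c_0$ the elementary inequality $2(\sqrt{1+|\nabla h|^2}-1)\ge |\nabla h|^2/\Lambda_0$ shows that the surface tension energy controls $\sigma\abs{\nabla h}_0^2$; combined with the $g\abs{h}_0^2$ term and $\sigma\le 1$ this dominates $\sigma\abs{h}_1^2$. Korn's inequality (Lemma \ref{Korn}) together with the Navier slip condition gives $\norm{\nabla v}^2\le \Lambda_0(\norm{S^\varphi v}^2+\norm{v}^2)$. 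Integrating the identity \eqref{intro1} in $t$ and using $\eps\le 1$ to absorb $\eps\int_0^t\norm{v}^2$ into $\int_0^t\norm{v}^2$ produces \eqref{l2estimate}.
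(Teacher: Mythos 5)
Your proposal is correct and follows essentially the same route as the paper: derive the physical energy identity \eqref{l23} (i.e.\ \eqref{intro1}) from testing against $v$ in the weighted measure $d\V$, then conclude via the pointwise lower bound $\sqrt{1+|\nabla h|^2}-1\gtrsim_{c_0}|\nabla h|^2$ and the Korn inequality of Lemma \ref{Korn}. The only detail you gloss over is the bottom friction term $2\kappa\eps\int_{z=-b}|v|^2$: since the paper does not fix the sign of $\kappa$, this term must be controlled via the trace estimate $\abs{v}_{L^2(\{z=-b\})}^2\lesssim\norm{\nabla v}\norm{v}+\norm{v}^2$ and Cauchy's inequality (absorbing $\eps\norm{\nabla v}\norm{v}$ into the dissipation), which is exactly the extra step the paper records.
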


\begin{proof}
Standard energy identity yields
\beq\label{l20}
\hal\dtt \int_{\Omega} |v|^2 \, d\V +   \eps  \int_{\Omega} |S^\varphi v|^2\, d\V=
\int_{z=0} \(2 \eps S^\varphi v - q \mbox{I} \) \N \cdot v \, dy-\int_{z=-b} \(2 \eps S^\varphi v - q \mbox{I} \) e_3 \cdot v \, dy.
\eeq
The Navier slip boundary condition implies that
\beq\label{l21}
-\int_{z=-b}\(2 \eps S^\varphi v - q \mbox{I} \) e_3 \cdot v \, dy=-\int_{z=-b}  2 \eps (S^\varphi v   e_3)_i  v_i \, dy=-2 \kappa
\eps\int_{z=-b}|v|^2\, dy.
\eeq
While the dynamic boundary condition and the kinematic boundary condition give
\begin{align}\label{l22}
\int_{z=0} \(2 \eps S^\varphi v - q \mbox{I} \) \N \cdot v \, dy& =  -  \int_{z=0} (g h-\sigma H) \N  \cdot v \, dy=  -  \int_{z=0} (g h-\sigma
H) \partial_t h \, dy
\\\nonumber&=   - \hal\dtt \int_{z=0} g|h|^2+2\sigma\(\sqrt{1+|\nabla h|^2}-1\) \,dy.
\end{align}
Consequently,
\begin{align}\label{l23}
& \hal\dtt \(  \int_{\Omega} |v|^2 \, d\V +   \int_{z=0} g|h|^2+2\sigma\(\sqrt{1+|\nabla h|^2}-1\) \,dy  \)
 \\\nonumber&\qquad\qquad\qquad\qquad\qquad\qquad+ 2 \eps  \int_{\Omega} |S^\varphi v|^2\, d\V+2 \kappa \eps\int_{z=-b}|v|^2\, dy=0.
 \end{align}

Note that
$$\sqrt{1+|\nabla h|^2}-1\ge \frac{1}{2}\frac{1}{\sqrt{1+({1 \over c_0})^2}}|\nabla h|^2$$
due to \eqref{apriori}, and the trace estimate
$$\abs{v}_{L^2(\{z=-b\})}^2\ls \norm{\nabla v}\norm{v}+\norm{v}^2.$$
Hence, \eqref{l2estimate} follows from \eqref{l23}, Lemma \ref{Korn}  and Cauchy's inequality.
\end{proof}

%%%%%%%%%%%%%%%%%%%%%%%%%%%%%%%%%%%%%%%%%%%%%%%%%%%%%%
\subsection{Estimate of $(Z^\alpha v, Z^\alpha h)$ for $\alpha_0\le m-1$}\label{secconormal1}
%%%%%%%%%%%%%%%%%%%%%%%%%%%%%%%%%%%%%%%%%%%%%%%%%%%%%%

Next, we derive the energy estimates of $(Z^\alpha v, Z^\alpha h)$ for $1 \leq | \alpha |\leq m$ and $\alpha_0\le m-1$, that is, except the cases
$\alpha=0$ or $\alpha_0=m$.

\begin{prop}\label{conormv1}
Any smooth solution of \eqref{NSv} satisfies the estimate
\begin{align}\label{conormales1}
\nonumber&\norm{v(t)}_{\X^{m-1,1}}^2+\abs{h(t)}_{\X^{m-1,1}}^2 +\sigma\abs{h(t)}_{\X^{m-1,2}}^2 +\eps  \abs{h(t)}_{\X^{m-1,
\frac{3}{2}}}^2+\int_0^t\eps  \norm{\nabla v}_{\X^{m-1,1}}^2+\sigma^2\abs{h}_{\X^{m-1,\frac{5}{2}}}^2
\\&\quad \le \Lambda_0\( \norm{v(0)}_{\X^{m-1,1}}^2+\abs{h(0)}_{\X^{m-1,1}}^2 +\sigma\abs{h(0)}_{\X^{m-1,2}}^2+ \eps \abs{h(0)}_{\X^{m,
\hal}}^2\)
\\\nonumber&\qquad+\int_0^t\linf \(\abs{h}_{\X^{m-1,1}}^2+\abs{h}_{\X^{m,-\hal}}^2+\sigma \abs{h}_{\X^{m,1}}^2+ \eps  \abs{h}_{\X^{m-1,
\frac{3}{2}}}^2+\norm{v}_{\X^{m}}^2+  \norm{\pa_z v}_{\X^{m-1}}^2 \).
\end{align}
\end{prop}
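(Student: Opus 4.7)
The plan is to apply $Z^\alpha$ with $1\le|\alpha|\le m$ and $\alpha_0\le m-1$ to the system \eqref{NSv}, use the Alinhac good unknowns $V^\alpha=Z^\alpha v-\D_z v\,Z^\alpha\eta$ and $Q^\alpha=Z^\alpha q-\D_z q\,Z^\alpha\eta$ from Lemma \ref{lemValpha}, and take the $L^2(\Omega,d\V)$ inner product of the interior equation \eqref{eqValpha} with $V^\alpha$. Standard integration by parts yields the dissipation $2\eps\int_\Omega|S^\varphi V^\alpha|^2\,d\V$ plus boundary fluxes on $\{z=0,-b\}$, while the pressure gradient paired with $V^\alpha$ produces, via the divergence defect \eqref{divValpha}, a bulk term $\int_\Omega\mathcal{C}^\alpha(d)Z^\alpha q\,d\V$ and a boundary flux $-\int_{z=0}Q^\alpha V^\alpha\cdot\N\,dy$. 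Substituting the dynamic condition \eqref{bordV} and the Navier-slip condition \eqref{bord222} (which only delivers an absorbable friction term), replacing $V^\alpha\cdot\N$ on $\{z=0\}$ via the kinematic equation \eqref{bordh}, and integrating by parts in $y$ on the $\sigma Z^\alpha H$ contribution via \eqref{sigsur1}, I expect the energy identity
\begin{align*}
&\hal\dtt\(\int_\Omega|V^\alpha|^2\,d\V+\int_{z=0}\bigl((g-\D_z q)|Z^\alpha h|^2+\sigma|\nabla_y Z^\alpha h|^2\bigr)dy\)+2\eps\int_\Omega|S^\varphi V^\alpha|^2\,d\V \\
&\quad=-\int_{z=0}\sigma Z^\alpha H\sum_{|\alpha'|=1}C_\alpha^{\alpha'}Z^{\alpha'}\N\cdot Z^{\alpha-\alpha'}v\,dy-\int_\Omega\mathcal{C}^\alpha(d)Z^\alpha q\,d\V+\mathcal{R},
\end{align*}
where $\mathcal{R}$ collects the commutator contributions from $\mathcal{C}^\alpha(\mathcal{T})$, $\mathcal{C}^\alpha(q)$, $\mathcal{D}^\alpha(S^\varphi v)$, $\mathcal{E}^\alpha(v)$, $\mathcal{C}^\alpha(\mathcal{B}_\eps)$, and $\tilde{\mathcal{C}}^\alpha(h)$, plus a harmless term $-\hal\dt(\D_z q)|Z^\alpha h|^2$ produced when extracting the time derivative and bounded uniformly via \eqref{qminfty}.

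The generic pieces of $\mathcal{R}$ are dispatched routinely by Cauchy--Schwarz combined with the commutator bounds \eqref{Cq}--\eqref{CT}, \eqref{CalphaB}, \eqref{eeest}, and \eqref{bordhC1}; for $\mathcal{D}^\alpha(S^\varphi v)$ I plan to integrate by parts so that one derivative is transferred onto $V^\alpha$, producing a boundary trace that is controlled by \eqref{CalphaB}. The divergence-defect term $\int_\Omega\mathcal{C}^\alpha(d)Z^\alpha q\,d\V$ is bounded via \eqref{Cd} together with the pressure estimate \eqref{qpressurem} of Proposition \ref{proppressure}; this is exactly where the restriction $\alpha_0\le m-1$ is used, keeping only time derivatives of $q$ up to order $m-1$ so that \eqref{qpressurem} is applicable and introduces precisely the quantities $\norm{v}_{\X^m}$, $\norm{\pa_z v}_{\X^{m-1}}$, $|h|_{\X^{m,-1/2}}$, $\sigma|h|_{\X^{m-1,2}}$, and $\eps|h|_{\X^{m-1,3/2}}$ on the right of \eqref{conormales1}. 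Korn's inequality converts the interior dissipation into $\eps\norm{\nabla v}_{\X^{m-1,1}}^2$, modulo lower-order quantities recovered via the equivalence \eqref{equiv1}.

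The hard part is the first boundary term on the right-hand side, the surface tension commutator. Since $Z^\alpha H$ has second-tangential-derivative regularity in $h$ while the surface energy only supplies $\sigma|Z^\alpha h|_1^2$, this term is short by $1/2$ derivative. My strategy is to integrate once by parts in $y$ using \eqref{sigsur1} and recast it as a duality pairing of $\sigma\nabla_y Z^\alpha h$ with a factor whose $\Y^{\cdot}$-norm is bounded by $\linf$; Young's inequality then gives a bound of the form $\sigma|h|_{\X^{m-1,3/2}}\,(\cdots)$, and the regularity $\sigma|h|_{\X^{m-1,3/2}}$ interpolates as $\ls \sigma|h|_{\X^{m-1,2}}+\sigma^2|h|_{\X^{m-1,5/2}}$, the first being part of the pointwise energy in \eqref{conormales1} and the second delivered, in time-integrated form, by the smoothing estimate \eqref{mhsig} of Proposition \ref{hsig}. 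Finally, integrating the energy identity in $t$, passing between $V^\alpha$ and $Z^\alpha v$ by \eqref{equiv1}, adding the viscous smoothing estimates \eqref{mheps1}--\eqref{mheps2}, and absorbing all $\linf$-prefactored remainders into the time integral on the right-hand side of \eqref{conormales1} yields the claim.
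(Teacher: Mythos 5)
Your overall architecture coincides with the paper's: the same good unknowns $V^\alpha,Q^\alpha$, the same energy identity, the divergence defect paired with $Z^\alpha q$ controlled through \eqref{Cd} and \eqref{qpressurem} (which is indeed where $\alpha_0\le m-1$ enters), Korn's inequality for the dissipation, and the viscous and surface-tension smoothing estimates \eqref{mheps1} and \eqref{mhsig} added at the end. The gap is in your treatment of the critical surface-tension commutator $-\int_{z=0}\sigma Z^\alpha H\sum_{|\alpha'|=1}C_\alpha^{\alpha'}Z^{\alpha'}\N\cdot Z^{\alpha-\alpha'}v\,dy$, which arises from the lowest-order piece of $\mathcal{C}^\alpha(h)$ in \eqref{chtilde}. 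You propose to pair $\sigma\nabla_y Z^\alpha h$ against ``a factor whose $\Y$-norm is bounded by $\linf$'' and conclude a bound $\sigma\abs{h}_{\X^{m-1,3/2}}(\cdots)$. But the other factor contains $Z^{\alpha-\alpha'}v$ with $\abs{\alpha-\alpha'}$ up to $m-1$ conormal derivatives of $v$ on the boundary; for $m\ge 14$ this far exceeds the $\tfrac{m}{2}+5$ derivatives that $\linf$ controls, so it cannot be placed in any $L^\infty$-type norm. This term is difficult precisely because \emph{both} factors are top order. If you instead estimate it crudely in $L^2\times L^2$ after one integration by parts in $y$, you get $\sigma\abs{h}_{\X^{m-1,2}}\abs{v}_{\X^{m-1,1}}$, and $\abs{v}_{\X^{m-1,1}}$ on $\{z=0\}$ is not uniformly controlled by the energy (only $\eps\norm{\nabla v}_{\X^{m-1,1}}^2$ is available as dissipation). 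Your subsequent ``interpolation'' $\sigma\abs{h}_{\X^{m-1,3/2}}\ls\sigma\abs{h}_{\X^{m-1,2}}+\sigma^2\abs{h}_{\X^{m-1,5/2}}$ is also not doing any work (the first term alone dominates), so it does not repair the estimate.

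The correct balance of regularity — and the paper's actual argument in \eqref{sigmaes2} — is an $H^{-1/2}$--$H^{1/2}$ duality: since $Z^\alpha H$ is a tangential divergence, $\sigma\abs{Z^\alpha H}_{-1/2}\le\Lambda_\infty\,\sigma\abs{h}_{\X^{m-1,5/2}}$, while $\abs{Z^{\alpha-\alpha'}v_y\cdot\nabla_y Z^{\alpha'}h}_{1/2}\le\Lambda_\infty\abs{v}_{\X^{m-1,1/2}}$ by \eqref{gues2}, and the latter is controlled by the trace estimate through $\norm{v}_{\X^{m-1}}+\norm{\nabla v}_{\X^{m-1}}$, which sits on the right-hand side of \eqref{conormales1} via $\norm{\pa_z v}_{\X^{m-1}}^2$. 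The resulting product $\sigma\abs{h}_{\X^{m-1,5/2}}\abs{v}_{\X^{m-1,1/2}}$ is then handled by Cauchy's inequality and absorbed using the $\sigma^2\abs{h}_{\X^{m-1,5/2}}^2$ dissipation of \eqref{mhsig} — so the half derivative must be shifted \emph{onto} $h$ (up to $5/2$), not off of it (down to $3/2$) as you propose. Until this term is estimated this way, the energy inequality does not close.
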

\begin{proof}
The energy identity for the equations \eqref{eqValpha}--\eqref{divValpha} yields
\beq \label{conormen}
\hal\dtt\int_{\Omega}\abs{V^\alpha }^2 d\V + 2 \eps \int_{\Omega} \abs{S^\varphi V^\alpha}^2\, d\V
\\=\mathcal{I}_0^\alpha+\mathcal{I}_b^\alpha+ \mathcal{R}_{C}^\alpha+\mathcal{R}_{S}^\alpha,
\eeq
where
\begin{align}
\label{I0}&\mathcal{I}_0^\alpha=\int_{z= 0}\( 2 \eps S^\varphi V^\alpha  - Q^\alpha \mbox{I} \) \N \cdot V^\alpha\, dy
\\
\label{Ib}&\mathcal{I}_b^\alpha=-\int_{z= -b}\( 2 \eps S^\varphi V^\alpha  - Q^\alpha \mbox{I} \) e_3 \cdot V^\alpha\, dy,
\\\label{RC} & \mathcal{R}_{C}^\alpha=\int_{\Omega}\( \( \D_{z}v  \cdot \nabla^\varphi v Z^\alpha \eta-  \mathcal{C}^\alpha(\mathcal{T})-
\mathcal{C}^\alpha(q)\) \cdot V^\alpha-\mathcal{C}^\alpha (d) Q^\alpha \) \, d\V,
\\
\label{RS}&\mathcal{R}_{S}^\alpha=\int_{\Omega} \( \eps \mathcal{D}^\alpha(S^\varphi v)  + \eps \nabla^\varphi \cdot \big( \mathcal{E}^\alpha (v)
\) \cdot V^\alpha\, d\V.
\end{align}

We first estimate $\mathcal{I}_0^\alpha$. The boundary condition \eqref{bordV} implies
\begin{align}\label{I0'}
\mathcal{I}_0^\alpha& =  \int_{z= 0} \big( 2 \eps S^\varphi V^\alpha  - Z^\alpha q \mbox{I} +\D_z q Z^\alpha\eta I\big) \N \cdot V^\alpha\,dy
\\& \nonumber =  \int_{z= 0}  - (g-\D_zq) Z^\alpha h \N\cdot V^\alpha\, dy+\int_{z= 0}  \sigma Z^\alpha H \N\cdot
V^\alpha\, dy+ \int_{z=0}  \eps \mathcal{C}^\alpha(\mathcal{B}_\eps)\cdot V^\alpha \, dy
\\\nonumber  &  \quad- \int_{z= 0} 2 \eps S^\varphi v\Pi Z^\alpha \N \cdot V^\alpha \, dy
-\int_{z=0} 2 \eps Z^\alpha h\, \D_{z} \big( S^\varphi v\big) \N  \cdot V^\alpha \, dy.
\end{align}
By \eqref{CalphaB}, the third term in the right hand side of \eqref{I0'} can be bounded by
\beq\label{m11}
\abs{\int_{z=0}  \eps \mathcal{C}^\alpha(\mathcal{B}_\eps)\cdot V^\alpha \, dy}\le \Lambda_{\infty}\eps \(\abs{v }_{\X^{m-1,1}}  + \abs{h
}_{\X^{m-1,1}}\)\abs{V^\alpha}_0.
 \eeq
Due to \eqref{cont2D}, it holds that
\begin{align}\label{m12}
\abs{ \int_{z= 0}2 \eps  S^\varphi v\Pi Z^\alpha \N \cdot V^\alpha \, dy}&\le 2\eps\abs{  Z^\alpha \nabla h }_{-{1 \over 2 }} \abs{S^\varphi v\Pi
V^\alpha}_{{1 \over 2}}
\\\nonumber&
\le \Lambda_{\infty}  \eps \abs{h}_{\X^{m-1,{3 \over 2}}} \abs{V^\alpha}_{\hal}.
\end{align}
Note that $\Lambda_\infty$ involves $\sqrt{\varepsilon}||\partial_{zz} v||_{L^\infty}$, one has
\begin{align}\label{m13}
\abs{ \int_{z=0} 2 \eps Z^\alpha h\, \D_{z} \( S^\varphi v\) \N  \cdot V^\alpha \, dy}
&\leq 2 \eps  \abs{Z^\alpha h}_{0}\norm{\D_{z} \( S^\varphi v\) \N }_{L^\infty} \abs{V^\alpha}_{0}
\\\nonumber&\leq \linf\eps^{\hal}  |h|_{\X^{m-1,1}} \abs{V^\alpha}_{0}.
\end{align}

For the first gravity term, one may use the boundary condition \eqref{bordh} to rewrite it as
 \begin{align}\label{gravity0}
  &\int_{z= 0}  - (g-\D_zq) Z^\alpha h \N\cdot
   V^\alpha\,dy
 \\\nonumber&\quad= \int_{z=0}  - (g-\D_zq) Z^\alpha h \(\partial_{t} Z^\alpha h+ v_y \cdot \nabla_{y}Z^\alpha h +\D_z v \cdot \N
 Z^\alpha h- \mathcal{C}^\alpha(h)\)\,dy.
    \end{align}
Integrating by parts in $t$ and using \eqref{qminfty} lead to
\beq\label{gravity1}
  \int_{z=0} -\(g-\D_zq\) \, Z^\alpha h  \partial_{t} Z^\alpha h\,dy
     \le -\hal \dtt  \int_{z=0} \( g- \D_{z} q\) \abs{Z^\alpha h }^2\,dy+\Lambda_\infty \abs{h}_{\X^{m-1,1}}^2.
\eeq
The integration by parts in $y$ gives
\beq\label{gravity2}
\abs{ \int_{z=0}  \(g-\D_zq\) Z^\alpha h v_y \cdot \nabla_{y}Z^\alpha h\, dy}\le   \Lambda_\infty\abs{h}_{\X^{m-1,1}}^2.
\eeq
Due to \eqref{bordhC}, one has
\beq\label{gravity3}
\abs{ \int_{z=0} \(g-\D_zq\) Z^\alpha h \(\D_z v \cdot \N Z^\alpha h- \mathcal{C}^\alpha(h)\)\, dy}
\leq  \Lambda_\infty    \abs{h}_{\X^{m-1,1}}\( \abs{h}_{\X^{m-1,1}} + \abs{ v}_{\X^{m-1}}\).
\eeq
Hence, in light of the estimates \eqref{gravity1}--\eqref{gravity3}, one may conclude from \eqref{gravity0} that
 \begin{align}\label{m14}
 &\int_{z= 0}  - (g-\D_zq) Z^\alpha h \N\cdot
   V^\alpha\,dy
 \\\nonumber&\quad\le - {1 \over 2} {d \over dt}  \int_{z=0} \( g- \D_{z} q\) \abs{Z^\alpha h }^2\,dy
+\Lambda_\infty\( \abs{h}_{\X^{m-1,1}}^2 +  \abs{ v}_{\X^{m-1}}^2\).
    \end{align}

To deal with the second term involving surface tension, one has by \eqref{sigsur1} that
  \begin{align}\label{sigmaes0}
   \int_{z= 0}  \sigma Z^\alpha H \N\cdot
   V^\alpha\,dy=\int_{z= 0}\sigma \nabla_y\cdot \( \frac{ \nabla_y Z^\alpha h}{\sqrt{1+|\nabla_y h|^2}}- \frac{\nabla_y h\cdot \nabla_y Z^\alpha
   h}{\sqrt{1+|\nabla_y h|^2}^3}\nabla_y h
  +\mathcal{C}(\mathcal{B}_\sigma^\alpha) \)\N\cdot  V^\alpha\,dy,
    \end{align}
where $ \mathcal{C}(\mathcal{B}_\sigma^\alpha)$ is defined by \eqref{sigsur2}. By Lemma \ref{lembordh}, one may deduce
  \begin{align}\label{sigmaes1}
   \int_{z= 0}  \sigma\nabla_y\cdot\mathcal{C}(\mathcal{B}_\sigma^\alpha) \N\cdot  V^\alpha\,dy
  \nonumber &\le \sigma\abs{  \nabla_y\cdot\mathcal{C}(\mathcal{B}_\sigma^\alpha)}_0\abs{\partial_{t} Z^\alpha h + v_y \cdot\nabla_y Z^\alpha h
  + \D_z v \cdot \N
 Z^\alpha h- \mathcal{C}^\alpha(h)}_0
  \\& \le \Lambda_\infty\sigma \abs{h}_{\X^{m-1,2}}\(\abs{ h}_{\X^{m ,1}}+\abs{h}_{\X^{m-1,2}}+\abs{h}_{\X^{m-1,1}}+\abs{v}_{\X^{m-1}}\)\nonumber
  \\& \le \Lambda_\infty \sigma \abs{h}_{\X^{m-1,2}}\(\abs{ h}_{\X^{m ,1}} +\abs{v}_{\X^{m-1}}\).
    \end{align}
To study the other two terms, one rewrite it as, by using the boundary condition \eqref{bordh} again,
 \begin{align} & \int_{z= 0}
   \sigma \nabla_y\cdot \(  \frac{ \nabla_y Z^\alpha h}{\sqrt{1+|\nabla_y h|^2}}- \frac{\nabla_y h\cdot \nabla_y Z^\alpha h}{\sqrt{1+|\nabla_y
   h|^2}^3}\nabla_y h\)\N\cdot V^\alpha  \,dy
   \\\nonumber&\quad = \int_{z=0}   \sigma \nabla_y\cdot \(  \frac{ \nabla_y Z^\alpha h}{\sqrt{1+|\nabla_y h|^2}}- \frac{\nabla_y h\cdot \nabla_y
   Z^\alpha h}{\sqrt{1+|\nabla_y h|^2}^3}\nabla_y h \)\(\partial_{t} Z^\alpha h+v_y \cdot \nabla_{y}Z^\alpha h\)\,dy
    +\mathcal{R}_{\mathcal{B}_\sigma^1}^\alpha,
    \end{align}
where
\beq
\mathcal{R}_{\mathcal{B}_\sigma^1}^\alpha= \int_{z=0}   \sigma \nabla_y\cdot \(  \frac{ \nabla_y Z^\alpha h}{\sqrt{1+|\nabla_y h|^2}}+
\frac{\nabla_y h\cdot \nabla_y Z^\alpha h}{\sqrt{1+|\nabla_y h|^2}^3}\nabla_y h \) \(\D_zv\cdot\N Z^\alpha h-   \mathcal{C}^\alpha(h) \) dy.
 \eeq
It follows from an integration by parts, \eqref{chtilde} and \eqref{bordhC1} that
 \begin{align}\label{sigmaes2}
 \mathcal{R}_{\mathcal{B}_\sigma^1}^\alpha&\ls \sigma\abs{   \frac{ \nabla_y Z^\alpha h}{\sqrt{1+|\nabla_y h|^2}}- \frac{\nabla_y h\cdot \nabla_y
 Z^\alpha h}{\sqrt{1+|\nabla h|^2}^3}\nabla_y h  }_{0}\abs{\nabla_y\(\D_zv\cdot\N Z^\alpha h-\tilde{\mathcal{C}}^\alpha(h)\)}_{0}
  \\\nonumber&\quad + \sigma \abs{\nabla_y\cdot \(  \frac{ \nabla_y Z^\alpha h}{\sqrt{1+|\nabla_y h|^2}}- \frac{\nabla_y h\cdot \nabla_y Z^\alpha
  h}{\sqrt{1+|\nabla h|^2}^3}\nabla_y h \)}_{-\hal}\abs{Z^{\alpha-\alpha_1} v_y\cdot \nabla_{y} Z^{ \alpha_1}h }_{\hal}
 \\&\nonumber \le \Lambda_\infty  \(\sigma\abs{h}_{\X^{m-1,2}}\(\abs{h}_{\X^{m-1,2}}+\abs{v}_{\X^{m-2,1}}\) +\sigma\abs{h}_{\X^{m-1,\frac{5}{2}}}
 \abs{v}_{\X^{m-1,\hal}} \).
    \end{align}
Integrating by parts in both $y$ and $t$, one finds that
\begin{align}\label{sigmaes3}
&\int_{z=0}   \sigma \nabla_y\cdot \(  \frac{ \nabla_y Z^\alpha h}{\sqrt{1+|\nabla_y h|^2}}- \frac{\nabla_y h\cdot \nabla_y Z^\alpha
h}{\sqrt{1+|\nabla h|^2}^3}\nabla_y h \)\partial_{t} Z^\alpha h\,dy
\\\nonumber&\quad=-\frac{1}{2}\frac{d}{dt}\int_{z=0}  \sigma\( \frac{  |\nabla_y Z^\alpha h|^2 }{\sqrt{1+|\nabla_y h|^2}}-  \frac{ |\nabla_y
h\cdot \nabla_y Z^\alpha h|^2  }{\sqrt{1+|\nabla_y h|^2}^3}   \)\,dy+\mathcal{R}_{\mathcal{B}_\sigma^2}^\alpha,
   \end{align}
where
\begin{align}\label{sigmaes4}
\mathcal{R}_{\mathcal{B}_\sigma^2}^\alpha
&=\hal\int_{z=0}  \sigma\(\partial_t\( \frac{ 1  }{\sqrt{1+|\nabla_y h|^2}}\)|\nabla_y Z^\alpha h|^2-  \partial_t\(\frac{ 1}{\sqrt{1+|\nabla_y
h|^2}^3}\) |\nabla_y h\cdot \nabla_y Z^\alpha h|^2\right.  \\\nonumber&\qquad\qquad\qquad\left. - \frac{\nabla_y h\cdot \nabla_y Z^\alpha
h}{\sqrt{1+|\nabla_y h|^2}^3} \nabla_y \partial_{t} h\cdot \nabla_y Z^\alpha h \)\,dy
\\\nonumber&\le \Lambda_\infty \sigma \abs{h}_{\X^{m-1,2}}^2.
   \end{align}
Similarly, the integration by parts twice yields
  \begin{align}\label{sigmaes5}
  - \int_{z=0}   \sigma \nabla_y\cdot \(  \frac{ \nabla_y Z^\alpha h}{\sqrt{1+|\nabla_y h|^2}}- \frac{\nabla_y h\cdot \nabla_y Z^\alpha
  h}{\sqrt{1+|\nabla h|^2}^3}\nabla_y h \)  v_y \cdot \nabla_{y}Z^\alpha h \, dy\le \Lambda_\infty \sigma \abs{h}_{\X^{m-1,2}}^2.
    \end{align}
Hence, by the estimates \eqref{sigmaes1}, \eqref{sigmaes2}--\eqref{sigmaes5}, one may conclude from \eqref{sigmaes0} that
 \begin{align}\label{m15}
 \int_{z= 0}  \sigma Z^\alpha H \N\cdot
   V^\alpha\, dy
 \le& -\frac{1}{2}\frac{d}{dt}\int_{z=0}  \sigma\( \frac{  |\nabla_y Z^\alpha h|^2 }{\sqrt{1+|\nabla_y h|^2}}-  \frac{ |\nabla_y h\cdot \nabla_y
 Z^\alpha h|^2  }{\sqrt{1+|\nabla_y h|^2}^3}   \)\,dy
 \\\nonumber&+\Lambda_\infty    \(\sigma\abs{h}_{\X^{m-1,2}} \abs{h}_{\X^{m,1}}+\sigma\abs{h}_{\X^{m-1,\frac{5}{2}}} \abs{v}_{\X^{m-1,\hal}} \).
    \end{align}
Note also that Lemma \ref{sobbord} implies that
\beq \label{equiv12}
\abs{V^\alpha}_0\ls \abs{v}_{\X^{m-1,1}}+\linf\abs{h}_{\X^{m-1,1}}\text{ and }\abs{V^\alpha}_\hal\ls
\abs{v}_{\X^{m-1,\frac{3}{2}}}+\linf\abs{h}_{\X^{m-1,\frac{3}{2}}}.
\eeq
Consequently, plugging the estimates \eqref{m11}--\eqref{m13}, \eqref{m14} and \eqref{m15} into \eqref{I0'}, by \eqref{equiv12} and Cauchy's
inequality, one may finish the
estimates of $\mathcal{I}_0^\alpha$ as:
 \begin{align}\label{I0estimate}
\mathcal{I}_0^\alpha
\le& -\frac{1}{2}\frac{d}{dt}\int_{z=0} \( g- \D_{z} q\) \abs{Z^\alpha h }^2+ \sigma\( \frac{  |\nabla_y Z^\alpha h|^2 }{\sqrt{1+|\nabla_y
h|^2}}-  \frac{ |\nabla_y h\cdot \nabla_y Z^\alpha h|^2  }{\sqrt{1+|\nabla_y h|^2}^3}   \)\,dy
 \\\nonumber&+\Lambda_\infty\( \abs{h}_{\X^{m-1,1}}^2 +    \abs{ v}_{\X^{m-1}}^2 + \sigma   \abs{h}_{\X^{m-1,2}}
 \abs{h}_{\X^{m,1}}+\sigma\abs{h}_{\X^{m-1,\frac{5}{2}}} \abs{v}_{\X^{m-1,\hal}}    \right.
   \\\nonumber& \qquad\quad\left.+
    \eps\(\abs{v }_{\X^{m-1,1}}^2
  +  \abs{h}_{\X^{m-1,{3 \over 2}}}^2+ \abs{h}_{\X^{m-1,\frac{3}{2}}} \abs{v}_{\X^{m-1,\frac{3}{2}}}\) \).
    \end{align}

It also follows from \eqref{eeest} and \eqref{equiv12} that $\mathcal{I}_b^\alpha$ admits the following bound:
\begin{align}\label{Ibestimate}
\mathcal{I}_b^\alpha&=-\int_{z= -b}\( 2 \eps S^\varphi V^\alpha e_3 \)_i \cdot V_i^\alpha\, dy=-\int_{z= -b}2\eps\( \kappa
V^\alpha_i-\mathcal{E}^\alpha(v)_{i3}\)  \cdot V_i^\alpha\, dy
 \\\nonumber&\le \linf \eps \( \abs{V^\alpha}_0+\abs{ v }_{\X^{m-1,1}} +  \abs{h}_{\X^{m-1,1}}\)\abs{V^\alpha}_0
  \\\nonumber&\le \linf \eps \( \abs{ v }_{\X^{m-1,1}}^2 +  \abs{h}_{\X^{m-1,1}}^2\).
\end{align}

Next, the commutator $\mathcal{R}_C$ is estimated by using \eqref{CT}, \eqref{Cq}, \eqref{Cd}, \eqref{equiv1}, \eqref{qpressurem} and
 \eqref{qminfty} as
  \begin{align}\label{RCestimate}
  \mathcal{R}_{C}^\alpha &\le \linf\(\( \norm{ Z^\alpha \eta}+\norm{\mathcal{C}^\alpha(\mathcal{T})}+\norm{\mathcal{C}^\alpha(q)}\) \norm{
  V^\alpha}+\norm{\mathcal{C}^\alpha (d)}\norm{Q^\alpha}\)
  \\\nonumber& \le \Lambda_\infty\(\abs{h}_{\X^{m,-\hal}}+ \norm{v}_{\X^{m-1}}+\norm{\nabla v}_{\X^{m-1}}+ \norm{\nabla
  q}_{\X^{m-1}}\)\(\norm{v}_{\X^{m-1,1}}+ \abs{h}_{\X^{m-1,\hal}}\)
  \\\nonumber&\quad+\Lambda_\infty \(\norm{\nabla v}_{\X^{m-1}}+ |h|_{\X^{m-1,{1 \over 2 }}}\)\(\norm{q}_{\X^{m-1,1}}+ \abs{h}_{\X^{m-1,\hal}}\)
   \\\nonumber &\le \linf\(\abs{h}_{\X^{m,-{1 \over 2 }}}+\sigma \abs{h}_{\X^{m-1,2}}+\norm{v}_{\X^{m}}  + \norm{\nabla v}_{\X^{m-1}} +
   \eps\abs{  v }_{{\X^{m-1, \frac{3}{2}}}}+\eps\abs{h}_{{\X^{m-1,\frac{3}{2}}}}\)
  \\\nonumber&\quad\times \(\norm{v}_{\X^{m-1,1}}+\norm{\nabla v}_{\X^{m-1}}+ \abs{h}_{\X^{m-1,\hal}}\).
  \end{align}

 It remains to estimate the commutator $\mathcal{R}_{S}^\alpha$. First, it follows from the integration by parts, \eqref{CE}
 and \eqref{eeest} that
\begin{align}\label{RSe}
&\int_{\Omega} \eps \nabla^\varphi( \mathcal{E}^\alpha(v) \big) \cdot V^\alpha d \V=- \int_{\Omega} \eps\, \mathcal{E}^\alpha(v) \cdot \nabla
V^\alpha d \V+ \int_{z=0}\eps\,  \mathcal{E}^\alpha(v) \N \cdot V^\alpha \, dy
\\\nonumber&\quad\leq \Lambda_{\infty}\eps\(\( \norm{\nabla v}_{\X^{m-1}}+  \abs{h}_{\X^{m-1,\hal}}\)  \norm{\nabla V^\alpha} + \(
\abs{h}_{\X^{m-1,1}} + \abs{v}_{\X^{m-1,1}}\)\abs{V^\alpha}_0\).
\end{align}
Next, for the first term, one actually has to estimate
\begin{align}\label{RSi}
\mathcal{R}_{Si}^\alpha  & =  \eps \int_{\Omega} \mathcal C^\alpha_{j}(S^\varphi v)_{ij}  V^\alpha_{j} d \V \\
&\nonumber=\eps \int_{\Omega} \mathcal C^\alpha_{j, 1}(S^\varphi v)_{ij}  V^\alpha_{j} d \V+ \eps \int_{\Omega} \mathcal C^\alpha_{j,
2}(S^\varphi v)_{ij}  V^\alpha_{j} d \V+  \eps \int_{\Omega} \mathcal C^\alpha_{j, 3}(S^\varphi v)_{ij}  V^\alpha_{j} d \V \\
&\nonumber: = \mathcal{R}_{Si}^{\alpha,1} + \mathcal{R}_{Si}^{\alpha,2} + \mathcal{R}_{Si}^{\alpha,3}
\end{align}
due to \eqref{Cialpha}. For $\mathcal{R}_{Si}^{\alpha,1}$, by \eqref{Cialpha1}, it suffices to estimate terms like
$$\eps  \int_{\Omega}  Z^\beta \big( {\partial_{j} \varphi \over \partial_{z} \varphi}\big) \big(Z^{\tilde{\gamma}} \partial_{z}(S^\varphi
v)_{ij} \big)V^\alpha_{j} d \V,$$
where $\beta $ and $\tilde\gamma$ are such that $\beta \neq 0, \, \tilde \gamma \neq 0$ and $|\beta | + |\tilde \gamma|=m.$ By using
\eqref{idcom}, one can reduce the problem to the estimate of
$$ \eps  \int_{\Omega}  c_{\gamma} Z^\beta \big( {\partial_{j} \varphi \over \partial_{z} \varphi}\big)  \partial_{z}\big( Z^\gamma (S^\varphi
v)_{ij} \big)V^\alpha_{j} d \V$$
with $\beta$ as before (thus $| \beta | \leq m-1$) and $| \gamma | \leq | \tilde \gamma|\leq m-1.$ The integration by parts shows that it
suffices
to estimate three types of terms:
$$  \mathcal{I}_{1}=  \eps\int_{\Omega} Z^\beta \big( {\partial_{j} \varphi \over \partial_{z} \varphi } \big)\,Z^\gamma (S^\varphi
v)_{ij}\,\partial_{z} V_{j}^\alpha d \V,$$
$$\mathcal{I}_{2}=\eps\int_{\Omega} \Big( \partial_{z}Z^\beta \big( {\partial_{j} \varphi \over \partial_{z} \varphi }  \big) \Big)Z^\gamma
(S^\varphi v)_{ij} V_{j}^\alpha d \V,$$
and
$$ \mathcal{I}_{3}= \eps \int_{z=0} Z^\beta \big( {\partial_{j} \varphi \over \partial_{z} \varphi } \big)\, Z^\gamma (S^\varphi v)_{ij}
V_{j}^\alpha dy.$$
For $\mathcal{I}_1$ and $\mathcal{I}_2$,  since $\beta \neq 0$, it follows from \eqref{gues}, \eqref{quot} and Lemma \ref{propeta} that
\beq\nonumber
\abs{\mathcal{I}_{1}} \leq\Lambda_{\infty}\eps\(  \norm{ \nabla v}_{\X^{m-1}}+\abs{h}_{\X^{m-1,{1 \over 2}}} \) \norm{ \nabla V^\alpha}
\eeq
and
$$\abs{ \mathcal{I}_{2}} \le\Lambda_{\infty}\eps\(   \norm{ \nabla v}_{\X^{m-1}}+\abs{h}_{\X^{m-1,{3 \over 2}}} \) \norm{   V^\alpha}.$$
By \eqref{gues}, $\beta \neq 0$ and Lemma \ref{lembord}, it holds that
$$\abs{\mathcal{I}_{3}}\leq\linf\eps\( \abs{h}_{\X^{m-1,1} } +  \abs{\nabla v}_{\X^{m-1}} \)\abs{V^\alpha}_0 \leq\linf\eps\( \abs{h}_{\X^{m-1,1}
} +  \abs{ v}_{\X^{m-1,1}} \)\abs{V^\alpha}_0 .$$
Consequently, one can get from the previous three estimates that
\begin{align}\label{RSi1}
\abs{\mathcal{R}_{Si}^{\alpha,1}}\leq\Lambda_{\infty}\eps&\(\(  \norm{ \nabla v}_{\X^{m-1}}+  \abs{h}_{\X^{m-1,\hal }} \) \norm{ \nabla V^\alpha}
+\(   \norm{ \nabla v}_{\X^{m-1}}+\abs{h}_{\X^{m-1,{3 \over 2}}} \) \norm{   V^\alpha}\right.
\\\nonumber&\left.+ \( \abs{h}_{\X^{m-1,1} } + \abs{ v}_{\X^{m-1,1}}  \)\abs{V^\alpha}_0 \) .
\end{align}
 The estimate of $\mathcal{R}_{Si}^{\alpha,2}$ is straightforward, one gets from the definition \eqref{Cialpha2} that
\beq\label{RSi2}
\abs{\mathcal{R}_{Si}^{\alpha,2}}\leq\Lambda_{\infty}\eps^{\hal}\abs{h}_{\X^{m-1,\hal }}\norm{V^\alpha}.
\eeq
To estimate $\mathcal{R}_{Si}^{\alpha,3}$, one derives from \eqref{Cialpha3} and \eqref{idcom} that
$$\eps \abs{\int_{\Omega}  {\partial_{i}\varphi \over (\partial_{z}\varphi)^2} \partial_{z}\big( S^\varphi v)  [Z^\alpha, \partial_{z}]\varphi
V_{j}^\alpha d\V} \leq\Lambda_{\infty}\eps^{\hal}\abs{h}_{\X^{m-1,\hal }}\norm{V^\alpha}.$$
Note that one has used again in the previous two estimates the fact that $\Lambda_\infty$ involves $
\eps^{\hal}\norm{\partial_{zz}v}_{L^\infty}$. For the term
$$\eps \int_{\Omega}{\partial_{i}\varphi \over \partial_{z} \varphi}  V^\alpha \, [Z^\alpha, \partial_{z}](S^\varphi v)  d\V, $$
performing an integration by parts and using a similar arguments as for $\mathcal{R}_{Si}^{1}$ show that
$$\abs{\int_{\Omega}{\partial_{i}\varphi \over \partial_{z} \varphi}  V^\alpha \, [Z^\alpha, \partial_{z}](S^\varphi v)  d\V}\leq\linf \eps
\norm{\nabla v}_{\X^{m-1}}\( \norm{V^\alpha} + \norm{\nabla  V^\alpha}\).$$
Consequently,
\beq\label{RSi3}
\abs{\mathcal{R}_{Si}^{\alpha,3}}\leq\linf\( \eps^{\hal}\abs{h}_{\X^{m-1,\hal }}\norm{V^\alpha}+\eps\norm{\nabla v}_{\X^{m-1}}\( \norm{V^\alpha}
+ \norm{\nabla  V^\alpha}\) \) .
\eeq
It then follows from \eqref{RSi1}--\eqref{RSi3} that
\begin{align}
\eps\abs{ \int_{\Omega} \mathcal D^\alpha(S^\varphi v) \cdot V^\alpha d \V}&\leq\linf\(\(\eps^{\hal}\abs{h}_{\X^{m-1,\hal}}+\eps
\abs{h}_{\X^{m-1, \frac{3}{2}}}+\eps\norm{\nabla v}_{\X^{m-1}}\)\norm{V^\alpha}\right.
\\\nonumber&\left.\quad +\eps\(  \norm{ \nabla v}_{\X^{m-1}}+  \abs{h}_{\X^{m-1,\hal }} \) \norm{ \nabla V^\alpha}+ \( \abs{h}_{\X^{m-1,1} } +
\abs{ v}_{\X^{m-1,1}} \)\abs{V^\alpha}_0 \).
\end{align}
This, together with \eqref{RSe}, \eqref{equiv1} and \eqref{equiv12}, implies that
\begin{align}\label{RSestimate}
\mathcal{R}_{S}^\alpha\leq&\linf\(\(\eps^{\hal}\abs{h}_{\X^{m-1,\hal}}+\eps \abs{h}_{\X^{m-1, \frac{3}{2}}}+\eps\norm{\nabla
v}_{\X^{m-1}}\)\(\norm{v}_{\X^{m-1,1}}+\abs{h}_{\X^{m-1,\hal}}\)\right.
\\\nonumber&\left.\qquad +\eps\(  \norm{ \nabla v}_{\X^{m-1}}+  \abs{h}_{\X^{m-1,\hal }} \) \norm{ \nabla V^\alpha}+ \abs{h}_{\X^{m-1,1} }^2 +
\abs{  v}_{\X^{m-1,1}}^2\).
\end{align}

We can now finish the proof of the proposition. By the estimates \eqref{I0estimate}--\eqref{RCestimate} and \eqref{RSestimate},  the trace
estimates
\beq
\abs{v}_{\X^{m-1,\hal}}\ls \norm{v}_{\X^{m-1}}+\norm{\nabla v}_{\X^{m-1}}\text{ and }\abs{v}_{\X^{m-1,1}}\ls \norm{\nabla
v}_{\X^{m-1,1}}^\hal\norm{v}_{\X^{m-1,1}}^\hal+\norm{v}_{\X^{m-1,1}} ,
\eeq
 using Cauchy's inequality, one may deduce from \eqref{conormen} that
\begin{align}\label{alphaend1}
&\hal\dtt \mathcal{E}^\alpha + 2 \eps \int_{\Omega} \abs{S^\varphi V^\alpha}^2\, d\V
\\\nonumber&\quad \le\Lambda_\infty\( \abs{h}_{\X^{m-1,1}}^2 + \sigma   \abs{h}_{\X^{m-1,2}}
\abs{h}_{\X^{m,1}}+\sigma\abs{h}_{\X^{m-1,\frac{5}{2}}}\(\norm{v}_{\X^{m-1,1}}+\norm{\nabla v}_{\X^{m-1}}\) \right.
  \\\nonumber&\qquad\qquad+\abs{h}_{\X^{m,-\hal}}^2+ \norm{\nabla v}_{\X^{m-1}}^2 + \norm{v}_{\X^{m}}^2 + \eps  \abs{h}_{\X^{m-1, \frac{3}{2}}}^2
  \\\nonumber&\qquad\qquad+ \eps\abs{  v }_{{\X^{m-1, \frac{3}{2}}}} \(\norm{v}_{\X^{m-1,1}}+\norm{\nabla
  v}_{\X^{m-1}}+\abs{h}_{{\X^{m-1,\frac{3}{2}}}}\)
\\\nonumber&\left.\qquad \qquad+\eps\(  \norm{ \nabla v}_{\X^{m-1}}+  \abs{h}_{\X^{m-1,\hal }} \) \norm{ \nabla V^\alpha}  \),
\end{align}
where
\beq
\mathcal{E}^\alpha:=\int_{\Omega}\abs{V^\alpha }^2 d\V +\int_{z=0} \( g- \D_{z} q\) \abs{Z^\alpha h }^2+ \sigma\( \frac{  |\nabla_y Z^\alpha h|^2
}{\sqrt{1+|\nabla_y h|^2}}-  \frac{ |\nabla_y h\cdot \nabla_y Z^\alpha h|^2  }{\sqrt{1+|\nabla_y h|^2}^3}   \)\,dy.
\eeq
It follows from \eqref{equiv1}, the Taylor sign condition in \eqref{apriori} and \eqref{sigsur3} that
$$\norm{Z^\alpha v}^2+\abs{Z^\alpha h}_0^2+\sigma\abs{Z^\alpha h}_1^2 \leq \Lambda\({1\over c_{0}}\)  \mathcal{E}^\alpha.$$
One can use the Korn inequality of Lemma \ref{Korn} and \eqref{equiv1} to get that
$$\norm{\nabla V^\alpha}^2 \leq \Lambda\({1\over c_{0}}\) \(\int_{\Omega} \abs{S^\varphi V^\alpha}^2\, d\V + \norm{v}_{\X^{m-1,1}}^2+
\abs{h}_{\X^{m-1,\hal}}^2\).$$
On the other hand, by the definition of $V^\alpha$,
$$\eps \norm{\nabla Z^\alpha v}^2\le \eps \norm{\nabla V^\alpha}^2+\linf\(\eps\norm{h}_{\X^{m-1,\frac{3}{2}}}^2+\norm{h}_{\X^{m-1,\hal}}^2\).$$
Then integrating \eqref{alphaend1} in time, using the trace estimate $\abs{  v }_{{\X^{m-1, \frac{3}{2}}}}\ls \norm{\nabla v }_{{\X^{m-1,
1}}}+\norm{  v }_{{\X^{m-1, 1}}}$ and Cauchy's inequality, together with \eqref{l2estimate}, one deduces that
\begin{align}\label{alphaend2}
&\norm{v(t)}_{\X^{m-1,1}}^2+\abs{h(t)}_{\X^{m-1,1}}^2 +\sigma\abs{h(t)}_{\X^{m-1,2}}^2+\eps\int_0^t  \norm{\nabla v}_{\X^{m-1,1}}^2
\\\nonumber&\quad\le \Lambda_0\( \norm{v(0)}_{\X^{m-1,1}}^2+\abs{h(0)}_{\X^{m-1,1}}^2 +\sigma\abs{h(0)}_{\X^{m-1,2}}^2\)
\\\nonumber&\qquad+ \int_0^t \Lambda_\infty\( \abs{h}_{\X^{m-1,1}}^2 + \sigma   \abs{h}_{\X^{m-1,2}}
\abs{h}_{\X^{m,1}}+\sigma\abs{h}_{\X^{m-1,\frac{5}{2}}}\(\norm{v}_{\X^{m-1,1}}+\norm{\nabla v}_{\X^{m-1}}\)  \right.
  \\\nonumber&\qquad\qquad\left.+\abs{h}_{\X^{m,-\hal}}^2+ \norm{\nabla v}_{\X^{m-1}}^2 + \norm{v}_{\X^{m}}^2 + \eps  \abs{h}_{\X^{m-1,
  \frac{3}{2}}}^2\).
\end{align}
This, together with \eqref{mheps1}, \eqref{mhsig} and Cauchy's inequality, leads to \eqref{conormales1}.
\end{proof}

%%%%%%%%%%%%%%%%%%%%%%%%%%%%%%%%%%%%%%%%%%%%%%%%%%%%%%
\subsection{Estimate of $(\dt^m v, \dt^m h)$}\label{secconormal2}
%%%%%%%%%%%%%%%%%%%%%%%%%%%%%%%%%%%%%%%%%%%%%%%%%%%%%%

We now derive the energy estimates of $(\dt^m v, \dt^m h)$, that is, the case $\alpha_0=m$.
\begin{prop}\label{conormvm}
Any smooth solution of \eqref{NSv} satisfies the estimate
 \begin{align}\label{alphaend2m1ll}
&\int_0^t\(\norm{\dt^m v }_0^2+\abs{\dt^m h }_{0}^2 +\sigma\abs{\dt^m h }_{1}^2+ \eps  \abs{h }_{\X^{m , \hal}}^2 \)^2+\int_0^t\(\eps\int_0^s
\norm{\nabla \dt^m v}_{0}^2\)^2
\\\nonumber&\quad\le t\Lambda_0\( \norm{\dt^m v(0)}_0^2+\abs{\dt^m h(0)}_{0}^2 +\sigma\abs{\dt^m h(0)}_{1}^2+ \eps  \abs{h(0)}_{\X^{m ,
\hal}}^2\)^2
\\\nonumber&\qquad+ t\(\int_0^t\Lambda_\infty\( \abs{h}_{\X^{m }}^2 + \sigma     \abs{h}_{\X^{m,1}}^2
  + \norm{v}_{\X^{m}}^2 + \norm{\pa_z v}_{\X^{m-1}}^2+ \eps  \norm{\nabla v}_{\X^{m-1,1}}^2+ \eps  \abs{h}_{\X^{m , \hal}}^2  \)\)^2
   \\\nonumber &\qquad +\int_0^t\linf \(\abs{h}_{\X^{m-1,\hal }}^2+\sigma \abs{h}_{\X^{m-1,2 }}^2+\norm{v}_{\X^{m-1,1 }}^2+\norm{\pa_z
   v}_{\X^{m-2 }}^2\)  \\\nonumber&\qquad\qquad \times\(\abs{h}_{\X^{m,-{1 \over 2 }}}^2+\sigma \abs{h}_{\X^{m,1}}^2 +\norm{v}_{\X^{m}}^2  +
   \norm{\pa_z v}_{\X^{m-1}}^2 +  \eps^2\abs{  v }_{{\X^{m-1, \frac{3}{2}}}}^2+\eps^2\abs{h}_{{\X^{m-1,\frac{3}{2}}}}^2\).
\end{align}

\end{prop}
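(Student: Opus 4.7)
The plan is to apply the energy identity \eqref{conormen} with $Z^\alpha = \dt^m$, using the good unknowns $V^m = \dt^m v - \D_z v\, \dt^m\eta$ and $Q^m = \dt^m q - \D_z q\, \dt^m\eta$, and to separate the dangerous terms that prevent a direct imitation of Proposition \ref{conormv1}. The routine pieces $\mathcal{R}_C^m$, $\mathcal{R}_S^m$, $\mathcal{I}_b^m$ together with the lower-order part of $\mathcal{I}_0^m$ can be handled exactly as in the proof of Proposition \ref{conormv1}, invoking Lemma \ref{lemValpha}, the pressure bound \eqref{qpressurem}, the smoothing estimates \eqref{mheps1}--\eqref{mhsig}, and the trace inequalities; these contribute the last two lines of the right-hand side of \eqref{alphaend2m1ll}.

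The genuinely new obstruction, as described in \eqref{intro5}--\eqref{intro7}, comes from two terms that are visible only when $\alpha_0 = m$: the surface-tension piece $-\int_{z=0}\sigma\,\dt^m H\cdot m\,\dt\N\cdot \dt^{m-1}v\, dy$ inside $\mathcal{I}_0^m$, and the interior commutator $-\int_\Omega m\,\dt\N\cdot \dt^{m-1}\pa_z v\, \dt^m q\, dydz$ hidden in $-\int_\Omega \mathcal{C}^m(d)\, Q^m\, d\V$. The first is of one half-derivative higher than what $\sigma|\dt^m h|_1^2$ in the energy controls, while the second involves $\dt^m q$, for which Proposition \ref{proppressure} gives no bound at all. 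The decisive step of the proof is therefore the following two-fold integration by parts. First, I would integrate the interior term by parts in $z$, producing the trace $-\int_{z=0}m\,\dt\N\cdot \dt^{m-1}v\,\dt^m q\, dy$ and an interior remainder. On the remainder, I would integrate by parts in $t$ to lower the time-order of $q$ by one, yielding the total time derivative $\dtt\int_\Omega m\,\pa_z(\dt\N\,\dt^{m-1}q)\cdot \dt^{m-1}v\, dydz$ plus harmless commutators. Then the resulting trace at $z=0$ is combined with the surface-tension term through the dynamic boundary condition $-\sigma H = q - gh - 2\eps S^\varphi v\,\n\cdot \n$, so that $\dt^m H$ and $\dt^m q$ cancel identically, leaving only $-\int_{z=0}m\,\dt\N\cdot \dt^{m-1}v\,(g\dt^m h + 2\eps\,\dt^m(S^\varphi v\,\n\cdot\n))\,dy$, which falls under the $\linf$ bounds already used for Proposition \ref{conormv1}.

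After this cancellation, the differential inequality takes the schematic form
\[
\dtt \mathcal{E}^m - \dtt \mathcal{B} + 2\eps\int_\Omega |S^\varphi V^m|^2\, d\V \leq \Lambda_\infty\,(\text{safe terms}) + \Lambda_\infty\,\norm{\nabla q}_{\X^{m-1}}\,(\text{safe factor}),
\]
where $\mathcal{E}^m = \|V^m\|^2 + \int_{z=0}(g-\D_z q)|\dt^m h|^2 + \sigma|\nabla_y \dt^m h|^2$ and $\mathcal{B}$ is the boundary correction produced by the time integration by parts. The crux is that $\norm{\nabla q}_{\X^{m-1}}$ is only $L^2$ in time by \eqref{qpressurem}, so a single time integration yields only a pointwise bound for $\mathcal{E}^m(t)$ involving $\int_0^t \norm{\nabla q}_{\X^{m-1}}\, ds$ \emph{linearly}. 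To match the $L^4$-type norm on the left of \eqref{alphaend2m1ll}, I would then square this inequality, integrate once more in time, and apply Cauchy--Schwarz on the $q$-factor so that $\bigl(\int_0^s \norm{\nabla q}_{\X^{m-1}}\bigr)^2 \leq s\int_0^s \norm{\nabla q}_{\X^{m-1}}^2$; combining with Proposition \ref{heps} to absorb the $\eps|h|_{\X^{m,\hal}}^2$ component produces \eqref{alphaend2m1ll}.

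The main obstacle will be executing the cancellation cleanly: both $\dt^m H$ and $\dt^m q$ are individually uncontrolled, and one must expand them carefully using the kinematic boundary condition (whose $\dt^m$ derivative determines $\dt^m h$ in terms of $\dt^{m-1}v\cdot\N$ modulo commutators) and the dynamic boundary condition for $\sigma H$ to verify that the singular contributions cancel identically rather than up to a surviving half-derivative remainder. All other remainders must be genuinely of lower order so as to be absorbable by the $\linf$-type bounds already proved.
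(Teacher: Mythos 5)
Your proposal follows essentially the same route as the paper's proof: the same isolation of the two dangerous terms, the same order of integration by parts ($z$ first, then $t$) on the $\mathcal{C}^m(d)_1$ piece of $\mathcal{R}_Q^m$, the same cancellation of $\sigma\dt^m H$ against $\dt^m q$ via the dynamic and kinematic boundary conditions, and the same square-then-integrate-in-time device to handle the fact that $\norm{\dt^{m-1}q}_{H^1}$ is only $L^2$ in time. The only slight imprecision is that after one time integration the obstruction is the pointwise-in-time value of the boundary correction $\mathcal{G}^m(t)$ (controlled by $\norm{\dt^{m-1}q(t)}_{H^1}$ at the endpoint), rather than a linear time integral of $\norm{\nabla q}_{\X^{m-1}}$, but your remedy is exactly the one the paper uses.
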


\begin{proof}
In the current case, \eqref{conormen} can be restated as:
\beq\label{conormenm}
\hal\dtt\int_{\Omega} \abs{V^m }^2 d\V + 2 \eps \int_{\Omega} \abs{S^\varphi V^m}^2\, d\V \\
= \mathcal{I}_0^m+\mathcal{I}_b^m+ \mathcal{R}_{Q}^m+ \mathcal{R}_{C}^m+\mathcal{R}_{S}^m,
\eeq
where
\begin{align}
\label{Im0} &\mathcal{I}_0^m=  \int_{z= 0} \( 2 \eps S^\varphi V^m  - Q^m \mbox{I} \) \N \cdot V^m\, dy,\\
\label{Imb} &\mathcal{I}_b^m= - \int_{z= 0} \( 2 \eps S^\varphi V^m  - Q^m \mbox{I} \) e_3 \cdot V^m\, dy, \\
\label{RQm} & \mathcal{R}_{Q}^m=   - \int_{\Omega}    \mathcal{C}^m (d) \dt^m q  \, d\V,\\
\label{RCm} & \mathcal{R}_{C}^m=   \int_{\Omega}\( \( \D_{z}v  \cdot \nabla^\varphi v \dt^m \eta-  \mathcal{C}^m(\mathcal{T})- \mathcal{C}^m(q)\)
\cdot V^m+\mathcal{C}^m (d) \D_zq\dt^m\eta \) \, d\V,\\
\label{RSm} & \mathcal{R}_{S}^m=   \int_{\Omega} \( \eps \mathcal{D}^m(S^\varphi v)  + \eps \nabla^\varphi \cdot  \mathcal{E}^m (v) \) \cdot
V^m\, d\V.
\end{align}
Here $V^m=\dt^m v-\D_z v\dt^m\eta,\ Q^m=\dt^m q-\D_z v\dt^m\eta$, and $\mathcal{C}^m(\cdot)$ are those commutators $\mathcal{C}^\alpha(\cdot)$
for the case $\alpha_0=m$. Note that we have singled out the term $\mathcal{R}_{Q}^m$ from $\mathcal{R}_{C}^m$.

We first estimate $\mathcal{I}_0^m$, which can be rewritten as (similar to \eqref{I0'})
\begin{align}\label{I0'm}
\mathcal{I}_0^m&   =  \int_{z= 0}  - (g-\D_zq) \dt^m h \N\cdot V^m\, dy+\int_{z= 0}  \sigma \dt^m H \N\cdot V^m\, dy+ \int_{z=0}  \eps
\mathcal{C}^m(\mathcal{B}_\eps)\cdot V^m \, dy
\\\nonumber&  \quad- \int_{z= 0} 2 \eps S^\varphi v\Pi \dt^m \N \cdot V^m \, dy-\int_{z=0} 2 \eps \dt^m h\, \D_{z} \big( S^\varphi v\big) \N
\cdot V^m \, dy.
\end{align}
Following the analysis in \eqref{m11}--\eqref{m13}, one can bound the last three terms in \eqref{I0'm} by
\beq\label{m11m}
\Lambda_{\infty}\(\eps \(\abs{v }_{\X^{m-1,1}}  + \abs{h }_{\X^{m-1,1}}\)\abs{V^m}_0 +\eps \abs{ h}_{\X^{m,\hal}} \abs{V^m}_{\hal}+\eps^\hal
\abs{ h}_{\X^{m}}\abs{V^m}_{0}\).
\eeq
As \eqref{m14}, one deduces
\begin{align}\label{m11m1}
& \int_{z= 0}  - (g-\D_zq) \dt^m h \N\cdot V^m\,dy
\\\nonumber&\quad\le - \hal \dtt  \int_{z=0} \( g- \D_{z} q\) \abs{\dt^m h }^2\,dy
+\linf\(\abs{h}_{\X^{m}}^2  +  \abs{ v}_{\X^{m-1}}^2 \).
\end{align}
However, as explained in Section \ref{sec main}, one can not use the arguments leading to \eqref{m15} to estimate $\int_{z= 0}  \sigma \dt^m H
\N\cdot V^m\, dy$ since there is one half
regularity loss for $\dt^m h$ so that it is difficult to control the following term, after using the kinematic boundary condition,
\beq\label{surout}
 -\int_{z= 0} \sigma \pa_t^m  H\,  m\pa_t  \N  \cdot \pa_t^{m-1} v \, dy.
\eeq
The crucial observation here is that, this term will be cancelled out from estimating the term $\mathcal{R}^m_Q$ defined by \eqref{RQm}.
So the estimates of this term will be postponed till we estimate $\mathcal{R}^m_Q$.

Similarly as \eqref{Ibestimate}, $\mathcal{I}_b^m$ admits the bound
\beq\label{Ibestimatem}
\mathcal{I}_b^m \le \linf \eps \( \abs{V^m}_0+\abs{ v }_{\X^{m-1,1}}  +  \abs{h}_{\X^{m-1,1}}\)\abs{V^m}_0.
\eeq

We now estimate $\mathcal{R}^m_Q$. Note carefully that there is no any estimates of $\dt^m q$, so one needs to integrate by parts in $t$.
To continue, one needs more explicit expression of $\mathcal{C}^m (d)$. Indeed, we will use a variant of \eqref{Cialpha}. It follows from
the divergence free condition that
$$\pa_z\varphi\(\pa_1v_1+\pa_2v_2\)+\pa_zv\cdot\N=0.$$
Applying $\dt^m$ to the above and using the definition of $\mathcal{C}^m(d)$, one gets that
 \begin{align}\label{com1m}
  \pa_z\varphi \mathcal{C}^m(d)   =  \[ \pa_t^m ,  \N , \cdot\partial_{z}v  \]  +   \[ \pa_t^m ,  \partial_{z} \eta,\pa_1v_1+\pa_2v_2\].
     \end{align}
 Moreover, to integrate by parts in $t$, one needs to single out in $\mathcal{C}^m(d) $ the highest $m-1$ order time
 derivatives terms and use the following decomposition
\begin{align}
 \pa_z\varphi\mathcal{C}^m (d)=  \mathcal{C} ^{m} (d)_1+\mathcal{C} ^{m} (d)_2+\mathcal{C} ^{m} (d)_3+\mathcal{C} ^{m} (d)_4+\mathcal{C} ^{m}
 (d)_5
     \end{align}
   with
\begin{align}
\mathcal{C} ^{m} (d)_1= & m\pa_t  \N  \cdot \pa_t^{m-1} \partial_{z} v,
\\ \mathcal{C} ^{m} (d)_2 =&m\pa_t \partial_{z} \eta
\pa_t^{m-1}(\pa_1v_1+\pa_2v_2), \\
\mathcal{C} ^{m} (d)_3
  = & m \pa_t^{m-1}\N\cdot \pa_t \partial_{z} v,\\
\mathcal{C} ^{m} (d)_4
  = &m \pa_t^{m-1}\pa_z\eta \pa_t (\pa_1v_1+\pa_2v_2),\\
 \mathcal{C} ^{m} (d)_5
 =   & \sum_{\ell=2 }^{m-2} C_m^\ell\(\pa_t^\ell \N  \cdot\pa_t^{m-\ell}\partial_{z} v+\pa_t^\ell \partial_{z} \eta
 \cdot\pa_t^{m-\ell}(\pa_1v_1+\pa_2v_2)\).
     \end{align}
Accordingly,
 \beq\label{rmq0}
\mathcal{R}^m_Q= -\int_{\Omega} \(\mathcal{C} ^{m} (d)_1+\mathcal{C} ^{m} (d)_2+\mathcal{C} ^{m} (d)_3+\mathcal{C} ^{m} (d)_4+\mathcal{C} ^{m}
(d)_5\) \pa_t^m q   \, dydz.
  \eeq
The fifth term in \eqref{rmq0} can be easily treated by the integration by parts in $t$ as
 \begin{align}\label{rmq11}
 -\int_{\Omega}  \mathcal{C} ^{m} (d)_5 \pa_t^m q   \, dydz
 =-\dtt\int_{\Omega}   \mathcal{C} ^{m} (d)_5 \pa_t^{m-1} q\,dydz+\mathcal{R}_5^m,
  \end{align}
  with
   \begin{align}\label{rmq11'}
\mathcal{R}_5^m=\int_{\Omega} \dt \mathcal{C} ^{m} (d)_5   \pa_t^{m-1} q\,dydz \le \linf\( \abs{h}_{\X^{m-1,\hal}}+\norm{\pa_z
v}_{\X^{m-2}}\)\norm{ \pa_t^{m-1} q}.
  \end{align}
Integrate by parts in $t$ to write the fourth term as
 \beq\label{rmq12}
-\int_{\Omega}  \mathcal{C} ^{m} (d)_4\pa_t^m q   \, dydz
=-\dtt\int_{\Omega}   \mathcal{C} ^{m} (d)_4 \pa_t^{m-1} q\,dydz+\mathcal{R}_4^m,
\eeq
where, by further integrating by parts in $z$ and the trace theory,
 \begin{align}\label{rmq12'}
  \mathcal{R}_4^m&=\int_{\Omega}m \(\pa_t^{m}\pa_z\eta \pa_t (\pa_1v_1+\pa_2v_2) +  \pa_t^{m-1}\pa_z\eta \pa_t^2 (\pa_1v_1+\pa_2v_2) \)
  \pa_t^{m-1} q\,dydz
 \\\nonumber& = \int_{z=0}   m \pa_t^{m}h  \dt\( \pa_1 v_1+\pa_2 v_2\)\pa_t^{m-1} q  \, dy -\int_{\Omega}   m\pa_t^{m}\eta\pa_z\( \( \pa_1
 v_1+\pa_2 v_2\)\pa_t^{m-1} q \) \, dydz \\\nonumber&\quad +\int_{\Omega}m
  \pa_t^{m-1}\pa_z\eta \pa_t^2 (\pa_1v_1+\pa_2v_2)  \pa_t^{m-1} q\,dydz
\\\nonumber& \le  \linf\(\abs{\dt^m h}_{-\hal}\abs{\dt^{m-1} q}_{\hal}+\norm{\pa_t^{m} \eta}\norm{\pa_t^{m-1} q}_{H^1}+\norm{\pa_t^{m-1}\pa_z
\eta}\norm{\pa_t^{m-1} q} \)
\\\nonumber& \le \linf \abs{h}_{\X^{m,-\hal}} \norm{\pa_t^{m-1} q}_{H^1}  .
 \end{align}
Similarly, integrate by parts in both $t$ and $y$ to bound the second and third terms by
 \begin{align}\label{rmq13}
 -\int_{\Omega}  \(\mathcal{C} ^{m} (d)_2+\mathcal{C} ^{m} (d)_3\) \pa_t^m q   \, dydz
 =-\dtt\int_{\Omega}   \(\mathcal{C} ^{m} (d)_2+\mathcal{C} ^{m} (d)_3\)  \pa_t^{m-1} q\,dydz+\mathcal{R}_{2,3}^m,
  \end{align}
  where
   \begin{align}\label{rmq13'}
\mathcal{R}_{2,3}^m &=\int_{\Omega}  \dt \(\mathcal{C} ^{m} (d)_2+\mathcal{C} ^{m} (d)_3\)  \pa_t^{m-1} q\,dydz
\\\nonumber&
\le \linf\( \abs{h}_{\X^{m-1,\hal}}+\norm{  v}_{\X^{m}}+\norm{\nabla  v}_{\X^{2}}\)\norm{\pa_t^{m-1} q}_{H^1}.
  \end{align}
Finally, we turn to the most delicate term, the one involving $\mathcal{C}^m(d)_1$ in \eqref{rmq0}. Integrate by parts in $z$ first to get
 \beq\label{rmq14}
-\int_{\Omega}   \mathcal{C}^m (d)_1  \pa_t^{m} q\,dydz
 =-\int_{ {z=0}}   m\pa_t  \N  \cdot \pa_t^{m-1} v  \pa_t^{m} q \, dy+\int_{\Omega}   m  \pa_z\( \pa_t^{m} q\pa_t  \N\)\cdot \pa_t^{m-1} v  \,
 dydz.
  \eeq
 Then integrate by parts in $t$ to obtain
\beq
 \int_{\Omega}   m  \pa_z\( \pa_t^{m} q\pa_t  \N\)\cdot \pa_t^{m-1} v  \, dydz
=\dtt\int_{\Omega}   m \pa_z\( \pa_t^{m-1} q\pa_t  \N\)\cdot \pa_t^{m-1} v  \, dydz+\mathcal{R}^m_1,
\eeq
with
\begin{align}
\mathcal{R}^m_1&= -\int_{\Omega}   m \pa_z\( \pa_t^{m-1} q\pa_t  \N\)\cdot \pa_t^{m} v + m \pa_z\( \pa_t^{m-1} q\pa_t^2  \N\)\cdot \pa_t^{m-1} v
\, dydz
\\&\le   \linf \norm{  v}_{\X^{m}}
\norm{\pa_z\pa_t^{m-1} q} .\nonumber
  \end{align}
Note carefully that we integrate by parts in $z$ first rather than in $t$ since there is no estimates of $\dt^{m}v$ on the boundary.
This also indicates the difficulty in controlling the first term in the right hand side of \eqref{rmq14} since one can no longer integrate by
parts in $t$. Recall here that there was also
one term out of control, that is, \eqref{surout}.
Our crucial observation is that there is a  cancelation between them since $ q=g h-\sigma H+2\eps S^\varphi v  \,{\bf n} \cdot \,{\bf n} $ on
$\{z=0\}$. This motivates us to estimate together the first term in \eqref{rmq14} and the second surface tension term in \eqref{I0'm}, by the
kinematic boundary condition,
  \begin{align}\label{rmq15}
  & \int_{z= 0}   \sigma \pa_t^m H    \N \cdot V^m \, dy-\int_{ {z=0}}   m\pa_t  \N  \cdot \pa_t^{m-1} v  \pa_t^{m} q \, dy
  \\\nonumber&\quad= \int_{z= 0} \sigma \pa_t^m  H \(\N \cdot V^m+  m\pa_t  \N  \cdot \pa_t^{m-1} v\)\, dy
   \\\nonumber&\qquad-\int_{ {z=0}}   m\pa_t  \N  \cdot \pa_t^{m-1} v  \(g\dt^mh +2\eps \dt^m \( S^\varphi v  \,{\bf n} \cdot \,{\bf n}\)\)\, dy
  \\\nonumber&\quad= \int_{z= 0} \sigma \pa_t^m  H \(\pa_t^{m+1}h+v_y\cdot\nabla_y \pa_t^m h+\D_zv\cdot\N \dt^m h- \tilde{\mathcal{C}}^m(h) \)\,
  dy
  \\\nonumber&\qquad-\int_{ {z=0}}   m\pa_t  \N  \cdot \pa_t^{m-1} v  \(g\dt^mh +2\eps \dt^m \( S^\varphi v  \,{\bf n} \cdot \,{\bf n}\)\)\, dy,
    \end{align}
where $\tilde{\mathcal{C}}^m(h)$ is the commutator $\tilde{\mathcal{C}}^\alpha(h)$ defined by \eqref{tildech} for the case $\alpha_0=m$.
 Note that the last term in \eqref{rmq15} can be estimated as follows, thanks to Lemma \ref{lembord},
   \begin{align}\label{rmq151}
  & -\int_{ {z=0}}   m\pa_t  \N  \cdot \pa_t^{m-1} v  \(g\dt^mh +2\eps \dt^m \( S^\varphi v  \,{\bf n} \cdot \,{\bf n}\)\)\, dy
  \\\nonumber&\quad\le \Lambda_\infty \abs{\pa_t^{m-1} v}_{\hal}\(\abs{\dt^mh}_{-\hal}+\eps\abs{\dt^m \( S^\varphi v  \,{\bf n} \cdot \,{\bf
  n}\)}_{-\hal}\)
  \\\nonumber&\quad\le \Lambda_\infty \abs{\pa_t^{m-1} v}_{\hal}\(\abs{\dt^mh}_{-\hal}+\eps\abs{h}_{\X^{m,\hal}}+\eps\abs{v}_{\X^{m,\hal}}\).
    \end{align}
The integration by parts and \eqref{bordhC1} yield
 \begin{align}\label{rmq152}
 &\int_{z= 0} \sigma \pa_t^m  H \( \D_zv\cdot\N \dt^m h- \tilde{\mathcal{C}}^m(h) \)\, dy
 \\\nonumber&\quad\le \sigma\abs{  \dt^m\( \frac{ \nabla_y  h}{\sqrt{1+|\nabla_y h|^2}} \)}_{0}\abs{\nabla_y\(\D_zv\cdot\N \dt^m
 h-\tilde{\mathcal{C}}^m(h)\)}_{0}
 \\&\nonumber\quad \le \Lambda_\infty   \sigma\abs{h}_{\X^{m,1}}\(\abs{\dt^m h}_{1}+\abs{h}_{\X^{m-1,2}}+\abs{v}_{\X^{m-2,1}} \).
    \end{align}
It follows from \eqref{sigsur1} and \eqref{sigsur2} that
  \begin{align}\label{sigmaes0m}
  & \int_{z= 0}  \sigma Z^\alpha H \(\pa_t^{m+1}h+v_y\cdot\nabla_y \pa_t^m h\)\,dy
  \\\nonumber&\quad= \int_{z= 0}\sigma\nabla_y \cdot\( \frac{ \nabla_y \pa_t^{m}h }{\sqrt{1+|\nabla_y h|^2}}- \frac{\nabla_y h\cdot \nabla_y
  \pa_t^{m}h}{\sqrt{1+|\nabla_y h|^2}^3}\nabla_y h
  +\mathcal{C}(\mathcal{B}_\sigma^m) \)\(\pa_t^{m+1}h+v_y\cdot\nabla_y \pa_t^m h\)\,dy,
    \end{align}
where
\begin{align}\label{sigsur2m}
 \mathcal{C}(\mathcal{B}_\sigma^m)=-\[\dt^{m-1},\frac{\nabla_y h}{\sqrt{1+|\nabla_y h|^2}^3}\]\cdot \nabla_y \dt h \nabla_y h+\[\dt^m,
 \frac{1}{\sqrt{1+|\nabla h|^2}}, \nabla_y h\].
\end{align}
Similarly as \eqref{sigmaes3}--\eqref{sigmaes5}, one can deduce that
\begin{align}\label{rmq153}
&\int_{z= 0}\sigma \nabla_y\cdot \( \frac{ \nabla_y \pa_t^{m}h }{\sqrt{1+|\nabla_y h|^2}}- \frac{\nabla_y h\cdot \nabla_y
\pa_t^{m}h}{\sqrt{1+|\nabla_y h|^2}^3}\nabla_y h \)\(\pa_t^{m+1}h+v_y\cdot\nabla_y \pa_t^m h\)\,dy
\\\nonumber&\quad\le-\frac{1}{2}\frac{d}{dt}\int_{z=0}  \sigma\( \frac{  |\nabla_y \pa_t^{m} h|^2 }{\sqrt{1+|\nabla_y h|^2}}-  \frac{ |\nabla_y
h\cdot \nabla_y \pa_t^{m} h|^2  }{\sqrt{1+|\nabla_y h|^2}^3}   \)\,dy+\Lambda_\infty   \sigma\abs{h}_{\X^{m,1}}^2.
   \end{align}
Integrate by parts in both $t$ and $y$ to have
  \begin{align}\label{rmq154}
\nonumber \int_{z= 0}\sigma
 \nabla_y\cdot\( \mathcal{C}(\mathcal{B}_\sigma^m)\) \pa_t^{m+1}h\,dy&\le -\dtt\int_{z= 0}\sigma
  \mathcal{C}(\mathcal{B}_\sigma^m)\cdot \nabla_y  \pa_t^{m}h\,dy+
  \int_{z= 0}\sigma
 \dt \mathcal{C}(\mathcal{B}_\sigma^m) \cdot\nabla_y\pa_t^{m}h\,dy
 \\&\le -\dtt\int_{z= 0}\sigma
  \mathcal{C}(\mathcal{B}_\sigma^m)\cdot \nabla_y  \pa_t^{m}h\,dy+
 \Lambda_\infty   \sigma\abs{h}_{\X^{m,1}}^2.
    \end{align}
One easily has
  \begin{align}\label{rmq155}
  \int_{z= 0}\sigma\nabla_y \cdot\(  \mathcal{C}(\mathcal{B}_\sigma^m) \) v_y\cdot\nabla_y \pa_t^m h \,dy\le \Lambda_\infty
  \sigma\abs{h}_{\X^{m-1,2}}\abs{h}_{\X^{m,1}}.
    \end{align}
Hence, by the estimates \eqref{rmq151}, \eqref{rmq152}, \eqref{rmq153}--\eqref{rmq155}, one may conclude from \eqref{rmq15} that
\begin{align}\label{rmq15m}
  & \int_{z= 0}   \sigma \pa_t^m H    \N \cdot V^m \, dy-\int_{ {z=0}}   m\pa_t  \N  \cdot \pa_t^{m-1} v  \pa_t^{m} q \, dy
  \\\nonumber&\quad\le-\frac{1}{2}\frac{d}{dt}\int_{z=0}  \sigma\( \frac{  |\nabla_y \pa_t^{m} h|^2 }{\sqrt{1+|\nabla_y h|^2}}-  \frac{ |\nabla_y
  h\cdot \nabla_y \pa_t^{m} h|^2  }{\sqrt{1+|\nabla_y h|^2}^3}   \)\,dy-\dtt\int_{z= 0}\sigma
  \mathcal{C}(\mathcal{B}_\sigma^m)\cdot \nabla_y  \pa_t^{m}h\,dy
    \\\nonumber&\qquad+
 \Lambda_\infty  \( \sigma\abs{h}_{\X^{m,1}}^2 +\sigma\abs{h}_{\X^{m,1}} \abs{v}_{\X^{m-2,1}} + \abs{\pa_t^{m-1}
 v}_{\hal}\(\abs{\dt^mh}_{-\hal}+\eps\abs{h}_{\X^{m,\hal}}+\eps\abs{v}_{\X^{m,\hal}}\)\).
\end{align}
This in particular finishes the estimates of the second surface tension term in \eqref{I0'm} and $\mathcal{R}^m_Q$, which can be stated as
follows:
\begin{align}\label{msigQ}
  & \int_{z= 0}   \sigma \pa_t^m H    \N \cdot V^m \, dy+\mathcal{R}^m_Q
  \\\nonumber&\quad\le-\frac{1}{2}\frac{d}{dt}\int_{z=0}  \sigma\( \frac{  |\nabla_y \pa_t^{m} h|^2 }{\sqrt{1+|\nabla_y h|^2}}-  \frac{ |\nabla_y
  h\cdot \nabla_y \pa_t^{m} h|^2  }{\sqrt{1+|\nabla_y h|^2}^3}   \)\,dy-\dtt\mathcal{G}^m
    \\\nonumber&\qquad+
 \Lambda_\infty  \( \sigma\abs{h}_{\X^{m,1}}^2 +\sigma\abs{h}_{\X^{m,1}} \abs{v}_{\X^{m-2,1}} + \abs{\pa_t^{m-1}
 v}_{\hal}\(\abs{\dt^mh}_{-\hal}+\eps\abs{h}_{\X^{m,\hal}}+\eps\abs{v}_{\X^{m,\hal}}\)\)
    \\\nonumber&\qquad+\linf\( \abs{h}_{\X^{m-1,\hal}}+\norm{  v}_{\X^{m}}+\norm{\nabla  v}_{\X^{2}}\)\norm{\pa_t^{m-1} q}_{H^1},
\end{align}
where
\begin{align}
\mathcal{G}^m=&  \int_{\Omega} \( \mathcal{C} ^{m} (d)_2+\mathcal{C} ^{m} (d)_3+\mathcal{C} ^{m} (d)_4+\mathcal{C} ^{m} (d)_5\) \pa_t^{m-1} q
\, dydz
\\\nonumber&-\int_{\Omega}   m \pa_z\( \pa_t^{m-1} q\pa_t  \N\)\cdot \pa_t^{m-1} v  \, dydz+\int_{z= 0}\sigma
  \mathcal{C}(\mathcal{B}_\sigma^m)\cdot \nabla_y  \pa_t^{m}h\,dy.
\end{align}

It remains to estimates the commutators $\mathcal{R}_C^m$ and $\mathcal{R}_S^m$. It follows from \eqref{CT}, \eqref{Cq}, \eqref{Cd},
\eqref{equiv1}, \eqref{qpressurem} and \eqref{qminfty} that
  \begin{align}\label{RCestimatem}
  \mathcal{R}_{C}^m &\le\linf\( \( \norm{ \dt^m \eta}+\norm{   \mathcal{C}^m(\mathcal{T})}+ \norm{ \mathcal{C}^m(q)}\) \cdot V^m +  \norm{
  \mathcal{C}^m (d)} \norm{ \dt^m\eta} \)
  \\\nonumber& \le \Lambda_\infty\(\abs{h}_{\X^{m,-\hal}}+ \norm{v}_{\X^{m-1}}+\norm{\nabla v}_{\X^{m-1}}+ \norm{\nabla
  q}_{\X^{m-1}}\)\(\norm{V^m}+\norm{\dt^m\eta}\)
   \\\nonumber &\le \linf\(\abs{h}_{\X^{m,-{1 \over 2 }}}+\sigma \abs{h}_{\X^{m-1,2}}+\norm{v}_{\X^{m}}  + \norm{\nabla v}_{\X^{m-1}} +
   \eps\abs{  v }_{{\X^{m-1, \frac{3}{2}}}}+\eps\abs{h}_{{\X^{m-1,\frac{3}{2}}}}\)
  \\\nonumber&\quad\times \(\norm{v}_{\X^m}+\abs{h}_{\X^{m,-\hal}}\).
  \end{align}
Similarly as \eqref{RSestimate}, it holds that
\begin{align}\label{RSestimatem}
\mathcal{R}_{S}^m\leq&\linf\(\(\eps^{\hal}\abs{h}_{\X^{m-1,\hal}}+\eps \abs{h}_{\X^{m-1, \frac{3}{2}}}+\eps\norm{\nabla
v}_{\X^{m-1}}\)\(\norm{v}_{\X^m}+\abs{h}_{\X^{m,-\hal}}\)\right.
\\\nonumber&\left.\qquad +\eps\(  \norm{ \nabla v}_{\X^{m-1}}+  \abs{h}_{\X^{m-1,\hal }} \) \norm{ \nabla V^m}+ \(\abs{h}_{\X^{m-1,1} }  +  \abs{
v}_{\X^{m-1,1}}\)\(\abs{h}_{\X^{m} }  +  \abs{  v}_{\X^{m}}\)\).
\end{align}

We can now finish the proof of the proposition. As a consequence of \eqref{m11m}, \eqref{m11m1}, \eqref{Ibestimatem}, \eqref{msigQ},
\eqref{RCestimatem}, \eqref{RSestimatem} and Cauchy's inequality, one may deduce from \eqref{conormen} that
\begin{align}\label{alphaend1m}
&\hal\dtt \mathcal{E}^m+\dtt\mathcal{G}^m + 2 \eps \int_{\Omega} \abs{S^\varphi V^m}^2\, d\V
\\\nonumber&\quad \le\Lambda_\infty\( \abs{h}_{\X^{m }}^2 + \sigma     \abs{h}_{\X^{m,1}}^2
 + \norm{\nabla v}_{\X^{m-1}}^2 + \norm{v}_{\X^{m}}^2 + \eps  \abs{v}_{\X^{m}}^2+ \eps  \abs{h}_{\X^{m , \hal}}^2 \right.
  \\\nonumber&\qquad\qquad \left.+ \eps\abs{  v }_{{\X^{m , \hal}}} \(\norm{v}_{\X^{m }}+\norm{\nabla v}_{\X^{2}}+\abs{h}_{{\X^{m ,\hal}}}\)
+\eps\(  \norm{ \nabla v}_{\X^{m-1}}+  \abs{h}_{\X^{m-1,\hal }} \) \norm{ \nabla V^m}  \),
\end{align}
where
\beq
\mathcal{E}^m:=\int_{\Omega}\abs{V^m }^2 d\V +\int_{z=0} \( g- \D_{z} q\) \abs{\dt^m h }^2+ \sigma\( \frac{  |\nabla_y \pa_t^{m} h|^2
}{\sqrt{1+|\nabla_y h|^2}}-  \frac{ |\nabla_y h\cdot \nabla_y \pa_t^{m} h|^2  }{\sqrt{1+|\nabla_y h|^2}^3}   \)\,dy.
\eeq
Similarly as \eqref{alphaend2}, by the trace estimates
$$\abs{v}_{\X^{m,\hal}}\ls \norm{v}_{\X^{m}}+\norm{\nabla v}_{\X^{m}}\text{ and }
\abs{v}_{\X^{m}}\ls \norm{\nabla v}_{\X^{m}}^\hal\norm{v}_{\X^{m}}^\hal+\norm{v}_{\X^{m}}, $$
using Cauchy's inequality and
\eqref{mheps2}, one can then deduce that
\begin{align}\label{alphaend2m}
&\norm{\dt^m v(t)}^2+\abs{\dt^m h(t)}_{0}^2 +\sigma\abs{\dt^m h(t)}_{1}^2+ \eps  \abs{h(t)}_{\X^{m , \hal}}^2 +\eps\int_0^t  \norm{\nabla \dt^m
v}_{0}^2
\\\nonumber&\quad\le \Lambda_0\( \norm{\dt^m v(0)}^2+\abs{\dt^m h(0)}_{0}^2 +\sigma\abs{\dt^m h(0)}_{1}^2+ \eps  \abs{h(0)}_{\X^{m ,
\hal}}^2\)-\mathcal{G}^m
\\\nonumber&\qquad+ \int_0^t\Lambda_\infty\( \abs{h}_{\X^{m }}^2 + \sigma     \abs{h}_{\X^{m,1}}^2
 + \norm{v}_{\X^{m}}^2+ \norm{\pa_z v}_{\X^{m-1}}^2 + \eps  \norm{\nabla v}_{\X^{m-1,1}}^2+ \eps  \abs{h}_{\X^{m , \hal}}^2  \).
\end{align}
Note that
\begin{align}\label{ges}
 -\mathcal{G}^m\le \linf\(\(\abs{h}_{\X^{m-1,\hal }}+\abs{v}_{\X^{m-1,1 }}+\norm{\pa_z v}_{\X^{m-2
 }}\)\norm{\dt^{m-1}q}_{H^1}+\sigma\abs{h}_{\X^{m-1,2 }}\abs{h}_{\X^{m,1 }}\).
\end{align}
In contrast to the previous case, the difficulty here is that $\norm{\dt^{m-1}q}_{H^1}$ and hence $-\mathcal{G}^m$  are not in $L^\infty([0,T])$
but only in $L^2([0,T])$.
Our basic idea is to integrate in time twice. Indeed, we take the square and then integrate in time to have, by Cauchy's inequality,
 \begin{align}\label{alphaend2m1}
&\int_0^t\(\norm{\dt^m v }^2+\abs{\dt^m h }_{0}^2 +\sigma\abs{\dt^m h }_{1}^2+ \eps  \abs{h }_{\X^{m , \hal}}^2 \)^2+\int_0^t\(\eps\int_0^s
\norm{\nabla \dt^m v}_{0}^2\)^2
\\\nonumber&\quad\le t\Lambda_0\( \norm{\dt^m v(0)}^2+\abs{\dt^m h(0)}_{0}^2 +\sigma\abs{\dt^m h(0)}_{1}^2+ \eps  \abs{h(0)}_{\X^{m ,
\hal}}^2\)^2+\int_0^t\abs{\mathcal{G}^m}^2
\\\nonumber&\qquad+ t\(\int_0^t\Lambda_\infty\( \abs{h}_{\X^{m }}^2 + \sigma     \abs{h}_{\X^{m,1}}^2
  + \norm{v}_{\X^{m}}^2 + \norm{\pa_z v}_{\X^{m-1}}^2+ \eps  \norm{\nabla v}_{\X^{m-1,1}}^2+ \eps  \abs{h}_{\X^{m , \hal}}^2  \)\)^2.
\end{align}
It follows from \eqref{ges} and \eqref{qpressurem} that
\begin{align}\label{hles}
 \int_0^t\abs{\mathcal{G}^m}^2\le& \int_0^t\linf\(\(\abs{h}_{\X^{m-1,\hal }}^2+\abs{v}_{\X^{m-1,1 }}^2+\norm{\pa_z v}_{\X^{m-2
 }}^2\)\(\abs{h}_{\X^{m,-{1 \over 2 }}}+\sigma \abs{h}_{\X^{m-1,2}}\right. \right.
 \\\nonumber&\left.\left.+\norm{v}_{\X^{m}}  + \norm{\pa_z v}_{\X^{m-1}} +  \eps\abs{  v }_{{\X^{m-1,
 \frac{3}{2}}}}+\eps\abs{h}_{{\X^{m-1,\frac{3}{2}}}}\)^2+\sigma^2\abs{h}_{\X^{m-1,2 }}^2\abs{h}_{\X^{m,1 }}^2\)
 \\\nonumber\le& \int_0^t\linf \(\abs{h}_{\X^{m-1,\hal }}^2+\sigma \abs{h}_{\X^{m-1,2 }}^2+\abs{v}_{\X^{m-1,1 }}^2+\norm{\pa_z v}_{\X^{m-2 }}^2\)
 \\\nonumber& \times\(\abs{h}_{\X^{m,-{1 \over 2 }}}^2+\sigma \abs{h}_{\X^{m,1}}^2 +\norm{v}_{\X^{m}}^2  + \norm{\pa_z v}_{\X^{m-1}}^2 +
 \eps^2\abs{  v }_{{\X^{m-1, \frac{3}{2}}}}^2+\eps^2\abs{h}_{{\X^{m-1,\frac{3}{2}}}}^2\)  .
\end{align}
We thus conclude \eqref{conormales1} by plugging the estimate \eqref{hles} into \eqref{alphaend2m1}.
 \end{proof}

%%%%%%%%%%%%%%%%%%%%%%%%%%%%%%%%%%%%%%%%%%%%%%%%%%%%%%
\section{Normal derivative estimates}\label{secnormal}
%%%%%%%%%%%%%%%%%%%%%%%%%%%%%%%%%%%%%%%%%%%%%%%%%%%%%%

In view of the conormal estimates in Propositions \ref{conormv1} and \ref{conormvm} in Section \ref{secconormal}, the next main step is to
estimate
$\norm{\partial_{z}v}_{\X^{m-1}}$.

Recall the definition of $\linf$ from \eqref{deflambdainfty1} and all the facts of the $L^\infty$ controls elaborated in the beginning of
Section \ref{secconormal}. Note that $\Lambda_{\infty}$ involves only $\sqrt{\eps}  \norm{\partial_{zz}v}_{L^\infty}$. For the case without
surface tension \cite{MasRou} that involves only the spatial derivatives, this is sufficient for deriving the normal derivative estimates since
in
such situation applying the product or commutator estimates to control the commutators resulting from the viscosity term needs only the control
of $\sqrt{\eps}  \norm{\partial_{zz}v}_{L^\infty}$. However, in the current case that involves the time derivatives, following the arguments
of \cite{MasRou} would require the control of $\sqrt{\eps}  \norm{\partial_{zz}v}_{\Y^{k}}$ for some $k\ge 1$. Recall from Proposition 9.8 in
\cite{MasRou} that deriving the bound of $\sqrt{\eps}  \norm{\partial_{zz}v}_{L^\infty}$ requires a crucial use of the heat kernel and the first
order compatibility condition $S_{\n}|_{t=0}=0$ on the boundary. Hence, to control $\sqrt{\eps}  \norm{\partial_{zz}v}_{\Y^{k}}$, it seems to
involve much more delicate use of the various properties of the heat kernel for the time differentiated problems; furthermore, it should require
more compatibility conditions of initial data.

Our key observation here is that since in the vicinity of the boundary the solution behaves as $v(t,x)\sim
v^0(t,x)+\sqrt{\eps}U(t,y,z/\sqrt{\eps})$, it indicates that there may be better control of $\eps \partial_{zz}v $ (and even $\eps
\partial_{zzz}v$!)
in Sobolev conormal spaces. This is indeed the case as shown in the following lemma.
\begin{lem}\label{dzzlem}
It holds that
\begin{align}\label{dzzves11}
\eps \norm{\partial_{zz}v}_{\Y^{{\[\!\frac{m}{2}\!\]}+1}}+\eps \norm{\partial_{zzz}v}_{\Y^{{\[\!\frac{m}{2}\!\]}}}\le \linf
\end{align}
and for $k\le m-1$:
\begin{align}\label{dzzves221}
\eps \norm{\partial_{zz}v}_{\X^{k}}
\le  \linf& \( \eps\(\norm{\pa_z   v}_{\X^{k,1}}+\abs{ h}_{\X^{k,\thal}}\)  +\norm{\pa_zv}_{\X^{k}}+\norm{v}_{
{\X^{k+1}}}+\abs{h}_{\X^{k+1,-\hal}}+\norm{\nabla  q}_{\X^{k}}\)
\end{align}
and
\begin{align}\label{dzzves22'}
\eps \norm{\partial_{zzz}v}_{\X^{k-1}}
\le  \linf& \( \eps\(\norm{\nabla v}_{\X^{k-1,2}}+\abs{ h}_{\X^{k-1,\fhal}}\)+\abs{h}_{\X^{k,\hal}} \right.
\\&\nonumber\   +\norm{\pa_zv}_{\X^{k}} +\norm{v}_{ {\X^{k,1}}} +\norm{\nabla  q}_{\X^{k-1,1}} +\norm{\pa_z\nabla  q}_{\X^{k-1}}\Big) .
\end{align}
\end{lem}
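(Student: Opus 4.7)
The strategy is to exploit the momentum equation in \eqref{NSv} algebraically rather than through energy estimates. Using the representation \eqref{deltaphi}, one has
\begin{equation*}
\eps \Delta^\varphi v = \frac{\eps E_{33}}{\pa_z\varphi}\,\pa_{zz}v + \eps \widetilde{\mathcal{L}}v,
\end{equation*}
where $\widetilde{\mathcal{L}}v$ gathers the remaining pieces of $\Delta^\varphi v$: the purely tangential second derivatives $\pa_{ij}v$ for $i,j\le 2$, the mixed derivatives $\pa_{iz}v$ for $i\le 2$, and the first-order terms $(\pa_i E_{jk})\pa_j v$. The coefficient $E_{33}/\pa_z\varphi = (1+|\nabla_y\varphi|^2)/(\pa_z\varphi)^2$ is bounded below by a positive constant under \eqref{apriori}, so the first equation of \eqref{NSv} yields the pointwise identity
\begin{equation*}
\eps \pa_{zz}v \;=\; \frac{(\pa_z\varphi)^2}{1+|\nabla_y\varphi|^2}\bigl(\D_t v + v\cdot\nabla^\varphi v + \nabla^\varphi q\bigr) \;-\; \eps\,\frac{(\pa_z\varphi)^2}{1+|\nabla_y\varphi|^2}\,\widetilde{\mathcal{L}}v.
\end{equation*}
The crucial feature is that $\widetilde{\mathcal{L}}v$ contains \emph{at most one} $\pa_z$ applied to $v$.

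For the $L^\infty$-type estimate \eqref{dzzves11}, I take the $\Y^{\frac{m}{2}+1}$ norm of this identity. The coefficient $(\pa_z\varphi)^2/(1+|\nabla_y\varphi|^2)$, together with all its conormal derivatives up to order $\frac{m}{2}+1$, is bounded by $\Lambda(1/c_0,\abs{h}_{\Y^{\frac{m}{2}+2}})\le \linf$ via Lemma \ref{propeta}. The dynamical terms $\D_t v$ and $v\cdot \nabla^\varphi v$ are bounded in $\Y^{\frac{m}{2}+1}$ by $\linf$ through the anisotropic Sobolev embedding \eqref{emb} and the control of $\norm{\pa_z v}_{\Y^{\frac{m}{2}+2}}$ built into $\linf$. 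The pressure term $\nabla^\varphi q$ is handled by invoking \eqref{qinfty} at $k=\frac{m}{2}+1$; the hypothesis $m\ge 14$ guarantees that all arguments of $\Lambda$ there are majorised by the ingredients of $\linf$. Finally, $\eps\widetilde{\mathcal{L}}v$ is bounded in $\Y^{\frac{m}{2}+1}$ by $\linf\norm{\pa_z v}_{\Y^{\frac{m}{2}+2}}\le \linf$, because at most one $\pa_z$ ever hits $v$. The control of $\eps\pa_{zzz}v$ in $\Y^{\frac{m}{2}}$ is obtained by applying $\pa_z$ to the identity; the only genuinely new contribution comes from $\pa_z\widetilde{\mathcal{L}}v$, which reintroduces $\pa_{zz}v$, but multiplied by $\eps$ and at order $\frac{m}{2}$, which is already controlled by the previous step.

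For the $L^2$-type estimates \eqref{dzzves221} and \eqref{dzzves22'}, I apply $Z^\alpha$ with $|\alpha|\le k$ (respectively $\le k-1$) to the same identity, then estimate each resulting term using the product rule \eqref{gues}, the quotient estimate \eqref{quot}, and Lemma \ref{propeta} to translate norms of $\varphi,\pa_z\varphi,\pa_{zz}\varphi,\pa_{zzz}\varphi$ into norms of $h$ with the appropriate half-derivative shift. This is precisely what generates the fractional indices $\abs{h}_{\X^{k,\thal}}$ (arising from $\pa_{zz}\eta$) and $\abs{h}_{\X^{k-1,\fhal}}$ (arising from $\pa_{zzz}\eta$) in the conclusion. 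The term $\D_t v$ contributes $\norm{v}_{\X^{k+1}}+\linf\abs{h}_{\X^{k+1,-\hal}}$ once the factor $\pa_t\varphi/\pa_z\varphi$ in $\D_t$ is accounted for; the convective term produces $\linf(\norm{v}_{\X^{k+1}}+\norm{\pa_z v}_{\X^{k}})$; the pressure term simply contributes $\norm{\nabla q}_{\X^{k}}$; and the remainder $\eps\widetilde{\mathcal{L}}v$ contributes $\eps\linf(\norm{\pa_z v}_{\X^{k,1}}+\abs{h}_{\X^{k,\thal}})$. Summing these yields \eqref{dzzves221}. The estimate \eqref{dzzves22'} is then obtained by differentiating the identity once more in $z$: this raises each coefficient index by $1/2$, replaces $\nabla^\varphi q$ by $\pa_z\nabla^\varphi q$, and, through the $z$-derivative of the coefficient acting on the lower-order pieces, produces the additional $\abs{h}_{\X^{k,\hal}}$ contribution.

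The principal obstacle is the careful bookkeeping of how the half-derivative gain of Lemma \ref{propeta} propagates through the nonlinear coefficients, since this is what dictates the precise fractional indices in the conclusion. Conceptually, the point of the lemma is that $\eps\pa_{zz}v$ and even $\eps\pa_{zzz}v$ can be read off directly from the momentum equation, so that the time derivatives packaged inside $Z^\alpha$ are effectively traded against $\eps\pa_{zz}$ through the equation, circumventing the difficulties associated with controlling $\sqrt{\eps}\pa_{zz}v$ in $\Y^k$ that would arise from attempting to adapt the purely spatial argument of \cite{MasRou}.
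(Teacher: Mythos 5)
Your identity for $\eps\pa_{zz}v$ is exactly the one the paper uses (its \eqref{eqdzzv}), and your treatment of the coefficients, of the pressure via \eqref{qinfty}, and of the conormal bookkeeping leading to \eqref{dzzves221} all match the paper's argument. There is, however, one genuine gap, and it sits precisely at the step you dismiss: the claim that, after applying $\pa_z$ to the identity, ``the only genuinely new contribution comes from $\pa_z\widetilde{\mathcal{L}}v$, which reintroduces $\pa_{zz}v$ but multiplied by $\eps$'' is false. The transport part $\D_t v+v\cdot\nabla^\varphi v=\pa_t v+v_y\cdot\nabla_y v+V_z\pa_z v$ (see \eqref{transportW}) also reintroduces $\pa_{zz}v$ when you differentiate in $z$, namely through $V_z\pa_{zz}v$, and this term carries \emph{no} factor of $\eps$. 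Neither $\norm{\pa_{zz}v}_{\Y^{\frac{m}{2}}}$ nor $\norm{\pa_{zz}v}_{\X^{k-1}}$ without an $\eps$-weight is available --- $\linf$ only contains $\eps^{\hal}\norm{\pa_{zz}v}_{L^\infty}$, and unweighted control of two normal derivatives is exactly what the whole scheme is designed to avoid --- so, as written, your derivations of the $\pa_{zzz}$ bound in \eqref{dzzves11} and of \eqref{dzzves22'} do not close.

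The missing ingredient is the boundary degeneracy of the normal transport coefficient: by \eqref{Wdef} together with the kinematic and Navier-slip boundary conditions, $v_z=v\cdot\N-\pa_t\eta$ vanishes on both $\{z=0\}$ and $\{z=-b\}$, so one may write $v_z\pa_{zz}v=\frac{v_z}{z(z+b)}\,Z_3\pa_z v$, with $\frac{v_z}{z(z+b)}$ controlled pointwise by $\norm{\pa_z v_z}_{L^\infty}$ and its conormal derivatives controlled by those of $\pa_z v_z$ via the Hardy inequality. This converts the offending term into $\linf\norm{\pa_z v}_{\Y^{\frac{m}{2}+1}}$ for \eqref{dzzves11} and into $\linf\norm{\pa_z v}_{\X^{k-1,1}}\le\linf\norm{\pa_z v}_{\X^{k}}$ for \eqref{dzzves22'}, both of which are admissible on the right-hand sides; this is exactly the paper's computation \eqref{dzzves23}. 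With this one addition your argument coincides with the paper's proof.
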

\begin{proof}
It follows from the first equation in \eqref{NSv}, \eqref{transportW} and \eqref{deltaphi} that
\begin{align}\label{eqdzzv}
\eps\partial_{zz}  v ={1 \over E_{33}}  \( -  \eps \sum_{j<3} \partial_{z} \(E_{3,j}\partial_{j} v\)   -\eps\sum_{i<3, \, j} \partial_{i}\( E_{i
j} \partial_{j} v\) +\pa_z\varphi\(\dt v+v_y\cdot\nabla_y v\)+v_z\pa_z v +\pa_z\varphi \nabla^\varphi  q  \).
\end{align}
This implies that, since $v_z=0$ on the boundary,
\begin{align}\label{dzzves}
\eps \norm{\partial_{zz}v}_{\Y^{{\[\!\frac{m}{2}\!\]}+1}}
\le  \Lambda& \(\frac{1}{c_0},\eps\(\norm{\nabla \nabla_y v}_{\Y^{{\[\!\frac{m}{2}\!\]}+1}}+\abs{ h}_{\Y^{{\[\!\frac{m}{2}\!\]}+3}}\)+\abs{h}_{\Y^{{\[\!\frac{m}{2}\!\]}+2}}
\right. \\\nonumber&\qquad\ \left.+\norm{\pa_zv}_{\Y^{{\[\!\frac{m}{2}\!\]}+1}}+\norm{v}_{\Y^{{\[\!\frac{m}{2}\!\]}+2}}+\norm{\nabla  q}_{\Y^{{\[\!\frac{m}{2}\!\]}+1}}\)\le
\linf .
\end{align}
Applying $\pa_z$ to \eqref{eqdzzv}, using the estimate \eqref{dzzves}, one deduces
\begin{align}\label{dzzves2}
\eps \norm{\partial_{zzz}v}_{\Y^{{\[\!\frac{m}{2}\!\]}}}
&\le  \Lambda \(\frac{1}{c_0},\eps\(\abs{ h}_{\Y^{{\[\!\frac{m}{2}\!\]}+3}}+\norm{\nabla^2\nabla_y  v}_{\Y^{{\[\!\frac{m}{2}\!\]}}}\)+\abs{h}_{\Y^{{\[\!\frac{m}{2}\!\]}+2}}
\right. \\\nonumber&\qquad\qquad \left.+\norm{\pa_zv}_{\Y^{{\[\!\frac{m}{2}\!\]}+1}}+\norm{v}_{\Y^{{\[\!\frac{m}{2}\!\]}+2}}+\norm{\pa_z \nabla
q}_{\Y^{{\[\!\frac{m}{2}\!\]}}}+\norm{ \nabla  q}_{\Y^{{\[\!\frac{m}{2}\!\]}}}\)\le \linf ,
\end{align}
where one has used the fact that, since $v_z=0$ on the boundary,
\begin{align}\label{dzzves23}
\nonumber\norm{ \pa_z(v_z\pa_z  v)}_{\Y^{{\[\!\frac{m}{2}\!\]}}}&\le \norm{ \pa_z  v_z \pa_z  v }_{\Y^{{\[\!\frac{m}{2}\!\]}}}+\norm{ v_z\pa_{zz}  v
}_{\Y^{{\[\!\frac{m}{2}\!\]}}}=\norm{ \pa_z  v_z \pa_z  v }_{\Y^{{\[\!\frac{m}{2}\!\]}}}+\norm{ \frac{1}{z(z+b)}v_zZ_3 \pa_z v}_{\Y^{{\[\!\frac{m}{2}\!\]} }}
\\&\ls \norm{ \pa_z  v_z}_{\Y^{{\[\!\frac{m}{2}\!\]}}} \norm{\pa_z  v }_{\Y^{{\[\!\frac{m}{2}\!\]}}}+\norm{ \pa_zv_z}_{\Y^{{\[\!\frac{m}{2}\!\]} }}\norm{ \pa_z
v}_{\Y^{{\[\!\frac{m}{2}\!\]}+1 }}.
\end{align}
Combining \eqref{dzzves} and \eqref{dzzves23} proves the estimates \eqref{dzzves11}.

The estimates \eqref{dzzves221}--\eqref{dzzves22'} follow by a similar argument as that for \eqref{dzzves11}. Indeed, it follows from
\eqref{eqdzzv} and
\eqref{gues} that for $k\ge m-1$,
\begin{align}
\nonumber\eps \norm{\partial_{zz}v}_{\X^{k}}
\le  \linf  \( \eps\(\norm{\nabla \nabla_y v}_{\X^{k}}+\abs{ h}_{\X^{k,\thal}}\)+\abs{h}_{\X^{k,\hal}}  +\norm{\pa_zv}_{\X^{k}}+\norm{v}_{
{\X^{k+1}}}+\abs{h}_{\X^{k+1,-\hal}}+\norm{\nabla  q}_{\X^{k}}\) .
\end{align}
This yields \eqref{dzzves221}. Now applying $Z_i$, $i=1,2,3$ to \eqref{eqdzzv} and then using \eqref{gues} again lead to
\begin{align}\label{dzzvesh12}
\eps \norm{\partial_{zz}v}_{\X^{k-1,1}}
\le  \linf& \( \eps\(\norm{\nabla \nabla_y v}_{\X^{k-1,1}}+\abs{ h}_{\X^{k-1,\fhal}}\)+\abs{h}_{\X^{k-1,\thal}} \right.
\\&\nonumber\left. \ +\norm{\pa_zv}_{\X^{k-1,1}}+\norm{v}_{ {\X^{k,1}}}+\abs{h}_{\X^{k,\hal}}+\norm{\nabla  q}_{\X^{k-1,1}}\) .
\end{align}
Then applying $\pa_z$ to \eqref{eqdzzv} and using the estimate \eqref{dzzves11} and the similar observation as in \eqref{dzzves23} give
\begin{align}\label{dzzvesh13}
\eps \norm{\partial_{zzz}v}_{\X^{k-1}}
\le  \linf& \( \eps\(\norm{\nabla^2 \nabla_y v}_{\X^{k-1}}+\abs{ h}_{\X^{k-1,\fhal}}\)+\abs{h}_{\X^{k-1,\thal}}\right.
\\&\nonumber\ \left.  +\norm{\pa_zv}_{\X^{k}}+\norm{v}_{ {\X^{k-1,1}}}+\abs{h}_{\X^{k,\hal}}+\norm{\pa_z\nabla  q}_{\X^{k-1}}+\norm{ \nabla
q}_{\X^{k-1}}\) .
\end{align}
Combining \eqref{dzzvesh13} and \eqref{dzzvesh12} thus proves the estimate \eqref{dzzves22'}.
\end{proof}

Lemma \ref{dzzlem} then allows one to derive the normal derivative estimates.

%%%%%%%%%%%%%%%%%%%%%%%%%%%%%%%%%%%%%%%%%%%%%%%%%%%%%%
\subsection{Estimate of  $\norm{\pa_z v}_{\X^{m-2}}$}
%%%%%%%%%%%%%%%%%%%%%%%%%%%%%%%%%%%%%%%%%%%%%%%%%%%%%%

As in Section 8 of \cite{MasRou}, one defines
\beq\label{Sn}
S_{\n}= \Pi \(S^\varphi v   \n-\kappa\chi v\).
\eeq
The main advantages of this quantity are that
\beq\label{Snb}
S_{\n}=0\text{ on }\{z=0,-b\}
\eeq
and that the following estimates hold:
\begin{lem}\label{lemdzS}
For every $k \geq 0$:
\beq\label{dzvk}
\norm{ \partial_{z} v}_{\X^{k}} \le \linf \(\norm{ S_{\n}}_{\X^{k}} + \abs{h}_{\X^{k,\hal}} +  \norm{ v}_{\X^{k,1}}\)
\eeq
and
\beq\label{dzzvk}
\norm{\partial_{zz} v}_{\X^{k}} \le \linf\( \norm{\pa_z S_{\n} }_{\X^{k}} + \abs{h}_{\X^{k,\frac{3}{2}}} + \norm{v}_{\X^{k,2}}\).
\eeq
\end{lem}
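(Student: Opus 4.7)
The plan is to express $\partial_z v$ as a linear combination of $S_\n$, tangential derivatives of $v$, and expressions involving $h$ (through $\varphi, \N, \n$, and $g^{ij}$), then apply $Z^\alpha$ for $|\alpha|\le k$ and control commutators with the help of the $L^\infty$-bounds built into $\linf$. The decomposition will be $\partial_z v = (\partial_z v\cdot \n)\n + \Pi\partial_z v$, and we treat the normal and tangential parts separately.

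For the normal part, the divergence-free condition $\nabla^\varphi\cdot v=0$ gives, as in \eqref{eqdzvn},
\[
\partial_{z} v \cdot \N = -\partial_{z} \varphi\bigl(\partial_1 v_1+\partial_2 v_2\bigr),
\]
so $\partial_z v\cdot \n$ is purely a tangential derivative of $v$ with smooth coefficients depending on $\nabla\varphi$. For the tangential part, I would mimic the boundary computation leading to \eqref{eqdzvPi'}, but now \emph{in the interior}: the definition $S_{\n}= \Pi \bigl(S^\varphi v\,\n-\kappa\chi v\bigr)$ gives $\Pi(S^\varphi v\,\n) = S_\n+\kappa\chi\Pi v$, and combining \eqref{Su} with \eqref{eqdzvPi0} yields
\[
\frac{|\N|}{\partial_z\varphi}\,\Pi\partial_z v \;=\; 2S_{\n}+2\kappa\chi\Pi v+\partial_1\varphi\,\Pi\partial_1 v+\partial_2\varphi\,\Pi\partial_2 v-g^{ij}\partial_j v\cdot\n\,\Pi\partial_{y^i}.
\]
Adding the normal and tangential parts expresses $\partial_z v$ as $S_\n$ plus a tangential differential expression in $v$ with coefficients that are smooth functions of $h$, $\nabla\varphi$, and $\n$.

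To obtain \eqref{dzvk}, I apply $Z^\alpha$ with $|\alpha|\le k$ to this identity, distribute using product and commutator estimates \eqref{gues} and \eqref{comsym}, and bound all coefficient $L^\infty$-norms by $\Lambda_\infty$, while coefficient high-order conormal norms get turned into $\abs{h}_{\X^{k,\hal}}$ via Poisson-extension bounds (Lemma \ref{propeta}); the tangential factors $\Pi\partial_i v$, $\partial_j v\cdot\n$, and $\chi\Pi v$ are controlled by $\norm{v}_{\X^{k,1}}$. This yields \eqref{dzvk}.

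For \eqref{dzzvk}, I apply $\partial_z$ once to the algebraic identity above to obtain an expression of the form
\[
\partial_{zz} v \;=\; a(h,\nabla\varphi)\,\partial_z S_\n + \text{(terms involving $\partial_z$ of the coefficients)}\cdot\partial_z v \;+\; \text{(tangential derivatives of $\partial_z v$ and of $v$)},
\]
where the tangential derivatives of $\partial_z v$ (i.e.\ $\partial_z\partial_i v=\partial_i\partial_z v$ for $i=1,2$, and similarly for $Z_3\partial_z v$ up to smooth coefficients) are already one derivative above $v$, matching the requested $\norm{v}_{\X^{k,2}}$, while the commutators generate the lower-order conormal norms of $\partial_z v$ that are reabsorbed using \eqref{dzvk}. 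After applying $Z^\alpha$ with $|\alpha|\le k$ and using the same commutator estimates as before, the coefficient losses give exactly $\abs{h}_{\X^{k,\frac{3}{2}}}$, yielding \eqref{dzzvk}. The only mildly delicate point is the treatment of the coefficient $\partial_z\varphi/|\N|$ and its reciprocal, handled through the quotient estimate \eqref{quot} together with \eqref{apriori}; no further obstruction arises since the argument is algebraic and no normal regularity of $q$ is needed here.
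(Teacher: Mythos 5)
Your proposal is correct and follows essentially the same route as the paper: the normal component of $\partial_z v$ from the divergence-free constraint \eqref{eqdzvnk}, the tangential component from the interior identity \eqref{eqdzvPin} (derived exactly as you do, from \eqref{Su}, \eqref{eqdzvPi0} and the definition of $S_\n$), followed by \eqref{gues} and Lemma \ref{propeta}, and then one application of $\partial_z$ together with \eqref{dzvk} for the second estimate. The only caveat is that in the term $g^{ij}\partial_j v\cdot\n\,\Pi\partial_{y^i}$ the index $j=3$ produces $\partial_z v\cdot\n$, which is bounded by the normal-part identity rather than directly by $\norm{v}_{\X^{k,1}}$ — as in the paper's \eqref{vnormm} — but your decomposition already supplies this.
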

\begin{proof}
We start with the estimate \eqref{dzvk}. The normal component of $\partial_{z}v$ is given by, due to the divergence free condition,
\beq\label{eqdzvnk}
\partial_{z} v \cdot  \n=-\frac{\partial_{z} \varphi}{\abs{\N}} \(\partial_{1} v_{1}+ \partial_{2} v_{2}\).
\eeq
Then it follows from \eqref{gues} and Lemma \ref{propeta} that
 \beq\label{vnorm}
\norm{\partial_{z}v \cdot \n }_{\X^{k}} \le \linf \( \norm{ v   }_{\X^{k,1}} + \abs{h}_{\X^{k,\hal}}\).
\eeq
It thus suffices to estimate the tangential components of $\partial_{z} v $. Recall from the derivation of \eqref{eqdzvPi'} that
\beq\label{eqdzvPin}
\Pi \partial_{z}v={\partial_{z} \varphi \over \abs{\N}}\(\partial_{1}\varphi\Pi \partial_{1} v + \partial_{2}\varphi \Pi \partial_{2}
v+2S_{\n}-g^{ij} \partial_{j}v \cdot \n \Pi \partial_{y^i} -\kappa\chi\Pi v\),
\eeq
which follows from the derivation of \eqref{eqdzvPi'}. Hence, by \eqref{gues} and Lemma \ref{propeta}, one has
\beq \label{vnormm}
\norm{\Pi \partial_{z}v }_{\X^{k}} \le \linf \( \norm{ v   }_{\X^{k,1}} + \abs{h}_{\X^{k,\hal}}+\norm{S_{\n}}_{\X^{k}}+\norm{\partial_{z}v \cdot
\n }_{\X^{k}} \).
\eeq
Thus the estimate \eqref{dzvk} follows from combining \eqref{vnorm} and \eqref{vnormm}.

The estimate \eqref{dzzvk} follows from applying $\pa_z$ to \eqref{eqdzvnk} and \eqref{eqdzvPin}, employing a similar argument as
for \eqref{dzvk} and using \eqref{dzvk}.
\end{proof}

In light of Lemma \ref{lemdzS}, we then turn to estimate $S_{\n}$ instead of $\partial_{z} v $. It follows from the first equation in \eqref{NSv}
that
\beq\label{eqS0}\D_{t} \nabla^\varphi v+ \(v \cdot \nabla^\varphi\) \nabla^\varphi v  + \(\nabla^\varphi v\)^2 + \(\nabla^\varphi\)^2 q  -\eps
\Delta^\varphi \nabla^\varphi v = 0.
\eeq
Taking the symmetric part of \eqref{eqS0} yields
\beq\label{eqS}
\D_{t} S^\varphi v+ \(v \cdot \nabla^\varphi\) S^\varphi v  + \hal\((\nabla^\varphi v)^2+ ((\nabla^\varphi v)^t)^2\) +  \(\nabla^\varphi\)^2 q
-\eps \Delta^\varphi (S^\varphi v)= 0.
\eeq
Consequently,
\beq\label{eqPiS}
\D_{t}  S_{\n}+ \(v \cdot \nabla^\varphi\) S_{\n}  -\eps \Delta^\varphi (S_{\n})= F_{S}
\eeq
where
\beq\label{FSdef}
F_{S}= F_{S}^1 + F_{S}^2+ F_{S}^3+ F_{S}^4
\eeq
with
\begin{align}
\label{FS1}& F_{S}^1 =   - \hal  \Pi\( (\nabla^\varphi v)^2+ ((\nabla^\varphi v)^t)^2\) \n + \( \partial_{t} \Pi + v \cdot \nabla^\varphi \Pi
\)S^\varphi v  \n +    \Pi S^\varphi v \( \partial_{t} \n + v \cdot \nabla^\varphi \n\)
\\&\qquad+\kappa \(\D_{t}+  v \cdot \nabla^\varphi\) (\chi \Pi v),\nonumber   \\
\label{FS4} &F_{S}^2 =- \Pi\( \(\nabla^\varphi\)^2 q \)\n,  \\
\label{FS2}& F_{S}^3 =    - \eps \(\Delta^\varphi \Pi  \)S^\varphi v  \n  - \eps  \Pi S^\varphi v \Delta^\varphi \n, \\
\label{FS3}& F_{S}^4 =   - 2 \eps  \D_{i} \Pi \D_{i} \( S^\varphi v  \n \)  -  2 \eps  \Pi \( \D_{i}\( S^\varphi v \)  \D_{i} \n \)-\eps
\kappa\Delta^\varphi (\chi\Pi v).
\end{align}
Note that the pressure estimates in \eqref{qpressurem} indicate that at most, one can estimate only $\norm{S_{\n}}_{\X^{m-2}}$ and hence
$\norm{\pa_z v}_{\X^{m-2}}$ at this stage. However, we shall prove a control of $\norm{\pa_z v}_{\X^{m-1}}$ based on the vorticity equation in
the next
subsection.

\begin{prop}\label{propdzvm-2}
Any smooth solution of \eqref{NSv} satisfies the estimate
 \begin{align} \label{estSnm-2}
& \norm{S_{\n}}_{\X^{m-2}}^2    + \eps \int_{0}^t \norm{ \nabla S_{\n}}_{\X^{m-2}}^2  \\
&\nonumber\le \Lambda_0\norm{S_{\n}(0)}_{\X^{m-2}}^2+\int_0^t \linf \eps^2\(\norm{\nabla v}_{\X^{m-2,2}}^2+\abs{h}_{\X^{m-2,\fhal}}^2\)
\\\nonumber&\quad  +\int_0^t \linf \abs{h}_{\X^{m-1,\hal}}^2+\sigma^2 \abs{h}_{\X^{m-2,3}}^2+\norm{v}_{\X^{m-1,1}}^2  + \norm{\pa_z
v}_{\X^{m-2,1}}^2 +\norm{ S_{\n} }_{\X^{m-2}}^2  .
\end{align}
\end{prop}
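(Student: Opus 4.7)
The plan is to perform a conormal $L^2$ energy estimate on the convection--diffusion equation \eqref{eqPiS} for $S_\n$, exploiting the key fact \eqref{Snb} that $S_\n=0$ on both $\{z=0\}$ and $\{z=-b\}$. Since $Z_3=z(z+b)\partial_z$ also vanishes on the boundary, $Z^\alpha S_\n=0$ on $\{z=0,-b\}$ for every multi-index $\alpha$. I therefore apply $Z^\alpha$ with $|\alpha|\le m-2$ to \eqref{eqPiS}, pair with $Z^\alpha S_\n$ in $L^2(\Omega,d\V)$, and integrate by parts in the transport and viscous terms. The homogeneous boundary values of $Z^\alpha S_\n$ eliminate all boundary contributions, producing the basic identity
\begin{equation*}
\hal\dtt\int_{\Omega}|Z^\alpha S_\n|^2\,d\V+2\eps\int_{\Omega}|\nabla^\varphi Z^\alpha S_\n|^2\,d\V=\int_{\Omega}Z^\alpha F_S\cdot Z^\alpha S_\n\,d\V+\mathcal{R}_T^\alpha+\mathcal{R}_V^\alpha,
\end{equation*}
where $\mathcal{R}_T^\alpha$ collects $[Z^\alpha,\D_t+v\cdot\nabla^\varphi]$ and $\mathcal{R}_V^\alpha$ collects $\eps[Z^\alpha,\Delta^\varphi]$. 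The ellipticity \eqref{Eminor} and the vanishing trace allow me to convert the $\nabla^\varphi$ dissipation on the left-hand side into $\eps\norm{\nabla S_\n}_{\X^{m-2}}^2$.

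Next I would bound each term. The transport commutator $\mathcal{R}_T^\alpha$ is structurally identical to $\mathcal{C}^\alpha(\mathcal{T})$ of Lemma \ref{lemValpha}, so \eqref{CT} at level $m-2$ gives $|\mathcal{R}_T^\alpha|\ls\linf\norm{S_\n}_{\X^{m-2}}(\norm{v}_{\X^{m-1}}+\norm{\pa_z v}_{\X^{m-2}}+|h|_{\X^{m-1,-\hal}})$. After one integration by parts, the viscous commutator $\mathcal{R}_V^\alpha$ splits into a piece absorbed into $\eps\norm{\nabla S_\n}_{\X^{m-2}}^2$ via Young's inequality and a remainder involving $\eps\pa_{zz}v$ and $\eps\pa_{zzz}v$; here Lemma \ref{dzzlem}, specifically \eqref{dzzves221} with $k=m-2$ and \eqref{dzzves22'} with $k-1=m-2$, replaces these normal derivatives by $\eps(\norm{\pa_z v}_{\X^{m-2,1}}+|h|_{\X^{m-2,\thal}})$-type quantities, whose squares fit exactly into the $\eps^2(\norm{\nabla v}_{\X^{m-2,2}}^2+|h|_{\X^{m-2,\fhal}}^2)$ contribution on the right-hand side of \eqref{estSnm-2}. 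For the source $F_S=F_S^1+F_S^2+F_S^3+F_S^4$: the nonlinear piece $F_S^1$ from \eqref{FS1} is a product of known Sobolev quantities and is bounded by $\linf(\norm{v}_{\X^{m-1,1}}+\norm{\pa_z v}_{\X^{m-2,1}}+|h|_{\X^{m-1,\hal}})$ via \eqref{gues} and Lemma \ref{propeta}; the two viscous pieces $F_S^3,F_S^4$ from \eqref{FS2},\eqref{FS3} carry an explicit $\eps$ and, after invoking Lemma \ref{dzzlem} for the second-order normal derivatives of $v$, feed directly into the same $\eps^2$ term. The delicate source is the pressure Hessian $F_S^2=-\Pi((\nabla^\varphi)^2q)\n$ from \eqref{FS4}: its $\X^{m-2}$ norm is controlled by $\norm{\nabla q}_{\X^{m-1}}+\norm{\pa_{zz}q}_{\X^{m-2}}$, which the elliptic estimate \eqref{qpressurem} bounds by $\linf(|h|_{\X^{m,-\hal}}+\sigma|h|_{\X^{m-1,2}}+\norm{v}_{\X^m}+\norm{\pa_z v}_{\X^{m-1}}+\eps|v|_{\X^{m-1,\thal}}+\eps|h|_{\X^{m-1,\thal}})$. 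Squaring and using Sobolev interpolation together with the smoothing estimate \eqref{mhsig} rearranges the critical factor $\sigma|h|_{\X^{m-1,2}}$ into the $\sigma^2|h|_{\X^{m-2,3}}^2$ displayed on the right-hand side of \eqref{estSnm-2}, while the remaining pieces slot into $\norm{v}_{\X^{m-1,1}}^2$, $\norm{\pa_z v}_{\X^{m-2,1}}^2$, and the $\eps^2$ contributions already present.

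The main obstacle is precisely the pressure Hessian $F_S^2$: the elliptic estimate \eqref{qpressurem} spends one full conormal derivative, so the $S_\n$ energy identity intrinsically carries order-$(m-1)$ quantities such as $\norm{\pa_z v}_{\X^{m-1}}$ on its right-hand side. This prevents the scheme from closing at the $\X^{m-1}$ regularity level and forces the statement to stop at $\norm{S_\n}_{\X^{m-2}}$; it is exactly the defect that the vorticity-based argument in the next subsection is designed to overcome, by splitting $Z^\alpha\omega$ into a homogeneous boundary part and a nonhomogeneous interior part whose source depends only on $\nabla q$ (not $\nabla^2 q$). Once the three groups of estimates above are summed over $|\alpha|\le m-2$, Young's inequality absorbs the dissipation, and Gronwall integration in time together with the initial-data bound $\Lambda_0\norm{S_\n(0)}_{\X^{m-2}}^2$ yields \eqref{estSnm-2}.
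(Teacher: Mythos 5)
Your overall architecture is the paper's: an $L^2$ conormal energy estimate on the convection--diffusion equation \eqref{eqPiS}, using $Z^\alpha S_\n=0$ on $\{z=0,-b\}$ to kill boundary terms, Lemma \ref{mingrad} to recover $\eps\norm{\nabla S_\n}_{\X^{m-2}}^2$, and Lemma \ref{dzzlem} to handle the $\eps\pa_{zz}v$, $\eps\pa_{zzz}v$ terms in the viscous commutator. However, there is a genuine gap in your treatment of the pressure source $F_S^2=-\Pi((\nabla^\varphi)^2q)\n$. You bound its $\X^{m-2}$ norm via the elliptic estimate \eqref{qpressurem} at level $k=m-1$, whose right-hand side contains $\abs{h}_{\X^{m,-\hal}}$, $\sigma\abs{h}_{\X^{m-1,2}}$, $\norm{v}_{\X^{m}}$ and $\norm{\pa_z v}_{\X^{m-1}}$, i.e.\ quantities carrying up to $m$ time derivatives. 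These do \emph{not} ``slot into'' the terms $\norm{v}_{\X^{m-1,1}}^2$, $\norm{\pa_z v}_{\X^{m-2,1}}^2$, $\sigma^2\abs{h}_{\X^{m-2,3}}^2$ on the right-hand side of \eqref{estSnm-2}: the inequalities go the wrong way ($\norm{v}_{\X^{m-1,1}}\le\norm{v}_{\X^{m}}$, not the reverse, since $\X^m$ contains $\dt^m v$ while $\X^{m-1,1}$ stops at $\dt^{m-1}\nabla_y v$), and no Sobolev interpolation can trade a time derivative for a spatial one, so $\sigma\abs{h}_{\X^{m-1,2}}$ cannot be converted into $\sigma\abs{h}_{\X^{m-2,3}}$; invoking \eqref{mhsig} only reintroduces $\norm{v}_{\X^m}^2$ and $\norm{\pa_z v}_{\X^{m-1}}^2$. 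The paper's resolution is to note that $F_S^2$ only requires $\norm{\nabla q}_{\X^{m-2,1}}+\norm{\pa_{zz}q}_{\X^{m-2}}$ (at most $m-2$ time derivatives) and to rerun the elliptic argument to obtain the refined estimate \eqref{qpressurem'}, whose right-hand side is exactly $\abs{h}_{\X^{m-1,\hal}}+\sigma\abs{h}_{\X^{m-2,3}}+\norm{v}_{\X^{m-1,1}}+\norm{\nabla v}_{\X^{m-2,1}}+\eps\norm{\nabla v}_{\X^{m-2,2}}+\eps\abs{h}_{\X^{m-2,\fhal}}$ --- precisely the terms displayed in \eqref{estSnm-2}. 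Without this refinement you prove a strictly weaker inequality than the stated proposition.

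A secondary point: you dispose of the transport commutator by citing \eqref{CT} ``at level $m-2$,'' but the direct analogue of \eqref{CT} for the transported quantity $S_\n$ would produce $\norm{\pa_z S_\n}_{\X^{m-3}}$ on the right, which is not controlled anywhere (only $\eps\int\norm{\nabla S_\n}_{\X^{m-2}}^2$ appears, with an $\eps$ weight). The term $[Z^\alpha,V_z]\pa_z S_\n$ must be handled as in the paper's estimate of $\mathcal{C}_{S3}^1$: write $\pa_z Z^\gamma S_\n=\tfrac{1}{z(z+b)}Z_3Z^\gamma S_\n$, move the singular weight onto $v_z$ (which vanishes on the boundary), and apply the Hardy inequality, so that only $\norm{S_\n}_{\X^{m-2}}$ and $\norm{\pa_z v_z}_{\X^{m-2}}$ appear. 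Your stated final bound for $\mathcal{R}_T^\alpha$ is essentially correct, but the justification as written skips the step that makes it true.
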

\begin{proof}
We start with the estimates of $F_S$. It follows from \eqref{gues} and Lemma \ref{propeta} that
\begin{align}
\label{FS1es}&
\norm{ F_{S}^1}_{\X^{m-2}} \le\linf\( \norm{\nabla  v }_{\X^{m-2}}  + \abs{h}_{\X^{m-1,\hal}} + \norm{v}_{\X^{m-2}} \),\\
\label{FS2es}&
\norm{ F_{S}^2 }_{\X^{m-2}} \le \linf \norm{\nabla q}_{\X^{m-2,1}},\\
\label{FS3es}&
\norm{ F_{S}^3 }_{\X^{m-2}} \le\linf \eps   \(\norm{\nabla v}_{\X^{m-2}}  + \abs{h}_{\X^{m-2, \frac{5}{2} }}\),\\
\label{FS4es}&
\norm{ F_{S}^4 }_{\X^{m-2}}
\le \linf \eps\(\abs{h}_{\Y^{{\[\!\frac{m}{2}\!\]-1}+2}}\norm{\nabla^2 v}_{\X^{m-2}}+\abs{h}_{\X^{m-2,\frac{3}{2}}}\norm{\nabla^2 v}_{\Y^{{\[\!\frac{m}{2}\!\]-1}}}\)
\\&\nonumber \  \le \linf \( \eps \norm{\nabla v}_{\X^{m-2,1}} +\norm{\pa_zv}_{\X^{m-2}}+\norm{v}_{\X^{m-1}}+\abs{h}_{\X^{m-1,-\hal}}+
\norm{\nabla q}_{\X^{m-2}}+\abs{h}_{\X^{m-2,\frac{3}{2}}}\).
\end{align}
Here in the second inequality of \eqref{FS4es}, one has used \eqref{dzzves11} and \eqref{dzzves221} with $k=m-2$. Hence,
\beq\label{FSes}
\norm{ F_{S}}_{\X^{m-2}} \le\linf\(\eps\(\norm{\nabla
v}_{\X^{m-2,1}}+\abs{h}_{\X^{m-2,\fhal}}\)+\norm{\pa_zv}_{\X^{m-2}}+\norm{v}_{\X^{m-1}}+\abs{h}_{\X^{m-1,\hal}} + \norm{\nabla q}_{\X^{m-2,1}}
\).
\eeq

It follows from applying $\frac{1}{\pa_z\varphi}Z^\alpha(\pa_z\varphi\cdot) $ for $| \alpha | \leq m-2 $ to \eqref{eqPiS}, \eqref{transportW} and
\eqref{deltaphi} that
\beq\label{SNalpha}
\D_{t}  Z^\alpha S_{\n}+ \(v \cdot \nabla^\varphi\)Z^\alpha S_{\n}  -\eps \Delta^\varphi Z^\alpha  S_{\n} = Z^\alpha F_{S} + \mathcal{C}_{S},
\eeq
where the commutator is given by
\beq\label{CS}
\mathcal{C}_{S}= \mathcal  C_{S}^1 + \mathcal C_{S}^2
\eeq
with
\begin{align}
\mathcal{C}_{S}^1&=\frac{1}{\pa_z\varphi}\[Z^\alpha,\pa_z\varphi\]\(\pa_t+v_y\cdot\nabla_y\)S_\n+ \[Z^\alpha, v_{y}\]\cdot  \nabla_{y} S_{\n} +
\[Z^\alpha, v_{z}\] \partial_{z} S_{\n}
\\\nonumber&:= \mathcal{C}_{S1}^1 + \mathcal{C}_{S2}^1 + \mathcal{C}_{S3}^1
\end{align}
and
\begin{align}
\mathcal{C}_{S}^2& =- \eps {1 \over \partial_{z} \varphi} [Z^\alpha, \nabla] \cdot \big( E \nabla S_{\n}\big)-\eps {1 \over \partial_{z}\varphi}
\nabla\cdot  \big( \[Z^\alpha, E \]\nabla S_{\n}\big)-\eps {1 \over \partial_{z}\varphi} \nabla\cdot  \big( E [Z^\alpha,  \nabla \big]
S_{\n}\big)
\\\nonumber&:= \mathcal{C}_{S1}^2 + \mathcal{C}_{S2}^2 + \mathcal{C}_{S3}^2.
\end{align}
Since $Z^\alpha S_{\n}=0$ on the boundary, one can obtain
\beq\label{m-21}
\hal\dtt\int_{\Omega}  \abs{Z^\alpha S_{\n}}^2 \, d\V + \eps \int_{\Omega} \abs{\nabla^\varphi Z^\alpha S_{\n}}^2 \, d\V
=\int_{\Omega}  \(Z^\alpha F_{S} + \mathcal{C}_{S} \) \cdot Z^\alpha S_{\n}\, d\V.
\eeq
The right hand side of \eqref{m-21} can be estimated as follows. \eqref{FSes} implies immediately that
\begin{align}\label{FSmes}
\int_{\Omega}  Z^\alpha F_{S} \cdot Z^\alpha S_{\n}\, d\V
\le \linf&\(\eps\(\norm{\pa_z v}_{\X^{m-2,1}}+\abs{h}_{\X^{m-2,\fhal}}\)+\norm{\pa_zv}_{\X^{m-2}}\right.
\\\nonumber&\ \left.+\norm{v}_{\X^{m-1}}+\abs{h}_{\X^{m-1,\hal}} + \norm{\nabla q}_{\X^{m-2,1}} \)\norm{ S_{\n} }_{\X^{m-2}}.
\end{align}
Next, we estimate the part involving $C_S^1$. \eqref{com} yields
\beq\label{CSy}
\norm{\mathcal  C_{S1}^1}+\norm{\mathcal  C_{S1}^2}\le\linf\(  \norm{v}_{\X^{m-2}}+ \norm{S_{\n}}_{\X^{m-2 }} + \abs{h}_{\X^{m-2,\hal }}  \).
\eeq
Additional care is needed to estimate $\norm{\mathcal  C_{S3}^1}$, since $\pa_{z}S_{\n}$ can not be
controlled. By expanding the commutator and using \eqref{idcom}, one sees clearly that it suffices to estimate terms of the form
$$\norm{ Z^\beta v_{z} \partial_{z} Z^\gamma S_{\n} }$$
with $|\beta |+ | \gamma |\leq m-2$, $ | \gamma |  \leq m-3$. Since
$$ Z^\beta v_{z} \partial_{z} Z^\gamma S_{\n}= {1 \over z(z+b) } Z^\beta v_{z} Z_{3} Z^\gamma S_{\n},$$
which can be further rewritten as a sum of terms of the form
\beq\label{CSz0}
c_{\tilde \beta }Z^{\tilde \beta }\( {1 \over z(z+b) } v_{z} \)   \, Z_{3} Z^\gamma S_{\n},
\eeq
where $c_{\tilde \beta}$ are harmless bounded functions and $|\tilde \beta | \leq | \beta |$.  Indeed, this comes from the fact that $ Z_{3}\( {1
\over z(z+b) }\)= -{2z+b \over z(z+b) } $. If $\tilde \beta = 0$,  since \eqref{Wdef} implies that $v_{z}=0$ on the boundary, then
$$
\norm{ c_{\tilde \beta }Z^{\tilde \beta }\(  {1 \over z(z+b) } v_{z} \)   \, Z_{3} Z^\gamma S_{\n}}\ls  \norm{\pa_z v_z}_{L^\infty}   \norm{
S_{\n}}_{\X^{m-2}}.
$$
If $\tilde \beta \neq 0$, one can use \eqref{gues} to obtain that
\begin{align*}
&\norm{ c_{\tilde \beta }Z^{\tilde \beta }\(  {1 \over z(z+b) } v_{z} \)   \, Z_{3} Z^\gamma S_{\n}}
\\&\quad\ls \norm{Z \(  {1 \over z(z+b) } v_{z} \)}_{\Y^{\[\!\frac{m-3}{2}\!\]}} \norm{Z_{3} S_{\n} }_{\X^{m-3}}
+ \norm{Z \(  {1 \over z(z+b) } v_{z} \)}_{\X^{m-3}} \norm{Z_{3} S_{\n} }_{\Y^{\[\!\frac{m-3}{2}\!\]}}.
\end{align*}
Observe that by again that $v_{z}=0$ on the boundary,
$$ \norm{Z \(  {1 \over z(z+b) } v_{z} \)}_{\Y^{\[\!\frac{m-3}{2}\!\]}}  \ls \norm{ \pa_z v_{z} }_{\Y^{\[\!\frac{m}{2}\!\]}} .$$
On the other hand, since
$$\norm{Z \(  {1 \over z(z+b) } v_{z} \)}_{\X^{m-3}} \ls  \norm{   {1 \over z(z+b) } Z v_{z}  }_{\X^{m-3}}  +  \norm{   {1 \over z(z+b) } v_{z}
}_{\X^{m-3}},
$$
it suffices to estimate
$$ \norm{   {1 \over z(z+b) } Z^\beta Z v_{z}  },\quad \norm{   {1 \over z(z+b) } Z^\beta v_{z}  }, \quad |\beta| \leq m-3.$$
Indeed, since $v_{z}=0$ on the boundary, it follows from the Hardy inequality that
$$ \norm{   {1 \over z(z+b) } Z^\beta Z v_{z}  }\ls \norm{\pa_z Z^\beta Z v_{z}  }\text{ and } \norm{   {1 \over z(z+b) } Z^\beta v_{z}  }\ls
\norm{\pa_z Z^\beta v_{z}  }.$$
We have thus proven that
$$ \norm{\mathcal{C}_{S3}^1} \le  \norm{ \pa_z v_{z} }_{\Y^{\[\!\frac{m}{2}\!\]}}  \norm{ S_{\n}}_{\X^{m-2}} + \norm{ S_{\n}
}_{\Y^{{\[\!\frac{m}{2}\!\]}}}\norm{ \pa_z v_{z} }_{\X^{m-2}}.
$$
By Lemma \ref{propeta}, it holds that
\begin{align*}
\norm{ \pa_z v_{z} }_{\Y^{\[\!\frac{m}{2}\!\]}}\le \linf\text{ and }\norm{ \pa_z v_{z}}_{\X^{m-2}} \le  \linf\(\norm{v}_{\X^{m-2}} +\norm{\pa_z
v}_{\X^{m-2}} +\abs{h }_{\X^{m-1 ,\hal}} \).
\end{align*}
Hence,
\beq\label{CSz}
\norm{\mathcal{C}_{S3}^1}  \le \linf \(\norm{v}_{\X^{m-2}} +\norm{\pa_z v}_{\X^{m-2}} +\abs{h }_{\X^{m-1 ,\hal}}+\norm{ S_{\n}}_{\X^{m-2}}\).
\eeq
This together with \eqref{CSy} yields
\beq\label{CS1}
\int_{\Omega}   \mathcal{C}_{S}^1  \cdot Z^\alpha S_{\n}\, d\V \le \linf \(\norm{v}_{\X^{m-2}} +\norm{\pa_z v}_{\X^{m-2}} +\abs{h }_{\X^{m-1
,\hal}}+\norm{ S_{\n}}_{\X^{m-2}}\)\norm{ S_{\n}}_{\X^{m-2}}.
\eeq

Next, we shall estimate the term involving $\mathcal{C}_{S}^2$. As mentioned in the beginning of this section, we need to employ a different
argument from Proposition 8.3 in \cite{MasRou}. Due to \eqref{idcom} to handle the term involving $\mathcal{C}_{S1}^2$, it suffices to
estimate
$$ \eps \int_{\Omega } Z^\beta\partial_{z}\big( E\nabla S_{\n}\big) \cdot Z^\alpha S_{\n} \,dy dz$$
with $|\beta | \leq m-3$, which can be reduced to the estimate of
$$ \eps \int_{\Omega } \(Z^{\beta'}\partial_{z}  E Z^{\beta''}\nabla S_{\n}+Z^{\beta'} E Z^{\beta''} \partial_{z} \nabla S_{\n}  \) \cdot
Z^\alpha S_{\n} \,dy dz$$
with $\beta'+\beta''=\beta$.
It follows from \eqref{gues}, \eqref{dzzves11} and \eqref{dzzves221} with $k=m-3$ that
\begin{align}\label{keyiss11}
&\eps\norm{Z^{\beta'}\partial_{z}  E Z^{\beta''}\nabla S_{\n}}
 \ls  \norm{\partial_{z}  E}_{\Y^{\[\!\frac{m-3}{2}\!\]}}\eps\norm{\nabla S_{\n}}_{\X^{m-3}}
+\norm{\partial_{z}  E}_{\X^{m-3}}\eps\norm{\nabla S_{\n}}_{\Y^{\[\!\frac{m-3}{2}\!\]}}
\\&\nonumber \quad\le \linf \(\eps \norm{\pa_z   v}_{\X^{m-3,1}}   +\norm{\pa_zv}_{\X^{m-3}}+\norm{v}_{
{\X^{m-2}}}+\abs{h}_{\X^{m-2,-\hal}}+\norm{\nabla  q}_{\X^{m-3}}+ \abs{h}_{\X^{m-3,\frac{3}{2}}}\),
\end{align}
and \eqref{dzzves22'} with $k=m-2$ implies that
\begin{align}\label{keyiss12}
&\eps\norm{Z^{\beta'}  E Z^{\beta''}\partial_{z}\nabla S_{\n}}
\ls  \norm{   E}_{\Y^{\[\!\frac{m-3}{2}\!\]}}\eps\norm{\partial_{z} \nabla S_{\n}}_{\X^{m-3}}
+\norm{   E}_{\X^{m-3}}\eps\norm{\partial_{z} \nabla S_{\n}}_{\Y^{\[\!\frac{m-3}{2}\!\]}}
\\&\nonumber\quad \le \linf \(\eps\(\norm{\nabla v}_{\X^{m-3,2}}+\abs{ h}_{\X^{m-3,\fhal}}\)+\abs{h}_{\X^{m-2,\hal}} \right.
\\&\nonumber\qquad\qquad\   +\norm{\pa_zv}_{\X^{m-2}} +\norm{v}_{ {\X^{m-2,1}}} +\norm{\nabla  q}_{\X^{m-3,1}} +\norm{\pa_z\nabla
q}_{\X^{m-3}}\Big).
\end{align}
Thus,
\begin{align}\label{CS22}
 \int_{\Omega} \mathcal{C}_{S1}^2 \cdot Z^\alpha S_{\n}\, d\V
\le& \linf \(\eps\(\norm{\nabla v}_{\X^{m-3,2}}+\abs{ h}_{\X^{m-3,\fhal}}\)+\abs{h}_{\X^{m-2,\hal}} \right.
\\&\nonumber\qquad     +\norm{\pa_zv}_{\X^{m-2}} +\norm{v}_{ {\X^{m-2,1}}} +\norm{\nabla  q}_{\X^{m-3,1}} +\norm{\pa_z\nabla  q}_{\X^{m-3}}\Big).
\end{align}
By expanding the second commutator $\mathcal{C}_{S2}^2$, one sees that it suffices to estimate terms of the form
$$\eps \int_{\Omega }  \nabla \cdot \(  Z^{\beta} E Z^{ \gamma} \nabla S_{\n}\) \, \cdot Z^\alpha S_{\n} \, dydz$$
with $\beta +   \gamma = \alpha$, $\beta \neq 0$. If $\gamma=0$ and hence $\beta=\alpha$, since $Z^\alpha S_{\n}=0$ on the boundary, one can then
integrate by parts to get
\begin{align}
&\abs{\eps \int_{\Omega }  \nabla \cdot \(  Z^{\alpha} E   \nabla S_{\n}\) \, \cdot Z^\alpha S_{\n} \, dydz}
=
\abs{\eps \int_{\Omega }       Z^{\alpha} E   \nabla S_{\n}  \cdot \nabla Z^\alpha S_{\n} \, dydz}
\\&\nonumber\quad\le \eps \norm{Z^{\alpha} E}\norm{\nabla S_{\n}}_{L^\infty}\norm{\nabla Z^\alpha S_{\n}}\le \linf
\abs{h}_{\X^{m-2,\hal}}\eps^\hal \norm{\nabla Z^\alpha S_{\n}},
\end{align}
where in the last inequality one has used the fact that $\Lambda_{\infty}$ involves $\sqrt{\eps}  \norm{\partial_{zz}v}_{L^\infty}$.
If $\gamma\neq 0$ and hence $1\le |\beta|\le m-3$, then one can expand $\nabla\cdot$, by \eqref{gues} and Lemma \ref{dzzlem}, to estimate
$$\eps \int_{\Omega }   \(       Z^{\beta} \nabla E Z^{ \gamma} \nabla S_{\n}+  Z^{\beta} E   Z^{ \gamma}  \nabla^2 S_{\n}  \)\, \cdot Z^\alpha
S_{\n} \, dydz.$$
It follows from \eqref{gues}, \eqref{dzzves11} and \eqref{dzzves221} with $k=m-3$ that
\begin{align}\label{keyiss112}
&\eps\norm{ Z^{\beta}\nabla E Z^{ \gamma} \nabla S_{\n}}
 \ls  \norm{Z\nabla  E}_{\Y^{\[\!\frac{m}{2}\!\]-2}}\eps\norm{Z\nabla S_{\n}}_{\X^{m-4}}
+\norm{Z\nabla  E}_{\X^{m-4}}\eps\norm{Z\nabla S_{\n}}_{\Y^{\[\!\frac{m}{2}\!\]-2}}
\\&\nonumber\quad \le \linf \(\eps \norm{\pa_z   v}_{\X^{m-3,1}}   +\norm{\pa_zv}_{\X^{m-3}}+\norm{v}_{
{\X^{m-2}}}+\abs{h}_{\X^{m-2,-\hal}}+\norm{\nabla  q}_{\X^{m-3}}+ \abs{h}_{\X^{m-3,\frac{3}{2}}}\),
\end{align}
and \eqref{dzzves22'} with $k=m-2$ that
\begin{align}
&\eps\norm{Z^{\beta}  E   Z^{\gamma} \nabla^2 S_{\n}} \ls  \norm{Z   E}_{\Y^{\[\!\frac{m}{2}\!\]-2}}\eps\norm{Z\nabla^2 S_{\n}}_{\X^{m-4}}
+\norm{ Z  E}_{\X^{m-4}}\eps\norm{Z\nabla^2 S_{\n}}_{\Y^{\[\!\frac{m}{2}\!\]-2}}\\&\nonumber\quad \le \linf \(\eps\(\norm{\nabla v}_{\X^{m-3,2}}+\abs{
h}_{\X^{m-3,\fhal}}\)+\abs{h}_{\X^{m-2,\hal}} \right.
\\&\nonumber\qquad\qquad\   +\norm{\pa_zv}_{\X^{m-2}} +\norm{v}_{ {\X^{m-2,1}}} +\norm{\nabla  q}_{\X^{m-3,1}} +\norm{\pa_z\nabla
q}_{\X^{m-3}}\Big).
\end{align}
Hence,
\begin{align}\label{CS222}
  \nonumber\int_{\Omega} \mathcal{C}_{S2}^2 \cdot Z^\alpha S_{\n}\, d\V
&\le \linf  \(\eps\(\norm{\nabla v}_{\X^{m-3,2}}+\abs{ h}_{\X^{m-3,\fhal}}\)+\abs{h}_{\X^{m-2,\hal}} +\norm{\pa_zv}_{\X^{m-2}} +\norm{v}_{
{\X^{m-2,1}}}\right.
\\&\qquad  +\norm{\nabla  q}_{\X^{m-3,1}} +\norm{\pa_z\nabla  q}_{\X^{m-3}}\Big)\norm{ S_{\n}}_{\X^{m-2}}+\linf  \abs{h}_{\X^{m-2,\hal}}\eps^\hal
\norm{\nabla Z^\alpha S_{\n}}  .
\end{align}
To handle the term involving $\mathcal{C}_{S3}^2$, due to \eqref{idcom}, one needs to estimate
$$ \eps \int_{\Omega }\nabla\cdot\( E Z^\beta\partial_{z}  S_{\n} \)\cdot Z^\alpha S_{\n} \,dy dz$$
with $|\beta | \leq m-3$, which can be bounded easily by using \eqref{dzzves221} with $k=m-3$ and \eqref{dzzves22'}
with $k=m-2$ so that
\begin{align}\label{CS2223}
  \int_{\Omega} \mathcal{C}_{S3}^2 \cdot Z^\alpha S_{\n}\, d\V
  &\le \linf  \(\eps\(\norm{\nabla v}_{\X^{m-3,2}}+\abs{ h}_{\X^{m-3,\fhal}}\)+\abs{h}_{\X^{m-2,\hal}} +\norm{\pa_zv}_{\X^{m-2}}\right.
\\&\nonumber\qquad\quad  +\norm{v}_{ {\X^{m-2,1}}}+\norm{\nabla  q}_{\X^{m-3,1}} +\norm{\pa_z\nabla  q}_{\X^{m-3}}\Big)\norm{ S_{\n}}_{\X^{m-2}}
.
\end{align}
In view of \eqref{CS22}, \eqref{CS222}, \eqref{CS2223}, one has actually proven that
\begin{align} \label{CS2}
 \nonumber \int_{\Omega} \mathcal{C}_{S}^2 \cdot Z^\alpha S_{\n}\, d\V
&\le \linf  \(\eps\(\norm{\nabla v}_{\X^{m-3,2}}+\abs{ h}_{\X^{m-3,\fhal}}\)+\abs{h}_{\X^{m-2,\hal}} +\norm{\pa_zv}_{\X^{m-2}} +\norm{v}_{
{\X^{m-2,1}}}\right.
\\&\qquad\quad  +\norm{\nabla  q}_{\X^{m-3,1}} +\norm{\pa_z\nabla  q}_{\X^{m-3}}\Big)\norm{ S_{\n}}_{\X^{m-2}}+\linf
\abs{h}_{\X^{m-2,\hal}}\eps^\hal \norm{\nabla Z^\alpha S_{\n}}  .
\end{align}

Consequently, by \eqref{FSmes}, \eqref{CS1} and \eqref{CS2}, one deduces from \eqref{m-21} that
 \begin{align}
& \hal\dtt \int_{\Omega} \abs{Z^\alpha S_{\n}}^2 \, d\V    +\eps \int_{\Omega}\abs{\nabla^\varphi Z^\alpha S_{\n}}^2 \, d\V \\
&\nonumber\le \linf\(\eps\(\norm{\nabla v}_{\X^{m-2,1}}+\abs{h}_{\X^{m-2,\fhal}}\)+\norm{ S_{\n}
}_{\X^{m-2}}+\norm{\pa_zv}_{\X^{m-2}}+\norm{v}_{\X^{m-1}} \right.
\\\nonumber&\quad \left.+\abs{h}_{\X^{m-1,\hal}} + \norm{\nabla q}_{\X^{m-2,1}} +\norm{\pa_z\nabla  q}_{\X^{m-3}}\)\norm{ S_{\n}
}_{\X^{m-2}}+\linf  \abs{h}_{\X^{m-2,\hal}}\eps^\hal \norm{\nabla Z^\alpha S_{\n}} .
\end{align}
To conclude, one can use Lemma \ref{mingrad} to replace $ \norm{\nabla^\varphi S_{\n}}_{\X^{m-2}}$ by $\norm{\nabla  S_{\n}}_{\X^{m-2}}$ in the
left hand side and then use Cauchy's inequality to absorb the last term. On the other hand, one can follow the derivation of \eqref{qpressurem}
to get that
  \begin{align}\label{qpressurem'}
  \nonumber&    \norm{   q }_{\X^{m-2,1}}+\norm{ \nabla q }_{\X^{m-2,1}}+\norm{ \pa_{zz} q}_{\X^{m-2}}
      \\\nonumber&  \quad\leq \linf\(\abs{h}_{\X^{m-1,{1 \over 2 }}}+\sigma \abs{h}_{\X^{m-2,3}}+\norm{v}_{\X^{m-1,1}}  + \norm{\nabla
      v}_{\X^{m-2,1}} +  \eps\norm{ \nabla v }_{{\X^{m-2, 2}}}+\eps\abs{h}_{{\X^{m-2,\frac{5}{2}}}}\),
  \end{align}
which reduces the order of time derivatives to the spatial derivatives.
Finally, we integrate the resulting inequality in time to obtain \eqref{estSnm-2}.
\end{proof}

%%%%%%%%%%%%%%%%%%%%%%%%%%%%%%%%%%%%%%%%%%%%%%%%%%%%%%
\subsection{Estimate of  $\norm{\pa_z v}_{\X^{m-1}}$}\label{sectionnorm2}
%%%%%%%%%%%%%%%%%%%%%%%%%%%%%%%%%%%%%%%%%%%%%%%%%%%%%%

Note that Proposition \ref{propdzvm-2} only provides the control of $\norm{\pa_z v}_{\X^{m-2}}$, and this is due to the appearance of
$(\nabla^\varphi)^2 q$ in the source term in the equation of $S_{\n}$. To get around this, a natural way is to use the vorticity instead of
$S_{\n}$.

Set $\omega = \nabla^\varphi \times v$. Then
\beq\label{omega1}
\omega \times \n= \hal \( \nabla^\varphi v\, \n - (\nabla^\varphi v)^t\n\)= S^\varphi v\, \n -   (\nabla^\varphi v)^t\n,
\eeq
and hence by \eqref{Su},
$$ \omega \times \n =   \hal  \partial_{\n} u  - g^{ij}\big( \partial_{j}v \cdot \n \big) \partial_{y^i} .$$
Consequently, as in Lemma \ref{lemdzS} one can get that
\beq\label{dzvm-1O}
\norm{\partial_{z} v}_{\X^{m-1}} \le \linf\(\norm{v}_{\X^{m-1,1}}+\abs{h}_{\X^{m-1,\hal} } + \norm{\omega}_{\X^{m-1}}\).
\eeq
{It then suffices to estimate $\norm{\omega}_{\X^{m-1}}$. Before proceeding further, one has the following
observation.
\begin{lem}\label{iii}
For any smooth cut-off function $\chi$ such that $\chi= 0$ in a vicinity of $\partial\Omega$, we have
\beq\label{inftyint}
\norm{\chi\omega}_{ H^k}\ls\norm{f}_{k+1}.
\eeq
\end{lem}
\begin{proof}
It follows directly by the fact that away from the boundary
$\partial\Omega$ the conormal Sobolev norm $\norm{\cdot}_{k}$ is equivalent to the $H^k$ norm.
\end{proof}

Hence, one needs only to estimate $   \omega  $ near the boundary
$\partial\Omega$.  For sake of brevity, we consider only the
estimates near $\{z=0\}$, and the estimates near $\{z=-b\}$ may follow in the same way and a bit simpler. One notes that the first equation in \eqref{NSv} implies
\beq\label{eqomega1}
\D_{t} \omega + v \cdot \nabla^\varphi \omega - \omega \cdot \nabla^\varphi v = \eps \Delta^\varphi \omega.
\eeq
We choose the cut-off function $\chi(z) \in [0, 1]$ which is smooth compactly supported near $\{z=0\}$ and takes the value $1$ in a vicinity of $\{z=0\}$. Then we have
\beq\label{eqomega}
\D_{t} (\chi\omega) + v \cdot \nabla^\varphi (\chi\omega)  - (\chi\omega)  \cdot \nabla^\varphi v = \eps \Delta^\varphi (\chi\omega) +\big(V_{z} \partial_{z} \chi\big) \omega -  2\eps \nabla^\varphi \chi \cdot \nabla^\varphi \omega - \eps \Delta^\varphi
\chi \, \omega.
\eeq
As in the previous subsection, applying $ \frac{1}{\pa_z\varphi}Z^\alpha(\pa_z\varphi\cdot)$ for $|\alpha|\le m-1$ to
\eqref{eqomega} yields
\beq\label{eqomegaalpha}
\D_{t} Z^\alpha  (\chi\omega) + (v \cdot \nabla^\varphi) \, Z^\alpha  (\chi\omega) - \eps \Delta Z^\alpha (\chi\omega)= F.\eeq
Here $F$ is given by
   \beq
   \label{Fomega}
    F= Z^\alpha\big(\chi\omega \cdot \nabla^\varphi v+\big(V_{z} \partial_{z} \chi\big) \omega -  2\eps \nabla^\varphi \chi \cdot \nabla^\varphi \omega - \eps \Delta^\varphi
\chi \, \omega)  + \mathcal{C}_{S},
   \eeq
   where} $\mathcal{C}_{S}$  is given, as in \eqref{CS}, by
    \beq
       \label{CSomega}
       \mathcal{C}_{S}= \mathcal  C_{S}^1 + \mathcal C_{S}^2\eeq
with
\begin{align}
\mathcal{C}_{S}^1&=\frac{1}{\pa_z\varphi}\[Z^\alpha,\pa_z\varphi\]\(\pa_t+v_y\cdot\nabla_y\) {(\chi\omega)} + \[Z^\alpha, v_{y}\]\cdot  \nabla_{y} {(\chi\omega)}  +
\[Z^\alpha, v_{z}\] \partial_{z}{(\chi\omega)}
\\\nonumber&:= \mathcal{C}_{S1}^1 + \mathcal{C}_{S2}^1 + \mathcal{C}_{S3}^1
\end{align}
and
\begin{align}
\mathcal{C}_{S}^2& =- \eps {1 \over \partial_{z} \varphi} [Z^\alpha, \nabla] \cdot \big( E \nabla {(\chi\omega)} \big)-\eps {1 \over \partial_{z}\varphi}
\nabla\cdot  \big( \[Z^\alpha, E \]\nabla {(\chi\omega)} \big)-\eps {1 \over \partial_{z}\varphi} \nabla\cdot  \big( E [Z^\alpha,  \nabla \big]
{(\chi\omega)} \big)\nonumber
\\&:= \mathcal{C}_{S1}^2 + \mathcal{C}_{S2}^2 + \mathcal{C}_{S3}^2.
\end{align}

{Since $\chi(z)$ is compactly supported near $\{z=0\}$, we may regard the equation \eqref{eqomegaalpha}  as to be defined in  $\{z<0\}$ by extending the functions smoothly outside $\Omega$. The main difficulty lies in that the vorticity does not vanish on the boundary $\{z=0\}$.} Thus, set
\beq\label{splitomega}
Z^\alpha {(\chi\omega)}= \omega^\alpha_{nh}+ \omega^{\alpha}_{h},
\eeq
where $\omega^\alpha_{nh} $ sloves
\beq\label{eqomeganh}
\begin{cases}
\D_{t}  \omega^\alpha_{nh} + (v \cdot \nabla^\varphi)   \omega^\alpha_{nh} - \eps \Delta^\varphi  \omega^\alpha_{nh}= F &\text{in } {\{z<0\}}
\\ \omega^\alpha_{nh} = 0 &\text{on }{\{z=0\}}
\\  \omega^\alpha_{nh}|_{t=0}= { Z^\alpha(\chi \omega)}(0)&
\end{cases}
\eeq
and $ \omega^{\alpha}_{h}$ solves
\beq\label{eqomegah}
\begin{cases}
\D_{t}  \omega^\alpha_{h} + (v \cdot \nabla^\varphi)   \omega^\alpha_{h} - \eps \Delta^\varphi  \omega^\alpha_{h}= 0 &\text{in }{\{z<0\}}
\\ \omega^\alpha_{h} = Z^\alpha \omega &\text{on }{\{z=0\}}
\\  \omega^\alpha_{h}|_{t=0}= 0.&
\end{cases}
\eeq

The estimate of $\omega^\alpha_{nh}$ is  given as follows.
\begin{prop}
For $|\alpha|\le m-1$, the solution $\omega_{nh}^\alpha$ of \eqref{eqomeganh} satisfies the estimate
\begin{align}\label{propomeganh}
\norm{\omega^\alpha_{nh}(t)}_0^2 + \eps \int_{0}^t \norm{\nabla \omega^\alpha_{nh} }_0^2 \le& \Lambda_0\norm{ \omega(0)}_{\X^{m-1}}^2+\int_0^t \linf
\eps^2\({\norm{\nabla v}_{\X^{m-1,1}}^2+\abs{ h}_{\X^{m-1,\thal}}^2}\)
\\&\nonumber +\int_0^t \linf \(\abs{h}_{\X^{m-1,\hal}}^2+\norm{\pa_zv}_{\X^{m-1}}^2 +\norm{v}_{ {\X^{m-1,1}}}^2+\norm{ \omega^\alpha_{nh}}_0^2 \)
.
\end{align}
\end{prop}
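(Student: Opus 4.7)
The equation \eqref{eqomeganh} is a linear convection–diffusion equation for $\omega^\alpha_{nh}$ with homogeneous Dirichlet boundary data, so I would perform the natural $L^2$ energy estimate against $\omega^\alpha_{nh}$ with the measure $d\V=\partial_z\varphi\,dy\,dz$. Writing $\D_t + v\cdot\nabla^\varphi = \partial_t + v_y\cdot\nabla_y + V_z\partial_z$ as in \eqref{transportW} and using $V_z=0$ on $\{z=0,-b\}$, one finds
\[
\tfrac12\dtt\int_\Omega |\omega^\alpha_{nh}|^2\,d\V + \eps\int_\Omega E\nabla\omega^\alpha_{nh}\cdot\nabla\omega^\alpha_{nh}\,dy\,dz
= \int_\Omega F\cdot\omega^\alpha_{nh}\,d\V + \linf \norm{\omega^\alpha_{nh}}^2,
\]
since the vanishing of $\omega^\alpha_{nh}$ on both components of $\partial\Omega$ kills all boundary terms arising from the integration by parts for $-\eps\Delta^\varphi$; the coercivity \eqref{Eminor} of $E$ then delivers the dissipation $\eps\norm{\nabla\omega^\alpha_{nh}}^2$ (modulo Lemma \ref{mingrad}, which compares $\nabla^\varphi$ to $\nabla$).

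\textbf{Source term $F$.} Write $F = Z^\alpha(\omega\cdot\nabla^\varphi v) + \mathcal{C}_S$. The nonlinear piece is handled by the product estimate \eqref{gues}, yielding
\[
\norm{Z^\alpha(\omega\cdot\nabla^\varphi v)} \le \linf\bigl(\norm{\omega}_{\X^{m-1}} + \norm{\nabla v}_{\X^{m-1}} + \abs{h}_{\X^{m-1,\hal}}\bigr)
\le \linf\bigl(\norm{\pa_z v}_{\X^{m-1}} + \norm{v}_{\X^{m-1,1}} + \abs{h}_{\X^{m-1,\hal}}\bigr),
\]
where the last step uses the relation $\omega=\nabla^\varphi v - (\nabla^\varphi v)^t$ together with Lemma \ref{propeta}.

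\textbf{Commutators $\mathcal{C}_S^1$ and $\mathcal{C}_S^2$.} The first–order commutator $\mathcal{C}_S^1 = \mathcal{C}_{S1}^1 + \mathcal{C}_{S2}^1 + \mathcal{C}_{S3}^1$ is treated exactly as in the proof of Proposition \ref{propdzvm-2}: the pieces $\mathcal{C}_{S1}^1$, $\mathcal{C}_{S2}^1$ are controlled by $\linf(\norm{v}_{\X^{m-1}} + \norm{\omega}_{\X^{m-1}} + \abs{h}_{\X^{m-1,\hal}})$ via the symmetric commutator estimate \eqref{comsym}, while $\mathcal{C}_{S3}^1 = [Z^\alpha,v_z]\partial_z\omega$ is the delicate one because $\partial_z\omega$ is not controlled; here I would repeat the Hardy-type manipulation from the previous subsection, using $v_z|_{\{z=0,-b\}}=0$ to convert $\tfrac{1}{z(z+b)}v_z$ into $\partial_z v_z$ and estimate the terms of type $c_{\tilde\beta}Z^{\tilde\beta}(\tfrac{v_z}{z(z+b)})\,Z_3 Z^\gamma\omega$ by $\linf(\norm{\pa_z v}_{\X^{m-1}}+\norm{\omega}_{\X^{m-1}}+\abs{h}_{\X^{m-1,\hal}})$. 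The main obstacle, as in Proposition \ref{propdzvm-2}, is $\mathcal{C}_S^2$, which contains $\eps\nabla^2\omega$, namely quantities of the form $\eps\pa_{zz}v$ and $\eps\pa_{zzz}v$; these are precisely bounded in conormal spaces up to order $m-1$ by Lemma \ref{dzzlem}, applied with $k=m-2$ for $\eps\norm{\pa_{zz}v}_{\X^{m-2}}$ and with $k=m-1$ for $\eps\norm{\pa_{zzz}v}_{\X^{m-2}}$. This is exactly why we can close the estimate at order $m-1$ here, whereas the $S_\n$–equation forced us to stop at $m-2$: the right–hand side of \eqref{eqomega} contains only $\omega\cdot\nabla^\varphi v$ (no $(\nabla^\varphi)^2 q$), and Lemma \ref{dzzlem} gives one extra order of regularity for $\eps\partial_{zz}v$ through the momentum equation. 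A term $\eps^{\hal}\abs{h}_{\X^{m-1,\hal}}\norm{\nabla\omega^\alpha_{nh}}$ (analogous to the one in \eqref{CS2}) will appear and is absorbed by the dissipation via Cauchy's inequality.

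\textbf{Conclusion.} Summing the estimates of $F$ and $\mathcal{C}_S$, applying Lemma \ref{mingrad} to pass from $\nabla^\varphi$ to $\nabla$, absorbing the $\eps^\hal$ cross term into the dissipation by Cauchy, and integrating in time from $0$ to $t$ yields
\[
\norm{\omega^\alpha_{nh}(t)}^2 + \eps\int_0^t\norm{\nabla\omega^\alpha_{nh}}^2 \le \Lambda_0\norm{\omega(0)}_{\X^{m-1}}^2 + \int_0^t\linf\bigl(\ldots\bigr),
\]
where the $(\ldots)$ collects $\abs{h}_{\X^{m-1,\hal}}^2$, $\norm{\pa_z v}_{\X^{m-1}}^2$, $\norm{v}_{\X^{m-1,1}}^2$, $\norm{\omega^\alpha_{nh}}^2$, and $\eps^2(\norm{\nabla v}_{\X^{m-2,2}}^2 + \abs{h}_{\X^{m-2,\fhal}}^2)$ coming from the Lemma \ref{dzzlem} controls of $\eps\pa_{zzz}v$. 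This is precisely the claimed bound \eqref{propomeganh}. The most technical step in the execution will be the careful accounting of the $\mathcal{C}_S^2$ terms, paralleling the delicate splitting in \eqref{keyiss11}–\eqref{CS2} but shifted up by one conormal order; apart from that, the argument is essentially a routine adaptation of the $S_\n$ proof with homogeneous Dirichlet data.
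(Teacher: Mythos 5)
Your skeleton is the same as the paper's: the $L^2$ energy identity with homogeneous Dirichlet data, the product estimate \eqref{gues} for $Z^\alpha(\omega\cdot\nabla^\varphi v)$, the transplantation of the $\mathcal{C}_S^2$ estimate from the $S_{\n}$ case with $m$ replaced by $m+1$ (Lemma \ref{dzzlem} at the shifted orders), and absorption of the $\eps^{\hal}\abs{h}_{\X^{m-1,\hal}}\norm{\nabla\omega^\alpha_{nh}}$ cross term by Cauchy. However, there is a genuine gap in your treatment of $\mathcal{C}_{S3}^1=[Z^\alpha,v_z]\partial_z\omega$. You claim that "repeating the Hardy-type manipulation from the previous subsection" yields a bound by $\linf(\norm{\pa_z v}_{\X^{m-1}}+\norm{\omega}_{\X^{m-1}}+\abs{h}_{\X^{m-1,\hal}})$. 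It does not: that manipulation reduces the term to $\norm{\partial_z v_z}_{\X^{m-1}}$ with $v_z=v\cdot\N-\partial_t\eta$, and at top order $|\alpha|=m-1$ this contains $v\cdot\partial_z Z^\alpha\N-\partial_z Z^\alpha\partial_t\eta$, whose naive estimate costs $\abs{h}_{\X^{m-1,\thal}}+\abs{\partial_t h}_{\X^{m-1,\hal}}\le\abs{h}_{\X^{m,\hal}}$ --- a quantity \emph{not} controlled by $\mathcal{N}(T)$ (only $\abs{h}_{\X^{m,-\hal}}$ and $\eps$-weighted versions are). This half-derivative loss is exactly why the paper says one "can not change $m$ into $m+1$ in \eqref{CSz}."

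The paper closes this term by exploiting structure you have not invoked: (i) the explicit convolution representation \eqref{etaconv} of the extension $\eta$ and the fact that $Z_3$ gains a full derivative on $\eta$ (estimate \eqref{Z3mieux}), which disposes of all cases except $|\alpha|=m-1$, $\alpha_3=0$, and of the region $z\le -b/2$; (ii) a Taylor expansion of $v_y(t,y,0)-v_y(t,y',0)$ paired against $\partial_z\psi_z(y-y')$ to tame one of the remaining contributions; and (iii), crucially, the kinematic boundary condition in the form \eqref{bordh}, which rewrites the dangerous combination $v_y\cdot\nabla_y Z^\alpha h+\partial_t Z^\alpha h$ as $-\partial_z^\varphi v\cdot\N\,Z^\alpha h+\mathcal{C}^\alpha(h)+V^\alpha\cdot\N$, a quantity bounded in $H^{\hal}$ by $\abs{v}_{\X^{m-1,\hal}}+\abs{h}_{\X^{m-1,\hal}}$. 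Without step (iii) the proposition does not close at order $m-1$, so this is the key missing idea in your proposal rather than a routine adaptation.
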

\begin{proof}
Since $\omega^\alpha_{nh}=0$ on the boundary, it follows from \eqref{eqomeganh} that
\beq\label{omeganh1}
\hal\dtt \int_{\Omega}  \abs{\omega^\alpha_{nh}}^2 \, d\V
+ \eps \int_{\Omega}  \abs{ \nabla^\varphi \omega^\alpha_{nh}}^2 \, d\V
=\int_{\Omega}F \cdot \omega^\alpha_{nh}\, d\V.
\eeq
We now estimate the right hand side of \eqref{omeganh1}. \eqref{gues} implies that
\beq\label{cssss}
\norm{Z^\alpha \({\chi}\omega \cdot \nabla^\varphi v \)}
\le\linf \( \norm{\omega }_{\X^{m-1}}+ \norm{ \nabla^\varphi v}_{\X^{m-1}}\)
\le \linf\( \norm{\omega }_{\X^{m-1}}+ \norm{ \nabla  v}_{\X^{m-1}}+\abs{h}_{\X^{m-1,\hal}}\).
\eeq
{By the cut-off function $\chi$, one has  similarly
\beq\label{cssss23}
\norm{Z^\alpha\big( \big(V_{z} \partial_{z} \chi\big) \omega -  2\eps \nabla^\varphi \chi \cdot \nabla^\varphi \omega - \eps \Delta^\varphi
\chi \, \omega) }\le \linf\( \norm{v }_{\X^{m-1,1}}+ \varepsilon\norm{    v}_{\X^{m-1,2}}+\varepsilon\abs{h}_{\X^{m-1,\frac{3}{2}}}\).
   \eeq}
To estimate the part involving $\mathcal{C}_{S}$, one first can change $S_{\n}$ into ${\chi}\omega$ and $m$ into $m+1$ in \eqref{CS2} to get
\begin{align}\label{CS2omega}
\int_{\Omega} \mathcal{C}_{S}^2 \cdot \omega^\alpha_{nh}\, d\V
&\le \linf  \(\eps\(\norm{\nabla v}_{\X^{m-2,2}}+\abs{ h}_{\X^{m-2,\fhal}}\)+\abs{h}_{\X^{m-1,\hal}} \right.
\\&\nonumber\qquad\quad  +\norm{\pa_zv}_{\X^{m-1}} +\norm{v}_{ {\X^{m-1,1}}} \Big)\norm{ \omega^\alpha_{nh}} +\linf
\abs{h}_{\X^{m-1,\hal}}\eps^\hal \norm{\nabla \omega^\alpha_{nh}}  .
\end{align}
For the part involving $\mathcal{C}_{S}^1$, one can change $S_{\n}$ into $\omega$ and $m$ into $m+1$ in \eqref{CSy} to obtain
\beq\label{CSyomega}
\norm{\mathcal  C_{S1}^1}+\norm{\mathcal  C_{S1}^2}\le\linf\(  \norm{v}_{\X^{m-1}}+ \norm{\omega}_{\X^{m-1 }} + \abs{h}_{\X^{m-1,\hal }}  \).
\eeq
The commutator $\mathcal{C}_{S3}^1$ requires much more care. Indeed, one can not change $m$ into $m+1$ in \eqref{CSz} since it would involve
$|h|_{\X^{m,\hal} }$. As in \eqref{CSz0}, this commutator can be expanded into a sum of terms of the form
$$ c_{  \beta }Z^{  \beta }\(  {1  \over {z (z+b)}} v_{z} \)   \, Z_{3} Z^\gamma \omega$$
such that  $|  \beta| + |  \gamma| \leq m-1$, $| \gamma | \leq m-2$. As the arguments in previous sections, by \eqref{gues} and $v_{z}=0$ on the
boundary that, one deduces
\begin{align}\label{tqqq0}
\norm{Z^{  \beta }\(  {1  \over {z (z+b)}} v_{z} \)   \, Z_{3} Z^\gamma \omega}
&\le\linf\( \norm{ \omega }_{\X^{m-2,1}}+  \norm{ Z\( {1  \over {z (z+b)}} v_{z}\)}_{\X^{m-2}}\) \\
&\le\linf\( \norm{ \omega }_{\X^{m-2,1}}+ \norm{{1  \over {z (z+b)}}  v_{z}}_{\X^{m-1}} \).\nonumber
\end{align}
It follows from $v_z=0$ on the boundary and the Hardy inequality that, since $  v_{z}  =  v\cdot \N - \partial_{t} \eta  $,
\begin{align}\label{tqqq00}
&\norm{{1  \over {z (z+b)}}  v_{z}}_{\X^{m-1}} \ls  \norm{\partial_{z}  v_{z}}_{\X^{m-1}}
 \\\nonumber&\quad\le \linf \(\norm{\pa_z v}_{\X^{m-1}} +\norm{  v}_{\X^{m-1}} + \abs{  h}_{\X^{m-1,\hal}}\)+ \sum_{| \alpha |= m- 1} \norm{ v
 \cdot \partial_{z}Z^\alpha \N - \partial_{z} Z^\alpha \partial_{t} \eta },
\end{align}
where the last term requires furthermore analysis. Due to  \eqref{eqeta}, it holds that
$$ |Z_{3} \hat \eta| \lesssim  |\tilde \chi(|\xi| z ) \hat{h}|$$
where $\tilde \chi$ has a slightly bigger support than $\chi$. Hence $Z_{3}$ acts as a zero order operator:
\beq\label{Z3mieux}
\norm{\nabla Z_{3} \eta } \ls \abs{h}_{1\over 2}.
\eeq
This yields that if $\alpha_{3} \neq 0$, one gains at least one derivative, and thus,
\beq\label{tqqq1} \norm{ v \cdot \partial_{z}Z^\alpha \N - \partial_{z} Z^\alpha \partial_{t} \eta }  \le\linf\( \abs{h}_{\X^{m-2,\thal}} +
\abs{\partial_{t}
 h}_{\X^{m-2,\hal}} \) \le  \linf \abs{h}_{\X^{m-1,\hal}}.
 \eeq
Hence it suffices to estimate this term for the case with $| \alpha |=m-1$ and $\alpha_{3}= 0$. Note that \eqref{eqeta} implies that
\beq\label{etaconv}
\eta=\(1+\frac{z}{b}\){1 \over z^2} \psi\( {\cdot \over z}\)\star_{y} h:= \(1+\frac{z}{b}\)\psi_{z} \star_{y} h,
\eeq
where $\star_{y}$ stands for the convolution in the $y$ variable and $\psi$ is  in  $L^1(\mathbb{R}^2)$. Then
\begin{align}
 v \cdot \partial_{z}Z^\alpha \N - \partial_{z} Z^\alpha \partial_{t} \eta=&-
 \frac{1}{b}\( v_{y}\cdot \psi_z\star_{y} \nabla_{y} Z^{\alpha} h
+\psi_{z} \star_{y} \partial_{t} Z^{\alpha} h\)
\\& -\(1+\frac{z}{b}\)\pa_z\( v_{y}\cdot \psi_z\star_{y} \nabla_{y} Z^{\alpha} h
+\psi_{z} \star_{y} \partial_{t} Z^{\alpha} h\) := \mathcal{T}_{\alpha}=\mathcal{T}_{\alpha1}+\mathcal{T}_{\alpha2}.\nonumber
\end{align}
It is clear that
\beq \label{Talpha0}
 \norm{ \mathcal{T}_{\alpha1} }  \le\linf\( \abs{h}_{\X^{m-1,\hal}} + \abs{\partial_{t} h}_{\X^{m-1,-\hal}} \) \le  \linf \abs{h}_{\X^{m,-\hal}}.
 \eeq
For the second term, one can separate it into two cases. For $-b< z\le -\frac{b}{2}$, then
$$
\mathcal{T}_{\alpha2}=- \frac{1}{bz} Z_3\( v_{y}\cdot \psi_z\star_{y} \nabla_{y} Z^{\alpha} h
+\psi_{z} \star_{y} \partial_{t} Z^{\alpha} h\).
$$
It then follows from \eqref{Z3mieux} that
\beq\label{tqqq} \norm{ \mathcal{T}_{\alpha2} }_{L^2\( \{-b\le z\le -\frac{b}{2}\}\)}   \le\linf\( \abs{h}_{\X^{m-1,\hal}} + \abs{\partial_{t}
 h}_{\X^{m-1,-\hal}} \) \le  \linf \abs{h}_{\X^{m,-\hal}}.
 \eeq
For $-\frac{b}{2}\le z<0$, $\mathcal{T}_{\alpha2}$ can be written as
\beq\label{Talphadecfin}
\mathcal{T}_{\alpha2}= -\(1+\frac{z}{b}\)\(v_y(t,y,0)\cdot\partial_{z} \(  \psi_{z} \star_{y} \nabla_{y} Z^{\alpha} h \)
+\partial_{z}\(  \psi_{z} \star_{y} \partial_{t} Z^{\alpha} h \)\) +  \mathcal{R},
\eeq
 where
 $$ \abs{\mathcal{R}} \leq \linf|z(z+b)| \abs{ \partial_{z} \( \psi_{z} \star  Z^\alpha \nabla h\)}
 \leq \linf\abs{ Z_{3} \( \psi_{z} \star  Z^\alpha \nabla h \)}.$$
By using \eqref{Z3mieux} again, one can get that
\beq\label{Talphadecfin1}
\norm{\mathcal{R}}_{L^2\({\Omega}  \cap \{  -\frac{b}{2}\le z<0\}\)}
\leq \linf \abs{h}_{\X^{m-1,\hal} }.\eeq
To estimate the first term in \eqref{Talphadecfin}, one notes that
\begin{align*}  &  v_y(t,y,0)\cdot \partial_{z} \(  \psi_{z} \star_{y} \nabla_{y} Z^{\alpha} h \)
+\partial_{z}\(  \psi_{z} \star_{y} \partial_{t} Z^{\alpha} h \)  = \partial_{z} \( \psi_{z} \star_{y}\( v_y(t,y,0)\cdot \nabla_{y} Z^\alpha h
+ \partial_{t} Z^\alpha h \) \)\\
& \qquad\qquad\qquad\quad +\partial_{z} \(\int_{\mathbb{R}^2}\(  v_{y}(t,y,0) - v_{y}(t,y',0) \)\cdot \psi_{z}(y-y') \nabla_{y} Z^\alpha h(t,y')
\).
\end{align*}
For the second term in the right hand side of the above, one can employ the Taylor formula for $v_{y}$ to get  that
$$ \abs{ (v_{y}(t,y,0) - v_{y}(t,y', 0))\partial_{z} \psi_{z}(y-y') } \leq \linf  {1 \over z^2} \tilde{\psi}\({y-y' \over z}\), $$
where $\tilde{\psi} $ is still an $L^1$ function. This yields that
\begin{align}\label{Talphadecfin2}
&\sup_{z \in (- \frac{b}{2}, 0)}\abs{ \partial_{z} \(\int_{\mathbb{R}^2}\(  v_{y}(t,y,0) - v_{y}(t,y',0) \)\cdot \psi_{z}(y-y') \nabla_{y}
Z^\alpha h(t,y') \)}_{0}\leq \linf \abs{h}_{\X^{m-1,\hal}}.
\end{align}
For the first term, one shall use \eqref{bordh} to get
$$v_y\cdot\nabla_{y} Z^\alpha h  + \partial_{t} Z^\alpha h  = -\partial_z^\varphi v\cdot\N Z^\alpha h+ \mathcal{C}^\alpha(h)+  V^\alpha \cdot \N,
$$
which implies,  since $|\alpha|= m-1$,
$$\abs{ v_y\cdot\nabla_{y} Z^\alpha h + \partial_{t} Z^\alpha h  }_{ \hal }
       \leq \linf\( \abs{v}_{\X^{m-1,\hal}}+ \abs{h}_{\X^{m-1,\hal}}\).$$
It thus holds that
\begin{align}\label{Talphadecfin3}
\norm{  \partial_{z} \( \psi_{z} \star_{y}\( v_y(t,y,0)\cdot \nabla_{y} Z^\alpha h  - \partial_{t} Z^\alpha h \) \)}   & \ls  \abs{
v_y\cdot\nabla_{y} Z^\alpha h  - \partial_{t} Z^\alpha h  }_{ \hal } \\
&\leq  \linf \(\abs{v}_{\X^{m-1,\hal}}+ \abs{h}_{\X^{m-1,\hal}}\),\nonumber
\end{align}
Collecting all the estimates \eqref{Talphadecfin1}--\eqref{Talphadecfin3}, one deduces from \eqref{Talphadecfin} that
\beq\label{tqqq2} \norm{ \mathcal{T}_{\alpha}}_{L^2\({\Omega}  \cap \{z\ge -\frac{b}{2}\}\)} \leq \linf \(\abs{v}_{\X^{m-1,\hal}}+
\abs{h}_{\X^{m-1,\hal}}\).
\eeq
In view of the estimates \eqref{tqqq0}, \eqref{tqqq00}, \eqref{tqqq1}, \eqref{Talpha0}, \eqref{tqqq} and \eqref{tqqq2}, by the trace estimates,
one finally gets
\beq\label{CSzomega}
\norm{\mathcal{C}_{S3}^1} \le \linf \(\norm{\pa_z v}_{\X^{m-1}} +\norm{  v}_{\X^{m-1,1}} + \abs{h}_{\X^{m-1,\hal}}\).
\eeq

As a consequence of \eqref{cssss}--\eqref{CSyomega}, \eqref{CSzomega} and
Lemma \ref{mingrad}, we deduce from \eqref{omeganh1} that
\begin{align}
&\norm{\omega^\alpha_{h}(t)}^2 + \eps \int_{0}^t \norm{\nabla \omega^\alpha_{h} }^2  \le \Lambda_0\norm{ \omega(0)}_{\X^{m-1}}^2+\int_0^t \linf
\(\eps\(\norm{\nabla v}_{\X^{m-2,2}}+\abs{ h}_{\X^{m-2,\fhal}}\) \right.
\\&\nonumber\qquad \qquad  +\abs{h}_{\X^{m-1,\hal}}+\norm{\pa_zv}_{\X^{m-1}} +\norm{v}_{ {\X^{m-1,1}}} \Big)\norm{ \omega^\alpha_{nh}} +\linf
\eps^\hal\abs{h}_{\X^{m-1,\hal}} \norm{\nabla \omega^\alpha_{h} }  .
\end{align}
We then conlude the proposition by using Cauchy's inequality.
\end{proof}

It remains to estimate $\omega^\alpha_{h}$ of \eqref{eqomegah}. Note that Lemma \ref{lembord} and the trace estimate imply that
\begin{align}
\label{omegab2}
\sqrt{\eps}  \int_{0}^t\abs{Z^\alpha \omega}_0^2 &\leq \sqrt{\eps} \int_{0}^t\linf\(\abs{v}_{\X^{m-1,1}}^2 + \abs{h}_{\X^{m-1,1}}^2  \)
 \\\nonumber& \leq   \sqrt{\eps}\int_{0}^t \linf  \(\norm{\nabla v}_{\X^{m-1,1}}\norm{  v}_{\X^{m-1,1}}  + \norm{
 v}_{\X^{m-1,1}}^2+\abs{h}_{\X^{m-1,1}}^2\)  .
\end{align}
Thus the main difficulty will be to handle the non-homogeneous boundary value problem, \eqref{eqomegah}, whose boundary value is at a low level
of regularity (it is $L^2_{t,y}$ and no more) due to \eqref{omegab2}. This creates two
difficulties: the first is that one cannot lift the boundary condition easily and  perform a standard energy estimate; the second one  is
that due to the lower regularity of the boundary estimate, one cannot expect an estimate of  $\norm{\omega^\alpha_{h}}$ in $L^\infty(0,T)$ independent of
$\eps$, as was well explained in Section 10.2 of \cite{MasRou}: in using the model of the heat equation, one may expect a
control in $H^\frac{1}{4}(0,T)\subset L^4(0,T)$.

\begin{prop}\label{nor2}
Assume that $\linf(t)+\int_0^t\norm{\pa_{zz}v}_{1,\infty}\le M$ for $M>0$. Then for $|\alpha|\le m-1$, the solution $\omega_{h}^\alpha$ of
\eqref{eqomegah} satisfies the estimate
\beq\label{nor2estimate}
\norm{ \omega^\alpha_{h} }_{L^4_T L^2}^2 \leq  \Lambda(M) \int_0^T \( \norm{  v}_{\X^{m-1,1}}^2+\abs{h}_{\X^{m-1,1}}^2\)+\eps\int_0^T\norm{\nabla
v}_{\X^{m-1,1}}^2.
\eeq
\end{prop}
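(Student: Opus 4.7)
The plan is to exploit the fact that $\omega^\alpha_h$ solves a convection-diffusion equation with zero initial data and nonhomogeneous Dirichlet data which, by \eqref{omegab2}, lies only in $L^2_{t,y}$ with a $\sqrt{\eps}$-weighting. The target $L^4_T L^2$ control is precisely the borderline regularity produced by the one-dimensional heat semigroup acting on such boundary data, so the argument will reduce the problem to a half-space model in the normal direction and then invoke the Sobolev embedding $H^{1/4}(\mathbb{R}_t)\hookrightarrow L^4(\mathbb{R}_t)$. Concretely, I would first localize using smooth cutoffs $\chi_0,\chi_{-b}$ concentrated near each boundary together with an interior cutoff; the interior piece solves a convection-diffusion with zero Dirichlet and initial data and a source generated by commutators, and a standard energy argument with Gronwall (using the hypothesis $\Lambda_\infty+\int_0^T\norm{\pa_{zz}v}_{1,\infty}\le M$) bounds it in $L^\infty_T L^2$, hence in $L^4_T L^2$.

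For the boundary pieces, since the two are analogous, I would focus on $\rho:=\chi_0\omega^\alpha_h$ near $\{z=0\}$. Using $-\eps\Delta^\varphi=-\eps\pa_{zz}/E_{33}+\ldots$ with $E_{33}\ge c_0>0$ and freezing the principal coefficient at the boundary, $\rho$ satisfies, up to perturbative terms collected in a source $R$,
\begin{equation*}
\pa_t\rho-\eps\pa_{zz}\rho=R,\qquad \rho|_{z=0}=g:=Z^\alpha\omega|_{z=0},\qquad \rho|_{t=0}=0.
\end{equation*}
The key step is the model estimate $\norm{\rho_0}_{L^4_tL^2_{y,z}}^2\lesssim\sqrt{\eps}\,\abs{g}_{L^2_{t,y}}^2$ for the solution $\rho_0$ of the corresponding problem with $R\equiv 0$ on the half-line $z<0$. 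The scaling $z\mapsto\sqrt{\eps}\,z$ reduces to $\eps=1$, for which Fourier transform in $t$ gives $\hat{\rho_0}(\tau,y,z)=\hat g(\tau,y)\,e^{\sqrt{i\tau}\,z}$ (branch with positive real part for $z<0$), and therefore $\norm{\hat\rho_0(\tau,\cdot)}_{L^2_{y,z}}^2=(2|\tau|)^{-1/2}\norm{\hat g(\tau,\cdot)}_{L^2_y}^2$; multiplying by $|\tau|^{1/2}$ and integrating shows $\rho_0\in H^{1/4}_tL^2_{y,z}$ with norm $\lesssim\abs{g}_{L^2_{t,y}}$, and the critical embedding $H^{1/4}(\mathbb{R}_t)\hookrightarrow L^4(\mathbb{R}_t)$ together with the rescaling yields the claimed $\sqrt{\eps}$-weighted $L^4_tL^2$ bound. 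The contribution of the source part of $\rho$ is handled by Duhamel: each term in $R$ is bounded in $L^2_TL^2$ by the right-hand side of \eqref{nor2estimate} thanks to the hypothesis on $M$ and the pressure/normal-derivative estimates already established.

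Combining the two pieces yields, schematically, $\norm{\omega^\alpha_h}_{L^4_TL^2}^2\le\Lambda(M)\bigl(\sqrt{\eps}\int_0^T\abs{Z^\alpha\omega}_0^2\,dt+\int_0^T\norm{R}^2\bigr)$. Invoking \eqref{omegab2} and writing $\sqrt{\eps}\,\norm{\nabla v}_{\X^{m-1,1}}\norm{v}_{\X^{m-1,1}}\le\tfrac12(\eps\norm{\nabla v}_{\X^{m-1,1}}^2+\norm{v}_{\X^{m-1,1}}^2)$ produces exactly \eqref{nor2estimate}. The main obstacle will be the careful bookkeeping of the source $R$: the convection $v\cdot\nabla^\varphi\rho$ and the commutator $\eps[\chi_0,\Delta^\varphi]\omega^\alpha_h$ both involve one normal derivative of $\omega^\alpha_h$ near the boundary, where one does not have better than $\sqrt{\eps}$-weighted control, and it is precisely for these terms that the additional hypothesis on $\int_0^T\norm{\pa_{zz}v}_{1,\infty}$ is indispensable in order to close the Duhamel loop in $L^4_TL^2$ rather than in a strictly weaker norm.
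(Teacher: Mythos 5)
Your model computation---the explicit Fourier-in-time solution of the frozen-coefficient half-line heat problem, the identity $\norm{\hat\rho_0(\tau,\cdot)}_{L^2_{y,z}}^2\sim(\eps/|\tau|)^{1/2}\abs{\hat g(\tau,\cdot)}_{L^2_y}^2$, and the embedding $H^{1/4}(\mathbb{R}_t)\hookrightarrow L^4(\mathbb{R}_t)$---is correct and is exactly the mechanism behind the $\sqrt{\eps}$-weighted $L^4_TL^2$ bound; it is what Theorem 10.6 of \cite{MasRou}, which the paper simply cites, encapsulates. The genuine gap is in your reduction to that model, at two places. First, you keep the convection term $v\cdot\nabla^\varphi\rho$ in the Duhamel source $R$. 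Its normal part is $V_z\partial_z\rho$, and there is no control on $\partial_z\omega^\alpha_h$, nor even on one conormal derivative of it: the only quantity being propagated is $\norm{\omega^\alpha_h}_{L^4_TL^2}$, so this source cannot be bounded in $L^2_TL^2$ by the right-hand side of \eqref{nor2estimate}. The paper avoids this entirely by passing to Lagrangian coordinates via the flow map of \eqref{lagrange1}, which eliminates the transport term exactly rather than perturbatively. Second, freezing the principal coefficient at $z=0$ produces errors of the form $\eps\,\partial_z(\tilde a)\,\partial_z\rho$ and $\eps\,\tilde a\,\partial_{zz}\rho$; under the standing hypotheses the coefficients enjoy only $\sqrt{\eps}$-weighted $L^\infty$ control of one normal derivative (cf. \eqref{compact2}), and since the solution generated by boundary data that is merely $L^2_{t,y}$ concentrates in a layer of width $\sqrt{\eps}$ where $\partial_z\rho$ and $\partial_{zz}\rho$ are large, these errors are not perturbative relative to the $L^4_TL^2$ norm you are iterating. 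This lack of uniform normal regularity of the coefficients is precisely why Masmoudi and Rousset resort to paradifferential calculus at this step; a naive frozen-coefficient Duhamel loop does not close.

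Your final bookkeeping (combining the linear estimate with \eqref{omegab2} and Cauchy's inequality to reach \eqref{nor2estimate}) is correct once one has $\norm{\omega^\alpha_h}_{L^4_TL^2}^2\le\Lambda(M)\sqrt{\eps}\int_0^T\abs{Z^\alpha\omega}_0^2$ in hand, but that linear estimate requires either the Lagrangian reduction together with the cited paradifferential theorem, or an argument of comparable strength that your sketch does not supply. A minor point: \eqref{eqomegah} is homogeneous---all commutators were assigned to $\omega^\alpha_{nh}$---so the only terms your $R$ actually contains are the two problematic ones above; there are no harmless commutator sources to absorb them into.
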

\begin{proof}
{Owing to the cut-off function $\chi$, the situation here of \eqref{eqomegah}, which is defined in the half space $\{z<0\}$, is exactly same as} Section 10.2 in \cite{MasRou}, thus this proposition is a restatement of Proposition 10.4 in
\cite{MasRou}. For completeness, we will sketch the main idea and some steps of the proof and state our estimates with a slight
modification.

First, it is convenient to eliminate the convection term in \eqref{eqomegah} by considering Lagrangian coordinates. Define a
parametrization of {$\tilde\Omega(t):=\{x_3<h(t,x_1,x_2)\},$ $X(t,\cdot):\{z<0\}\mapsto \tilde\Omega(t)$}, by
\beq\label{lagrange1}
\begin{cases}
\partial_{t} X(t,x)= u(t, X(t,x))= v(t,\Phi(t, \cdot)^{-1}\circ X)
\\ X(0, x)= \Phi(0,x),
\end{cases}
\eeq
where the map $\Phi(t, \cdot)$ is defined by \eqref{diff}. {Note that here $v$ is a smooth extension onto $\{z<0\}$ and hence $u$ is the corresponding smooth extension onto $\tilde\Omega(t)$.} Denote $J(t,x)= | \mbox{det }\nabla X(t,x)|$ for the Jacobian of the change of
coordinates. Then $J(t,x)= J_{0}(x)$ by the divergence free condition. Define
\beq\label{Omegaalphadef}
\Omega^\alpha=e^{-\gamma t} \omega_{h}^\alpha (t, \Phi^{-1}\circ X),
\eeq
where $\gamma >0$ is a large parameter to be chosen. Then  $\Omega^\alpha$ solves
\beq\label{omegalagrange}
a_{0}\big( \partial_{t}\Omega^\alpha+ \gamma \Omega^\alpha\big) - \eps  \partial_{i}\( a_{ij} \partial_{j}\Omega^\alpha \)= 0\quad\text{in
}{\{z<0\}}.
\eeq
Here $ a_{0}= |J_{0}|^{1\over 2}$ and the matrix $ (a_{ij})$ is defined by
$$ ( a_{ij} ) =  |J_{0}|^{1 \over 2} P^{-1}\text{ with } P_{ij}=  \partial_{i}X \cdot \partial_{j} X.$$

Due to the assumption $\linf(t)+\int_0^t\norm{\pa_{zz}v}_{1,\infty}\le M$, Lemma 10.5 in \cite{MasRou} holds. Then the
following estimates hold:
\begin{eqnarray}
\label{J0}   & &\norm{J}_{W^{1, \infty}}+\norm{J^{-1}}_{W^{1, \infty}} \leq  \Lambda_{0}, \\
\label{nablaX} & & \norm{ \nabla X}_{L^\infty}  + \norm{ \partial_{t} \nabla X }_{L^\infty} \leq \Lambda(M) e^{ t\Lambda(M)},  \\
\label{dtnablaX} & & \norm{ \nabla X}_{1, \infty} +  \norm{ \partial_{t} \nabla X}_{1, \infty}  \leq \Lambda(M)e^{ t\Lambda(M)}, \\
\label{ZnablaX}  & &\sqrt{\eps}\norm{ \nabla^2 X }_{1, \infty}+\sqrt{\eps}\norm{ \partial_{t} \nabla^2 X}_{L^\infty} \leq   \Lambda(M)(1+t) e^{t
\Lambda(M)}.
\end{eqnarray}
Indeed, the estimate \eqref{J0} follows directly by $J(t,x)= J_{0}(x)$. Next, \eqref{lagrange1} implies that
\beq\label{eqDX}
\partial_{t} \nabla X= \nabla v \nabla \Phi^{-1} \nabla X
\eeq
and hence
$$\norm{\nabla X}_{L^\infty} \leq  \norm{\nabla X(0)}_{L^\infty}+ \Lambda_{0} \int_{0}^t \norm{\nabla v}_{L^\infty} \norm{ \nabla X}_{L^\infty}
\leq \Lambda_{0} + \Lambda(M) \int_{0}^t \norm{ \nabla X}_{L^\infty}.$$
This yields the first part of  \eqref{nablaX} by the Gronwall inequality.
Next, applying one spatial conormal derivative to \eqref{eqDX}, one can get
$$\norm{\nabla X}_{1, \infty} \leq  \norm{\nabla X(0)}_{1, \infty}+ \Lambda_{0} \int_{0}^t \norm{\nabla v}_{1, \infty} \norm{ \nabla X}_{1,
\infty}
\leq \Lambda_{0} + \Lambda(M) \int_{0}^t \norm{ \nabla X}_{1, \infty}.$$
and hence the first part of \eqref{dtnablaX} follows from the Gronwall inequality.
The estimates hold also for the time derivative in \eqref{nablaX} and \eqref{dtnablaX} by using again the equation \eqref{eqDX}. To prove
\eqref{ZnablaX}, one applies $\sqrt{\eps}\pa_{x_i}$ to \eqref{eqDX} to find that
\beq\label{eqD2X}
\partial_{t} \sqrt{\eps}\pa_{x_i} \nabla X= \sqrt{\eps}  \(\pa_{x_i}\nabla v \nabla \Phi^{-1}\nabla X+\nabla v \pa_{x_i}\nabla \Phi^{-1}\nabla
X+\nabla v \nabla \Phi^{-1} \pa_{x_i}\nabla X\).
\eeq
It follows from \eqref{dtnablaX} that
$$ \sqrt{\eps }\norm{ \nabla^2 X}_{1, \infty}  \leq \Lambda(M)(1+t) + \Lambda(M) \int_{0}^t \sqrt{\eps} \norm{ \nabla^2 X}_{1, \infty}
+ \Lambda(M) e^{\Lambda(M) t} \int_{0}^t   \sqrt{\eps} \norm{ \nabla^2 v}_{1, \infty}$$
and hence, by using the assumption $\int_0^t\norm{\pa_{zz}v}_{1,\infty}\le M$,
$$  \sqrt{\eps }\norm{\nabla^2 X}_{1, \infty}  \leq \Lambda(M) ( 1+t) e^{\Lambda(M) t} + \Lambda(M) \int_{0}^t \sqrt{\eps} \| \nabla^2 X\|_{1,
\infty}$$
and so the  first part of \eqref{ZnablaX} follows from the Gronwall inequality. For the second part of \eqref{ZnablaX},
it follows by using again \eqref{eqD2X} and the fact that
$\Lambda_\infty$ involves the control of $ \sqrt{\eps}\norm{ \nabla ^2 v}_{L^\infty}$.

As consequences of the estimates \eqref{J0}--\eqref{ZnablaX}, one gets
\beq\label{compact}
a_{0}\geq m ,\ a_{3,3} \geq m,  \     (a_{ij}) \geq c_{0} \mbox{Id}
\eeq
by a suitable choice of $m$ and $c_0$ depending on $M$ for $t\in [0,1]$ and that
\begin{align}\label{compact2}
&\norm{( a_{0}, a_{ij}) }_{L^\infty} + \sqrt{\eps}\norm{\partial_{z} ( a_{0}, a_{ij})  }_{L^\infty}  \leq \Lambda(M),
 \\&\norm{\partial_{t,y}( a_{0}, a_{ij}) }_{L^\infty} + \sqrt{\eps}  \norm{\partial_{t,y}\nabla a_{ij}}_{L^\infty} \leq \Lambda(M).
\end{align}
Note that these coefficients are lack of uniform regularity with respect to normal variables.
To get the estimates for solutions of \eqref{omegalagrange}, the authors in  \cite{MasRou} use the paradifferential calculous to prove Theorem
10.6 in \cite{MasRou}, which yields that there exists $\gamma_{0}$ depending only on $M$ such that for $\gamma \geq \gamma_{0}$, the solution of
\eqref{omegalagrange} satisfies the estimate
\beq
\norm{\Omega^{\alpha}}_{H_T^{1 \over 4} L^2 }^2 \leq
\sqrt{\eps} \Lambda(M)\int_{0}^T  \abs{\Omega^{\alpha}}_0^2.
\eeq
By the Minkowski inequality and the one-dimensional Sobolev embedding $H^{1 \over 4} \subset L^4$,
$$\norm{\Omega^\alpha}_{L^4_T L^2 }^2
      \ls \norm{\Omega^\alpha}_{ L^2 L^4_T}^2
       \ls\norm{\Omega^{\alpha}}_{L^2 H_T^{1 \over 4} }^2=\norm{\Omega^{\alpha}}_{H_T^{1 \over 4} L^2 }^2,$$
one then has
\beq
\norm{\Omega^\alpha}_{L^4_T L^2 }^2
      \le
\sqrt{\eps} \Lambda(M)\int_{0}^T  \abs{\Omega^{\alpha}}_0^2 .
\eeq
This and \eqref{Omegaalphadef} show that
$$ \norm{\omega^\alpha_{h}}_{L^4_T L^2 }^2   \leq  \Lambda(M) \sqrt{\eps} \int_{0}^T\abs{Z^\alpha \omega}_0^2.$$
Thus \eqref{nor2estimate} follows from \eqref{omegab2} and Cauchy's inequality.
\end{proof}

%%%%%%%%%%%%%%%%%%%%%%%%%%%%%%%%%%%%%%%%%%%%%%%%%%%%%%
\section{$L^\infty$ estimates}\label{secinfty}
%%%%%%%%%%%%%%%%%%%%%%%%%%%%%%%%%%%%%%%%%%%%%%%%%%%%%%

In order to close the a priori estimates, we shall now estimate the $L^\infty$ norms contained in $\linf$, $\norm{\pa_z v
}_{\Y^{{\[\!\frac{m}{2}\!\]}+2}}$ and $\sqrt{\eps} \norm{\partial_{zz}v}_{L^\infty}$, and $\int_0^t\sqrt{\eps}\norm{\partial_{zz}v}_{1,\infty}$ that was
used in Proposition \ref{nor2}. We will prove that they can be bounded in terms of the quantities in the left hand side of the estimates of
Propositions \ref{propdzvm-2} and \ref{conormv1}, which will then be shown to be in $L^\infty$ in time.

The key point is to use again the quantity $S_\n$, and one has the following:
\begin{lem}\label{propLinfty1}
For any $k\in \mathbb{N}$:
\beq\label{dzv1infty1}
\norm{\pa_z v }_{\Y^k}\le\Lambda\(\frac{1}{c_0},\abs{h}_{\Y^{k+1}}\) \( \norm{S_\n }_{\Y^{k}}+\norm{v }_{\Y^{k+1}}\).
\eeq
Also,
\beq\label{dzv1infty11}
 \sqrt{\eps}\norm{\partial_{zz}v}_{L^\infty}\leq  \Lambda_0\(\sqrt{\eps} \norm{\partial_{z}S_{\n}}_{L^\infty} + \norm{S_{\n}}_{1,
 \infty}  + \norm{v}_{2, \infty}\)
 \eeq
 and
 \beq\label{dzv1infty111}
 \sqrt{\eps}\norm{\partial_{zz}v}_{1,\infty}\leq  \Lambda_0\(\sqrt{\eps} \norm{\partial_{z}S_{\n}}_{1,\infty} + \norm{S_{\n}}_{2,
 \infty}  + \norm{v}_{3, \infty}\).
 \eeq
\end{lem}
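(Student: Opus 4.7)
The plan is to prove the three estimates in Lemma~\ref{propLinfty1} by extending, to $L^\infty$-based conormal spaces, the pointwise decomposition of $\partial_z v$ that was already used in the proof of Lemma~\ref{lemdzS}. Since $L^\infty$ (and hence $\mathcal{Y}^k$) is an algebra and the spatial-temporal conormal fields $Z^\alpha$ satisfy the Leibniz rule, all product terms that appeared there can be estimated simply by distributing derivatives and applying Lemma~\ref{propeta} to handle factors involving $\nabla\varphi$.

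First, for \eqref{dzv1infty1}, I would split $\partial_z v = (\partial_z v\cdot\mathbf{n})\mathbf{n} + \Pi\partial_z v$. The normal component is controlled by the divergence-free relation \eqref{eqdzvnk}
\[
\partial_z v\cdot\mathbf{n} = -\frac{\partial_z\varphi}{|\mathbf{N}|}(\partial_1 v_1+\partial_2 v_2),
\]
so applying $Z^\alpha$ for $|\alpha|\le k$ and using the $L^\infty$-Leibniz rule together with the bound $\norm{\nabla\varphi}_{\mathcal{Y}^{k}}\lesssim \Lambda(1/c_0,|h|_{\mathcal{Y}^{k+1}})$ yields $\norm{\partial_z v\cdot\mathbf{n}}_{\mathcal{Y}^k}\le \Lambda(1/c_0,|h|_{\mathcal{Y}^{k+1}})\norm{v}_{\mathcal{Y}^{k+1}}$. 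For the tangential component I use the pointwise identity \eqref{eqdzvPin}, which explicitly isolates $S_{\mathbf{n}}$:
\[
\Pi\partial_z v = \frac{\partial_z\varphi}{|\mathbf{N}|}\Bigl(2S_{\mathbf{n}} + \partial_1\varphi\,\Pi\partial_1 v + \partial_2\varphi\,\Pi\partial_2 v - g^{ij}\partial_j v\cdot\mathbf{n}\,\Pi\partial_{y^i} - \kappa\chi\Pi v\Bigr).
\]
Every factor other than $S_{\mathbf{n}}$ is either a tangential derivative of $v$ (absorbed into $\norm{v}_{\mathcal{Y}^{k+1}}$) or a smooth function of $\nabla\varphi$ (absorbed into the $\Lambda$ prefactor through Lemma~\ref{propeta}), so applying $Z^\alpha$ yields \eqref{dzv1infty1}.

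For \eqref{dzv1infty11}, I apply $\partial_z$ to the two identities above. Differentiating the normal component produces only terms of the form $\partial_z(\text{coefficient})\cdot\nabla_y v$ and $\text{coefficient}\cdot\nabla_y\partial_z v$, which together are bounded by $\Lambda_0(\norm{v}_{2,\infty}+\norm{\partial_z v}_{1,\infty})$. Differentiating the tangential identity produces the crucial term $2(\partial_z\varphi/|\mathbf{N}|)\partial_z S_{\mathbf{n}}$, plus terms that are again of type $\partial_z v$ or second tangential derivatives of $v$. Multiplying by $\sqrt{\eps}$ and invoking \eqref{dzv1infty1} with $k=1$ to bound $\norm{\partial_z v}_{1,\infty}\le \Lambda_0(\norm{S_{\mathbf{n}}}_{1,\infty}+\norm{v}_{2,\infty})$ produces \eqref{dzv1infty11}. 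Estimate \eqref{dzv1infty111} is obtained by applying one additional conormal field $Z_i$ to the same identity for $\partial_{zz} v$ and using \eqref{dzv1infty1} with $k=2$.

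This is a largely structural result with no genuine obstacle; the only point requiring care is keeping track of the mismatch between $\partial_z$ (which is not tangential) and the $Z^\alpha$ fields, so that each $\partial_z$ landing on a coefficient $\partial_z\varphi/|\mathbf{N}|$, on $\mathbf{n}$, or on $\Pi$ produces only terms controlled by $\Lambda(1/c_0,|h|_{\mathcal{Y}^{k+1}})$ through the smoothness of the Poisson extension $\eta$ of $h$ established in Appendix~\ref{aa2}. No use of the equations of motion or of the pressure is needed.
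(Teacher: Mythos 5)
Your proof is correct and follows exactly the paper's route: the paper's own proof of this lemma consists of the single remark that it ``follows again from \eqref{eqdzvnk} and \eqref{eqdzvPin}'', i.e., precisely the normal/tangential decomposition of $\partial_z v$ that you spell out, combined with the Leibniz rule in the $L^\infty$-based conormal norms and the bounds on $\nabla\eta$ from Lemma \ref{propeta}. Your treatment of \eqref{dzv1infty11} and \eqref{dzv1infty111} by applying $\partial_z$ (and one additional $Z$) to the same two identities, and then reabsorbing $\norm{\partial_z v}_{1,\infty}$ via the first estimate, is exactly what the paper intends.
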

\begin{proof}
This follows again from \eqref{eqdzvnk} and \eqref{eqdzvPin}.
\end{proof}

As a consequence of  Lemma \ref{propLinfty1}, it suffices to estimate $ S_\n $. {Similarly as in Section \ref{sectionnorm2}, one may need only to estimate $   S_\n  $ near $\{z=0\}$}. In this step, as in \cite{MasRou12,MasRou}, it is convenient to use a coordinate system where  the Laplacian has the simplest
expression. We shall thus use a normal geodesic coordinate system in the vicinity of $\partial\Omega$. Note that this coordinate
system has not been used before since it requires more regularity for the boundary: to get an $H^m$ (or $\mathcal{C}^m$) coordinate system, the
boundary needs to be $H^{m+1}$ (or $\mathcal{C}^{m+1}$). Nevertheless, at this step, this is not a problem since one needs only to estimate a low
number of derivatives
of the velocity, say ${\[\!\frac{m}{2}\!\]}+2$, while the boundary is $H^m$ and $m$ can be as large as needed. We shall choose the cut-off
function $\chi$ in order to get a well defined coordinate system in the vicinity of  {$\{z=0\}$}.

Define a different parametrization of the vicinity of $\{z=0\}$ by
\begin{align} \label{diffgeo}
\begin{split}
\Psi(t, \cdot) : \,   {\{z<0\}} & \rightarrow   \Omega(t) \\
   (y,z) &\mapsto    \left( \begin{array}{ll}  y \\ h(t,y)   \end{array} \right) + z \n^b(t,y),
\end{split}
\end{align}
where $\n^b= \N^b/ |\N^b|$ is the unit exterior normal with $\N^b=(-\partial_{1}h, - \partial_{2} h, 1)$. Note that $\Psi(t, \cdot) $ is a
diffeomorphism from $\mathbb{R}^2\times( - \delta, 0)$ to a vicinity of $\partial \Omega(t)$  for some  $\delta$ which depends only on $c_{0}$,
and  for every $t \in [0, T^\es]$ thanks to \eqref{apriori}. By this parametrization, the induced Riemannian metric has the block structure
\beq\label{blockg}
g(t,y,z)= \( \begin{array}{cc} \tilde{g}(t,y,z) & 0  \\ 0  & 1 \end{array} \).
\eeq
Hence, the Laplacian in this coordinate system reads:
\beq\label{laplacien}
\Delta_{g}f= \partial_{zz}f +\hal \partial_{z} \big( \ln |g| \big) \partial_{z} f  + \Delta_{\tilde g} f,
\eeq
where $|g|$ denotes the determinant of the matrix $g$ and $\Delta_{\tilde{g}}$ is defined by
\beq
\Delta_{\tilde{g}} f=  { 1 \over |\tilde{g}|^{1 \over 2} }  \sum_{1 \leq i, \, j \leq 2} \partial_{y^i}\big(\tilde{g}^{ij} |\tilde{g}|^{1 \over
2} \partial_{y^j} f \big).
\eeq

To use this coordinate system, one shall first localize the equation for $S^\varphi v$ in a vicinity of  $\{z=0\}$. Set
\beq\label{Schidef}
S^\chi= \chi(z) S^\varphi v,
\eeq
where $\chi(z) \in [0, 1]$ is smooth compactly supported near $\{z=0\}$ and takes the value $1$ in a vicinity of $\{z=0\}$. \eqref{eqS} yields
that
\beq\label{Schieq}
\partial_{t}^\varphi S^\chi + (v \cdot \nabla^\varphi)S^\chi - \eps \Delta^\varphi S^\chi=  F_{S^\chi}\quad \text{in } {\{z<0\}},
\eeq
where
\beq\label{Fchidef}
F_{S^\chi}=  F^\chi + F_{v}
\eeq
with
\begin{eqnarray}
& & F^\chi= \big(V_{z} \partial_{z} \chi\big) S^\varphi v  -  \eps \nabla^\varphi \chi \cdot \nabla^\varphi S^\varphi v - \eps \Delta^\varphi
\chi \, S^\varphi v ,\label{fcex} \\
& & F_{v} = - \chi \big( D^\varphi)^2 q -{\chi \over 2}\big( (\nabla ^\varphi v)^2 + ((\nabla^\varphi v)^t)^2\big).\label{fvex}
\end{eqnarray}
Next, define implicitly $\tilde{S}$ in ${\tilde\Omega(t)}$ by $\tilde S (t,  \Phi(t,y,z))= S^\chi(t,y,z)$. Then \eqref{Schieq} yields
\beq
\partial_{t}   \tilde S  + u \cdot \nabla \tilde S  - \eps \Delta  \tilde S = F_{S^\chi}(t, \Phi(t, \cdot)^{-1})\quad\text{in }{\tilde\Omega(t)}.
\eeq
Finally, define $S^\Psi$ in $  {\{z<0\}} $ by  $S^\Psi(t,y,z)= \tilde S (t,\Psi(t,y,z))\equiv  S^\chi(t, \Phi(t, \cdot)^{-1} \circ \Psi)$. It then
follows from (8.14) and \eqref{laplacien} that
\beq\label{eqSPsi}
\partial_{t}  S^\Psi + w \cdot \nabla  S^\Psi - \eps\big(  \partial_{zz}   S^\Psi  + {1 \over 2 } \partial_{z} \big( \ln |g| \big) \partial_{z}
S^\Psi  + \Delta_{\tilde g}   S^\Psi \big)= F_{S^\chi} (t, \Phi^{-1} \circ \Psi(t,\cdot)) \quad \text{in } {\{z<0\}},
\eeq
where the vector field $w$ is given by
\beq\label{defW}
w=\overline{\chi} (\nabla \Psi)^{-1}\big( u(t,  \Psi ) - \partial_{t} \Psi \big)\equiv\overline{\chi} (\nabla \Psi)^{-1}\big( v(t, \Phi^{-1}
\circ \Psi ) - \partial_{t} \Psi \big).
\eeq
Note that $S^\Psi$ is compactly supported in $z$ in a vicinity of $\{z=0\}$. The function $\overline{\chi}(z)$ is a function with a slightly
larger support such that $\overline \chi S^\Psi= S^\Psi$. The introduction of this function allows to have $w$ also supported in a vicinity of
$\{z=0\}$. Note that on $\{z=0\}$, $w_3=0$. Indeed, since
\beq\label{wbord1}
\nabla \Psi(t,y,0)= \left( \begin{array}{ccc}
        1  & 0 & \n_{1}^b \\ 0 & 1 & \n_{2}^b   \\ \partial_{1} h & \partial_{2} h  & \n_{3}^b \end{array} \right),
\eeq
thus for any $Y\in \mathbb{R}^3$,
\beq\label{cdv} ((\nabla\Psi(t,y,0))^{-1} Y)_{3}= Y \cdot \n^b.
\eeq
Hence, \eqref{defW} implies that on $\{z=0\}$,
\beq\label{w3bord}
w_{3}= v \cdot \n^b - \partial_{t} h \n_{3}^b= {1 \over |\N^b|} \big( v \cdot \N^b - \partial_{t}h)= 0
\eeq
thanks to the kinematic boundary condition.

Now, set
\beq\label{etadef}
S_{\n}^\Psi(t,y,z)=\Pi^b(t,y) S^\Psi(t,y,z) \n^b(t,y)
\eeq
with $\Pi^b= \mbox{Id}- \n^b \otimes \n^b$. Note that $\Pi^b$ and $\n^b$  are independent of $z$. Moreover, since the equation
\eqref{eqSPsi} is compactly supported in $z$ in a vicinity of $\{z=0\}$, this yields that $S_{\n}^\Psi$ solves
\beq\label{eqSnPsi}
\partial_{t} S_{\n}^\Psi + w \cdot \nabla  S_{\n}^\Psi - \eps\big(  \partial_{zz}  + \hal \partial_{z} \big( \ln |g| \big) \partial_{z} \big)
S_{\n}^\Psi  = F_{\n}^\Psi\quad \text{in }\{z<0\},
\eeq
where
\beq\label{Fetadef}
F_{\n}^\Psi=  \Pi^b F_{S^\chi} \n^b + F_{\n}^{\Psi, 1}  + F_{\n}^{\Psi, 2}
\eeq
with
\begin{eqnarray}
& &   \label{Feta1}
F_{\n}^{\Psi, 1}=  \big(( \partial_{t} + w_{y} \cdot \nabla_{y}) \Pi^b\big)S^\Psi \n^b + \Pi^b S^\Psi\big( \partial_{t}+ w_{y}\cdot \nabla_{y})
n^b, \\& & \label{Feta2} F_{\n}^{\Psi, 2}=  - \eps \Pi^b\big( \Delta_{\tilde{g}} S^\Psi\big) \n^b.
\end{eqnarray}
On $\{z=0\}$, $S_{\n}^\Psi = S_{\n}=0$. Furthermore, it is convenient to eliminate the term $\eps \partial_{z} \ln |g| \partial_{z}$ in
the equation \eqref{eqSnPsi}.
Set
\beq\label{etadefbis}
\rho = |g|^{1 \over 4} S_{\n}^\Psi,
\eeq
then
\beq\label{eqrhobis}
\partial_{t}  \rho + w \cdot \nabla   \rho - \eps  \partial_{zz}   \rho= \mathcal{H}\quad \text{in }\{z<0\},
\eeq
where
\beq\label{Fg}
\mathcal{H}= |g|^{ 1 \over 4 } \(F_{\n}^\Psi + F_{g}\)\text{
with }
F_{g}=  \rho|g|^{-\hal} \( \partial_{t} + w \cdot \nabla - \eps \partial_{zz} \) |g|^{1 \over 4}.
\eeq
Trivially, on $\{z=0\}$, $\rho= 0$.

%%%%%%%%%%%%%%%%%%%%%%%%%%%%%%%%%%%%%%%%%%%%%%%%%%%%%%
\subsection{Estimate of $\norm{\pa_z v}_{\Y^{{\[\!\frac{m}{2}\!\]}+2}}$}\label{sec linf1}
%%%%%%%%%%%%%%%%%%%%%%%%%%%%%%%%%%%%%%%%%%%%%%%%%%%%%%

We now establish the first $L^\infty$ estimate. Note that it is equivalent to estimate $S_{\n}$ or $S_{\n}^\Psi$ or  $\rho$.  Indeed,  by the
definition \eqref{etadef},
using the chain rule and the fact that $Z$ is tangent to $\{z=0\}$, one has
$$ \norm{S_{\n}^\Psi}_{\Y^{k}}  \leq \Lambda \(\frac{1}{c_0},\abs{h}_{\Y^{k+1}}\) \norm{ \Pi^b S^\varphi v \, \n^b}_{\Y^{k}}.$$
Since $|\Pi- \Pi^b| + | \n- \n^b|= \mathcal{O}(z)$ in the vicinity of $\{z=0\}$, thus
\beq\label{etaSn}
\norm{S_{\n}^\Psi}_{\Y^{k}}  \leq \Lambda \(\frac{1}{c_0},\abs{h}_{\Y^{k+1}}\) \(\norm{ S_\n}_{\Y^{k}}+\norm{ v}_{\Y^{k+1}}\).
\eeq
Similar arguments show that
\beq\label{Sneta}
\norm{S_{\n}}_{\Y^{k}}  \leq \Lambda \(\frac{1}{c_0},\abs{h}_{\Y^{k+1}}\) \(\norm{ S_\n^\Psi}_{\Y^{k}}+\norm{ v}_{\Y^{k+1}}\).
\eeq
On the other hand, it is easy to see that it is equivalent to estimate $\rho$ or $S_{\n}^\Psi$. By \eqref{dzv1infty1}, it thus suffices to
estimate $\rho$.

Set
\beq
\widetilde{\Lambda}_{\infty}(t)=\Lambda\( {1 \over c_{0}},   \abs{h}_{\X^{{\[\!\frac{m}{2}\!\]}+7}}+\norm{\nabla v}_{\Y^{{\[\!\frac{m}{2}\!\]}+2}} +
\norm{v}_{\X^{{\[\!\frac{m}{2}\!\]+}7}}  + \norm{\nabla v}_{\X^{{\[\!\frac{m}{2}\!\]+}6}}   \).
\eeq

\begin{prop}\label{propdzvinfty}
For $m\ge {13}$, it holds that
\beq
\norm{ \rho(t) }_{\Y^{{\[\!\frac{m}{2}\!\]}+2}} \ls\norm{ \rho(0)}_{\Y^{{\[\!\frac{m}{2}\!\]}+2}}
+ \int_{0}^t \widetilde{\Lambda}_{\infty}\(1+\eps\norm{\nabla S_\n }_{\X^{m-2}}\)  .
\eeq
\end{prop}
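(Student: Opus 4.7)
The plan is to apply the parabolic maximum principle to $Z^\alpha\rho$ for every $|\alpha|\le\tfrac{m}{2}+2$ in the convection-diffusion equation \eqref{eqrhobis}. Since $\rho$ vanishes on $\{z=0\}$ and is compactly supported in a vicinity of that boundary, the same holds for every $Z^\alpha\rho$; together with $w_3|_{z=0}=0$ from \eqref{w3bord}, which prevents the drift from carrying mass through the boundary, the standard maximum principle for $\pa_t+w\cdot\nabla-\eps\pa_{zz}$ with homogeneous Dirichlet data yields
\begin{equation*}
\norm{Z^\alpha\rho(t)}_{L^\infty}\le\norm{Z^\alpha\rho(0)}_{L^\infty}+\int_0^t\norm{Z^\alpha\mathcal{H}+\mathcal{R}^\alpha}_{L^\infty}\,ds,
\end{equation*}
where $\mathcal{R}^\alpha=-[Z^\alpha,w\cdot\nabla]\rho+\eps[Z^\alpha,\pa_{zz}]\rho$. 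The pointwise bound $\norm{Z^\alpha\mathcal{H}}_{L^\infty}\le\widetilde{\Lambda}_{\infty}$ holds because the only delicate piece of $\mathcal{H}$ is the Hessian-of-pressure contribution $-\chi(\nabla^\varphi)^2q$ in $F_v$, which is controlled at level $\Y^{\frac{m}{2}+2}$ by the $L^\infty$ pressure estimate \eqref{qinfty} applied with $k=\tfrac{m}{2}+2$, while the remaining pieces $F^\chi$, $F^{\Psi,1}_{\n}$, $F^{\Psi,2}_{\n}$ and $F_g$ involve only quantities already built into $\widetilde{\Lambda}_{\infty}$.

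For the transport commutator $[Z^\alpha,w\cdot\nabla]\rho$, the only nonstandard contribution is the normal component $w_3\pa_z\rho$, which is handled by writing $w_3\pa_z\rho=\tfrac{w_3}{z(z+b)}\,Z_3\rho$ and using $w_3|_{z=0}=0$ together with the compact support of $w$ away from $\{z=-b\}$ to bound $w_3/(z(z+b))$ in $\Y^{\frac{m}{2}+2}$ by $\widetilde{\Lambda}_{\infty}$, in the spirit of the Hardy argument used for $\mathcal{C}_{S3}^1$ in the proof of Proposition \ref{propdzvm-2}. Combined with the product estimate \eqref{gues}, this gives $\norm{[Z^\alpha,w\cdot\nabla]\rho}_{L^\infty}\le\widetilde{\Lambda}_{\infty}(1+\norm{\rho}_{\Y^{\frac{m}{2}+2}})$.

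The principal obstacle is the diffusive commutator $\eps[Z^\alpha,\pa_{zz}]\rho$. A direct expansion via \eqref{idcom} and the Leibniz rule produces terms schematically of the form $\eps\,c_\beta\pa_{zz}(Z^\beta\rho)$ and $\eps\,c_\beta\pa_z(Z^\beta\rho)$ with $|\beta|\le|\alpha|-1$; the first family cannot be bounded pointwise by $\widetilde{\Lambda}_{\infty}$ since no $\eps\pa_{zz}$ quantity enters it. The key trick, enabled by the inclusion of time derivatives in $Z^\alpha$, is to use the equation \eqref{eqrhobis} itself to substitute
\begin{equation*}
\eps\pa_{zz}(Z^\beta\rho)=\pa_t(Z^\beta\rho)+w\cdot\nabla(Z^\beta\rho)-Z^\beta\mathcal{H}+[\pa_t+w\cdot\nabla-\eps\pa_{zz},\,Z^\beta]\rho,
\end{equation*}
thereby trading the uncontrolled normal diffusion for a tangential/time derivative of strictly lower $Z$-order (still inside $\Y^{\frac{m}{2}+2}$), a source term already estimated, and a lower-order commutator amenable to induction on $|\alpha|$. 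The residual $\eps\pa_z$ contributions are controlled pointwise by the anisotropic Sobolev embedding $\X^{m-2}\hookrightarrow L^\infty$ (valid for $m\ge 14$) together with the equivalence $\pa_z v\leftrightarrow S_\n$ of Lemma \ref{lemdzS} transferred through \eqref{etadef}--\eqref{etadefbis} to $\rho$, which produces exactly the factor $\eps\norm{\nabla S_\n}_{\X^{m-2}}$ appearing in the claim. Summing the resulting inequality over $|\alpha|\le\tfrac{m}{2}+2$ and absorbing the term $\widetilde{\Lambda}_{\infty}\int_0^t\norm{\rho}_{\Y^{\frac{m}{2}+2}}$ via Gronwall's inequality yields the proposition. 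The chief technical difficulty is the careful bookkeeping of the inductive substitution in $\eps[Z^\alpha,\pa_{zz}]\rho$, in particular verifying that every residual term is either controlled by $\widetilde{\Lambda}_{\infty}$, or by $\eps\norm{\nabla S_\n}_{\X^{m-2}}$, or absorbable by Gronwall into $\norm{\rho}_{\Y^{\frac{m}{2}+2}}$.
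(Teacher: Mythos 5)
Your proposal is correct and follows essentially the same route as the paper: maximum principle for $Z^\alpha\rho$ in \eqref{eqrhobis}, the Hardy-type treatment of $[Z^\alpha,w_3]\partial_z\rho$ using $w_3|_{z=0}=0$, the substitution of $\eps\partial_{zz}\rho$ by $\partial_t\rho+w\cdot\nabla\rho-\mathcal{H}$ via the equation to kill the diffusive commutator, and the pressure estimate \eqref{qinfty} for the Hessian term in $\mathcal{H}$. The only cosmetic differences are that the paper attributes the $\eps\norm{\nabla S_\n}_{\X^{m-2}}$ factor to the $F_\n^{\Psi,2}=-\eps\Pi^b(\Delta_{\tilde g}S^\Psi)\n^b$ piece of $\mathcal{H}$ (via the embedding requiring $\frac{m}{2}+5\le m-2$, i.e.\ $m\ge14$), controls the residual $\eps\partial_z\rho$ terms directly by \eqref{dzzves11}, and needs no Gronwall since $\norm{\rho}_{\Y^{\frac{m}{2}+2}}$ is already absorbed into $\widetilde{\Lambda}_\infty$.
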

\begin{proof}
Apply $Z^\alpha$, for $\alpha\in \mathbb{N}^{1+3}$ with $|\alpha|=k\le {\[\!\frac{m}{2}\!\]}+2$, to \eqref{eqrhobis} to obtain
\beq\label{eqetaFP0}
\partial_{t}Z^\alpha{\rho}+w\cdot\nabla Z^\alpha{\rho}-\eps\partial_{zz}Z^\alpha{\rho}=Z^\alpha {\mathcal{H}}+\mathcal{C}_S\quad \text{in
}\{z<0\},
\eeq
where
\beq\label{Ht}
\mathcal{C}_S=\mathcal{C}_S^1+\mathcal{C}_S^2
\eeq
with
\beq
\mathcal{C}_S^1=\[Z^\alpha,w_y\]\cdot  \nabla_y \rho+\[Z^\alpha,w_3\]\cdot  \pa_z \rho:=\mathcal{C}_{Sy}+\mathcal{C}_{Sz}\text{ and
}\mathcal{C}_S^2=-\eps\[Z^\alpha,\partial_{zz}\] \rho.
\eeq
The maximum principle on \eqref{eqetaFP0} yields that, since $Z^\alpha{\rho}=0$ on $\{z=0\}$,
\begin{align}\label{linfty1}
\norm{Z^\alpha{\rho}}_{L^\infty}\le \norm{Z^\alpha{\rho}(0)}_{L^\infty}+\int_0^t\(\norm{Z^\alpha {\mathcal{H}}}_{L^\infty}+\norm{
\mathcal{C}_S}_{L^\infty}\).
\end{align}
The right hand side of \eqref{linfty1} can be estimated as follows. For the commutator $\mathcal{C}_S^1$, the direct estimates yield
\beq\label{eeqq10}
\norm{\mathcal{C}_{Sy}}_{L^\infty}\ls \norm{w_y}_{\Y^k}\norm{\rho}_{\Y^k}.
\eeq
To estimate $\mathcal{C}_{Sz}$, by expanding the commutator and using \eqref{idcom}, one needs to estimate terms of the form
$$\norm{ Z^\beta  w_{3}  \partial_{z} Z^\gamma  {\rho} }_{L^\infty}$$
with $\beta +  \gamma \le \alpha$ and $|\gamma|\le |\alpha|-1$. Since $w_3=0$ on $\{z=0\}$, so
\beq\label{eeqq}
\norm{ Z^\beta  w_{3}  \partial_{z} Z^\gamma  {\rho} }_{L^\infty}=\norm{\frac{ Z^\beta  w_{3}}{z(z+b)}  Z_3 Z^\gamma  {\rho}
}_{L^\infty}\ls\norm{\pa_{z}w_3}_{\Y^k} \norm{ \rho}_{\Y^k}.
\eeq
For the commutator $\mathcal{C}_S^2$, using \eqref{idcom} repeatedly leads to
\begin{align}\label{idcom10}
- \[Z^\alpha,\partial_{zz}\] \rho&=\pa_z\(\[Z^\alpha, \partial_{z}\]{\rho}\) +\[Z^\alpha,\pa_z\] \partial_{z}{\rho}
\\\nonumber& =\sum_{| \beta | \leq k-1} \pa_z\( c_{\beta } \partial_{z} ( Z^\beta  \rho)\) +\sum_{| \beta | \leq k-1} c_{\beta } \partial_{z} (
Z^\beta \pa_z\rho)
\\\nonumber& =\sum_{| \beta' | \leq k-1}  c_{\beta' } Z^{\beta'} \partial_{z} \rho   +\sum_{| \beta' | \leq k-1} \tilde c_{\beta'}  Z^{\beta'}
\pa_{zz}\rho.
\end{align}
Hence,
\beq\norm{ \mathcal{C}_S^2 }_{L^\infty}\ls \eps \norm{ \pa_z  {\rho} }_{\Y^{k-1}}+\eps \norm{ \pa_{zz}  {\rho} }_{\Y^{k-1}}.
\eeq
Note that using the equation \eqref{eqrhobis} implies
\beq
\eps \norm{ \pa_{zz}  {\rho} }_{\Y^{k-1}} \le \norm{\partial_{t}  \rho + w \cdot \nabla   \rho-\mathcal{H}}_{\Y^{k-1}}
\le \norm{ \rho}_{\Y^{k}} +\norm{  w \cdot \nabla   \rho}_{\Y^{k-1}}+\norm{ \mathcal{H}}_{\Y^{k-1}}.
\eeq
Recall from \eqref{dzzves11} that
\beq
\eps \norm{ \pa_z  {\rho} }_{\Y^{k-1}}\le \eps \Lambda\(\frac{1}{c_0},\abs{h}_{\Y^{k+1}}\)\norm{ \nabla^2  v }_{\Y^{k-1}}\le \linf.
\eeq
As \eqref{eeqq10} and \eqref{eeqq},
\beq\label{eeqq333}\norm{  w \cdot \nabla   \rho}_{\Y^{k-1}}\ls \(\norm{ w_y}_{\Y^k}+\norm{\pa_{z}w_3}_{\Y^k}\) \norm{ \rho}_{\Y^k}.
\eeq
Thus,
\beq\label{eeqq111}\norm{ \mathcal{C}_S^2 }_{L^\infty}\ls \(1+\norm{ w_y}_{\Y^k}+\norm{\pa_{z}w_3}_{\Y^k}\) \norm{ \rho}_{\Y^k}+\norm{
\mathcal{H}}_{\Y^{k-1}}+\linf.\eeq
Consequently, in light of the estimates \eqref{eeqq10}, \eqref{eeqq} and \eqref{eeqq111} into \eqref{linfty1}, one obtains
\beq\label{rhoes1}
\norm{  \rho }_{\Y^k} \ls\norm{  \rho(0) }_{\Y^k}
+ \int_{0}^t  \norm{ \mathcal{H}}_{\Y^{k}}+\(1+\norm{ w_y}_{\Y^k}+\norm{\pa_{z}w_3}_{\Y^k}\) \norm{ \rho}_{\Y^k}+\linf.
\eeq

Now we estimate $\mathcal{H}$. Note first that
$$
\norm{ |g|^{\frac{1}{4}}F_{g} }_{\Y^{k}}\le \Lambda\(\frac{1}{c_0},\norm{\rho}_{\Y^{k}}+\norm{w}_{\Y^{k}}+\abs{h}_{\Y^{k+3}}\).
$$
Next, it follows from \eqref{Feta1} and \eqref{Feta2} that
$$ \norm{ |g|^{\frac{1}{4}}F_\n^{\Psi, 1} }_{\Y^{k}} \le \Lambda\( {1 \over c_{0}}, \abs{h}_{\Y^{k+2}}
     + \norm{w }_{\Y^{k}}+ \norm{\nabla v }_{\Y^{k}}\)$$
     and
     $$  \norm{ |g|^{\frac{1}{4}}F_\n^{\Psi,2} }_{\Y^{k}} \le \eps \Lambda\( {1 \over c_{0}}, \abs{h}_{\Y^{k+3}}
    \)\norm{\nabla v }_{\Y^{k+2}}.$$
Using \eqref{fcex} and \eqref{fvex}, the fact that $F^\chi$  is supported away from
$\{z=0\}$ and Lemma \ref{iii}, one gets
$$  \norm{ |g|^{\frac{1}{4}}\Pi^b F ^\chi \n^b }_{\Y^{k}}\le \Lambda\( {1 \over c_{0}}, \abs{h}_{\Y^{k+2}}
+\norm{ v }_{\X^{k+4}}\)$$
and
$$  \norm{ |g|^{\frac{1}{4}}\Pi^b F_v \n^b }_{\Y^{k}}\le \Lambda\( {1 \over c_{0}}, \abs{h}_{\Y^{k+2}}+\norm{\nabla v }_{\Y^{k}}
     \)\(1+ \norm{\nabla^2 q }_{\Y^{k}}\).$$
Recalling (8.27), (8.22) and (8.11), and collecting these estimates, one arrives at
\begin{align}\label{hhes}
\norm{\mathcal{H}}_{\Y^{k}}\le \Lambda\( {1 \over c_{0}}, \abs{h}_{\Y^{k+3}}+\norm{ v }_{\X^{k+4}}+\norm{\nabla v }_{\Y^{k}}
     \)\(1+\eps\norm{\nabla v }_{\Y^{k+2}}+ \norm{\nabla^2 q }_{\Y^{k}}\).
\end{align}
Recall from \eqref{qinfty} that
\begin{align} \label{qinfty1}
\norm{\nabla^2 q }_{\Y^{{\[\!\frac{m}{2}\!\]}+2}}
 &\nonumber\leq \Lambda\( {1 \over c_{0}},   \abs{h}_{\Y^{{\[\!\frac{m+6}{4}\!\]+}4}}  +\abs{h}_{\X^{{\[\!\frac{m}{2}\!\]+7}}}+\norm{\nabla
 v}_{\Y^{{\[\!\frac{m+6}{4}\!\]+}3}} +  \norm{v}_{\X^{{\[\!\frac{m}{2}\!\]+7}}}  + \norm{\nabla v}_{\X^{{\[\!\frac{m}{2}\!\]+6}}}   \)
\\& \leq \Lambda\( {1 \over c_{0}},   \abs{h}_{\Y^{{\[\!\frac{m}{2}\!\]}+5}}  +\abs{h}_{\X^{{\[\!\frac{m}{2}\!\]}+7}}+\norm{\nabla v}_{\Y^{{\[\!\frac{m}{2}\!\]}+2}} +
\norm{v}_{\X^{{\[\!\frac{m}{2}\!\]+}7}}  + \norm{\nabla v}_{\X^{{\[\!\frac{m}{2}\!\]+}6}}   \)
\end{align}
if $m\ge 10$. Hence, \eqref{rhoes1} implies
\begin{align}
\norm{  \rho }_{\Y^{{\[\!\frac{m}{2}\!\]}+2}} \ls\norm{  \rho_0 }_{\Y^{{\[\!\frac{m}{2}\!\]}+2}}
+ \int_{0}^t  &  \Lambda\( {1 \over c_{0}},   \abs{h}_{\X^{{\[\!\frac{m}{2}\!\]}+7}}+\norm{\nabla v}_{\Y^{{\[\!\frac{m}{2}\!\]}+2}} +  \norm{v}_{\X^{{\[\!\frac{m}{2}\!\]+}7}}
+ \norm{\nabla v}_{\X^{{\[\!\frac{m}{2}\!\]+}6}}   \)\nonumber
\\&\quad \times\(1+\eps\norm{\nabla v }_{\Y^{{\[\!\frac{m}{2}\!\]}+4}}\)  .
\end{align}
Then the desired estimates (8.29) follows for $m\ge {13}$ so that ${\[\!\frac{m}{2}\!\]}+4+1\le m-2$.
\end{proof}

%%%%%%%%%%%%%%%%%%%%%%%%%%%%%%%%%%%%%%%%%%%%%%%%%%%%%%
\subsection{Estimate of $\sqrt{\eps}\norm{\pa_{zz} v}_{L^\infty}$}\label{sec linf2}
%%%%%%%%%%%%%%%%%%%%%%%%%%%%%%%%%%%%%%%%%%%%%%%%%%%%%%

The next $L^\infty$ estimate is the only place where one needs to use the compatibility condition {\eqref{comc}} on the initial data.  As in the previous
subsection, by  \eqref{dzv1infty11}, one can reduce
the problem to the estimate of $\sqrt{\eps} \norm{\partial_{z} \rho }_{L^\infty}.$
\begin{prop}\label{dzzvLinfty}
Assume that the initial data satisfies the compatibility condition $S_\n(0)=0$ on $\{z=0\}$. Then it holds that for $m \geq 6$,
\beq
\sqrt{\eps}\norm{\partial_{z} \rho (t)}_{L^\infty}\ls  \sqrt{\eps} \norm{\partial_{z} \rho_{0}}_{L^\infty} + \int_{0}^t  {\linf \over \sqrt{t-
\tau}}.
\eeq
\end{prop}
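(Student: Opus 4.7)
The plan is to adapt, essentially verbatim, the heat-kernel argument of Masmoudi--Rousset (\cite{MasRou}, Proposition 9.8) to the equation \eqref{eqrhobis}. The key structural facts we exploit are that $\rho$ solves a convection-diffusion equation on the half-space $\{z<0\}$ with the homogeneous Dirichlet condition $\rho|_{z=0}=0$, and that by \eqref{w3bord} the normal component of the drift satisfies $w_3|_{z=0}=0$, so we may factor $w_3(t,y,z)=z\,\tilde w_3(t,y,z)$ with $\tilde w_3$ bounded by $\Lambda_\infty$. Introducing a Lagrangian-type flow $Y(t,y,z)$ adapted to the tangential drift, $\partial_t Y = w_y(t,Y,z)$, $Y|_{t=0}=y$, and setting $\tilde\rho(t,y,z)=\rho(t,Y(t,y,z),z)$, the equation \eqref{eqrhobis} becomes a genuinely one-dimensional Fokker--Planck equation in the variable $z$, parameterized by $(t,y)$:
\begin{equation*}
\partial_t \tilde\rho + z\,\tilde w_3\,\partial_z \tilde\rho - \eps\,\partial_{zz}\tilde\rho = \tilde{\mathcal H},
\end{equation*}
where the forcing $\tilde{\mathcal H}$ is $\mathcal H$ composed with the tangential flow, together with a harmless commutator that is bounded in $L^\infty$ by $\Lambda_\infty$ via Lemma \ref{dzzlem} and Proposition \ref{propdzvinfty}.

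Next I would invoke an explicit Green's function $K_\eps(t,\tau;z,z')$ for this Fokker--Planck operator on the half-line $\{z<0\}$ with Dirichlet boundary condition at $z=0$. Via a parametrix comparison against the standard Dirichlet heat kernel on $\mathbb R_-$ (the drift $z\tilde w_3 \partial_z$ is subordinate to $\eps\partial_{zz}$ on the relevant scale after the usual rescaling $z\mapsto z/\sqrt\eps$), one gets the pointwise bounds
\begin{align*}
|K_\eps(t,\tau;z,z')| &\lesssim \frac{1}{\sqrt{\eps(t-\tau)}}\,\exp\!\Bigl(-c\,\frac{(z-z')^2}{\eps(t-\tau)}\Bigr),\\
|\partial_z K_\eps(t,\tau;z,z')| &\lesssim \frac{1}{\eps(t-\tau)}\,\exp\!\Bigl(-c\,\frac{(z-z')^2}{\eps(t-\tau)}\Bigr),
\end{align*}
uniformly in $(t,y)$. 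Duhamel's formula then yields $\tilde\rho(t,\cdot,z)=\int K_\eps(t,0;z,z')\rho_0(z')\,dz' + \int_0^t\!\int K_\eps(t,\tau;z,z')\tilde{\mathcal H}(\tau,z')\,dz'\,d\tau$. For the forcing contribution, $\sqrt\eps$ times the $\partial_z K_\eps$-bound integrated against $\|\tilde{\mathcal H}(\tau)\|_{L^\infty}$ produces exactly the factor $1/\sqrt{t-\tau}$ claimed in the statement, while $\|\tilde{\mathcal H}\|_{L^\infty}\le \Lambda_\infty$ follows from \eqref{hhes} with $k=0$, together with Proposition \ref{proppressure} for $k=3$ (whence the hypothesis $m\ge 6$ is needed so that the $L^\infty$-norm of $\nabla^2 q$ is controlled).

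The role of the compatibility condition $S_\n(0)=0$ on $\{z=0,-b\}$ appears in the initial-data term. A naive application of the $\partial_z K_\eps$-bound would yield $\|\rho_0\|_{L^\infty}/\sqrt{\eps t}$, which blows up as $\eps\to 0$; the compatibility forces $\rho_0|_{z=0}=0$ (since $\rho_0=|g|^{1/4}\Pi^b S^\varphi v_0\,\n^b$ and $\Pi^b S^\varphi v_0\,\n^b$ coincides with $S_{\n}$ up to $O(z)$-terms on $\{z=0\}$), so one integrates by parts in $z'$ with no boundary contribution to trade $\partial_z K_\eps$ for $K_\eps$, yielding $\sqrt\eps\,\|\partial_z\rho_0\|_{L^\infty}$ as claimed.

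The principal obstacle is the construction and uniform bounds for the half-line Dirichlet Green's function $K_\eps$ in the presence of the drift $z\tilde w_3$: one must verify that, after rescaling $z=\sqrt\eps Z$, the operator becomes a small-parameter perturbation of the standard heat operator on $\{Z<0\}$ whose Dirichlet kernel enjoys the stated Gaussian bounds, and check that the commutators introduced by (i) the Lagrangian change of variables in $y$, (ii) the factorization $w_3=z\tilde w_3$, and (iii) the passage from $\rho$ to $\tilde\rho$ are all absorbable into $\Lambda_\infty$ via the previously established bounds. All these commutators involve at worst one extra derivative of $v$ or of $h$ in $L^\infty$, and hence are controlled under the hypothesis $m\ge 6$ exactly as in \cite{MasRou}. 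The remaining steps are routine, and summing over the two boundary pieces $\{z=0\}$ and $\{z=-b\}$ (the latter being treated identically, since the Navier boundary condition likewise makes $S_\n$ vanish there under the compatibility assumption) completes the proof.
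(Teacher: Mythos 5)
Your overall skeleton (Duhamel representation, integration by parts in the initial-data term using $\rho_0|_{z=0}=0$, and a $1/\sqrt{t-\tau}$ factor from $\sqrt{\eps}\,\partial_z$ of the kernel against an $L^\infty$-bounded forcing) matches the paper's, but you take a substantially heavier route than is needed, and the heavy part is exactly where your argument has a gap. The paper does \emph{not} pass to Lagrangian coordinates in $y$, does \emph{not} factor $w_3=z\tilde w_3$ into the operator, and does \emph{not} construct a Green's function for a drift--diffusion operator. It simply writes \eqref{eqrhobis} as $\partial_t\rho-\eps\partial_{zz}\rho=\mathcal H-w\cdot\nabla\rho$ and uses the explicit one-dimensional Dirichlet heat kernel $G$ on $\{z<0\}$, treating the \emph{entire} convection term as a source. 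This works because $\norm{w\cdot\nabla\rho}_{L^\infty}\le(\norm{w_y}_{L^\infty}+\norm{\partial_z w_3}_{L^\infty})\norm{\rho}_{1,\infty}\le\linf$ (the vanishing of $w_3$ on the boundary converts $w_3\partial_z$ into a conormal derivative, and $\norm{\rho}_{1,\infty}$ is already part of $\linf$ via Proposition \ref{propdzvinfty}). This is precisely the simplification over \cite{MasRou} that the availability of the $\Y^{\frac m2+2}$ bound on $\pa_z v$ buys; re-importing the Fokker--Planck machinery forfeits it.

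The gap in your version is the asserted Gaussian bounds for the Dirichlet Green's function $K_\eps$ of $\partial_t+z\tilde w_3\partial_z-\eps\partial_{zz}$. After the rescaling $z=\sqrt\eps Z$ the drift becomes $Z\tilde w_3\,\partial_Z$, which is \emph{not} a small perturbation of the heat operator: it is an $O(1)$, linearly growing (Ornstein--Uhlenbeck-type) coefficient, and with $\tilde w_3$ depending on $(t,y,z)$ the kernel and its $z$- and $z'$-derivatives require a genuine construction (this is where the real work in \cite{MasRou} lies), not a parametrix comparison as you describe. Your integration by parts in the initial-data term also silently uses a bound on $\partial_{z'}K_\eps$, which for a non-translation-invariant kernel does not follow from the bound on $\partial_z K_\eps$. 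Finally, the commutators produced by the $y$-Lagrangian change of variables acting on $\eps\partial_{zz}$ (terms like $\eps\,\nabla_y\partial_z\rho\cdot\partial_z Y$ and $\eps\,\nabla_y\rho\cdot\partial_{zz}Y$) are controllable via Lemma \ref{dzzlem}, but you only gesture at them. All of these difficulties evaporate if you follow the paper and put $w\cdot\nabla\rho$ on the right-hand side from the start.
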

\begin{proof}
The proof follows the spirit of the proof of Proposition 9.8 in \cite{MasRou}. Recall that $ \rho $ satisfies \eqref{eqrhobis} in $\{z<0\}$ with
$\rho=0$ on $\{z=0\}$. Note that one can not apply $\sqrt{\eps}\partial_{z}$ to \eqref{eqrhobis}
and then use the maximum principle due to boundary condition. We shall use a precise description of the
solution of \eqref{eqrhobis}. Indeed, one can use the one-dimensional heat kernel of  $\{z<0\}$:
\beq
G(t,z,z')= {1 \over \sqrt{4\pi \eps t}}\( e^{- { (z-z')^2 \over 4\eps t }} -  e^{- { (z+z')^2 \over 4\eps t }} \)
\eeq
to write that
\begin{align}\label{duhamel}
\sqrt{\eps}\partial_{z} \rho(t,y,z)=&\int_{0}^{+ \infty} \sqrt{\eps}\partial_{z}G(t,z, z') \rho_{0}(y, z') dz'
\\&\nonumber+ \int_{0}^t \sqrt{\eps} \partial_{z} G(t-\tau,z,z')\( \mathcal{H}(\tau,y,z') - w \cdot \nabla \rho\) dz' d\tau.
\end{align}
Since $\rho_{0}=0$ on $\{z=0\}$, thanks to the compatibility condition, one can integrate by parts the first term to obtain
\beq\label{dzzeta1}
\sqrt{\eps}\norm{\partial_{z} \rho (t)}_{L^\infty}\ls  \sqrt{\eps} \norm{\partial_{z} \rho_{0}}_{L^\infty}  + \int_{0}^t  {1 \over \sqrt{t-
\tau}} \(\norm{\mathcal{H}}_{L^\infty}+ \norm{ w \cdot \nabla \rho }_{L^\infty}\).
\eeq
Next, it follows from \eqref{hhes} with $k=0$ that
\beq\label{eeqq444}
\norm{ \mathcal{H}}_{L^\infty} \le \linf\( 1+ \norm{\nabla^2 q}_{L^\infty}\)  \le \linf.
\eeq
On the other hand, as \eqref{eeqq333},
\beq\label{eeqq555}
\norm{w \cdot \nabla \rho }_{L^\infty}   \le \(\norm{w_y}_{L^\infty} +\norm{\pa_{z}w_3}_{L^\infty} \) \norm{ \rho}_{1,\infty}  \le \linf.
\eeq
Consequently, plugging \eqref{eeqq444} and \eqref{eeqq555} into \eqref{dzzeta1} yields that
\beq
\sqrt{\eps}\norm{\partial_{z} \rho (t)}_{L^\infty}\ls  \sqrt{\eps} \norm{\partial_{z} \rho_{0}}_{L^\infty} + \int_{0}^t  {\linf \over \sqrt{t-
\tau}},
\eeq
which completes the proof.
\end{proof}

%%%%%%%%%%%%%%%%%%%%%%%%%%%%%%%%%%%%%%%%%%%%%%%%%%%%%%
\subsection{Estimate of $\sqrt{\eps}\norm{\pa_{zz} v}_{1,\infty}$}
%%%%%%%%%%%%%%%%%%%%%%%%%%%%%%%%%%%%%%%%%%%%%%%%%%%%%%

The last $L^\infty$ estimate is the one that was used in Proposition \ref{nor2}. By \eqref{dzv1infty111}, one can again reduce the problem to the
estimate of $\sqrt{\eps
}\norm{ \partial_{z}  \rho}_{1,\infty}$.
\begin{prop}\label{propro1}
For $m \geq 6$, it holds that:
\beq\label{Linfty0t1}
\int_0^t\sqrt{\eps }\norm{ \partial_{z}  \rho}_{1,\infty} \ls   \sqrt{t}  \norm{ \rho _{0}}_{1, \infty}+ t \int_{0}^t {\linf\over \sqrt{t -
\tau}}.
\eeq
\end{prop}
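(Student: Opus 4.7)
The plan is to combine a one-dimensional Duhamel representation (as in Proposition \ref{dzzvLinfty}) with one extra conormal derivative, and to use the direct heat-semigroup estimate $\int_0^\infty\sqrt{\eps}\abs{\pa_z G(s,z,z')}\,dz'\ls 1/\sqrt{s}$ in place of the integration-by-parts trick of Proposition \ref{dzzvLinfty}. This replaces the initial-data contribution $\sqrt{\eps}\norm{\pa_z\rho_0}_{L^\infty}$ by $\norm{\rho_0}_{L^\infty}/\sqrt{s}$, which, after integration in $s$, produces the desired factor $\sqrt{t}\,\norm{\rho_0}_{1,\infty}$.

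First I would apply $Z^\alpha$ with $\alpha\in\mathbb N^{3}$, $|\alpha|\le 1$, to \eqref{eqrhobis}. Since $Z_1,Z_2$ are tangent to $\{z=0\}$ and $Z_3=z(z+b)\pa_z$ vanishes there, while $\rho|_{z=0}=0$, the derivative $Z^\alpha\rho$ also vanishes on $\{z=0\}$. One obtains
\begin{equation*}
\pa_t Z^\alpha \rho+w\cdot\nabla Z^\alpha\rho-\eps\pa_{zz}Z^\alpha\rho=Z^\alpha\mathcal H+\mathcal C_S^\alpha\quad\text{in }\{z<0\},\quad Z^\alpha\rho|_{z=0}=0,
\end{equation*}
with $\mathcal C_S^\alpha=[Z^\alpha,w_y]\cdot\nabla_y\rho+[Z^\alpha,w_3]\pa_z\rho-\eps[Z^\alpha,\pa_{zz}]\rho$. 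The Duhamel formula with the 1D Dirichlet heat kernel $G$, together with $\norm{\sqrt{\eps}\pa_z G(s,z,\cdot)}_{L^1}\ls 1/\sqrt{s}$, then gives
\begin{equation*}
\sqrt{\eps}\norm{\pa_z Z^\alpha\rho(s)}_{L^\infty}\ls\frac{\norm{Z^\alpha\rho_0}_{L^\infty}}{\sqrt{s}}+\int_0^s\frac{\norm{Z^\alpha\mathcal H+\mathcal C_S^\alpha-w\cdot\nabla Z^\alpha\rho}_{L^\infty}(\tau)}{\sqrt{s-\tau}}\,d\tau.
\end{equation*}

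Next I would check that every source term has $L^\infty$-norm bounded by $\linf$. The estimate $\norm{Z^\alpha\mathcal H}_{L^\infty}\le\linf$ follows from \eqref{hhes} with $k=1$ combined with the pressure bound \eqref{qinfty1}; $[Z^\alpha,w_y]\cdot\nabla_y\rho$ is trivially bounded by $\norm{w_y}_{\Y^1}\norm{\rho}_{\Y^1}\le\linf$; for $[Z^\alpha,w_3]\pa_z\rho$ one argues as in \eqref{eeqq} by rewriting $(Z^\alpha w_3)\pa_z\rho=\bigl(Z^\alpha w_3/[z(z+b)]\bigr)\,Z_3\rho$ and using $w_3|_{z=0}=0$ together with the Hardy inequality. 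The delicate term is $\eps[Z^\alpha,\pa_{zz}]\rho$; after expansion via \eqref{idcom10} it reduces to combinations of $\eps Z^{\beta'}\pa_z\rho$, bounded by $\sqrt{\eps}\cdot\bigl(\sqrt{\eps}\norm{\pa_{zz}v}_{L^\infty}\bigr)\le\linf$, and $\eps Z^{\beta'}\pa_{zz}\rho$, for which I would use \eqref{eqrhobis} itself to trade $\eps\pa_{zz}\rho$ for $\pa_t\rho+w\cdot\nabla\rho-\mathcal H$, whose $L^\infty$ norm at one extra conormal order is again under $\linf$.

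Finally, integrating in $s\in[0,t]$, summing over $|\alpha|\le 1$, and using Fubini gives
\begin{equation*}
\int_0^t\!\int_0^s\frac{\linf(\tau)}{\sqrt{s-\tau}}\,d\tau\,ds=2\int_0^t\linf(\tau)\sqrt{t-\tau}\,d\tau\le t\int_0^t\frac{\linf(\tau)}{\sqrt{t-\tau}}\,d\tau,
\end{equation*}
where the last step uses $\sqrt{t-\tau}\le t/\sqrt{t-\tau}$. Together with $\int_0^t\norm{\rho_0}_{1,\infty}/\sqrt{s}\,ds=2\sqrt{t}\norm{\rho_0}_{1,\infty}$, this yields \eqref{Linfty0t1}. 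The main obstacle is the commutator $\eps[Z^\alpha,\pa_{zz}]\rho$: at face value it needs an $L^\infty$ control of $\eps\pa_{zz}\rho$ at one higher conormal order than is available in $\linf$; the way out, in the spirit of Section \ref{secnormal}, is to invoke \eqref{eqrhobis} to eliminate $\eps\pa_{zz}\rho$ in favour of quantities that are already controlled.
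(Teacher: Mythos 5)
Your proof is correct and follows essentially the same route as the paper: a direct Duhamel representation with the bound $\sqrt{\eps}\norm{\partial_z G(s,z,\cdot)}_{L^1}\ls 1/\sqrt{s}$ on the initial-data term (in place of the integration by parts used for Proposition \ref{dzzvLinfty}), $\linf$ bounds on $\mathcal{H}$ and $w\cdot\nabla\rho$, and a final integration in time. The only difference is presentational: you commute $Z^\alpha$ through the equation first and control the resulting commutators explicitly (trading $\eps\partial_{zz}\rho$ for $\partial_t\rho+w\cdot\nabla\rho-\mathcal{H}$ via \eqref{eqrhobis}, exactly as in \eqref{eeqq111}), whereas the paper applies the conormal derivatives directly to the Duhamel formula and leaves these details implicit.
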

\begin{proof}
We will use a different argument from the proof of Lemma 9.9 in \cite{MasRou}.  A direct use of  the Duhamel formula \eqref{duhamel} yields
\beq
\sqrt{\eps }\norm{ \partial_{z}  \rho}_{1,\infty} \ls {1 \over \sqrt{t}} \norm{ \rho _{0}}_{1, \infty}+ \int_{0}^t{1 \over \sqrt{t- \tau}}\(
\norm{ \mathcal{H}}_{1, \infty} + \norm{ w \cdot \nabla \rho }_{1, \infty} \).
\eeq
Next, it follows as the previous arguments that
\beq
\norm{ \mathcal{H}}_{1,\infty} \le \linf\( 1+ \norm{\nabla^2 q}_{1,\infty}\)  \le \linf
\eeq
and
\beq
\norm{w \cdot \nabla \rho }_{1,\infty}   \le \linf \(\norm{w_y}_{2,\infty}+ \norm{\pa_{z}w_3}_{2,\infty} \)\norm{ \rho }_{2, \infty} \le \linf.
\eeq
Consequently,
\beq
\sqrt{\eps }\norm{ \partial_{z}  \rho}_{1,\infty} \ls {1 \over \sqrt{t}} \norm{ \rho _{0}}_{1, \infty}+ \int_{0}^t {\linf\over \sqrt{t - \tau}}.
\eeq
Integration in time yields
\beq
\int_0^t\sqrt{\eps }\norm{ \partial_{z}  \rho}_{1,\infty} \ls   \sqrt{t}  \norm{ \rho _{0}}_{1, \infty}+ t \int_{0}^t {\linf\over \sqrt{t -
\tau}}.
\eeq
which completes the proof of the proposition.
\end{proof}

%%%%%%%%%%%%%%%%%%%%%%%%%%%%%%%%%%%%%%%%%%%%%%%%%%%%%%
\section{Proof of Theorem \ref{main}}\label{finalsec}
%%%%%%%%%%%%%%%%%%%%%%%%%%%%%%%%%%%%%%%%%%%%%%%%%%%%%%

In this section, collecting the estimates  obtained in Sections \ref{secconormal}--\ref{secinfty}, we can prove Theorem \ref{main} in the similar way as that for
Theorem
1.1 of \cite{MasRou} with slight modifications.

Recall $\mathcal{N}(T)$ and $\mathcal{Q}(T)$. For two parameters $R$ and $c_{0}$ to be chosen $1/c_{0}\ll R$, define
\begin{align}\label{TTTTTT}
        T^{\es}_{*}= \sup_{T>0}\left\{T\in [0, 1] \mid  \mathcal{N}(t) \leq R, \ \abs{h(t)}_{2, \infty} \leq {1 \over c_{0}},\
        \partial_{z} \varphi(t) \geq \frac{{c_{0}}}{2}\right.
        \\ \left.\qquad\qquad\qquad\qquad\qquad\qquad\qquad\text{ and }  g  -   \partial_{z}^\varphi q (t)  \geq  {c_{0}\over
2}\text{ on } \{z=0\}, \ \forall t \in [0, T].
        \right\}.\nonumber
\end{align}
Proposition \ref{propro1} yields
\beq \int_{0}^T\sqrt{\eps }\norm{\pa_{zz}v}_{1,\infty} \leq \Lambda(R).
\eeq
This allows one to use Proposition \ref{nor2}, which together with Propositions \ref{conormv1}, \ref{conormvm}, \ref{propdzvm-2},
\ref{propdzvinfty} and \ref{dzzvLinfty} implies that, by a suitable linear combination,
\beq\label{Em1}
\mathcal{N}(t)
       \leq \Lambda\( {1 \over c_{0}}, {R_0}\) + \Lambda(R)\( T^{1 \over 2} +\mathcal{W}(T)+\mathcal{W}(T)^\hal\),
\eeq
   where
$$\mathcal{W}(T):=\int_0^T \( \abs{\dt^mh}_{0}^2 + \sigma     \abs{\dt^mh}_{1}^2
+ \norm{\dt^m v}_0 ^2 + \norm{\pa_z v}_{\X^{m-1}}^2\). $$
It follows from the Cauchy-Schwarz inequality that
\beq
\mathcal{W}(T)\le \mathcal{N}(T)T^\hal \le R T^\hal. \eeq
Hence, one deduces from \eqref{Em1} that
       \beq
       \label{Em2}
        \mathcal{N}(t)
       \leq \Lambda\( {1 \over c_{0}}, {R_0}\) + \Lambda(R) T^{1 \over 4}.
 \eeq
On the other hand, since $\mathcal{N}(T)$ involves time derivatives, one gets easily that
\beq\label{choixc01}
\abs{h(t)}_{2, \infty} \leq \abs{h(0)}_{2, \infty}+ \Lambda(R) T,\eeq
\beq\label{choixc02}
\partial_{z} \varphi(t) \geq  \partial_{z} \varphi(0)- \Lambda(R) T
\eeq
and
\beq\label{taylorcon}
g  -   \partial_{z}^\varphi q (t) \geq  g  -   \partial_{z}^\varphi q (0) - \Lambda(R) T.
\eeq
Consequently, one can choose $c_0$ so that $\abs{h(0)}_{2, \infty}\le \frac{1}{2c_0}$ and then $R= 2 \Lambda\( {1 \over c_{0}},
{R_0}\)$, then there exists $T_{*}$ which depends only on
$ R$ so that for $T \leq \min(T_{*}, T^{\es}_{*})$,
$$  \mathcal{N}(t) \leq \frac{3R}{4}, \quad \partial_{z} \varphi(t) \geq \frac{{3c_{0}}}{4}, \ \abs{h(t)}_{2, \infty} \leq {3 \over 4c_{0}}\text{
and }  g  -   \partial_{z}^\varphi q (t)  \geq  {3c_{0}\over
4}\text{ on } \{z=0\}, \ \forall t \in [0, T].$$
This yields  $T^{\es}_{*} \geq T_{*}$ by the definition \eqref{TTTTTT} and also the estimate \eqref{mainborne1}. The proof of Theorem
\ref{main} is thus completed. \hfill$\Box$

{
%%%%%%%%%%%%%%%%%%%%%%%%%%%%%%%%%%%%%%%%%%%%%%%%%%%%%%
\section{Proof of Theorem \ref{main2}}\label{finalsec2}
%%%%%%%%%%%%%%%%%%%%%%%%%%%%%%%%%%%%%%%%%%%%%%%%%%%%%%

In this section, we will prove Theorem \ref{main2} by first proving the uniform in $\varepsilon$ and $\sigma$ local well-posedness of \eqref{NSv} and then showing the inviscid limit. Consider the initial data $v ^\es_0\in H^{2m}(\Omega)$ and $h^\es_0\in H^{2m+1}(\Sigma)$ satisfying the assumptions in Theorem \ref{main2}.  Then for fixed $\varepsilon>0$ and $\sigma>0$, according to the existence result of \cite{TW14}, by the $m$-th compatibility conditions \eqref{compp} one can get a positive time $T^\es$ for which a unique solution $(v^\es,h^\es)$ of \eqref{NSv} achieving this initial data exists on $[0, T^\es]$.

As in \cite{MasRou}, an important remark is that if $\mathcal{N}(T_1)<+\infty$, then the solution above can be continued on $[0,T_2]$, $T_2 >T_1$ with  $\mathcal{N}(T_2)<+\infty$. Indeed, if $\mathcal{N}(T_1)<+\infty$, one can use the parabolic regularity for \eqref{NSv} on $[T_1/2, T_1]$ as \cite{beale_2} to get that the solution actually is smooth on $[T_1/2, T_1]$ and in particular, one finds that $v^\es(T_1)\in H^{2m}(\Omega)$ and $h^\es(T_1)\in H^{2m+1}(\Sigma)$ and that the $m$-th compatibility conditions \eqref{compp} hold at the time $T_1$. These allow one to use again the existence result of \cite{TW14} to continue the solution. Consequently, by this remark, from Theorem \ref{main}  one has the uniform estimate $\mathcal{N}(T)\le C$ and hence that  the
  solution $(v^\es,h^\es)$ actually exists on $[0,T]$.

The uniform estimate $\mathcal{N}(T)\le C$  allows one to deduce that as $\varepsilon\rightarrow 0$, up to extraction of a subsequence,   $(v^\es,h^\es)$ converges to a limit $(v^\sigma,h^\sigma)$ in the norms of any spaces which contain the set of functions obeying \eqref{mainborne12} as a compact subset (recalling that we have the time derivatives estimates in $\mathcal{N}(T)$). These convergences are more than sufficient for one to
pass to the limit in \eqref{NSv} for each $t\in
[0,T]$. Then one finds that the limit $(v^\sigma,h^\sigma)$ is a
strong solution of the free-surface  Euler equations \eqref{Eulerv} on $[0,T]$ that takes the initial data $(v_0^\sigma,h_0^\sigma)$ and satisfies the estimate \eqref{mainborne12}. Note that, one can prove, as in \cite{MasRou,EL14}, the uniqueness of solutions to \eqref{Eulerv} satisfying \eqref{mainborne12}. This implies in turn that the whole family $(v^\es,h^\es)$ converges to $(v^\sigma,h^\sigma)$. The proof of Theorem
\ref{main2} is thus completed. \hfill$\Box$}

  \appendix

%%%%%%%%%%%%%%%%%%%%%%%%%%%%%%%%%%%%%%%%%%%%%%%%%%%%%%
 \section{Sobolev conormal spaces}\label{aa1}
 %%%%%%%%%%%%%%%%%%%%%%%%%%%%%%%%%%%%%%%%%%%%%%%%%%%%%%

 We recall the Sobolev conormal spaces $\X^m$ and $\Y^m$ from \eqref{xym}.
  \begin{lem}
  \label{sob}
The following product and commutator estimates hold.

$(i)$
       For $ |\alpha|+|\beta|=k \geq 0$:
            \beq
     \label{gues}
     \norm{ Z^{\alpha} fZ^{\beta} g }\lesssim \norm{f}_{\X^k}\norm{g}_{\Y^{{\[\!\frac{k}{2}\!\]}}}+ \norm{f}_{\Y^{{\[\!\frac{k}{2}\!\]}}}   \norm{g}_{\X^k}
     .
      \eeq

$(ii)$ For  $ | \alpha | = k \ge 1$:
  \beq
  \label{com}
 \norm{\[Z^\alpha, f\]g } \lesssim \norm{Zf}_{\X^{k-1}} \norm{g}_{\Y^{{\[\!\frac{k-1}{2}\!\]}}} + \norm{Zf}_{\Y^{{\[\!\frac{k-1}{2}\!\]}}} \norm{g}_{\X^{k-1}}
  \eeq

$(iii)$ For $| \alpha |=k \geq 2$, define the symmetric commutator
\beq\label{symcom}
\[Z^\alpha, f, g\]= Z^\alpha (f g) - Z^\alpha f \, g - f  Z^\alpha \,g.
\eeq
   Then
   \beq
   \label{comsym}\norm{\[Z^\alpha, f, g\] } \lesssim \norm{Zf}_{\X^{k-2}} \norm{Zg}_{\Y^{{\[\!\frac{k}{2}\!\]-1}}} + \norm{Zf}_{\Y^{{\[\!\frac{k}{2}\!\]-1}}}
   \norm{Zg}_{\X^{k-2}} .
   \eeq
   \end{lem}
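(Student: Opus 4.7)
The plan is to prove the three estimates in the order (i) $\Rightarrow$ (ii) $\Rightarrow$ (iii), since (i) is the workhorse and the two commutator bounds are really bookkeeping on top of it. The underlying principle is the standard one: for a product of two factors whose total derivative order is $k$, split the distribution so that the factor carrying fewer than $k/2$ derivatives is put in $L^\infty$ and the other factor is kept in $L^2$. The only twist here is that $Z^\alpha$ mixes temporal and conormal spatial derivatives ($\alpha\in\mathbb{N}^{1+3}$), so one must keep the anisotropic structure of $\X^k$ and $\Y^k$ in mind when applying Sobolev embeddings.

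For (i), since $|\alpha|+|\beta|=k$, exactly one of the two multi-indices, say $\alpha$, satisfies $|\alpha|\le k/2$ (up to the integer-part convention); by H\"older,
\begin{equation*}
\norm{Z^\alpha f\, Z^\beta g}\le \norm{Z^\alpha f}_{L^\infty}\,\norm{Z^\beta g}_{L^2}.
\end{equation*}
Writing $\alpha_0=\ell$ and $\bar\alpha=(\alpha_1,\alpha_2,\alpha_3)$ with $|\bar\alpha|\le k/2-\ell$, the definition of $\Y^{k/2}$ gives $\norm{Z^\alpha f}_{L^\infty}=\norm{\dt^\ell Z^{\bar\alpha}f}_{L^\infty}\le \norm{\dt^\ell f}_{k/2-\ell,\infty}\le \norm{f}_{\Y^{k/2}}$, and similarly the definition of $\X^k$ yields $\norm{Z^\beta g}_{L^2}\le \norm{g}_{\X^k}$. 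The symmetric case $|\beta|\le k/2$ is handled identically, producing the second term in \eqref{gues}.

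For (ii), expand with Leibniz,
\begin{equation*}
[Z^\alpha,f]g=Z^\alpha(fg)-fZ^\alpha g=\sum_{\beta+\gamma=\alpha,\,|\beta|\ge 1}C_{\beta\gamma}\,Z^\beta f\cdot Z^\gamma g.
\end{equation*}
For each such term, choose an index $i$ with $\beta_i\ge 1$ and write $Z^\beta f=Z^{\beta-e_i}(Z_i f)$, so the product becomes $Z^{\beta-e_i}(Z_i f)\cdot Z^\gamma g$ with $|\beta-e_i|+|\gamma|=k-1$. Applying (i) with $f\rightsquigarrow Z_i f$, $g\rightsquigarrow g$, and $k\rightsquigarrow k-1$, and then absorbing $\norm{Z_i f}_{\X^{k-1}}+\norm{Z_i f}_{\Y^{(k-1)/2}}\le \norm{Zf}_{\X^{k-1}}+\norm{Zf}_{\Y^{(k-1)/2}}$, yields \eqref{com} after summing over the finitely many multi-index decompositions.

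Finally, for (iii), the full Leibniz expansion of $Z^\alpha(fg)$ contains the two ``extreme'' terms $(Z^\alpha f)\,g$ and $f\,(Z^\alpha g)$, and the symmetric commutator is precisely what remains:
\begin{equation*}
[Z^\alpha,f,g]=\sum_{\beta+\gamma=\alpha,\,|\beta|\ge 1,\,|\gamma|\ge 1}C_{\beta\gamma}\,Z^\beta f\cdot Z^\gamma g.
\end{equation*}
Because both $|\beta|$ and $|\gamma|$ are at least $1$, one can pull out a single $Z$ from each factor, writing $Z^\beta f=Z^{\beta-e_i}(Z_i f)$ and $Z^\gamma g=Z^{\gamma-e_j}(Z_j g)$, with $|\beta-e_i|+|\gamma-e_j|=k-2$. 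Applying (i) with $(f,g,k)\rightsquigarrow(Z_i f,Z_j g,k-2)$ and bounding $Z_i f,Z_j g$ by $Zf,Zg$ in the obvious way gives \eqref{comsym}. None of the steps is a real obstacle; the only point that requires any vigilance is confirming, when invoking (i), that the anisotropic assignment of temporal versus conormal derivatives in the definitions of $\X$ and $\Y$ is consistent with the Sobolev $L^\infty$ embedding used in the ``low-frequency'' factor --- this works precisely because $|\alpha|\le k/2$ together with $\alpha_0=\ell$ forces $|\bar\alpha|\le k/2-\ell$, which is exactly the condition encoded in $\Y^{k/2}$.
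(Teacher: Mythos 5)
Your proof is correct and follows essentially the same route as the paper: for (i) the low-order factor goes in $L^\infty$ (controlled by the $\Y^{k/2}$ norm) and the high-order factor in $L^2$ (controlled by the $\X^k$ norm), and for (ii) and (iii) one expands by Leibniz, peels one first-order $Z$ off each factor that carries at least one derivative, and invokes (i) at level $k-1$ resp.\ $k-2$. The paper's proof is just a terser version of the same argument, and your explicit check that $|\alpha|\le k/2$ with $\alpha_0=\ell$ forces $|\bar\alpha|\le k/2-\ell$ is exactly the point that makes the anisotropic $\Y^{k/2}$ bound legitimate.
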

     \begin{proof}
  The product estimate \eqref{gues} follows by controlling the product with the lower order derivative term in $L^\infty$ and the higher order
  derivative term in $L^2$. To prove the commutator estimate \eqref{com}, one uses the Leibnitz formula to expand
   $$\[Z^\alpha, f\]g =  \sum_{  \beta + \gamma = \alpha\atop
    \beta \neq 0   } C_{\beta, \gamma}   Z^\beta f  Z^\gamma g.$$
 Since $\beta \neq 0$, one can write $Z^\beta =  Z^{\beta- \beta'} Z^{\beta'}$ with $ |\beta' |=1$. Then \eqref{gues} yields
    \begin{align*}
   \norm{ Z^{\beta- \beta'} Z^{\beta'}f   Z^\gamma g } \lesssim \norm{Z^{\beta'}f}_{\X^{k-1}}\norm{g}_{\Y^{{\[\!\frac{k-1}{2}\!\]}}} +
   \norm{Z^{\beta'}f}_{\Y^{{\[\!\frac{k-1}{2}\!\]}}} \norm{g}_{\X^{k-1}}.
     \end{align*}
This proves \eqref{com}. The commutator estimate \eqref{comsym} can be proved in the same way.
          \end{proof}

    We shall also use the Sobolev tangential spaces defined by
  $$ H^s_{tan}(\Omega)= \left\{ f \in L^2(\Omega),\ \norm{f}_{H^{s}_{tan}}= \norm{ \Lambda^s f }_{L^2}<\infty\right\},\quad s\in \mathbb{R},  $$
  where $\Lambda^s$ is the tangential Fourier multiplier by $\big(1+ |\xi|^2 \big)^{s \over 2},\ \xi\in \mathbb{R}^2$.
   Note that
   $$ \norm{f}_{H^s_{tan}} \ls  \norm{ f}_{k}  \text{ for }s\le k,\ k \in \mathbb{N}.$$
\begin{lem}
\label{proptrace}
The following anisotropic Sobolev embedding and trace estimates hold.

$(i)$ For $s_{1}+ s_{2}>2,\ s_3+s_4 >2$:
  \beq
  \label{emb}
  \norm{f }_{L^\infty}  \ls   \norm{\partial_{z}f}_{H^{s_{1}}_{tan}}^\hal \norm{f}_{H^{s_{2}}_{tan}}^\hal+\norm{f}_{H^{s_{3}}_{tan}}^\hal
  \norm{f}_{H^{s_{4}}_{tan}}^\hal.
  \eeq

$(ii)$ For $s_{1}+ s_{2}=s_3+s_4= 2 s$:
  \beq
  \label{trace}
\abs{f }_{s} \ls \norm{ \partial_{z} f }^\hal_{H^{s_{1}}_{tan}}\norm{f}^\hal_{H^{s_{2}}_{tan}}+\norm{f}_{H^{s_{3}}_{tan}}^\hal
\norm{f}_{H^{s_{4}}_{tan}}^\hal.
 \eeq
  \end{lem}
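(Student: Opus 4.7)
The plan is to prove both inequalities by passing to the tangential Fourier variable, applying a one-dimensional Sobolev/trace estimate in $z$ on each frequency fiber $\hat f(\xi, \cdot)$, and then integrating over $\xi$ with carefully split Cauchy--Schwarz weights. This reduces the anisotropic statement to routine one-dimensional Calderon--type inequalities.

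For (ii), I would first write $|f|_s^2 = \int_{\R^2} \langle \xi\rangle^{2s}|\hat f(\xi,0)|^2\,d\xi$ by Plancherel, then bound the boundary value on each fiber via the identity
\[
|\hat f(\xi,0)|^2 = \tfrac{1}{b}\int_{-b}^0 |\hat f(\xi,z)|^2\,dz + 2\operatorname{Re}\!\int_{-b}^0 \tfrac{z+b}{b}\,\partial_z \hat f(\xi,z)\,\overline{\hat f(\xi,z)}\,dz,
\]
which yields $|\hat f(\xi,0)|^2 \ls \|\hat f(\xi,\cdot)\|_{L^2_z}^2 + \|\hat f(\xi,\cdot)\|_{L^2_z}\|\partial_z \hat f(\xi,\cdot)\|_{L^2_z}$. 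Integrating in $\xi$ against $\langle \xi\rangle^{2s}$ and factoring $\langle\xi\rangle^{2s}=\langle\xi\rangle^{s_3}\langle\xi\rangle^{s_4}$ in the pure term and $\langle\xi\rangle^{2s}=\langle\xi\rangle^{s_1}\langle\xi\rangle^{s_2}$ in the mixed term (using $s_1+s_2=s_3+s_4=2s$), a single application of the Cauchy--Schwarz inequality in $\xi$ produces $\|f\|_{H^{s_3}_{tan}}\|f\|_{H^{s_4}_{tan}} + \|\partial_z f\|_{H^{s_1}_{tan}}\|f\|_{H^{s_2}_{tan}}$, from which (ii) follows after taking square roots.

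For (i), the pointwise Fourier inversion gives $|f(y,z)|\le \int |\hat f(\xi,z)|\,d\xi$. Choose $\alpha>1$ and apply Cauchy--Schwarz with the weight $\langle\xi\rangle^{-\alpha}\langle\xi\rangle^{\alpha}$ to get
\[
|f(y,z)|^2 \le C_\alpha \int \langle\xi\rangle^{2\alpha}|\hat f(\xi,z)|^2\,d\xi,\qquad C_\alpha=\!\!\int\!\langle\xi\rangle^{-2\alpha}d\xi<\infty.
\]
Take $\sup_z$ inside the integral and apply the same one-dimensional bound as above on each fiber. Proceeding exactly as in (ii), setting $2\alpha=s_1+s_2$ for the mixed term and $2\alpha=s_3+s_4$ for the pure term, produces the sum $\|\partial_z f\|_{H^{s_1}_{tan}}\|f\|_{H^{s_2}_{tan}} + \|f\|_{H^{s_3}_{tan}}\|f\|_{H^{s_4}_{tan}}$; taking square roots gives (i).

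The main obstacle is that in (i) the two conditions $s_1+s_2>2$ and $s_3+s_4>2$ are independent, so a single choice of weight $\langle\xi\rangle^{-2\alpha}$ cannot simultaneously match the natural splitting of both terms. I would handle this by running the argument twice with two different exponents $2\alpha_1=s_1+s_2$ and $2\alpha_2=s_3+s_4$ (both admissible since $\alpha_j>1$ guarantees $C_{\alpha_j}<\infty$), producing two separate bounds; then upgrade the unwanted correction in each by the monotonicity $\|f\|_{H^\sigma}\le \|f\|_{H^{s'}}$ for $\sigma\le s'$ together with the interpolation inequality $\|f\|_{H^{(a+b)/2}}^2 \le \|f\|_{H^a}\|f\|_{H^b}$, which lets the ``other'' pair control the residual term. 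All remaining manipulations are standard products and Hausdorff--Young-type integrability checks, with no delicate cancellation required.
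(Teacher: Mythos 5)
Your proposal is correct, and for part (ii) it is essentially the paper's own argument: the weighted fundamental-theorem-of-calculus identity on each Fourier fiber (your factor $\tfrac{z+b}{b}$ plays exactly the role of the paper's averaging over $z'\in(-b,0)$, which is the needed finite-depth modification), followed by Cauchy--Schwarz in $\xi$ with the splittings $\langle\xi\rangle^{2s}=\langle\xi\rangle^{s_3}\langle\xi\rangle^{s_4}$ and $\langle\xi\rangle^{s_1}\langle\xi\rangle^{s_2}$.

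For part (i) you take a slightly different route, and it costs you the complication you flag. The paper never squares the Fourier inversion: it keeps $\norm{f}_{L^\infty}\ls \sup_z\int_{\xi}|\hat f(\xi,z)|\,d\xi$, bounds $|\hat f(\xi,z)|$ pointwise by the sum of the $z'$-average of $|\hat f|$ and $\big(\int|\partial_z\hat f||\hat f|\,dz\big)^{1/2}$, and then applies Cauchy--Schwarz in $\xi$ to each of the two summands with its own weight, $(1+|\xi|)^{-(s_3+s_4)/2}$ and $(1+|\xi|)^{-(s_1+s_2)/2}$ respectively; since the two terms are weighted independently, no residual term ever appears and no interpolation is needed. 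Your version, which first applies Cauchy--Schwarz with a single weight $\langle\xi\rangle^{2\alpha}$, forces one exponent on both terms and produces the extra piece $\norm{f}_{H^{(s_1+s_2)/2}_{tan}}^2$ (resp. a mixed term with the wrong exponents). Your patch does close this, but be precise about how: it is a case distinction, not a symmetric repair of both runs. If $s_1+s_2\le s_3+s_4$, run the argument with $2\alpha=s_1+s_2$; the mixed term comes out right and the residual $\norm{f}_{H^{(s_1+s_2)/2}_{tan}}^2$ is absorbed by monotonicity plus $\norm{f}_{H^{(s_3+s_4)/2}_{tan}}^2\le\norm{f}_{H^{s_3}_{tan}}\norm{f}_{H^{s_4}_{tan}}$. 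If $s_3+s_4\le s_1+s_2$, run it with $2\alpha=s_3+s_4$ and absorb the mixed residual using $\langle\xi\rangle^{s_3+s_4}\le\langle\xi\rangle^{s_1}\langle\xi\rangle^{s_2}$. The statement that the unwanted correction can be upgraded ``in each'' run is not literally true for the run whose exponent sum is the larger one (a higher tangential norm cannot be dominated by the lower pair), but since one admissible run always exists, the argument is sound; the paper's ordering of the two Cauchy--Schwarz steps simply makes this bookkeeping unnecessary.
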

\begin{proof}
We need to modify the proof of Proposition 2.2 in \cite{MasRou} since our domain here is of finite depth. To get the anisotropic Sobolev
embedding estimate \eqref{emb}, one first notes that
 $$ \abs{\hat f (\xi, z)} \leq \abs{\hat f (\xi, z')}+ \( \int_{-z'}^z 2\abs{\partial_{z} \hat f(\xi, x_3 ) }\abs{ \hat f (\xi, x_3)} \, dx_3
 \)^{1 \over 2 }.$$
Integrating the inequality above with respect to $z'\in(-b,0)$ yields
 $$ \abs{\hat f (\xi, z)} \ls \int_{-b}^0 \abs{\hat f (\xi, z')}dz'+ \( \int_{-b}^0  \abs{\partial_{z} \hat f(\xi, x_3 ) }\abs{ \hat f (\xi,
 x_3)} \, dx_3  \)^{1 \over 2 }.$$
Hence, it follows from the Cauchy-Schwarz inequality and the fact that $s_{1}+ s_{2}>2,\ s_3+s_4 >2$  that
\begin{align}
 \nonumber&\norm{f}_{L^\infty} \leq \sup_{z\in (-b,0)}\int_{\mathbb{R}^2_\xi}\abs{ \hat f(\xi, z)} \, d\xi
 \\\nonumber&\quad\ls \int_{\mathbb{R}^2_\xi} \int_{-b}^0\abs{\hat f (\xi, z')}\, d\xi dz' + b \int_{\mathbb{R}^2_\xi}\( \int_{-b}^0
 \abs{\partial_{z} \hat f(\xi, x_3 ) }\abs{ \hat f (\xi, x_3)} \, dx_3  \)^\hal  d\xi
 \\\nonumber&\quad\ls  \(\int_{-b}^0\int_{\mathbb{R}^2_\xi} \( 1 + |\xi |\)^{s_{3}+ s_{4}}\abs{\hat f (\xi, z )}^2  d\xi dz\)^\hal + \(
 \int_{\mathbb{R}_\xi^2}\( 1 + |\xi |\)^{s_{1}+ s_{2}}
   \int_{-b}^0 \abs{\partial_{z} \hat f(\xi, z )}\abs{\hat f (\xi, z)} dz  d \xi \)^\hal
    \\\nonumber&\quad\ls \norm{  \Lambda^{s_{3}} f }^\hal \norm{\Lambda^{s_{4}} f}^\hal+\norm{ \partial_{z} \Lambda^{s_{1}} f }^\hal
    \norm{\Lambda^{s_{2}} f}^\hal.
\end{align}

To prove the trace estimate \eqref{trace}, since $s_1+s_2=2s$, one may write
\begin{align}
\abs{f(\cdot, 0)}_{H^s}^2 &=\abs{f(\cdot, z')}_{H^s}^2+  \int_{\mathbb{R}_y^2} \int_{z'}^0   2 \partial_{z}\Lambda^s f(z,y)\, \Lambda^sf(z,y) dz
dy
\\\nonumber&=\abs{f(\cdot, z')}_{H^s}^2+  \int_{\mathbb{R}_y^2} \int_{z'}^0   2 \partial_{z}\Lambda^{s_1} f(z,y)\, \Lambda^{s_2}f(z,y) dz dy
\end{align}
Integrating the equality above with respect to $z'\in(-b,0)$ and using the Cauchy-Schwarz inequality give the desired estimate.
\end{proof}

Following similar arguments, we also have the following Poincar\'e inequality.
\begin{lem}
\label{proppoin}
It holds that
 \beq
  \label{poin}
\norm{f}\ls \abs{f }_{0} +\norm{\pa_z f}.
 \eeq
  \end{lem}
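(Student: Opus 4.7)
The plan is to derive the inequality by the standard one-dimensional trace/Poincaré argument, exploiting the fact that $\Omega = \mathbb{R}^2\times(-b,0)$ is a slab of finite depth and that the stated $|\cdot|_0$ norm is taken on the upper boundary $\{z=0\}$. By density it suffices to prove the estimate for $f \in C^\infty_c(\overline{\Omega})$.

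First, I would use the fundamental theorem of calculus in the normal direction: for each fixed $y \in \mathbb{R}^2$ and $z \in (-b,0)$,
\begin{equation*}
f(y,z) = f(y,0) - \int_{z}^{0} \partial_z f(y,s)\,ds.
\end{equation*}
Squaring and applying the Cauchy--Schwarz inequality to the integral term (using $|z| \le b$) yields
\begin{equation*}
|f(y,z)|^2 \lesssim |f(y,0)|^2 + b \int_{-b}^{0} |\partial_z f(y,s)|^2\,ds.
\end{equation*}

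Next, I would integrate this pointwise inequality over $(y,z) \in \mathbb{R}^2 \times (-b,0)$. The $z$-integral on the right contributes a factor of $b$ from the first term and a factor of $b$ from the second (since the inner integrand is already independent of $z$). This gives
\begin{equation*}
\norm{f}^2 \lesssim b\,\abs{f}_0^2 + b^2\,\norm{\partial_z f}^2,
\end{equation*}
and taking square roots produces \eqref{poin}, since $b$ is a fixed parameter of the problem and is absorbed into the implicit constant.

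There is essentially no obstacle here: the argument is the same slab Poincaré inequality already implicitly used in the proof of the trace estimate \eqref{trace} in Lemma \ref{proptrace}, only now specialized to $s=0$ (with no tangential Fourier multiplier needed). The only point worth noting is that one must use the boundary trace on $\{z=0\}$ rather than $\{z=-b\}$ to be consistent with the definition $|\cdot|_0$ employed throughout the paper, but the argument is symmetric in $z \leftrightarrow -b-z$ so either trace would work.
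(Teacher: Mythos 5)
Your proof is correct. It differs mildly from the paper's route: you apply the fundamental theorem of calculus to $f$ itself, square, and use Cauchy--Schwarz on the integral, which yields $\norm{f}^2\ls b\abs{f}_0^2+b^2\norm{\pa_z f}^2$ in one shot. The paper instead recycles the computation from the proof of the trace estimate \eqref{trace} with $s_1=s_2=s=0$, i.e.\ it integrates the identity $\pa_z\abs{f}^2=2f\,\pa_z f$ in $z$ and averages over $z'\in(-b,0)$, which first produces the interpolation-type bound $\norm{f}\ls\abs{f}_0+\norm{\pa_z f}^{1/2}\norm{f}^{1/2}$ and then requires an extra application of Cauchy's inequality to absorb the $\norm{f}^{1/2}$ factor. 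Your version is slightly more direct (no absorption step) and makes the dependence on the depth $b$ explicit; the paper's version has the minor economy of reusing a display already written two lemmas earlier. Both hinge on the same two facts — finite depth and the trace on $\{z=0\}$ — so there is no substantive gap either way.
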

\begin{proof}
The proof of the estimate \eqref{trace} with $s_1=s_2=s=0$ also leads to
$$
\norm{f}\ls \abs{f }_{0} +\norm{\pa_z f}^\hal\norm{f}^\hal.
$$
Then the Poincar\'e inequality \eqref{poin} follows by Cauchy's inequality.
\end{proof}

We also recall the classical product and commutator estimates in $\mathbb{R}^2$:
\begin{lem}\label{sobbord}
The followings hold.
\begin{eqnarray}\label{sobr2}
& &\abs{\Lambda^s(fg)}_{L^2}\ls\abs{f}_{L^\infty } \abs{\Lambda^s g}_{L^2 }+\abs{g}_{L^\infty } \abs{\Lambda^s f}_{L^2 } \text{ for }s\ge 0, \\
 & &\label{comr2}\abs{ \[\Lambda^s, f \] \nabla_y  g}_{L^2 } \ls  \abs{ \nabla_y f}_{L^\infty } \abs{\Lambda^{s}g}_{L^2 } +  \abs{ \nabla_y
 g}_{L^\infty }\abs{\Lambda^s f}_{L^2 } \text{ for }s\ge 0, \\
& &\label{cont2D}\abs{fg}_{1 \over 2} \ls \abs{ f}_{1, \infty} \abs{g}_{1\over 2}\text{ and }\abs{fg}_{-\hal} \ls \abs{ f}_{1, \infty}
\abs{g}_{-\hal}.
\end{eqnarray}
\end{lem}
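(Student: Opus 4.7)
The three estimates in Lemma \ref{sobbord} are classical results in harmonic analysis on $\mathbb{R}^2$ (Moser/Kato--Ponce product and commutator estimates, together with multiplication in $H^{\pm 1/2}$ by Lipschitz functions), so the plan is not to invent new machinery but to select the right tools and assemble a short self-contained proof. My plan is to handle \eqref{sobr2} and \eqref{comr2} together via the Littlewood--Paley / Bony paraproduct decomposition, and then to prove \eqref{cont2D} by a direct Gagliardo--seminorm computation (which is the key non-paradifferential step) followed by duality for the $H^{-1/2}$ part.

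For \eqref{sobr2}, I would decompose $f=\sum_j \Delta_j f$, $g=\sum_k \Delta_k g$ and write Bony's identity $fg = T_f g + T_g f + R(f,g)$, where $T_f g=\sum_j S_{j-N} f\,\Delta_j g$ is the paraproduct and $R(f,g)=\sum_{|j-k|\le N}\Delta_j f\,\Delta_k g$ is the remainder. Each $\Delta_j$ has Fourier support in an annulus of size $2^j$, so on $T_f g$ one gets $\|2^{js/2}\Delta_j T_f g\|_{\ell^2 L^2}\lesssim |f|_{L^\infty}\|2^{js}|\Delta_j g|\|_{\ell^2 L^2}\sim |f|_{L^\infty}|\Lambda^s g|_{L^2}$ using Bernstein and the almost-orthogonality of the shifted frequencies; the term $T_g f$ is handled symmetrically, and for $R(f,g)$ one uses $s\ge 0$ together with Young's inequality on a diagonal sum to gain an extra $2^{-\epsilon|j-k|}$ factor, yielding a bound by $|f|_{L^\infty}|\Lambda^s g|+|g|_{L^\infty}|\Lambda^s f|$. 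The same decomposition gives \eqref{comr2}: in the commutator $[\Lambda^s,f]\nabla_y g$, the "diagonal" piece $[\Lambda^s,T_f]\nabla_y g$ generates the key cancellation because the symbols of $\Lambda^s$ and $\nabla_y$ are smooth away from zero, so a standard commutator lemma (or a mean-value expansion on the Fourier side) produces one derivative on $f$ and one less on $g$, giving $|\nabla_y f|_{L^\infty}|\Lambda^s g|$; the remaining paraproduct pieces have frequencies on $\nabla_y g$ dominated by those on $f$ and are handled as in the product estimate, producing $|\nabla_y g|_{L^\infty}|\Lambda^s f|$.

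For the first half of \eqref{cont2D}, I cannot simply quote \eqref{sobr2} at $s=1/2$ because the right side of the product estimate would need $|g|_{L^\infty}$ which is not controlled by $|g|_{1/2}$ in dimension two. Instead I work with the Gagliardo seminorm $[h]_{1/2}^2=\iint |h(x)-h(y)|^2|x-y|^{-3}\,dx\,dy$ on $\mathbb{R}^2$, split
\[
(fg)(x)-(fg)(y)=(f(x)-f(y))g(x)+f(y)(g(x)-g(y)),
\]
and bound the second piece trivially by $|f|_{L^\infty}[g]_{1/2}$. For the first piece I split the integration region into $\{|x-y|\le 1\}$, where $|f(x)-f(y)|\le |\nabla f|_{L^\infty}|x-y|$ gives $\iint_{|x-y|\le 1} |\nabla f|_{L^\infty}^2 |g(x)|^2|x-y|^{-1}\,dx\,dy \lesssim |\nabla f|_{L^\infty}^2|g|_{L^2}^2$ (since $\int_{|z|\le 1}|z|^{-1}\,dz<\infty$ in two dimensions), and $\{|x-y|>1\}$, where $|f(x)-f(y)|\le 2|f|_{L^\infty}$ yields $\iint_{|x-y|>1} |f|_{L^\infty}^2|g(x)|^2|x-y|^{-3}\,dx\,dy\lesssim |f|_{L^\infty}^2|g|_{L^2}^2$ (since $\int_{|z|>1}|z|^{-3}\,dz<\infty$). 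Combined with $|fg|_{L^2}\le |f|_{L^\infty}|g|_{L^2}$, this gives $|fg|_{1/2}\lesssim |f|_{1,\infty}|g|_{1/2}$. The $H^{-1/2}$ statement then follows by duality: for any test function $\phi$ with $|\phi|_{1/2}\le 1$,
\[
|\langle fg,\phi\rangle|=|\langle g,f\phi\rangle|\le |g|_{-1/2}|f\phi|_{1/2}\lesssim |f|_{1,\infty}|g|_{-1/2}.
\]

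The main obstacle, as flagged above, is the third estimate: the naive route via \eqref{sobr2} at $s=1/2$ breaks down because $|g|_{L^\infty}$ is not controlled, so the Gagliardo split with a length-scale cutoff at $|x-y|=1$ (which exploits the fact that the critical Sobolev exponent for $H^{1/2}(\mathbb{R}^2)$ is below $L^\infty$) is essential. All other steps are Littlewood--Paley bookkeeping of a kind that is entirely standard once the paraproduct decomposition is set up.
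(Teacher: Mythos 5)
Your proposal is correct, but it takes a different route from the paper on the key point. For \eqref{sobr2} and \eqref{comr2} the paper simply cites the classical Kato--Ponce commutator estimates \cite{KP88}, whereas you re-derive them via the Bony paraproduct decomposition; this is the standard proof of the cited result, so the content is the same, only spelled out rather than quoted. The genuine divergence is in \eqref{cont2D}: the paper observes the two elementary bounds $\abs{fg}_{0} \ls \abs{f}_{L^\infty}\abs{g}_{0}$ and $\abs{fg}_{1} \ls \abs{f}_{1,\infty}\abs{g}_{1}$ and then obtains the $H^{1/2}$ estimate by interpolation (and the $H^{-1/2}$ estimate by duality, exactly as you do), while you prove the $H^{1/2}$ bound directly through the Gagliardo seminorm, splitting the double integral at $\abs{x-y}=1$ and using the Lipschitz bound on $f$ at short range and the $L^\infty$ bound at long range. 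Your computation is correct (both radial integrals converge in dimension two, and the seminorm plus $L^2$ norm is equivalent to $\abs{\cdot}_{1/2}$), and your diagnosis that one cannot invoke \eqref{sobr2} at $s=\hal$ because $H^{1/2}(\mathbb{R}^2)$ fails to embed into $L^\infty$ is precisely why the paper also routes through the $s=0$ and $s=1$ endpoints instead. The trade-off is that the paper's argument is two lines but leans on interpolation theory for the operator $g\mapsto fg$, whereas yours is longer but entirely self-contained and makes transparent where the full $\abs{f}_{1,\infty}$ norm is needed; the duality step for the $H^{-1/2}$ half is identical in both.
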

\begin{proof}
These estimates \eqref{sobr2} and \eqref{comr2} are classical, see \cite{KP88} for example. Note that
$$\abs{fg}_{0} \ls \abs{ f}_{L^\infty} \abs{g}_{0}\text{ and }\abs{fg}_{1} \ls \abs{ f}_{1, \infty} \abs{g}_{1},$$
the estimate \eqref{cont2D} follows by the theory of interpolation and duality.
\end{proof}

Note that Lemma \ref{sob} also holds on $\mathbb{R}^2$, while we also need the following for half regularities.
\begin{lem}\label{sob2}
For $ |\alpha|+|\beta|=k \geq 0$:
\beq\label{gues2}
\abs{ Z^{\alpha} fZ^{\beta} g }_s\lesssim \abs{f}_{\X^{k,s}}\abs{g}_{\Y^{{\[\!\frac{k}{2}\!\]}+1}}+ \abs{f}_{\Y^{{\[\!\frac{k}{2}\!\]}+1}}
\abs{g}_{\X^{k,s}},\ s=-\hal,\hal  .      \eeq
\end{lem}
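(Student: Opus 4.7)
The plan is to reduce the half-order product estimate to the already-established product estimate \eqref{cont2D} of Lemma \ref{sobbord} on $\mathbb{R}^2$, together with a careful book-keeping of how the $W^{1,\infty}$ norm on one factor is absorbed into the space–time conormal $L^\infty$ norm $\Y^{\frac{k}{2}+1}$. Since $|\alpha|+|\beta|=k$ and we use the convention $\frac{k}{2}=[k/2]$, at least one of $|\alpha|,|\beta|$ is $\le \frac{k}{2}$; by the symmetric role of $f$ and $g$ in the statement, it suffices to treat the case $|\alpha|\le \frac{k}{2}$ and then argue by exchanging the roles.

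Assuming $|\alpha|\le \frac{k}{2}$, I would apply \eqref{cont2D} to the product $Z^\alpha f \cdot Z^\beta g$ to get
\[
\abs{Z^\alpha f\, Z^\beta g}_{s} \;\lesssim\; \abs{Z^\alpha f}_{1,\infty}\, \abs{Z^\beta g}_{s}, \qquad s=\pm\tfrac12.
\]
For the first factor, writing $\alpha=(\alpha_0,\alpha_1,\alpha_2)$, the $W^{1,\infty}(\mathbb{R}^2)$-norm of $Z^\alpha f=\dt^{\alpha_0}\partial_{y_1}^{\alpha_1}\partial_{y_2}^{\alpha_2}f$ involves at most $|\alpha|+1$ total derivatives, with $\alpha_0$ time derivatives and $\alpha_1+\alpha_2+1$ spatial derivatives. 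Taking $\ell=\alpha_0$ in the definition of $\Y^{\frac{k}{2}+1}$, the condition $\alpha_1+\alpha_2+1\le \frac{k}{2}+1-\alpha_0$ reduces to $|\alpha|\le \frac{k}{2}$, which is precisely our case, so $\abs{Z^\alpha f}_{1,\infty}\lesssim \abs{f}_{\Y^{\frac{k}{2}+1}}$. For the second factor, since $Z^\beta g=\dt^{\beta_0}\partial_y^{(\beta_1,\beta_2)}g$, pulling the spatial derivatives into the Sobolev index gives $\abs{Z^\beta g}_{s}\le \abs{\dt^{\beta_0}g}_{|\beta|-\beta_0+s}$, and the requirement $|\beta|-\beta_0+s\le k+s-\beta_0$, i.e.\ $|\beta|\le k$, is automatic. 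Hence the second factor is bounded by $\abs{g}_{\X^{k,s}}$.

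Combining, the $|\alpha|\le \frac{k}{2}$ case yields $\abs{Z^\alpha f\,Z^\beta g}_s\lesssim \abs{f}_{\Y^{\frac{k}{2}+1}}\abs{g}_{\X^{k,s}}$, which is the second term on the right-hand side of \eqref{gues2}. The complementary case $|\beta|\le \frac{k}{2}$ is handled identically with the roles of $f$ and $g$ swapped, producing the first term on the right-hand side. Both $s=\frac12$ and $s=-\frac12$ work uniformly because Lemma \ref{sobbord} supplies \eqref{cont2D} in both half-integer indices.

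I do not expect any genuine obstacle: the half-integer Sobolev product estimate on $\mathbb{R}^2$ is already supplied by \eqref{cont2D}, and the only point that requires attention is the derivative count verifying $|\alpha|+1\le \frac{k}{2}+1$ under the convention $\frac{k}{2}=[k/2]$, particularly when $k$ is odd. The $s=-\frac12$ case can alternatively be obtained by duality from the $s=\frac12$ case if one prefers, but it is cleaner to invoke \eqref{cont2D} directly since the lemma explicitly records both signs.
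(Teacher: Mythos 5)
Your proposal is correct and follows essentially the same route as the paper, which proves \eqref{gues2} in one line by invoking \eqref{cont2D} with the lower-order factor in $W^{1,\infty}$ (absorbed into $\Y^{\frac{k}{2}+1}$ since $\min(|\alpha|,|\beta|)\le[\frac{k}{2}]$) and the higher-order factor in $H^{s}$ (absorbed into $\X^{k,s}$). Your derivative count verifying $\abs{Z^\alpha f}_{1,\infty}\lesssim\abs{f}_{\Y^{\frac{k}{2}+1}}$ and $\abs{Z^\beta g}_{s}\lesssim\abs{g}_{\X^{k,s}}$ is exactly the bookkeeping the paper leaves implicit.
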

\begin{proof}
The estimate \eqref{gues2} follows by using \eqref{cont2D} to control the product with the higher order derivative term in $H^{s}$ and the lower
order derivative term in $W^{1,\infty}$.
\end{proof}

%%%%%%%%%%%%%%%%%%%%%%%%%%%%%%%%%%%%%%%%%%%%%%%%%%%%%%
\section{Poisson extension}\label{aa2}
%%%%%%%%%%%%%%%%%%%%%%%%%%%%%%%%%%%%%%%%%%%%%%%%%%%%%%

We recall the extension $\eta$ of $h$ onto $\{z\le 0\}$ defined by \eqref{eqeta} with parameter $A>0$ in the following form
\beq \label{zeta}
\eta(y,z)= \( 1+\frac{z}{b}\)\zeta(y,z)\ \text{ with }\ \hat{\zeta}(\xi, z)=  \exp{(A|\xi|z)} \hat{h}(\xi).
\eeq

We  first verify that $\varphi$ defined by \eqref{eqphi} is a diffeomorphism.
\begin{prop} \label{diffeoprop}
Assume that $h_0\in H^{s}(\{z=0\}), s> 5/2$ and $h_0>-b$. Then there exists sufficiently small $A>0$ such that
\beq \label{diffeo}
\partial_{z} \varphi_0\ge \hal\( 1+\frac{1}{b}h_0\)>0\text{ in }\Omega.
\eeq
 \end{prop}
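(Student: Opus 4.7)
The plan is a direct Fourier-side computation, exploiting the fact that the parameter $A$ can be tuned so that both $\zeta-h_0$ and $\partial_z\zeta$ are uniformly small.

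First I would decompose $\partial_z\varphi_0$. From $\varphi_0(y,z)=z+\eta_0(y,z)$ and $\eta_0=(1+z/b)\zeta_0$ one computes
\begin{equation*}
\partial_z\varphi_0(y,z)=1+\frac{1}{b}\zeta_0(y,z)+\Big(1+\frac{z}{b}\Big)\partial_z\zeta_0(y,z),
\end{equation*}
which rearranges into the identity
\begin{equation*}
\partial_z\varphi_0(y,z)-\Big(1+\frac{h_0(y)}{b}\Big)=\frac{1}{b}\bigl(\zeta_0(y,z)-h_0(y)\bigr)+\Big(1+\frac{z}{b}\Big)\partial_z\zeta_0(y,z).
\end{equation*}
Since $h_0>-b$ and $h_0\in H^s$ with $s>5/2$, Sobolev embedding gives a constant $c_1>0$ such that $1+h_0(y)/b\geq 2c_1$ pointwise; it therefore suffices to show that the two remainder terms on the right are each bounded in $L^\infty(\Omega)$ by, say, $c_1/2$ once $A$ is small.

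Next I would estimate the remainders via the Fourier representation $\hat\zeta_0(\xi,z)=e^{A|\xi|z}\hat h_0(\xi)$. For $z\in(-b,0)$, the elementary inequality $|e^{A|\xi|z}-1|\leq A|\xi||z|\leq Ab|\xi|$ yields
\begin{equation*}
\|\zeta_0-h_0\|_{L^\infty(\Omega)}\leq\int_{\mathbb{R}^2}Ab\,|\xi|\,|\hat h_0(\xi)|\,d\xi\leq C_{s}\,Ab\,\|h_0\|_{H^s},
\end{equation*}
using Cauchy--Schwarz with the weight $\langle\xi\rangle^{-s}$, which converges since $s>5/2>2$. The same argument applied to $\widehat{\partial_z\zeta_0}=A|\xi|e^{A|\xi|z}\hat h_0$ gives
\begin{equation*}
\|\partial_z\zeta_0\|_{L^\infty(\Omega)}\leq\int_{\mathbb{R}^2}A|\xi|\,|\hat h_0(\xi)|\,d\xi\leq C_{s}\,A\,\|h_0\|_{H^s}.
\end{equation*}
Since $|1+z/b|\leq 1$ on $\Omega$, both remainder terms are majorised by a constant multiple of $A\|h_0\|_{H^s}$.

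Finally I would choose $A>0$ sufficiently small, depending only on $c_1$ and $\|h_0\|_{H^s}$, so that the total remainder is at most $c_1$. This gives
\begin{equation*}
\partial_z\varphi_0(y,z)\geq\Big(1+\frac{h_0(y)}{b}\Big)-c_1\geq\frac{1}{2}\Big(1+\frac{h_0(y)}{b}\Big)>0
\end{equation*}
throughout $\Omega$, proving \eqref{diffeo}. There is no real obstacle here: the only subtle point is ensuring that the Fourier integrals converge, which is guaranteed by $s>5/2$, and that the bounds are genuinely linear in $A$ so that smallness of $A$ buys smallness of the error.
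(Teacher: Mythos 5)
Your proof is correct, and it follows the same overall architecture as the paper's: the same decomposition $\partial_z\varphi_0-(1+h_0/b)=\tfrac1b(\zeta_0-h_0)+(1+z/b)\partial_z\zeta_0$, followed by showing the remainder is small in $L^\infty$ once $A$ is small. Where you differ is in how the $L^\infty$ smallness is obtained. The paper's proof spends most of its length establishing $\norm{\partial_z\zeta_0}_{L^\infty}\ls A^{1/2}\abs{h_0}_{s+3/2}$ by an averaging-in-$z'$ trick (a one-dimensional Sobolev argument in the normal variable) combined with the explicit computation $\int_{-b}^0 e^{2A|\xi|z}\,dz\le\tfrac{1}{2A|\xi|}$, which is where the $A^{1/2}$ gain comes from; it then controls $\zeta_0-h_0$ through $\partial_z\zeta_0$. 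You instead use the pointwise-in-$z$ Fourier inversion bound $\norm{f}_{L^\infty}\ls\int|\hat f|\,d\xi$ together with the elementary inequalities $|e^{A|\xi|z}-1|\le Ab|\xi|$ and $e^{A|\xi|z}\le1$, which is shorter, yields a bound linear in $A$ rather than $O(A^{1/2})$, and handles $\zeta_0-h_0$ and $\partial_z\zeta_0$ by the same one-line computation; the price is that your Cauchy--Schwarz needs $\int|\xi|^2\langle\xi\rangle^{-2s}\,d\xi<\infty$, i.e.\ $s>2$, which is comfortably within the hypothesis $s>5/2$. A further small merit of your write-up is that you make explicit the point the paper leaves implicit, namely that $h_0\in H^s\subset C^0$ decays at infinity, so $h_0>-b$ actually yields a uniform lower bound $1+h_0/b\ge 2c_1>0$, without which the final absorption into $\tfrac12(1+h_0/b)$ would not follow from smallness of $A$ alone.
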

 \begin{proof}
 Note that
 \begin{align}
\abs{\pa_z\hat \zeta (\xi,z)}\le \abs{\pa_z\hat\zeta(\xi,z')}+\int_{z'}^z \abs{\pa_z^2\hat\zeta(\xi,x_3)}dx_3.
\end{align}
Integrating the inequality above with respect to $z'\in(-b,0)$, one can deduce
\begin{align}
 \nonumber b \norm{\pa_z\zeta}_{L^\infty} \leq b \sup_{z\in [-b,0]}\int_{\mathbb{R}^2_\xi}\abs{ \pa_z\hat \zeta   (\xi, z)} \, d\xi\le
 \int_{-b}^0
 \int_{\mathbb{R}^2_\xi} \abs{\pa_z\hat\zeta(\xi,z )}dz+\int_{-b}^0 \int_{\mathbb{R}^2_\xi}\abs{\pa_z^2\hat\zeta(\xi,z)}dz.
\end{align}
For $s>1$, it then follows from the Cauchy-Schwarz inequality and the definition \eqref{zeta} that
\begin{align}
 \nonumber
 \int_{-b}^0 \int_{\mathbb{R}^2_\xi} \abs{\pa_z\hat\zeta(\xi,z )}dz&\ls \(\int_{-b}^0 \int_{\mathbb{R}^2_\xi} (1+|\xi|)^{2 s
 }\abs{\pa_z\hat\zeta(\xi,z )}^2dz\)^\hal
 \\\nonumber& =A \(\int_{\mathbb{R}^2_\xi} (1+|\xi|)^{2s}|\xi|^2 \abs{\hat{h}(\xi)}^2\int_{-b}^0\exp{(2A|\xi|z)}dz\)^\hal
 \\\nonumber& =A \(\int_{\mathbb{R}^2_\xi} (1+|\xi|)^{2s}|\xi|^2 \abs{\hat{h}(\xi)}^2\frac{1-\exp{(2A|\xi|z)}}{2A|\xi|}dz\)^\hal
 \ls  A^\hal  \abs{h}_{s+\hal}
\end{align}
and
\begin{align}
 \nonumber
 \int_{-b}^0 \int_{\mathbb{R}^2_\xi} \abs{\pa_z\hat\zeta(\xi,z )}dz&\ls \(\int_{-b}^0 \int_{\mathbb{R}^2_\xi} (1+|\xi|)^{2 s
 }\abs{\pa_z\hat\zeta(\xi,z )}^2dz\)^\hal
 \\\nonumber& =A^2 \(\int_{\mathbb{R}^2_\xi} (1+|\xi|)^{2s}|\xi|^4 \abs{\hat{h}(\xi)}^2\int_{-b}^0\exp{(2A|\xi|z)}dz\)^\hal
 \ls  A^\frac{3}{2}  \abs{h}_{s+\frac{3}{2}} .
\end{align}
We thus deduce
\beq\label{positivity}
 \norm{\pa_z\zeta}_{L^\infty}\ls A^\frac{1}{2}  \abs{h}_{s+\frac{3}{2}}.
\eeq

Now we prove \eqref{diffeo}. It follows from the definitions \eqref{eqphi} and \eqref{zeta} that
\beq
 \pa_z\varphi_0 = 1+\frac{1}{b}\zeta_0+ \(1+\frac{1}{b}\)\pa_z\zeta_0.
\eeq
By \eqref{positivity}, this yields that for $s>\frac{5}{2}$,
\begin{align}
\nonumber \pa_z\varphi_0 &\ge 1+\frac{1}{b}h_0+\frac{1}{b}(\zeta_0-h_0)+ \(1+\frac{1}{b}\)\pa_z\zeta_0
 \\\nonumber&\ge 1+\frac{1}{b}h_0- \(2+\frac{1}{b}\)\norm{\pa_z\zeta_0}_{L^\infty}\ge 1+\frac{1}{b}h_0- A^\frac{1}{2}\(2+\frac{1}{b}\)
 \abs{h_0}_{ \frac{5}{2}}\ge \hal\( 1+\frac{1}{b}h_0\)
\end{align}
if $A$ has been chosen sufficiently small.
 \end{proof}

 We also have the following well-known estimates for  $\eta$.
\begin{lem}\label{propeta}
For $s\in\mathbb{R}$:
\beq\label{etaharm}
\norm{\eta }_{H^s } \ls \abs{h }_{s-\hal}.
\eeq
For $k \in \mathbb{N}$:
\beq\label{etainfty}
\norm{  \eta }_{W^{k, \infty} } \ls \abs{h }_{k, \infty } .
\eeq
\end{lem}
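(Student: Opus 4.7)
\textbf{Proof plan for Lemma \ref{propeta}.}

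The factor $1+z/b$ is smooth and bounded together with all its derivatives on $\Omega=\mathbb{R}^2\times(-b,0)$, so by the Leibniz rule both estimates reduce to the corresponding estimates for $\zeta$, where $\hat\zeta(\xi,z)=e^{A|\xi|z}\hat h(\xi)$ for $z\le 0$. For \eqref{etaharm} it suffices (by interpolation / duality) to establish the estimate for non-negative integer $s$. For such $s$ I expand the $H^s(\Omega)$ norm as a sum of $L^2$ norms of mixed derivatives and apply Plancherel in the tangential variable: for any multi-index $\alpha\in\mathbb{N}^2$ and any $j\in\mathbb{N}$ with $|\alpha|+j\le s$,
\begin{equation*}
\|\partial_y^\alpha\partial_z^j\zeta\|_{L^2(\Omega)}^2
=\int_{\mathbb{R}^2}\xi^{2\alpha}(A|\xi|)^{2j}|\hat h(\xi)|^2\!\int_{-b}^0 e^{2A|\xi|z}\,dz\,d\xi
\lesssim \int_{\mathbb{R}^2}|\xi|^{2(|\alpha|+j)-1}|\hat h(\xi)|^2\,d\xi\lesssim |h|_{s-\hal}^2,
\end{equation*}
where we used $\int_{-b}^0 e^{2A|\xi|z}dz=(1-e^{-2Ab|\xi|})/(2A|\xi|)\lesssim 1/|\xi|$ uniformly. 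Summing over $|\alpha|+j\le s$ gives \eqref{etaharm}. For real $s$ one performs the analogous anisotropic Fourier computation directly on the tangential side.

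For \eqref{etainfty} I rewrite $\zeta(y,z)=(P_{A|z|}\ast_y h)(y)$ with the 2D Poisson kernel $P_t(y)=\tfrac{1}{2\pi}\,t/(|y|^2+t^2)^{3/2}$, whose Fourier transform is $e^{-t|\xi|}$. Since $\|P_t\|_{L^1}=1$ for every $t>0$, Young's inequality yields $\|\zeta\|_{L^\infty}\le \|h\|_{L^\infty}$. Tangential derivatives commute with convolution, so $\|\partial_y^\alpha\zeta\|_{L^\infty}\le \|\partial_y^\alpha h\|_{L^\infty}$. For normal derivatives I use $\partial_z\zeta=-A\sqrt{-\Delta_y}\,\zeta$, which on the Fourier side corresponds to $A|\xi|e^{A|\xi|z}=-\partial_t e^{-t|\xi|}\big|_{t=A|z|}$; hence $\partial_z\zeta=A\,\Phi_{A|z|}\ast h$ with $\Phi_t:=-\partial_t P_t$. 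Iterating, $\partial_z^{2j}\zeta=A^{2j}P_{A|z|}\ast(-\Delta_y)^j h$ and $\partial_z^{2j+1}\zeta=-A^{2j+1}\Phi_{A|z|}\ast(-\Delta_y)^j h$. For even powers the $L^1$-kernel bound directly gives $\|\partial_z^{2j}\zeta\|_{L^\infty}\lesssim |h|_{2j,\infty}$, and mixed derivatives follow by commuting $\partial_y^\alpha$ with the convolution.

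The only non-trivial step is the $L^\infty$ bound on odd normal derivatives, since $\|\Phi_t\|_{L^1}\sim 1/t$ blows up as $t\to 0^+$. Here I exploit the zero-mean property $\int \Phi_t=\partial_t\!\int P_t=0$: for any $f\in W^{1,\infty}(\mathbb{R}^2)$,
\begin{equation*}
(\Phi_{t}\ast f)(y)=\int \Phi_{t}(y-y')\bigl(f(y')-f(y)\bigr)\,dy',
\qquad
|(\Phi_{t}\ast f)(y)|\le \|f\|_{W^{1,\infty}}\int |y'|\,|\Phi_{t}(y')|\,dy'.
\end{equation*}
By scaling $\Phi_t(y)=t^{-3}G(y/t)$ for some fixed $G\in L^1(\mathbb{R}^2)$ with $\int G=0$ and $\int|u||G(u)|\,du<\infty$, so $\int|y'||\Phi_t(y')|\,dy'=\int|u||G(u)|\,du\le C$ uniformly in $t>0$. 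Applying this with $f=(-\Delta_y)^j h$ yields $\|\partial_z^{2j+1}\zeta\|_{L^\infty}\lesssim |h|_{2j+1,\infty}$. Summing over all $|\alpha|+j\le k$ and multiplying by $1+z/b$ gives \eqref{etainfty}. The only conceptual obstacle is this scale-invariance cancellation for $\partial_z$ applied to a boundary value at only $W^{1,\infty}$ regularity; everything else is a direct Plancherel or Young calculation.
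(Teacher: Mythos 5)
Your overall route --- reducing to $\zeta$ via the bounded factor $1+z/b$, proving \eqref{etaharm} by tangential Plancherel using $\int_{-b}^0 e^{2A|\xi|z}\,dz \lesssim \min(b,|\xi|^{-1})$, and proving \eqref{etainfty} by writing $\zeta(\cdot,z)=P_{A|z|}\ast_y h$ --- is the natural one; the paper gives no computation here and simply cites Proposition 3.1 of \cite{MasRou}, so a self-contained argument is welcome. A minor point first: in the displayed chain for \eqref{etaharm} you replace the $z$-integral by $|\xi|^{-1}$ and then assert $\int |\xi|^{2(|\alpha|+j)-1}|\hat h|^2\,d\xi \lesssim |h|_{s-\hal}^2$; for $|\alpha|+j=0$ this final inequality is false at low frequencies, since $|\xi|^{-1}$ is not dominated by $(1+|\xi|^2)^{-1/2}$ near $\xi=0$. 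Keep instead the bound $\min(b,(2A|\xi|)^{-1})\lesssim (1+|\xi|)^{-1}$, which is already contained in your parenthetical, and compare directly with the inhomogeneous weight.

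The genuine gap is in the step you yourself single out as the only non-trivial one. Your bound on the odd normal derivatives rests on the claim that $\int_{\mathbb{R}^2}|u|\,|G(u)|\,du<\infty$, where $\Phi_t(y)=t^{-3}G(y/t)$ and $\Phi_t=-\partial_t P_t$. This is false in two dimensions: an explicit computation gives $G(u)=\frac{2-|u|^2}{2\pi(1+|u|^2)^{5/2}}$, so $G(u)\sim -\frac{1}{2\pi}|u|^{-3}$ as $|u|\to\infty$, hence $|u|\,|G(u)|\sim |u|^{-2}$ and the first moment diverges logarithmically. Consequently the one-line estimate $|(\Phi_t\ast f)(y)|\le \|f\|_{W^{1,\infty}}\int |y'|\,|\Phi_t(y')|\,dy'$ does not close. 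The conclusion $\norm{\partial_z^{2j+1}\zeta}_{L^\infty}\lesssim |h|_{2j+1,\infty}$ is still correct, but the proof requires splitting into two regimes: using $\int\Phi_t=0$, write $(\Phi_t\ast f)(y)=\int \Phi_t(y')\bigl(f(y-y')-f(y)\bigr)dy'$, bound the difference by $\norm{\nabla f}_{L^\infty}|y'|$ on $\{|y'|\le 1\}$ and by $2\norm{f}_{L^\infty}$ on $\{|y'|\ge 1\}$. Then $\int_{|y'|\le 1}|y'|\,|\Phi_t(y')|\,dy' = t\int_{|u|\le 1/t}|u|\,|G(u)|\,du \lesssim t\bigl(1+\log(1/t)\bigr)$, which is bounded for $t=A|z|\in(0,Ab]$, while $\int_{|y'|\ge 1}|\Phi_t(y')|\,dy'\lesssim \int_{|y'|\ge 1}|y'|^{-3}dy'<\infty$ uniformly in $t$. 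Note this repair uses the boundedness of the strip in $z$; no $t$-uniform bound of the scale-invariant form you wrote can hold, precisely because the moment diverges.
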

\begin{proof}
One deduces in the same way as Proposition 3.1 in \cite{MasRou} that
$$
  \norm{ \nabla \zeta }_{H^s } \ls \abs{h }_{s+{1 \over 2} },\ s\in\mathbb{R}\text{ and }\norm{  \eta }_{W^{k, \infty} } \ls \abs{h }_{k, \infty
  },\ k \in \mathbb{N}.
$$
Then the estimates \eqref{etaharm}--\eqref{etainfty} follow by noting that $\eta=(1+\frac{z}{b})\zeta$ for $z\in[-b,0]$.
\end{proof}

%%%%%%%%%%%%%%%%%%%%%%%%%%%%%%%%%%%%%%%%%%%%%%%%%%%%%%
\section{Some geometric estimates}\label{aa3}
%%%%%%%%%%%%%%%%%%%%%%%%%%%%%%%%%%%%%%%%%%%%%%%%%%%%%%

We recall that the control of quantities like $\int_{\Omega} | \nabla^\varphi  f|^2 d \V$ yields a control of the standard $H^1$ norm of  $f$.
\begin{lem}\label{mingrad}
Assume that  $\partial_{z} \varphi \geq c_{0} $ and   $\norm{ \nabla \varphi}_{L^\infty} \leq {1 \over c_{0}}$ for some $c_{0}>0$, then
\beq\norm{  \nabla f } ^2 \leq  \Lambda_0\int_{\Omega} | \nabla^\varphi f |^2\, d \V.
\eeq
\end{lem}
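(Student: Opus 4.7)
The plan is to establish a pointwise comparison between $|\nabla f|^2$ and $|\nabla^\varphi f|^2$ using the hypotheses, and then account for the change between $d\V$ and $dydz$. First I would recall from \eqref{graddiv} and the definition of $\partial_i^\varphi$ that for $i=1,2$
$$\partial_i^\varphi f = \partial_i f - \frac{\partial_i\varphi}{\partial_z\varphi}\partial_z f, \qquad \partial_3^\varphi f = \frac{1}{\partial_z\varphi}\partial_z f,$$
so that
$$|\nabla^\varphi f|^2 = \sum_{i=1}^{2}\left(\partial_i f - \frac{\partial_i\varphi}{\partial_z\varphi}\partial_z f\right)^{2} + \frac{1}{(\partial_z\varphi)^{2}}(\partial_z f)^{2}.$$

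Next I would bound $(\partial_z f)^2$ from above by $|\nabla^\varphi f|^2$. Since $\partial_z\varphi \leq \norm{\nabla\varphi}_{L^\infty} \leq 1/c_0$, the last term above satisfies $\frac{1}{(\partial_z\varphi)^2}(\partial_z f)^2 \geq c_0^2 (\partial_z f)^2$, hence $(\partial_z f)^2 \leq c_0^{-2}|\partial_3^\varphi f|^2$. Then for $i=1,2$, writing $\partial_i f = \partial_i^\varphi f + \frac{\partial_i\varphi}{\partial_z\varphi}\partial_z f$ and applying the elementary inequality $(a+b)^2 \leq 2a^2+2b^2$ together with $|\partial_i\varphi|/\partial_z\varphi \leq c_0^{-2}$, one obtains
$$(\partial_i f)^2 \leq 2\, |\partial_i^\varphi f|^2 + 2 c_0^{-4}(\partial_z f)^2 \leq 2\, |\partial_i^\varphi f|^2 + 2 c_0^{-6}|\partial_3^\varphi f|^2.$$
Summing over $i=1,2,3$ yields a pointwise bound $|\nabla f|^2 \leq \Lambda\bigl(\tfrac{1}{c_0}\bigr)\, |\nabla^\varphi f|^2$.

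Finally, since $d\V = \partial_z\varphi\, dydz \geq c_0\, dydz$ by hypothesis, we have $dydz \leq c_0^{-1}\, d\V$, and therefore
$$\norm{\nabla f}^2 = \int_\Omega |\nabla f|^2\, dydz \leq \Lambda\bigl(\tfrac{1}{c_0}\bigr)\int_\Omega |\nabla^\varphi f|^2\, dydz \leq \Lambda_0 \int_\Omega |\nabla^\varphi f|^2\, d\V,$$
which is the desired estimate. The argument is purely algebraic and pointwise; there is no real obstacle, the only care required is to keep track of how the upper bound $\norm{\nabla\varphi}_{L^\infty}\leq 1/c_0$ feeds into the lower bound on $1/(\partial_z\varphi)^2$ that makes the normal component $(\partial_z f)^2$ controllable by $|\partial_3^\varphi f|^2$.
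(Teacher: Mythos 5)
Your proof is correct, and it is essentially the standard argument behind the paper's citation to Lemma 2.8 of \cite{MasRou}: a pointwise comparison of $|\nabla f|^2$ with $|\nabla^\varphi f|^2$ (equivalently, the uniform ellipticity of the matrix $E$ as in \eqref{Eminor}, since $\int_\Omega |\nabla^\varphi f|^2\, d\V = \int_\Omega E\nabla f\cdot\nabla f\, dydz$), followed by $d\V=\partial_z\varphi\,dydz\ge c_0\,dydz$. You correctly identify that the upper bound $\partial_z\varphi\le \norm{\nabla\varphi}_{L^\infty}\le 1/c_0$ is what makes $(\partial_z f)^2$ controllable by $|\partial_z^\varphi f|^2$; nothing further is needed.
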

\begin{proof}
We refer to Lemma 2.8 in \cite{MasRou}.
 \end{proof}

  We also need the Korn type inequality to control the energy dissipation term.
 \begin{lem}\label{Korn}
Assume that $\partial_{z} \varphi \geq  c_{0} $ and $\norm{ \nabla \varphi }_{L^\infty} + \norm{ \nabla^2 \varphi}_{L^\infty} \leq {1 \over
c_{0}}$ for
some $c_{0}>0,$ then
\beq\label{estKorn}
\norm{ \nabla v}^2  \leq \Lambda_0 \( \int_{ \Omega } |S^\varphi v |^2 \, d\V + \norm{ v}^2 \).
\eeq
\end{lem}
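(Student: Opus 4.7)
\textit{Plan.} The strategy is to pull back the problem to the moving domain $\Omega(t)$, where the classical Korn's second inequality applies, and then transfer the estimate back. By Lemma \ref{mingrad} it suffices to show
\begin{equation*}\int_{\Omega}|\nabla^\varphi v|^2\, d\V \le \Lambda_0\(\int_{\Omega}|S^\varphi v|^2\, d\V + \norm{v}^2\).\end{equation*}
I set $u(t,x) = v(t,\Phi^{-1}(t,x))$ on $\Omega(t)$. The chain rule together with the definition of $\partial_i^\varphi$ gives $(\partial_i u)\circ\Phi = \partial_i^\varphi v$ for $i=1,2,3$, so $(Su)\circ\Phi = S^\varphi v$. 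Since the change of variables $x=\Phi(t,y,z)$ has Jacobian $\partial_z\varphi$, $d\V = \partial_z\varphi\, dy dz$ is exactly the image of the Lebesgue measure on $\Omega(t)$, and therefore
\begin{equation*}\int_{\Omega}|\nabla^\varphi v|^2\, d\V = \int_{\Omega(t)}|\nabla u|^2\, dx, \qquad \int_{\Omega}|S^\varphi v|^2\, d\V = \int_{\Omega(t)}|Su|^2\, dx,\end{equation*}
while the upper bound $|\partial_z\varphi| \le 1/c_0$ gives $\int_{\Omega(t)}|u|^2\, dx = \int_\Omega |v|^2\, d\V \le \Lambda_0\norm{v}^2$.

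Next I would apply the classical Korn second inequality on the bounded Lipschitz domain $\Omega(t)$: for any $u\in H^1(\Omega(t))^3$,
\begin{equation*}\int_{\Omega(t)}|\nabla u|^2\, dx \le C(\Omega(t))\(\int_{\Omega(t)}|Su|^2\, dx + \int_{\Omega(t)}|u|^2\, dx\).\end{equation*}
Combining with the identities above and Lemma \ref{mingrad} yields \eqref{estKorn} as soon as one knows $C(\Omega(t))\le \Lambda_0$, i.e.\ that the Korn constant depends only on $c_0$.

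The main obstacle is therefore the uniform control of the Korn constant. Under the hypothesis $\partial_z\varphi \ge c_0$ and $\norm{\nabla\varphi}_{L^\infty}+\norm{\nabla^2\varphi}_{L^\infty}\le 1/c_0$, the map $\Phi(t,\cdot)\colon\Omega\to\Omega(t)$ is a $C^{1,1}$-diffeomorphism whose $C^{1,1}$ norm and that of its inverse are bounded in terms of $c_0$ alone, so $\partial\Omega(t)$ has a uniformly controlled Lipschitz (in fact $W^{2,\infty}$) character. The uniformity of $C(\Omega(t))$ then follows from a standard compactness/contradiction argument: any putative violating sequence on $\Omega(t_n)$ is pulled back by $\Phi(t_n)^{-1}$ to the fixed reference slab $\Omega$, where one exploits the classical Korn inequality on $\Omega$ with its universal constant, together with the uniform $C^{1,1}$ control of $\Phi(t_n)$ (to compare $\nabla$ and $\nabla^\varphi$, as well as $S$ and $S^\varphi$, modulo lower-order $O(|v|)$ terms absorbed into $\norm{v}^2$) and $H^1(\Omega)$ weak compactness to reach a contradiction. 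This uniform Korn constant, inserted above, closes the proof.
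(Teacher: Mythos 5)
Your reduction to the moving domain is sound and is in fact how the proof cited by the paper (Proposition 2.9 of \cite{MasRou}) begins: the identities $(\partial_i u)\circ\Phi=\partial^\varphi_i v$ and $d\V=\partial_z\varphi\,dydz$ convert the claim exactly into Korn's second inequality for $u$ on $\Omega(t)$, and Lemma \ref{mingrad} handles the return trip. The gap is in how you justify that inequality with a constant depending only on $c_0$. First, $\Omega(t)$ is \emph{not} a bounded Lipschitz domain: it is the horizontally infinite layer $\{-b<x_3<h(t,x_1,x_2)\}$, so ``the classical Korn second inequality on the bounded Lipschitz domain $\Omega(t)$'' is not available as stated. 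Second, and consequently, your compactness/contradiction argument for the uniformity of the constant collapses: on an unbounded domain there is no Rellich compactness, a normalized violating sequence converging weakly in $H^1$ need not converge strongly in $L^2$, and the usual endgame (the weak limit is a rigid motion of unit $L^2$ norm) is unavailable since nontrivial rigid motions are not square integrable on an infinite layer. Third, the parenthetical claim that $S$ and $S^\varphi$ differ ``modulo lower-order $O(|v|)$ terms'' is false: $S^\varphi v-Sv$ consists of genuinely first-order terms such as $\tfrac{\partial_i\varphi}{\partial_z\varphi}\,\partial_z v_j$, whose coefficients are bounded by $\Lambda_0$ but in no way small, so they can be absorbed neither into $\norm{v}^2$ nor into the left-hand side; this is precisely the difficulty the lemma is meant to overcome, so the step where you invoke Korn on the reference slab for $Sv$ and pass to $S^\varphi v$ begs the question.

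The standard repair is a localization rather than a compactness argument: cover $\Omega(t)$ by a family of Lipschitz subdomains of uniformly bounded size, uniformly controlled Lipschitz character (this is exactly what $\partial_z\varphi\ge c_0$ and $\norm{\nabla\varphi}_{L^\infty}+\norm{\nabla^2\varphi}_{L^\infty}\le 1/c_0$ provide, e.g.\ by taking the images under $\Phi(t,\cdot)$ of a lattice of unit cubes in $\Omega$), and bounded overlap; apply the classical Korn inequality on each piece, whose constant is controlled solely by the diameter and the Lipschitz character and hence by $c_0$; and sum over the cover. This yields the uniform constant $\Lambda_0$ with no compactness needed, after which your first paragraph completes the proof.
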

\begin{proof}
We refer to Proposition 2.9 in \cite{MasRou}.
 \end{proof}

Finally, we will also need the following $H^{-1/2}$ boundary estimates for functions satisfying $v\in L^2$ and $\nabla^\varphi\cdot v \in L^2$.

\begin{lem}\label{-1/2b}
If $\norm{ \nabla \varphi }_{L^\infty}   \leq {1 \over c_{0}}$ for some $c_{0}>0,$ then
\begin{equation}\label{-1/2es}
 \abs{v \cdot \N}_{-\hal}\le \Lambda_0 \(\norm{v}+\norm{\nabla^\varphi\cdot v}  \).
\end{equation}
\end{lem}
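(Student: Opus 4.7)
The plan is to prove the estimate by duality, using a suitable extension of the test function on the boundary and the identity that expresses $v\cdot\N$ as the third component of $Pv$, so that tangential regularity of the test function can be traded against the $L^2$ norm of $v$ and of $\nabla^\varphi\cdot v$ through an integration by parts over $\Omega$.

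First, I would recall from \eqref{graddiv} the key identities on $\overline\Omega$:
\beq\nonumber
\nabla^\varphi\cdot v=\frac{1}{\partial_z\varphi}\nabla\cdot(Pv),\qquad \N=P^\ast e_3\text{ on }\{z=0,-b\},
\eeq
which imply $v\cdot\N=(Pv)_3$ on $\{z=0\}$, while $\N=e_3$ and $\partial_i\varphi=0$ on $\{z=-b\}$ since $\eta$ carries the factor $(1+z/b)$. Then, for any test function $\phi\in H^{1/2}(\mathbb{R}^2)$ on $\{z=0\}$, I would pick a lifting $\tilde\phi$ of $\phi$ to $\Omega$ (for instance a Poisson-type extension as in \eqref{eqeta}, or any continuous right inverse of the trace operator) such that $\tilde\phi|_{z=0}=\phi$, $\tilde\phi|_{z=-b}=0$, and $\|\tilde\phi\|_{H^1(\Omega)}\ls|\phi|_{1/2}$.

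Next I would integrate by parts on $\Omega$: since the outward unit normal is $e_3$ on $\{z=0\}$ and $-e_3$ on $\{z=-b\}$, and since $\tilde\phi$ vanishes on $\{z=-b\}$,
\beq\nonumber
\int_{z=0}(v\cdot\N)\,\phi\,dy=\int_{z=0}(Pv)_3\,\phi\,dy=\int_\Omega \nabla\cdot(Pv)\,\tilde\phi\,dydz+\int_\Omega Pv\cdot\nabla\tilde\phi\,dydz.
\eeq
Using $\nabla\cdot(Pv)=\partial_z\varphi\,\nabla^\varphi\cdot v$, Cauchy–Schwarz, and the assumed $L^\infty$ bound on $\nabla\varphi$ (which controls $\|P\|_{L^\infty}+\|\partial_z\varphi\|_{L^\infty}\le\Lambda_0$), both integrals are bounded by $\Lambda_0(\|v\|+\|\nabla^\varphi\cdot v\|)\|\tilde\phi\|_{H^1}$, hence by $\Lambda_0(\|v\|+\|\nabla^\varphi\cdot v\|)|\phi|_{1/2}$.

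Taking the supremum over $\phi$ with $|\phi|_{1/2}\le 1$ yields the dual bound \eqref{-1/2es}. The only nontrivial ingredient is the existence of the extension $\tilde\phi$ with the claimed norm bound and zero trace on $\{z=-b\}$, which is standard (e.g.\ harmonic extension in a half-slab, multiplied by a cutoff); since $\Omega=\mathbb{R}^2\times(-b,0)$ is a flat slab, no geometric complication arises. I do not anticipate any real obstacle in this argument.
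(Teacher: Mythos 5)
Your proposal is correct and is essentially the paper's own argument: a duality estimate obtained by lifting the $H^{1/2}$ test function to an $H^1$ function on $\Omega$ vanishing on $\{z=-b\}$ and integrating by parts, your identity $\int_\Omega\nabla\cdot(Pv)\,\tilde\phi+\int_\Omega Pv\cdot\nabla\tilde\phi$ being exactly the paper's $\int_\Omega(\tilde\psi\,\nabla^\varphi\cdot v+\nabla^\varphi\tilde\psi\cdot v)\,d\V$ rewritten via \eqref{graddiv}. No gap.
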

\begin{proof}
We adapt the proof of Lemma 3.3 in \cite{GT13}.
We will only prove the result on $\{z=0\}$. Let $\psi \in H^{1/2} $, and let $\tilde{\psi} \in H^1(\Omega)$ be a bounded extension. Then
\begin{align*}
 \int_{z=0} \psi v \cdot \N &= \int_\Omega \nabla^\varphi\cdot (\tilde{\psi}v) d \V = \int_\Omega \(\nabla^\varphi \tilde{\psi}\cdot
 v+\tilde{\psi} \nabla^\varphi\cdot v   \) d \V \\
 &\le \Lambda_0 \(\norm{\tilde{\psi}}\norm{\nabla^\varphi\cdot v} +\norm{\nabla \tilde{\psi}}\norm{v} \)
 \le \Lambda_0 \abs{ \psi }_{\hal}\(\norm{v}+\norm{\nabla^\varphi\cdot v}  \).
\end{align*}
Then the estimate \eqref{-1/2es} follows from this inequality above by taking the supremum over all $\psi$ so that $\abs{ \psi }_{\hal}\le1$.
\end{proof}

%%%%%%%%%%%%%%%%%%%%%%%%%%%%%%%%%%%%%%%%%%%%%%%
%\section*{Conflict of Interest}
%%%%%%%%%%%%%%%%%%%%%%%%%%%%%%%%%%%%%%%%%%%%%%%

%The authors declare that they have no conflict of interest.

  \vspace{0.5cm}

\end{document}